\numberwithin{equation}{section}
\numberwithin{equation}{section}
\newtheorem{theorem}{Theorem}[section]
\newtheorem{proposition}[theorem]{Proposition}
\newtheorem{lemma}[theorem]{Lemma}
\newtheorem{corollary}[theorem]{Corollary}
\newtheorem{remark}[theorem]{Remark}
\newtheorem{example}[theorem]{Example}
\numberwithin{equation}{section}
\newcommand\N{{\mathbb N}}
\newcommand\R{{\mathbb R}}
\newcommand\C{{\mathbb C}}
\newcommand\Pp{{\mathbb P}}
\newcommand{\cB}{\mathcal B}
\newcommand{\cC}{\mathcal C}
\newcommand{\cD}{\mathcal D}
\newcommand{\cF}{\mathcal F}
\newcommand{\cH}{\mathcal H}
\newcommand{\cL}{\mathcal L}
\newcommand{\cM}{\mathcal M}
\newcommand{\cO}{\mathcal O}
\newcommand{\cR}{\mathcal R}
\newcommand{\cS}{\mathcal S}
\newcommand{\cT}{\mathscr T}
\newcommand{\sC}{\mathscr{C}}
\newcommand{\sL}{\mathscr{L}}
\newcommand{\sT}{\mathscr{T}}
\newcommand{\fD}{\mathfrak{D}}
\newcommand{\fE}{\mathfrak{E}}
\newcommand{\fM}{\mathfrak{M}}
\newcommand{\fP}{\mathfrak{P}}
\newcommand{\fR}{\mathfrak{R}}
\newcommand{\dd}{{\,\mathrm d}}
\newcommand{\fa}{\forall\,}
\newcommand{\nrm}[2]{\left\|{#1}\right\|_{#2}}
\newcommand{\scalar}[2]{\langle{#1},{#2}\rangle}
\newcommand{\pih}{h^\parallel}
\newcommand{\vertiii}[1]{{\left\vert\kern-0.25ex\left\vert\kern-0.25ex\left\vert #1 \right\vert\kern-0.25ex\right\vert\kern-0.25ex\right\vert}}
\newcommand{\wgt}{\lfloor v\rceil}
\newcounter{Hequation}
\g@addto@macro\equation{\stepcounter{Hequation}}\makeatother
\newcounter{labelrescounter}
	\newcommand{\labelres}[1]{\addtocounter{labelrescounter}{1}\hypertarget{res-#1}{\kern 0.1pt}\label{#1}}
\newcounter{taggedeq}
	\pretocmd{\equation}{\stepcounter{taggedeq}}{}{}
\begin{document}

\title{Special macroscopic modes and hypocoercivity}
\titlemark{Special macroscopic modes and hypocoercivity}

\emsauthor{1}{Kleber Carrapatoso}{K.~Carrapatoso}
\emsauthor{2}{Jean Dolbeault}{J.~Dolbeault}
\emsauthor{3}{Fr\'ed\'eric H\'erau}{F.~H\'erau}
\emsauthor{4}{St\'ephane Mischler}{S.~Mischler}
\emsauthor{5}{Cl\'ement Mouhot}{C.~Mouhot}
\emsauthor{6}{Christian Schmeiser}{C.~Schmeiser}

\emsaffil{1}{Centre de math\'ematiques Laurent
  Schwartz, \'Ecole Polytechnique,
  Institut Polytechnique de Paris, 91128 Palaiseau Cedex, France
\email{kleber.carrapatoso@polytechnique.edu}}

\emsaffil{2}{Centre de Recherche en Math\'ematiques de
  la D\'ecision (CEREMADE, CNRS UMR n$^\circ$ 7534),
  Universit\'es PSL \& Paris-Dauphine, Place de Lattre de
  Tassigny, 75775 Paris 16, France
\email{dolbeaul@ceremade.dauphine.fr}}

\emsaffil{3}{Nantes Universit\'e, CNRS, Laboratoire de Math\'ematiques Jean Leray, LMJL, UMR 6629, 2, rue de la Houssini\`ere BP 92208 F-44322, France
\email{frederic.herau@univ-nantes.fr}}

\emsaffil{4}{Centre de Recherche en Math\'ematiques de
  la D\'ecision (CEREMADE, CNRS UMR n$^\circ$ 7534),
  Universit\'es PSL \& Paris-Dauphine, Place de Lattre de
  Tassigny, 75775 Paris 16, France
\email{mischler@ceremade.dauphine.fr}}

\emsaffil{5}{Department of Pure Mathematics and
  Mathematical Statistics, University of Cambridge,
  Wilberforce Road, Cambridge CB3 0WA, UK
\email{C.Mouhot@dpmms.cam.ac.uk}}

\emsaffil{6}{Fakult\"at f\"ur Mathematik, Universit\"at Wien,
  Oskar-Morgen\-stern-Platz 1, 1090 Wien, Austria
\email{Christian.Schmeiser@univie.ac.at}}

\classification[\href{https://mathscinet.ams.org/mathscinet/search/mscbrowse.html?sk=default&sk=76P05&submit=Chercher}{76P05};
  \href{https://mathscinet.ams.org/mathscinet/search/mscbrowse.html?sk=default&sk=35Q83&submit=Chercher}{35Q83};
  \href{https://mathscinet.ams.org/mathscinet/search/mscbrowse.html?sk=default&sk=82C70&submit=Chercher}{82C70}]{
\href{https://mathscinet.ams.org/mathscinet/search/mscbrowse.html?sk=default&sk=82C40&submit=Chercher}{82C40}}

\keywords{Hypocoercivity; linear kinetic equations; collision
  operator; transport operator; collision invariant; local
  conservation laws; micro/macro decomposition; commutators;
  hypoellipticity; confinement potential; rotations; rotational
  invariance; symmetries; global conservation laws; spectral gap;
  Poincar\'e-Korn inequality; Witten-Laplace operator; partially
  harmonic potential; time-periodic solutions; classification of
  steady states; special macroscopic modes}

\begin{abstract}
  We study linear inhomogeneous kinetic equations with an
  external confining potential and a collision operator admitting
  several local conservation laws (local density, momentum and
  energy). We classify all special macroscopic modes (stationary
  solutions and time-periodic solutions). We also prove the
  convergence of all solutions of the evolution equation to such
  non-trivial modes, with a quantitative exponential rate. This
  is the first hypocoercivity result with multiple special
  macroscopic modes with constructive estimates depending on the
  geometry of the potential.\end{abstract}

\maketitle

\section{Introduction}
\label{sec:intro}
\setcounter{equation}{0}
\setcounter{theorem}{0}

Since the publication of Boltzmann's paper~\cite{Boltzmann} in
1876, the existence of time-periodic steady states of the \emph{
inhomogeneous Boltzmann equation} in the whole Euclidean space
is known, in presence of an external harmonic potential. As
explained in~\cite[p.~147]{Cercignani}, ``equilibrium is not
necessarily achieved in an harmonic field. [...] [D]ensity,
velocity and temperature oscillate with the natural frequency of
the field or with twice such a frequency.'' Beyond such remarks,
the classification of the steady states according to the
symmetries of the domain or the symmetries of the external
potential remained untouched for more than a century, although
some special solutions were
known~\cite{Uhlenbeck,Cercignani}. When symmetry partially or
completely breaks, this turns out to be a difficult issue. With
symmetry, special modes have to be taken into account in some
configurations and local collision laws of the collision operator
add significant difficulties to the understanding of the
convergence in asymptotic regimes in all cases, even if there is
no particular symmetry.

Without external potential and for a bounded domain, the problem
has been studied in~\cite{DV05}. In presence of a given external
potential, the question was so far open and our first result is
to classify all steady solutions for linear kinetic equations
with collision operators satisfying the local conservation laws
of physics. Even more difficult is the problem of the stability
of the (possibly time-periodic) steady states and the proof of
the convergence to such states, with an exponential rate, for
inhomogeneous kinetic equations. The question goes back to the
celebrated $H$-theorem of Boltzmann, but became quantitative only
recently with the theory of hypocoercivity. All results involving
an external potential deal with collision operators admitting
only one collision invariant, up to a few attempts
like~\cite{Dua11,DL12} which discard special modes, with
non-constructive methods. Our second result gives the very first
answer to the question of the convergence rate in the whole space
for an external potential without any \emph{a priori} symmetry,
using an entirely new scheme made of a cascade of several
hypocoercive estimates. Alternatively we also propose a
commutator method in the spirit of~\cite{HN04,Vil09}.

Even when the potential has no specific symmetry, which forbids
the existence of any special mode other than the standard
stationary solution, the fact that the collision operator admits
several \emph{collision invariants} is a source of difficulties:
when the potential is almost symmetric, convergence rates get
deteriorated and the geometric properties of the potential have
therefore to be taken into account. The notion of steady states,
defined as the set of attractors in large time asymptotics, is
widely used in physics, and corresponds in our case to minimizers
of the mathematical entropy (that is, the physical entropy, up to
the sign). In this paper we shall speak of \emph{special
  macroscopic modes} in relation with \emph{special} symmetries
of the potential.

\subsection{Equation and assumptions}
\label{ssec:cadre}

Consider the kinetic equation
\begin{align}
  \label{eq:main}
  \partial_t f = \sL f := \sT f + \sC f\,, \quad f_{|t=0} =
  f_0\,,
\end{align}
for the unknown distribution function $f=f(t,x,v)$ depending on
the time variable $t \ge 0$, the position variable $x \in \R^d$,
and the velocity variable $v \in \R^d$, where $d\ge1$ is an
arbitrary dimension. The \emph{transport operator} $\sT$ is
given~by
\begin{align*}
  \sT f := -\,v \cdot \nabla_x f+ \nabla_x \phi \cdot \nabla_v f
\end{align*}
with a stationary, position dependent \emph{potential}
$\phi : \R^d \to \R$. We assume that the \emph{linear collision
  operator} $\sC$ is acting only along the velocity variable
$v \in \R^d$, is self-adjoint in $\mathrm L^2(\mu^{-1})$, with
weight given by the \emph{local Maxwellian} function
\begin{equation}
  \label{mu}
  \forall\,v\in\R^d\,,\quad\mu(v) :=
  \frac{e^{-|v|^2/2}}{(2\,\pi)^{d/2}}\,,
\end{equation}
and has the $(d+2)$-dimensional kernel of \emph{collision
  invariants} given by
\[
  \label{eq:kersC}
  \tag{H0} \mathrm{Ker}\,\sC = \mathop{\mathrm{Span}} \left\{ \mu,
    v_1\,\mu, \dots, v_d\,\mu, |v|^2\,\mu \right\}\,,
\]
corresponding to the local conservation of mass, momentum and
energy. Here $\mathrm L^2(\mu^{-1})$ is the subspace of
$\mathrm L^2_{\mathrm{loc}}(\R^d,\dd v)$ of the functions $f$
such that
\[
  \|f\|_{\mathrm
    L^2(\mu^{-1})}:=\left(\int_{\R^d}\frac{|f|^2}{\mu(v)}\dd
    v\right)^{1/2}
\]
is finite.

We assume that $\sC$ satisfies the following \emph{spectral gap
  property} (which is a quantitative version of the
\emph{spatially homogeneous linearized $H$-theorem})
\begin{align}
  \label{eq:hyp-sg-C}
  \tag{H1}
  - \int_{\R^d} \big( \sC f(v) \big)\,f(v)\,\mu(v)^{-1} \dd v \ge
  \mathrm c_{\sC}\,\| f - \Pi f \|_{\mathrm L^2(\mu^{-1})}^2
\end{align}
for some constant $\mathrm c_{\sC}>0$ and all $f$ in the domain
of $\sC$, where $\Pi$ denotes the
$\mathrm L^2(\mu^{-1})$-orthogonal projection onto
$\mathrm{Ker}\,\sC$. Moreover, we suppose that for any polynomial
function $p(v) : \R^d \to \R$ of degree at most $4$, the function
$p\,\mu$ is in the domain of $\sC$ and
\begin{align}
  \label{eq:lbound}
  \tag{H2}
  C(p):=\| \sC (p\,\mu)\|_{\mathrm L^2(\mu^{-1})}<\infty\,.
\end{align}
We provide examples of collision operators satisfying these
conditions in Appendix~\ref{ssec:ecol}, including the linearized
Boltzmann and Landau operators.

Throughout the paper, we assume that the potential
$\phi : \R^d \to \R$ is such that $\rho(x) := e^{-\phi(x)}$ is a
centred probability density, \emph{i.e.},
\begin{equation}
  \label{hyp:intnorm}
  \tag{H3} \int_{\R^d}\rho(x) \dd x=1\quad \text{and} \quad
  \int_{\R^d} x\,\rho(x) \dd x=0\,.
\end{equation}
We also assume that $\phi$ is of class $C^2(\R^d;\R)$, and for
all $\varepsilon>0$, there exists a constant $C_\varepsilon$ such
that
\begin{equation}
  \label{hyp:regularity}
  \tag{H4}
  \forall\,x\in \R^d\,,\quad |\nabla_x^2\phi(x)| \leq
  \varepsilon\,|\nabla_x\phi(x)|^2+C_\varepsilon\,,
\end{equation}
where $\nabla_{\!x}^2 \phi$ denotes the Hessian matrix of
$\phi$. We further assume that the measure $\rho(x) \dd x$
satisfies the Poincar\'e inequality with a constant
$c_{\text{\tiny P}}>0$,
\begin{align}
  \label{eq:poincarenormal}
  \tag{H5}
  c_{\text{\tiny P}} \int_{\R^d} | \varphi - \langle \varphi
  \rangle |^2\,\rho\,\dd x \leq \int_{\R^d} | \nabla_x \varphi
  |^2\,\rho\,\dd x\,,
\end{align}
for all $\varphi \in \mathrm L^2(\rho)$, where
\[
  \langle \varphi \rangle := \int_{\R^d}\varphi\,\rho\,\dd x
\]
is the average of $\varphi$. Here $\mathrm L^2(\rho)$ is the
subspace of $\mathrm L^2_{\mathrm{loc}}(\R^d,\dd x)$ of the
functions $\varphi$ such that
$\|\varphi\|_{\mathrm
  L^2(\rho)}^2=\int_{\R^d}|\varphi|^2\,\rho\dd x$ is finite.

We assume moment bounds on $\rho$, namely
\begin{equation}
  \label{eq:momentspace}
  \tag{H6}
  \int_{\R^d} \Big( |x|^4 + |\phi|^2+ |\nabla_x \phi|^4
  \Big)\,\rho\,\dd x \le {\mathrm C}_{\phi}
\end{equation}
for some constant ${\mathrm C}_{\phi} >0$. We also introduce the
normalization
\begin{equation}
  \label{eq:phiid}
  \tag{H7}
  \left\langle \nabla_{\!x}^2 \phi \right\rangle = \int_{\R^d}
  \nabla_{\!x}^2 \phi\,\rho\,\dd x = \mathrm{Id}_{d \times d}\,,
\end{equation}
where $\mathrm{Id}_{d \times d}$ the identity matrix of size
$d$. The assumption that $\langle \nabla_{\!x}^2 \phi \rangle$ is
diagonal is not a restriction since it can be obtained through a
rotation in position space. Note that the same rotation in
velocity space leaves the kinetic equation invariant and all
assumptions made so far remain valid. The stronger
assumption~\eqref{eq:phiid} is made for notational simplicity,
and a discussion of the general case is given in
Appendix~\ref{ssec:rescale-change-phi-L}.

The potential
\[\phi(x) := (1+|a_\gamma\,x|^2)^{\gamma/2}-Z_\gamma\,,\]
with $\gamma > 1$ and real normalization constants $a_\gamma$,
$Z_\gamma$,
satisfies~\eqref{hyp:intnorm}--\eqref{hyp:regularity}--\eqref{eq:poincarenormal}--\eqref{eq:momentspace}--\eqref{eq:phiid}. See
Appendix~\ref{ssec:expot} for other examples. \emph{No sign} is
assumed on $f$: one should think of $f$ as a real valued
fluctuation around the equilibrium in the nonlinear Boltzmann or
Landau equation (see Appendix~\ref{ssec:ecol}). Throughout this
article we shall refer to~\eqref{eq:hyp-sg-C}
and~\eqref{eq:poincarenormal} as \emph{spectral gap properties},
and to~\eqref{eq:lbound} and~\eqref{eq:momentspace} as
\emph{bounded moment properties}. These are the structural
assumptions on $\sC$ and $\phi$ for our theory.

Finally, since we are concerned with large time asymptotic
behaviour, we require that the evolution equation~\eqref{eq:main}
is well-posed, a condition which is satisfied by our standard
examples of application, and assume that
\begin{equation}
  \label{hyp:semigroup}
  \tag{H8}
  \mbox{\emph{$t\mapsto e^{t \sL}$ is a strongly continuous
      semi-group on the space $\mathrm L^2(\cM^{-1})$}},
\end{equation}
where $\mathcal M$ is the \emph{global Maxwellian equilibrium}
function given by
\begin{equation}
  \label{def:calM}
  \forall\,(x,v)\in\R^d\times\R^d\,,\quad\cM(x,v) :=
  \rho(x)\,\mu(v) =
  \frac{e^{-\frac12|v|^2-\phi(x)}}{(2\,\pi)^{d/2}}\,,
\end{equation}
and the space $\mathrm L^2(\cM^{-1})$ is the subspace of
$\mathrm L^2_{\mathrm{loc}}(\R^d\times\R^d,\dd x\dd v)$ of the
functions~$f$ such that
\[
  \|f\|_{\mathrm
    L^2(\cM^{-1})}:=\left(\iint_{\R^d\times\R^d}\frac{|f|^2}{\cM}\dd
    x\dd v\right)^{1/2}
\]
is finite.

\subsection{The main result}
\label{subsec:theo}

{}From here on, we assume the normalization
conditions~\eqref{hyp:intnorm}--\eqref{eq:phiid}. The function
$\cM$ defined by~\eqref{def:calM} is a stationary solution
of~\eqref{eq:main} but it is not the unique attractor of the
time-dependent solutions of~\eqref{eq:main}, even up to a mass
normalization. Let us introduce a larger class of steady
states. \textit{Special macroscopic modes} of~\eqref{eq:main} are
the solutions $F = F(t,x,v)$ to the system
\begin{align}
  \label{eq:mainMicroMacro}
  \sC F = 0\,, \quad \partial_t F = \sT F\,.
\end{align}
Of course we read from~\eqref{eq:kersC} that $F = \alpha\,\cM$,
$\alpha \in \R$, is a special macroscopic mode but we also look
for solutions to~\eqref{eq:mainMicroMacro} that can be written as
\begin{align}
  \label{eq:mainMicroMacro2}
  F = \big(r(t,x) + m(t,x) \cdot v + e(t,x)\,\fE(v)\big)\,\cM\,,
\end{align}
for some functions $r$, $m$ and $e$ with values respectively in
$\R$, $\R^d$ and $\R$, with
\begin{equation}
  \label{eq:mainMicroMacro2E}
  \fE(v) := \frac{|v|^2-d}{\sqrt{2\,d}}\,.
\end{equation}
The \emph{energy mode} $F = \beta\,\cH\,\cM$, $\beta \in \R$, is
another stationary solution to~\eqref{eq:mainMicroMacro} where
$\cH$ defined by
\begin{equation}
  \label{def:calH}
  \cH(x,v) := \tfrac12\left(|v|^2-d\right) + \phi(x)-\langle \phi
  \rangle\,
\end{equation}
is the Hamiltonian energy associated with the characteristics of
the transport equation $\partial_t f = \sT f$. As we shall see in
Section~\ref{ssec:macrosols}, it turns out that the linear
combination of global Maxwellian equilibrium functions and energy
modes are the only special macroscopic modes for ``generic
potentials''. Other special macroscopic modes are available under
additional symmetry properties of $\phi$ as observed by
L.~Boltzmann in~\cite{Boltzmann}. These modes deserve some
explanations.

The set of \emph{infinitesimal rotations compatible with $\phi$}
defined as
\begin{equation}
  \label{eq:Rphi}
  \cR_\phi := \left\{x\mapsto A\,x\,:\,A \in \mathfrak M_{d
      \times d}^{\text{\tiny skew}}(\R) \textrm{ s.t. }
    \forall\,x \in \R^d\,, \;\nabla_x \phi(x) \cdot A\,x = 0
  \right\}
\end{equation}
is identified with a subset of the space of skew-symmetric
matrices
\[
  \mathfrak M_{d \times d}^{\text{\tiny skew}}(\R) := \Big\{ A
  \in \mathfrak M_{d \times d}(\R) \,:\,{^T \!A} = -\,A \Big\}\,.
\]
In other words, $A \in \cR_\phi$ if and only if $\phi$
is invariant by the rotation group $\theta\mapsto e^{\theta A}$,
\emph{i.e.},
\[
  \forall\,(\theta,x) \in \R\times\R^d\,,\quad\phi\big(e^{\theta
    A}x\big) = \phi(x)\,.
\]
The set $\cR_\phi$ gives rise to the set of
\emph{rotation modes compatible with $\phi$} defined by
\[
  \fR_{\phi} := \Big\{ (x,v) \mapsto \left( A\,x\cdot v \right)
  \cM(x,v) : A \in \cR_\phi\Big\}\,.
\]
Functions in $\fR_\phi$ are stationary solutions
of~\eqref{eq:mainMicroMacro} associated to all invariances of
$\phi$ under rotation.

There are also some time-periodic special macroscopic modes when
$\phi$ has \emph{harmonic directions}. Let us define
\begin{equation}
  \label{eq:defephi}
  E_\phi := \mathrm{Span}_{\R^d}\Big(\{ \nabla_x \phi(x) -
  x\}_{x \in \R^d}\Big)\,, \quad d_\phi :=
  \mathrm{dim}\,E_\phi\,.
\end{equation}
Notice that $E_\phi$ is a subspace of $\R^d$ and $d_\phi\le
d$. Alternatively, we can characterize $d_\phi$ by
\[
  d_\phi=\mathrm{dim}\;\mathrm{Span}_{\mathrm
    L^2(\rho)} \Big(\left\{\partial_{x_i} \phi
  \right\}_{i=1,\dots,d} \cup\left\{x_i\right\}_{i=1,\dots,d}\Big)-d
\]
and choose cartesian coordinates $(x_1,x_2,\dots,x_d)$ such that
$\partial_{x_i} \phi = x_i$ if and only if $i\in I_\phi:=\{d_\phi
+ 1 , \ldots , d\}$. We face three different cases.\\
$\rhd$ The case $d_\phi = d$ is called \emph{fully non-harmonic}:
$E_\phi=\R^d$ and $\phi$ has no harmonic direction and it that
case, as we shall see below, there are no time-periodic
solutions.\\
$\rhd$ In the case $1 \le d_\phi \le d - 1$, the potential is
called \emph{partially harmonic}. In the harmonic coordinates
$x_{d_\phi+1},\ldots,x_d$, we have $\partial_{x_i} \phi = x_i$
and define \emph{harmonic directional modes}~by
\begin{equation}
  \label{eq:fD}
  \fD_\phi := \mathop{\mathrm{Span}} \Big\{ (x_i \cos t - v_i
  \sin t )\,\cM\,,\,(x_i \sin t + v_i \cos t)\,\cM\,\,:\,i \in
  I_\phi\Big\}\,.
\end{equation}
Harmonic directional modes are also defined if $d_\phi=0$. By
convention, we set $\fD_\phi := \{ 0 \}$ if $d_\phi=d$. All
functions in $\fD_\phi$ are solutions
to~\eqref{eq:mainMicroMacro} which correspond to an
inertia-driven oscillation of period $1$ of particles in a
potential well along a direction in $E_\phi^\bot$.  These modes
are independent of each other.\\
$\rhd$ In the case $d_\phi=0$, the potential is called
\emph{fully harmonic} and
$\phi(x)= \frac12\,|x|^2 + \frac d2\,\log(2\,\pi)$. In addition
to the harmonic directional modes, the set of \emph{harmonic
  pulsating modes}
\begin{multline}
  \label{eq:fP}
  \fP_\phi := \mathop{\mathrm{Span}}
  \bigg\{\left(\tfrac12\left(|x|^2-|v|^2\right) \cos(2\,t) -
    x\cdot v \sin (2\,t) \right)\cM\,,\\
  \left(\tfrac12\left(|x|^2-|v|^2\right)\sin (2\,t) + x\cdot v
    \cos (2\,t) \right)\cM\, \bigg\}
\end{multline}
is also made of solutions to~\eqref{eq:mainMicroMacro}. By
convention, we set $\fP_\phi = \{0\}$ if $d_\phi \ge 1$. These
macroscopic modes correspond to a radially symmetric pulsation of
period $1/2$ of particles in the potential well.

Summing up the above observations, we have obtained \emph{special
  macroscopic modes} of the form
\begin{equation}
  \label{eq:solMacroF}
  F = \alpha\,\cM+ \beta\,\cH\,\cM + A\,x\cdot v\,\cM +
  F_{\mathrm{dir}} + F_{\mathrm{pul}}
\end{equation}
where $\alpha$, $\beta \in \R$, $(x\mapsto A\,x) \in \cR_\phi$,
$F_{\mathrm{dir}} \in \fD_\phi$ and
$ F_{\mathrm{pul}} \in \fP_\phi$. With these definitions at hand,
we can now state the main result of this paper.
\begin{theorem}[Special macroscopic modes and hypocoercivity]
\label{theo:main}
Assume that the potential $\phi$ and the collision operator $\sC$
satisfy the
assumptions~\eqref{eq:kersC}--\eqref{eq:hyp-sg-C}--\eqref{eq:lbound}--\eqref{hyp:intnorm}--\eqref{hyp:regularity}--\eqref{eq:poincarenormal}--\eqref{eq:momentspace}--\eqref{eq:phiid}--\eqref{hyp:semigroup}. Then
\begin{enumerate}
\item[(1)] All special macroscopic modes
  of~\eqref{eq:mainMicroMacro} are given by~\eqref{eq:solMacroF},
  \emph{i.e.}, are linear combinations of the Maxwellian, the
  energy mode, rotation modes compatible with $\phi$, and
  harmonic directional or pulsating modes if allowed by $\phi$.
\item[(2)] There are explicit constants $C>0$ and $\kappa>0$ such
  that, for any $f\in\cC\big(\R^+; \mathrm L^2(\cM^{-1})\big)$ 
  solving~\eqref{eq:main} with initial datum
  $f_0\in \mathrm L^2(\cM^{-1})$, there exists a unique special
  macroscopic mode $F$ such that
\[
  \forall\,t\ge0\,,\quad\left\| f(t) - F(t) \right\|_{\mathrm
    L^2(\cM^{-1})} \le C\,e^{-\kappa\,t} \left\|
    f_0-F(0)\right\|_{\mathrm L^2(\cM^{-1})}\,.
\]
\end{enumerate}
\end{theorem}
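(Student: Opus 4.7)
The plan is to handle the two assertions in sequence, with the classification of Part~(1) providing the structural backbone for Part~(2).

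For Part~(1), the constraint $\sC F = 0$ together with~\eqref{eq:kersC} forces $F$ to have the micro/macro shape~\eqref{eq:mainMicroMacro2} for some scalar fields $r(t,x)$, $e(t,x)$ and a vector field $m(t,x)$. Substituting this ansatz into $\partial_t F = \sT F$ and testing against the natural basis of velocity polynomials up to degree $3$ against $\mu^{-1}\dd v$ produces two things: a closed Euler-type system relating $\partial_t(r,m,e)$ to spatial derivatives of $(r,m,e)$ and to $\nabla_x\phi$, and a family of pointwise compatibility relations of the form $(\partial_{x_i}\phi - x_i)\times(\text{linear in }m,e)=0$ coming from the velocity moments orthogonal to $\mathrm{Ker}\,\sC$. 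These compatibilities are precisely where the subspace $E_\phi$ of~\eqref{eq:defephi} enters: non-harmonic coordinates force the corresponding components to vanish, while harmonic coordinates decouple into scalar oscillators of frequency~$1$ (producing $\fD_\phi$) or, in the fully harmonic case, an additional pulsating oscillator of frequency~$2$ generated by the trace part of the stress tensor (producing $\fP_\phi$). Stationary solutions reduce to linear combinations of $\cM$, $\cH\,\cM$, and the rotation modes $A\,x\cdot v\,\cM$ with $A \in \cR_\phi$, the latter appearing as the kernel of the transport operator restricted to the $v_i\,\cM$ subspace.

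For Part~(2), write $\fM$ for the (finite-dimensional) space of special macroscopic modes at $t=0$, which by Part~(1) is $\sL$-invariant. Decompose uniquely $f_0 = F(0) + g_0$ with $g_0$ orthogonal in $\mathrm L^2(\cM^{-1})$ to $\fM$, so that $g(t) := f(t) - F(t)$ satisfies~\eqref{eq:main} with initial datum $g_0$ and remains orthogonal to $\fM$ for all $t\ge0$. The task then reduces to proving exponential $\mathrm L^2(\cM^{-1})$-decay of $g$. Following the cascade strategy announced in the abstract, I would construct an equivalent norm $\vertiii{g}^2 := \|g\|^2_{\mathrm L^2(\cM^{-1})} + \sum_k \varepsilon_k\,\cF_k(g)$ where each $\cF_k$ is designed to supply coercivity on one macroscopic mode of $\Pi g = (r+ m\cdot v + e\,\fE)\,\cM$ missed by~\eqref{eq:hyp-sg-C}: the microscopic part $(1-\Pi)g$ is controlled directly by $\sC$; successive applications of the Poincar\'e inequality~\eqref{eq:poincarenormal} and a Poincar\'e--Korn inequality on $\rho\,\dd x$ yield coercivity on $r$, $e$, and the symmetric traceless part of $\nabla_x m$; and the orthogonality to $\fM$ removes exactly the rotation modes and harmonic modes identified in Part~(1). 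Alternatively, a H\'erau--Nier--Villani commutator method with twisted norm incorporating $\scalar{\nabla_v g}{\nabla_x g}$ and $\|\nabla_x g\|^2$, combined with~\eqref{hyp:regularity}--\eqref{eq:momentspace} to absorb Hessians of $\phi$, yields the same conclusion.

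The main obstacle is the multi-stage character of the cascade. With five local conservation laws and the possible rotational or harmonic degeneracies of $\phi$, a standard one-step micro/macro hypocoercivity no longer suffices: at each stage one must quotient out the modes already identified, extract a coercive estimate on the remaining piece whose constant depends quantitatively on $\mathrm c_{\sC}$, $c_{\text{\tiny P}}$, $\mathrm C_\phi$ and a \emph{geometric spectral gap} of a Witten-type operator measuring how far $\phi$ stands from possessing an extra symmetry, and then bootstrap without losing the orthogonality to $\fM$. Making this geometric gap quantitative, so that the final rate $\kappa$ is explicit rather than produced by a compactness or contradiction argument as in~\cite{Dua11,DL12}, is the technical heart of the proof.
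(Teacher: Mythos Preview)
Your overall architecture matches the paper's---Part~(1) feeds Part~(2), orthogonality to the modes is preserved along the flow, and a cascade of correctors closes the hypocoercive estimate---but there is a concrete error in your mechanism for Part~(1) and a structural omission in Part~(2).

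\textbf{Part (1).} The claim that the velocity moments orthogonal to $\mathrm{Ker}\,\sC$ produce ``pointwise compatibility relations of the form $(\partial_{x_i}\phi - x_i)\times(\text{linear in }m,e)=0$'' is not correct. Those moments (against $v\otimes v-\mathrm{Id}$ and $v(\fE-\sqrt{2/d})$) give only $\nabla_x e=0$ and $\tfrac{1}{\sqrt{2d}}(\partial_t e)\,\mathrm{Id}=-\nabla_x^{\text{sym}}m$; neither involves $\phi$ at all. The space $E_\phi$ does not enter here. What actually happens is: the symmetric-gradient relation plus the Schwarz lemma forces $\nabla_x^2 m=0$, so $m(t,x)=A(t)x+b(t)-\tfrac{c'(t)}{\sqrt{2d}}x$ with $A$ skew-symmetric; the momentum equation then shows $A$ is constant in $t$ and yields $r$ explicitly; finally substituting into the continuity equation produces a single scalar ODE whose coefficients are the functions $1,\,x,\,\xi_\phi,\,\nabla_x\phi\cdot x,\,\nabla_x\phi,\,\xi_2,\,\nabla_x\phi\cdot Ax$. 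It is the \emph{linear (in)dependence of these spatial functions}---not any velocity-moment orthogonality---that encodes $d_\phi$ and $\cR_\phi$. In the non-generic (partially harmonic) case this linear-algebra step is delicate: the paper needs the conservation laws obtained after subtracting the known modes, an invertibility lemma for a reduced Gram matrix of $\nabla_x\phi-x$, and a short Lyapunov argument to force $A=0$. Your sketch skips this entire reduction.

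\textbf{Part (2).} Your cascade is too coarse. The paper does not apply Poincar\'e--Korn directly to $m$; it first splits each macroscopic quantity into an infinite-dimensional ``deviation from averages'' piece ($e_s,\,m_s,\,w_s$ with $w=r-\sqrt{2/d}\,\langle e\rangle\phi$) and a finite-dimensional ``averages'' piece ($A(t),\,b(t),\,c(t)$ and their time derivatives). The first Lyapunov functional $\cF_1$ controls only the deviations, via zeroth-order Poincar\'e--Lions and Korn inequalities involving $\Omega^{-1}$. A \emph{second} layer of correctors---built from the same $X,Y$ functionals that appear in the ODE of Part~(1)---is then needed to control $|A|^2,\,|b'|^2,\,|c''|^2$, and this is precisely where the rigidity constant $c_{\mathrm K}$ of~\eqref{grad} enters (through $\|\nabla_x\phi\cdot Ax\|^2\ge c_{\mathrm K}|A|^2$ on $\cR_\phi^\perp$). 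Without isolating these finite-dimensional scalars you cannot close, because the Korn inequality you invoke only controls $m$ modulo affine maps, and the affine part is exactly $A(t)x+b(t)-\tfrac{c'(t)}{\sqrt{2d}}x$.
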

\noindent The constants in the decay estimate being explicit
means that the proof is constructive and provides a finite
algorithm for computing $C$ and $\kappa$.

In the following, the norm and scalar product without subscript,
$\|\cdot\|$ and $\langle \cdot, \cdot \rangle$, refer to the
space $\mathrm L^2(\cM)$, so that
\begin{equation}\label{nrm}
  \|h\|^2=\iint_{\R^d\times\R^d}|h|^2\,\cM\dd x\dd v=\scalar
  hh\,.
\end{equation}
With $h:=f/\cM$, $h_0:=f_0/\cM$ and $\pih:=F/\cM$, Part (2) of
Theorem~\ref{theo:main} amounts to
\[
  \forall\,t\ge0\,,\quad\nrm{h(t)-\pih(t)}{} \le
  C\,e^{-\kappa\,t}\nrm{h_0-\pih(0)}{}\,.
\]
When considering functions of $x$ only, the
$\mathrm L^2(\rho)$-norm coincides with the
$\mathrm L^2(\cM)$-norm. We recall that $\langle \cdot \rangle$ stands for the
average with respect to $\rho\dd x$.

In Theorem~\ref{theo:main}, the constants $C$ and $\kappa$ depend
only on bounded moments constants, spectral gap constants or
explicitly computable quantities associated to $\phi$ such as the
rigidity constant (to be defined later) which admits a
quantitative estimate. Moreover, the special macroscopic mode $F$
can be explicitly computed in terms of the initial data $f_0$
(see Section~\ref{ssec:conslaw}).

\subsection{Framework, comments and methods}
\label{ssec:coco}

During the last two decades, new \emph{hypocoercive methods} were
developed for the study of spatially inhomogeneous kinetic
equations. Many linear or nonlinear models were tackled,
including Fokker-Planck, Boltzmann and Landau equations in
various geometries, ranging from bounded domains to the whole
Euclidean space, with or without confining potentials. The
central issue is the trend to equilibrium for these equations, in
the spirit of the celebrated H-Theorem by Boltzmann on the decay
of the entropy, but with constructive estimates which measure the
rate of convergence towards asymptotic regimes described by
steady states. The set of steady states is not fully
characterized by the entropy dissipation, but also depends on the
transport operator and the geometric setting governed either by
boundary conditions or by properties of the potential. The goal
of this paper is to make the notion of steady states explicit by
classifying all special macroscopic modes, and to derive
quantitative estimates on the rate of convergence, with explicit
constants.

Let us give a brief account of the literature. In a series of
papers~\cite{Guo02b,Guo02a,Guo03a} on Landau, Boltzmann and
Vlasov-Boltzmann equations in a periodic box, Y.~Guo used
micro-macro methods inspired from Grad's 13 moments method
introduced in~\cite{Gra65}. The approach of~\cite{DV05} relies on
the derivation of a suitable set of ordinary differential
inequalities. It provides an algebraic rate of convergence to
equilibrium under strong smoothness assumptions on the
solution. The study of linear inhomogeneous kinetic equations
with single conservation laws, such as the linear Boltzmann or
Fokker-Planck equations, and nonlinear equations in a nonlinear
but perturbative regime, took advantage of various ideas of the
theory of hypoellipticity, for instance of~\cite{Hor67}, and gave
rise to robust Hilbertian hypocoercive methods. T.~Gallay coined
the word \emph{hypocoercivity}, by analogy with hypoellipticity,
when coercivity is degenerate in the ambient space but recovered
using commutators, in the context of convergence to steady
states. Hypocoercivity is well adapted to kinetic equations with
general collision operators. We refer to the memoir~\cite{Vil09}
by C.~Villani for an overview of the initial developments of this
theory and to~\cite{HN04,MN05,Her06,DMS09,DMS15} for various
other contributions in exponentially weighted spaces. The theory
of \emph{enlargement of spaces} of~\cite{GMM17} allows to extend
convergence rates to larger, and physically more relevant,
polynomially weighted spaces.

Usually, explicit and constructive estimates cannot be obtained
via compactness arguments. Such estimates are essential for
applications in physics (typical time-scale for relaxation) but
also for a wide array of mathematical questions: range of
validity of perturbation methods applied to nonlinear kinetic
equations, conditions of convergence in the study of diffusive or
macroscopic limits, control of the limiting processes leading to
hydrodynamical equations when the Knudsen number tends to zero,
control of the range of parameters, time and length scales in the
corresponding asymptotic regimes, \emph{etc.} Among a huge
literature, we can refer for instance to~\cite{BGL93,Vil02} and
to~\cite{MR2172804,HR18,BMM19} in polynomially weighted spaces.

In this article, we focus on an important and old problem. We
study kinetic equations involving an external confining potential
as well as several local conservation laws in the collision
process. The linear problem was solved for a fully harmonic
potential in~\cite{Dua11} and under full asymmetry assumptions on
the potential in~\cite{DL12}, both with non-constructive
arguments and for well-prepared initial data so that, in
particular, there are no special macroscopic modes beyond the
Maxwellian stationary solution. Such an assumption destroys the
rich structure of special macroscopic modes and bypasses the
non-trivial consequences of the geometric properties of the
potential on convergence rates. Our contribution is precisely the
study of these consequences, which requires new methods, by
classifying all \emph{special macroscopic modes} and proving
hypocoercivity results with constructive convergence rates in a
natural Hilbertian structure. As in~\cite{Dua11,DL12}, we
restrict our analysis to the linear framework and, for
simplicity, to exponentially weighted spaces, but cover rather
general confining potentials and discuss the consequences of
their geometric properties in terms of symmetry, partial symmetry
or lack of symmetry under rotations. On the one hand, the
extension of our results to polynomially weighted spaces in the
spirit of~\cite{GMM17} is probably doable. On the other hand,
nonlinear stability for Boltzmann and Landau equations with
confining potentials, close to special macroscopic modes,
presents additional difficulties.

The special macroscopic modes other than the global Maxwellian
stationary solutions and the energy modes are consequences of the
symmetries of the potential. Some of these modes are known in the
literature, although no systematic study seems to have been
done. From the point of view of mechanics, any function
$F(x,v)=G\big(\cH(x,v),A\,x\cdot v\big)$ is a stationary solution
of the transport equation where $A\,x\cdot v$ is known as the
\emph{angular momentum} generated by
$A\in\mathfrak M_{d \times d} (\R)$. The property that there is
no other stationary solution, under appropriate conditions, is
known in astrophysics as \emph{Jeans' theorem} (usually
considered with a potential induced by a mean field coupling) and
has to do with Noether's theorem: see~\cite{binney2011galactic}
and references therein. Of course the only profile $G$ compatible
with~\eqref{eq:kersC} is
$G(\mathsf h,a)=p(\mathsf h,a)\,e^{-\mathsf h}$ where $p$ is a
polynomial of order at most two. Proving that any stationary
solution of~\eqref{eq:main} has to
solve~\eqref{eq:mainMicroMacro} is also known as a
\emph{factorization} result.

The existence of time-periodic steady states for the fully
harmonic potential was shown by L.~Boltzmann in~\cite{Boltzmann}
and is mentioned in some references: see for
instance~\cite{Cercignani,Uhlenbeck,BosiCac,Trizac}. In~\cite{Trizac},
time-periodic modes are called \emph{breathing modes}. The
consideration of partially harmonic potentials and their
corresponding harmonic directional modes seems to be new. The
fact that special macroscopic modes also exist for the nonlinear
Boltzmann equation is discussed in Appendix~\ref{ssec:fullnl}.

Now let us review some of the tools which are used in our
paper. To estimate the convergence rate, a major difficulty is to
quantify ``how far'' the potential $\phi$ is from having certain
partial symmetries. Inspired by~\cite{DV02,DV05}, we use some
Korn inequalities for bounded domains which go back
to~\cite{Kor06,Kor09} and adapt them to the Euclidean space, in
presence of a confining potential: see~\cite{CDHMM21}. A typical
quantity involved in our approach is the \emph{rigidity constant}
\begin{multline}
  \label{grad}
  c_{\mathrm K} := \min \left\{ \int_{\R^d} |\nabla \phi(x) \cdot
    A\,x |^2\,\rho(x) \dd x\,:\right.\\
  \left. A \in \cR_\phi^\bot\, \textrm{ such that }\,\int_{\R^d}
    | A\,x |^2\,\rho(x) \dd x = 1 \right\}>0\,,
\end{multline}
where $\cR_\phi^\bot$ is the orthogonal complement in
$\mathrm L^2(\rho)$ of the set $\cR_\phi$ defined
in~\eqref{eq:Rphi}. The time-periodic special macroscopic modes
in $\fD_\phi$ and $\fP_\phi$, defined in~\eqref{eq:fD}
and~\eqref{eq:fP} respectively, are related to the (partial)
harmonicity of~$\phi$ and another difficulty is to quantify ``how
far'' the potential $\phi$ is from being (partially) harmonic. As
for Korn type inequalities, the analysis relies on the finite
dimension of the space $E_\phi$ defined in~\eqref{eq:defephi}.

The spectral gap assumptions~\eqref{eq:hyp-sg-C} in $v$
and~\eqref{eq:poincarenormal} in $x$ reflect the corresponding
confining properties respectively in velocity and space. The
Poincar\'e inequality introduced in~\eqref{eq:poincarenormal} is
linked with the natural Hodge Laplacian associated to the
geometry, sometimes called the \emph{Witten Laplacian}. Denote by
$\nabla_x^*$ the adjoint of $\nabla_x$ in $\mathrm L^2(\rho)$
acting on vector fields $\varphi:\R^d\to\R^d$ according to
$\nabla_x^*\cdot\varphi= (-\nabla_x + \nabla_x \phi ) \cdot
\varphi$. The Witten-Laplace operator $\nabla_x^* \cdot \nabla_x$
is self-adjoint in $\mathrm L^2(\rho)$, with kernel spanned by
constant functions and its first non-zero eigenvalue determines
the optimal Poincar\'e constant $c_{\text{\tiny P}}$. The
operator
\begin{equation}
  \label{omegadef}
  \Omega : = \nabla_x^* \cdot \nabla_x +1 =
  -\,\Delta_x+\nabla_x\phi\cdot \nabla_x +1\,
\end{equation}
is used in the \emph{$0$th-order Poincar\'e inequality}
\[
  c_{\text{\tiny P},1} \left\| \varphi - \langle \varphi \rangle
  \right\|^2 \leq \big\|\,\Omega^{-\frac12}\, \nabla_x
  \varphi\big\|^2
\]
which holds for some constant $c_{\text{\tiny P},1}>0$ under
assumptions~\eqref{eq:poincarenormal} and~\eqref{hyp:regularity},
in the spirit of Poincar\'e-Lions inequalities studied
in~\cite{CDHMM21}.

We provide two proofs of Theorem~\ref{theo:main}. The first proof
follows a micro-macro decomposition as
in~\cite{Guo02b,Guo02a,Guo03a} and~\cite{DL12}. Due to the lack
of \emph{a priori} symmetry assumptions and the delicate interaction of
local conservation laws corresponding to the collision invariants
with the potential, the complexity is significantly
increased. There are also deep similarities with the analysis of
hyperbolic equations with damping studied
in~\cite{HN03,RS04,SW03} after the seminal paper~\cite{KS88} by
S.~Kawashima and Y.~Shizuta. The second proof is given under
slightly more restrictive hypotheses, namely that the collision
operator $\mathscr C$ is bounded and $\phi$ has bounded
derivatives of order two and more. The method is based on
commutator estimates as in~\cite{HN04,MN05,Vil09} in the spirit
of the hypoellipticity theory of~\cite{Hor67}. In practice, an
elegant triple cascade of commutators based on the equality
$[\nabla_v, v\cdot \nabla_x] = \nabla_x$ is needed to control all
macroscopic quantities.

The plan of the article is the following. In
Section~\ref{sec:Htheo}, we review all possible conservation laws
and their relations with the special macroscopic modes. Then we
present the so-called macroscopic equations associated to the
evolution equation~\eqref{eq:main} and perform a change of
unknown in order to work in a simplified Hilbertian framework. In
Section~\ref{sec:minimizers}, we classify all steady states
of~\eqref{eq:main} and prove that they correspond to the special
macroscopic modes. At this stage, we already use
entropy-dissipation arguments in order to prove that
factorization occurs and reduce the problem
to~\eqref{eq:mainMicroMacro}. In Section~\ref{sec:micmac}, we
prove the remaining part of Theorem~\ref{theo:main}, that is, the
hypocoercivity result, using the micro-macro method. In
Section~\ref{sec:cascade} we expose the second proof based on the
commutator's method. A number of technical results are collected
in two appendices. Appendix~\ref{app:tech} collects some
computations and intermediate lemmata needed in the
proofs. For completeness, an extension to weakly coercive collision operators is given in Appendix~\ref{app:extension}.
Appendix~\ref{app:exples} is devoted to examples and
remarks, for instance on the normalization, including a spectral
interpretation of Theorem~\ref{theo:main}, the extension of our
special macroscopic modes to the fully nonlinear Boltzmann
equation, and various examples of collision operators and
potentials.

\section{Conservation laws and macroscopic equations}
\label{sec:Htheo}
\setcounter{equation}{0} \setcounter{theorem}{0}

In this section we characterize the special macroscopic modes, as
defined by~\eqref{eq:mainMicroMacro}, for generic potentials. We
also identify the global conservation laws and the macroscopic
equations associated to~\eqref{eq:main}. From here on, we assume
that~\eqref{eq:momentspace} holds. This assumption is needed to
justify the computations, which are given below only at formal
level, for sake of simplicity.

\subsection{The equations for the special macroscopic modes}
\label{ssec:macrosols}

We recall that by~\eqref{eq:mainMicroMacro2}, any special
macroscopic mode $F$ can be written as
\[
  \label{F}
  F = \pih\,\cM\,, \quad \pih =r + m \cdot v + e\,\fE(v)\,.
\]
By~\eqref{eq:mainMicroMacro}, we know that
$\partial_t F = \sT F$. By integrating in $v$ the evolution
equation against $1$, $v$, $v \otimes v$ and $v\,\fE$, we obtain
\emph{macroscopic equations} on the macroscopic quantities
$r=r(t,x)$, $m=m(t,x)$ and $e=e(t,x)$:
\begin{subequations}
  \label{SMrmem}
  \begin{align}
    & \partial_t r = \nabla_x^* \cdot m\,,\label{SMrmem-a}\\
    & \partial_t m = -\,\nabla_x
      r+\sqrt{\tfrac2d}\,e\,\nabla_x\phi\label{SMrmem-b}\\
    & \partial_t e = -\,\sqrt{\tfrac2d}\,\nabla_x \cdot
      m\,,\label{SMrmem-c}\\
    & \tfrac1{\sqrt{2\,d}}\left( \partial_t e \right)
      \mathrm{Id}_{d \times d} = -\,\nabla_x^{\text{\tiny
      sym}}\,m\,,\label{SMrmem-d}\\
    & 0 = \nabla_x e\,,\label{SMrmem-e}
\end{align}
\end{subequations}
where the \emph{symmetric gradient} is defined by
\begin{equation}
  \label{SymmetricGradient}
  \forall\,i,\,j=1,\ldots,d\,,\quad(\nabla_x^{\text{\tiny
      sym}}\,m)_{ij}:=\frac12\left( \partial_j m_i + \partial_i
    m_j\right).
\end{equation}

{}From~\eqref{SMrmem-e}, we deduce that $e$ does not depend on
the space variable, and therefore
\begin{align}
  \label{SMeq:expe}
  e = \langle e \rangle=:c
\end{align}
is a function $t\mapsto c(t)$ depending on $t$ only. We recall
that the average is defined as
$\langle \varphi \rangle := \int_{\R^d}\varphi\,\rho\,\dd
x$. Then we read from~\eqref{SMrmem-d} that
\begin{equation}
  \label{SMeq:nablasym-m}
  \tfrac{c'}{\sqrt{2\,d}}\,\mathrm{Id}_{d \times d}=
  -\,\nabla_x^{\text{\tiny sym}}\,m\,.
\end{equation}
By the Schwarz Lemma applied to $m=(m_k)_{k=1}^d$, we have
\begin{align}
  \label{skewsym}
  (\nabla_{\!x}^2 m_k)_{i,j}=\partial^2 _{x_i x_j} m_k =
  \partial_{x_i} \left( \nabla^{\mbox{{\tiny sym}}}
  m\right)_{j,k} + \partial_{x_j} \left( \nabla^{\mbox{{\tiny
  sym}}} m\right)_{i,k} - \partial_{x_k} \left(
  \nabla^{\mbox{{\tiny sym}}} m\right)_{i,j}
\end{align}
for any $i$, $j$, $k = 1 , \ldots , d$. By
differentiating~\eqref{SMeq:nablasym-m} with respect to $x_i$,
$x_j$ and $x_k$, we get that $\nabla_{\!x}^2 m = 0$, so that in
particular the \emph{skew-symmetric gradient}, defined by
\begin{equation}
  \label{SkewSymmetricGradient}
  \forall\,i,\,j=1,\ldots,d\,,\quad(\nabla_x^{\text{\tiny
      skew}}m)_{ij}:=\frac12\left( \partial_j m_i - \partial_i
    m_j\right),
\end{equation}
is constant in the $x$-variable and equal to its
average. Together with~\eqref{SMeq:nablasym-m}, using
$\nabla_xm=\nabla_x^{\text{\tiny skew}}m+\nabla_x^{\text{\tiny
    sym}}\,m$, we deduce that
\begin{equation}
  \label{SMeq:expm-bis}
  m(t,x) = \langle \nabla m \rangle\,x + \langle m \rangle =
  A(t)\,x + b(t) - \tfrac1{\sqrt{2\,d}}\,c'(t)\,x\,,
\end{equation}
with $A(t) := \langle \nabla_x^{\text{\tiny skew}}m \rangle$ and
$ b(t) := \langle m \rangle$. Taking~\eqref{SMeq:expe}
and~\eqref{SMeq:expm-bis} into account in~\eqref{SMrmem-b}
implies
\[
  \nabla_x r = -\,\partial_t m+\sqrt{\tfrac2d}\,e\,\nabla_x\phi
  =-\,A'\,x- b' + \tfrac1{\sqrt{2\,d}}\,c''\,x +
  \sqrt{\tfrac2d}\,c\,\nabla \phi\,.
\]
Taking the skew-symmetric gradient of this equation gives
$0 = -A'$. Hence~$A$ is a skew-symmetric matrix which does not
depend on $t$ and
\begin{equation}
  \label{SMeq:expm}
  m(t,x) = A\,x + b(t) - \tfrac1{\sqrt{2\,d}}\,c'(t)\,x\,.
\end{equation}
Taking~\eqref{SMeq:expe} and~\eqref{SMeq:expm} into account, we
can then take the primitive in space of~\eqref{SMrmem-b} and we
immediately deduce that the macroscopic density satisfies
\begin{equation}
  \label{SMeq:expr}
  r(t,x) = r_0 - b'(t)\cdot x +
  \tfrac1{2\,\sqrt{2\,d}}\,c''(t)\,\xi_2(x) +
  \sqrt{\tfrac2d}\,c(t)\,\xi_\phi(x)
\end{equation}
where $r_0$ is an integration constant,
\begin{equation}
  \label{xi2}
  \xi_2(x) := |x|^2 - \langle |x|^2 \rangle
\end{equation}
and
\begin{equation}
  \label{xiphi}
  \xi_\phi (x):= \phi-\langle \phi \rangle\,.
\end{equation}
An integration against $\rho$ shows that
$r_0=\langle r(t,\cdot)\rangle$ is in fact independent
of~$t$. Inserting the expressions of $r$ and $m$ given
by~~\eqref{SMeq:expm} and~\eqref{SMeq:expr} into~\eqref{SMrmem-a}
yields a differential equation satisfied by $A$, $b(t)$ and
$c(t)$:
\begin{proposition}
  \label{Prop:EDO}
  Assume that $r$, $m$ and $e$ solve~\eqref{SMrmem}. With the
  above notations,
  $A(t):=\langle\nabla_x^{\text{\tiny skew}}m\rangle$,
  $b(t):=\langle m\rangle$ and $c(t):=\langle e\rangle$ solve
  \begin{equation}
    \label{SMeq:Abcm}
    \tfrac{2\,\xi_\phi(x) + \nabla_x \phi \cdot x -
      d}{\sqrt{2\,d}}\,c' + \tfrac{\xi_2(x)}{2\,\sqrt{2\,d}}\,c''' -
    \nabla_x \phi \cdot b - b''\cdot x - \nabla_x \phi \cdot A\,x
    =0\,.
  \end{equation}
\end{proposition}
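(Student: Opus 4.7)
The plan is to obtain the stated equation by substituting the already-derived expressions for $r$ and $m$ (that is, \eqref{SMeq:expm} and \eqref{SMeq:expr}) together with $e(t,x)=c(t)$ into the first macroscopic equation \eqref{SMrmem-a}, namely $\partial_t r = \nabla_x^*\cdot m$. Equations \eqref{SMrmem-b}--\eqref{SMrmem-e} have already been exploited to produce the explicit forms of $e$, $m$, $r$; the only macroscopic equation not yet used is \eqref{SMrmem-a}, so this is where the closure relation on $A$, $b$, $c$ must come from.

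First I would compute $\partial_t r$. Since $r_0$ is $t$-independent and $\xi_2$, $\xi_\phi$ depend only on $x$, differentiating \eqref{SMeq:expr} gives
\[
\partial_t r = -\,b''(t)\cdot x + \tfrac{1}{2\sqrt{2d}}\,c'''(t)\,\xi_2(x) + \sqrt{\tfrac2d}\,c'(t)\,\xi_\phi(x).
\]
Next, using $\nabla_x^*\cdot m = -\nabla_x\cdot m + \nabla_x\phi\cdot m$, I would compute $\nabla_x^*\cdot m$ from \eqref{SMeq:expm}. The key simplification is that $A$ is skew-symmetric, so $\nabla_x\cdot(Ax)=\mathrm{tr}(A)=0$, and $b(t)$ is $x$-independent. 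Since $\nabla_x\cdot x=d$, this yields
\[
\nabla_x\cdot m = -\,\tfrac{d}{\sqrt{2d}}\,c'(t), \qquad \nabla_x\phi\cdot m = \nabla_x\phi\cdot A\,x + \nabla_x\phi\cdot b - \tfrac{1}{\sqrt{2d}}\,c'(t)\,\nabla_x\phi\cdot x.
\]

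Finally I would set $\partial_t r = \nabla_x^*\cdot m$ and rearrange. Collecting the $c'$ terms gives the combination $\sqrt{2/d}\,\xi_\phi + \tfrac{1}{\sqrt{2d}}\nabla_x\phi\cdot x - \tfrac{d}{\sqrt{2d}}$, and factoring $1/\sqrt{2d}$ (using $\sqrt{2/d}=2/\sqrt{2d}$) produces exactly the prefactor $(2\xi_\phi + \nabla_x\phi\cdot x - d)/\sqrt{2d}$ in \eqref{SMeq:Abcm}. The $c'''$ term is already in the right form, and the remaining $-\nabla_x\phi\cdot b - b''\cdot x - \nabla_x\phi\cdot Ax$ appears directly.

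There is no real obstacle: once $e$, $m$, $r$ are as in \eqref{SMeq:expe}, \eqref{SMeq:expm}, \eqref{SMeq:expr}, the derivation is a direct substitution, the only subtle point being the use of $\mathrm{tr}\,A = 0$ (skew-symmetry) to eliminate the $A$-contribution from $\nabla_x\cdot m$, and careful bookkeeping of the scalar factors $\sqrt{2/d}$ and $1/\sqrt{2d}$ to recover the stated form. No further regularity beyond the moment assumption \eqref{eq:momentspace} used for the formal manipulations is needed.
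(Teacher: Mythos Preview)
Your proposal is correct and follows exactly the paper's approach: the paper simply says that inserting the expressions \eqref{SMeq:expm} and \eqref{SMeq:expr} for $m$ and $r$ into \eqref{SMrmem-a} yields \eqref{SMeq:Abcm}, and you have carried out precisely this substitution with the correct bookkeeping (including the use of $\mathrm{tr}\,A=0$).
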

Equation~\eqref{SMeq:Abcm} suggests, on the one hand, that
(partial) harmonicity of the potential $\phi$ allows for
non-trivial choices of $b$ and~$c$, as we shall indeed see
later. On the other hand, for a generic potential $\phi$ in the
sense that the functions $1$, $x$, $\phi$,
$\nabla_x \phi \cdot x$, $\nabla\phi$, $|x|^2$ and
$\nabla_x \phi \cdot A\,x$ (if $A\neq0$) are linearly
independent, equation~\eqref{SMeq:Abcm} implies that
\[
  c' = 0\,,\quad b = 0\quad\mbox{and}\quad A = 0\,,
\]
so that $r=r_0 + c\,\sqrt{\tfrac2d}\,\xi_\phi$, $m=0$ and $e= c$,
for two constants $r_0$ and $c\in \R$. In other words, we have
\[
  \pih = r_0 + \sqrt{\tfrac2d}\,c\,\cH -
  \sqrt{\tfrac2d}\,c\,\big(\langle \phi \rangle-\tfrac d2\big)
\]
if $\phi$ is not a (partially) harmonic potential: any special
macroscopic mode is then a linear combination of a Maxwellian
function and an energy mode.

\subsection{Global conservation laws}\label{ssec:conslaw}

Consider a solution
$f \in \cC\big(\R^+ ; \mathrm L^2(\cM^{-1})\big)$
to~\eqref{eq:main} with initial datum
$f_0\in \mathrm L^2(\cM^{-1})$. Associated with the symmetries of
the equation, there are local conservations which, after
integration on the phase space $\R^d\times\R^d$, give rise to
global conservation laws. These laws allow us to identify the
special macroscopic modes compatible with~$\phi$ which, as we
shall see later, attract the solutions to the Cauchy problem.

The \emph{conservation of mass} writes
\[
  \frac{\mathrm d}{\mathrm dt} \iint_{\R^d\times\R^d} f(t,x,v)
  \dd x \dd v = 0\,.
\]
Hence $\alpha\,\cM$ is a solution to~\eqref{eq:mainMicroMacro}
with same mass
\begin{equation}
  \label{msm}
  \alpha := \iint_{\R^d\times\R^d}f_0 \dd x \dd v
\end{equation}
as~$f$. With~$\cH$ defined by~\eqref{def:calH}, the
\emph{conservation of energy} amounts to
\begin{align*}
  \frac{\mathrm d}{\mathrm dt} \iint_{\R^d\times\R^d} \cH(x,v)
  f(t,x,v) \dd x \dd v = 0\,.
\end{align*}
The distribution function $\beta\,\cH\,\cM$, with
\begin{equation}
  \label{hsm}
  \beta := \frac{\iint_{\R^d\times\R^d} \cH\,f_0 \dd x\dd
    v}{\iint_{\R^d\times\R^d} \cH^2\,\cM\,\dd x \dd v}\,,
\end{equation}
is a solution to~\eqref{eq:mainMicroMacro} with same energy as
the conserved energy of $f$. With $\mathsf f_1:=\cM$ and
$\mathsf f_\cH:=\cH\,\cM/\nrm{\cH\,\cM}{\mathrm L^2(\cM^{-1})}$,
we have that
\[
  \alpha\,\cM=\scalar{f_0}{\mathsf f_1}_{\mathrm
    L^2(\cM^{-1})}\,\mathsf
  f_1\quad\mbox{and}\quad\beta\,\cH\,\cM=\scalar{f_0}{\mathsf
    f_\cH}_{\mathrm L^2(\cM^{-1})}\,\mathsf f_\cH\,.
\]
Moreover, the global conservations of mass and energy write
\[
  \label{hsm-rsm}
  \begin{aligned}
    & \forall \, t \ge 0, \quad
    \iint_{\R^d\times\R^d}\big(f(t,x,v)-\alpha\,\cM(x,v)\big)\dd
    x\dd v=0\,,\\
    & \forall \, t \ge 0, \quad
    \iint_{\R^d\times\R^d} \cH(x,v) \, \big(f(t,x,v)-\beta\,
    \cH(x,v)\, \cM(x,v)\big) \dd x \dd v=0\,.
\end{aligned}
\]

The transport operator can be written as
$\mathcal Tf=X\cdot\nabla_{x,v}f=\nabla_{x,v}\cdot(X\,f)$, where
$X=(-v,\nabla_x\phi)=(-\nabla_v\cH,\nabla_x\cH)$ is a divergence
free vector field, in the sense that
\begin{equation}
  \label{DivergenceFree}
  \nabla_{x,v}\cdot X=0\,.
\end{equation}
As a consequence, the volume conservation in the phase space
under the action of the flow induces the local mass conservation
and the~\eqref{DivergenceFree} symmetry gives rise to the global
mass conservation. Another symmetry is associated with the fact
that $\cH$ is conserved along the characteristics of Newton's
equations $\dot x = v$ and $\dot v =-\,\nabla_x\phi$. This is
reflected by the Poisson brackets: a stationary solution $F$
satisfies
\begin{equation}
  \label{PoissonBrackets}
  0=\{F,\cH\}:=\nabla_v\cH\cdot\nabla_xF-\nabla_x\cH\cdot\nabla_vF\,,
\end{equation}
but by replacing $F$ by $\cH\,F$, it is also clear that
\[
  \{\cH\,F,\cH\}=\cH\,\{F,\cH\}+\{\cH,\cH\}\,\cH=0\,.
\]
The underlying reason is that the transport dynamics involving a
time-in\-de\-pen\-dent potential is invariant under a translation
in time, which gives rise to the global conservation of
energy. These considerations can be generalized. To any
continuous group of transformations which leaves $\sT$ invariant,
we can associate an infinitesimal transformation
$\mathcal G(x,v)$ such that $\{\mathcal G,\cH\}=0$ and as a
consequence, if $f$ solves the transport equation
$\partial_tf=\sT f$, then
\begin{equation}
  \label{GlobalConservation}
  \frac{\mathrm d}{\mathrm dt}\iint_{\R^d\times\R^d} \mathcal
  G(x,v)\,f(t,x,v) \dd x\dd v=0\,.
\end{equation}
Additionally, if $v\mapsto\mathcal G(x,v)\,\mu(v)$ is in the
kernel of the collision operator $\sC$ for any $x\in\R^d$, then
$\gamma\,\mathcal G\,\cM$ is a solution
of~\eqref{eq:mainMicroMacro}, \emph{i.e.}, a \emph{special
  macroscopic mode}, for any
$\gamma\in\R$. Then~\eqref{GlobalConservation} holds for any
solution $f$ of~\eqref{eq:main} with initial datum $f_0$ and
there is a unique~$\gamma\in\R$ such that
\[
  \iint_{\R^d\times\R^d} \mathcal
  G(x,v)\,\big(f_0(x,v)-\gamma\,\mathcal G(x,v)\,\cM(x,v)\big)
  \dd x\dd v=0\,.
\]
More considerations on symmetries, local and global conservation
laws, and Noether's theorem can be found in textbooks on
classical mechanics like, for
instance,~\cite{MR3752660,MR2761185}. The case of rotational
symmetries enters this framework:

When $\phi$ is invariant under a rotation, stationary
\emph{rotation modes} appear. Let $A\in\cR_\phi$ as defined
in~\eqref{eq:Rphi} and consider the rotation group
$(R_\theta)_{\theta \in \R}$ defined by
$R_\theta := e^{\theta A}$ and a point $x_0 \in \R^d$ so that
$\phi\big(R_\theta(x-x_0) + x_0\big) = \phi(x)$ for any
$\theta\in\R$. By differentiation with respect to $\theta$, we
get
\begin{align*}
  \fa x \in \R^d\,, \quad \left( A\,x + u \right) \cdot \nabla_x
  \phi(x) = 0
\end{align*}
with $u=-A\,x_0$. Integrating the above identity against
$(u\cdot x)\,\rho$ yields $u=0$ after an integration by parts
because $\rho(x) \dd x$ is centred according to
Assumption~\eqref{hyp:intnorm}. \emph{Rotation modes compatible
  with~$\phi$} are therefore restricted to $x_0=0$:
see~\cite{CDHMM21} for similar computations. As a consequence, if
we compute the Poisson bracket as defined
in~\eqref{PoissonBrackets}, we find that
\[
  \{\mathcal G,\cH\}=0\quad\text{if}\quad\mathcal G(x,v)=\left(
    A\,x\cdot v \right)
\]
and the conservation of the \emph{total angular momentum}
associated with this rotation writes
\begin{align*}
  \frac{\mathrm d}{\mathrm dt} \iint_{\R^d\times\R^d} \left(
  A\,x\cdot v \right) f(t,x,v) \dd x \dd v = 0\,.
\end{align*}
Given $f_0$, let us identify the corresponding special
macroscopic mode.

Associated with $f_0$, we introduce the initial momentum
\[m_0(x) := \left(\int_{\R^d}v\,f_0(x,v) \dd v \right)e^{\phi(x)}
\] and the infinitesimal rotation
$x \mapsto A\,x := \Pp_\phi m_0(x)$, where $\Pp_\phi$ is the
orthogonal projection onto the vector space $\cR_\phi$ in
$\mathrm L^2(\rho)$. We can then check that $x \mapsto A\,x$
belongs to $\cR_\phi$ and thus the function (rotational mode
compatible with $\phi$)
\begin{equation}
  \label{rsm}
  F_{\mathrm{rot}}(x,v):=\left( A\,x\cdot v \right) \cM
\end{equation}
belongs to $\fR_{\phi}$, so that $F_{\mathrm{rot}}$ is a solution
to~\eqref{eq:mainMicroMacro} with same conserved total angular
momen\-tum as $f$. Denoting
\[
  m_f(t,x) := \left( \int_{\R^d} v\,f(t,x,v) \dd v \right) e^{\phi(x)}
\]
the momentum of $f$, the associated conservation law then reads
\begin{equation}
  \label{eq:consm}
  \Pp_\phi m_f(t) = \Pp_\phi m_0 \quad \textrm{ or equivalently }
  \quad \Pp(m_f-m_0) \in \cR_\phi^\bot\,,
\end{equation}
where $\Pp$ is the orthogonal projection onto the vector space of
all infinitesimal rotations
$\cR := \{ x \mapsto A\,x\,:\,A \in \mathfrak M_{d\times
  d}^{\text{\tiny skew}} (\R) \}$, identified with
$\mathfrak M_{d\times d}^{\text{\tiny skew}} (\R)$, in
$\mathrm L^2(\rho)$, and $\cR_\phi^\bot$ is the orthogonal of
$\cR_\phi$, seen as a subspace of $\cR$, for the scalar product
induced by $\mathrm L^2(\rho)$. We refer to
Lemma~\ref{lem:proj-ort-append} in Appendix~\ref{ssec:momvs} for
a precise statement and a short proof.

If we denote by
$(A_j)_{j\in J_\phi} = (x\mapsto A_j\,x)_{j\in J_\phi}$ a basis
of $\cR_\phi$ normalized by the condition
$\nrm{\mathsf f_{\mathrm{rot},j}}{\mathrm L^2(\cM^{-1})}=1$ for
any $j\in J_\phi$, where
$\mathsf f_{\mathrm{rot},j}(x,v):=\left( A_j\,x\cdot v
\right)\cM(x,v)$, then the conservation of the total angular
momentum implies for all $j \in J_\phi$:
\[
  \forall \, t \ge 0, \quad \iint_{\R^d\times\R^d} \left[
    f(t,x,v)-\scalar{f_0}{\mathsf f_{\mathrm{rot},j}}_{\mathrm
      L^2(\cM^{-1})}\,\mathsf f_{\mathrm{rot},j}(x,v) \right] \dd x
  \dd v=0.
\]

Now let us turn our attention to the \emph{time-periodic special
  macroscopic modes} and start with the harmonic directional
modes, which appear when \hbox{$d_\phi \leq d-1$}. We choose a
coordinate system such that $\partial_{x_i} \phi = x_i$ for
$i\in I_\phi=\{d_\phi + 1 , \ldots , d\}$. In that case, the
potential $\phi$ is such that
$x\mapsto\phi(x)-\frac12\sum_{i\in I_\phi}x_i^2$ depends only on
$(x_1,\ldots,x_{d_\phi})$ and for any $i\in I_\phi$ the
\emph{harmonic directional modes}, defined for all
$(t,x,v)\in\R^+\times\R^d\times\R^d$ by
\begin{align*}
  & \mathsf f^+_{\mathrm{dir}, i}(t,x,v):=\big( x_i\,\cos t +
    v_i\, \sin t\big)\,\cM(x,v)\,,\\
  & \mathsf f^-_{\mathrm{dir},i}(t,x,v):=\big( v_i\,\cos t -x_i\,
    \sin t\big)\,\cM(x,v)\,,
\end{align*}
solve~\eqref{eq:mainMicroMacro}. A direct computation of the
solution of~\eqref{eq:main} with initial datum
$f_0\in \mathrm L^2(\cM^{-1})$ shows that
\begin{align*}
  & \frac{\mathrm d}{\mathrm dt} \iint_{\R^d\times\R^d} x_i\,f
    \dd x \dd v = \iint_{\R^d\times\R^d} v_i\,f \dd x \dd v\,,\\
  & \frac{\mathrm d}{\mathrm dt} \iint_{\R^d\times\R^d} v_i\,f
    \dd x \dd v = -\iint_{\R^d\times\R^d} x_i\,f \dd x \dd v\,,
\end{align*}
which implies that these two global quantities evolve as an
harmonic oscillator with period equal to $1$. For any
$i \in I_\phi$, let us define
\[
  \gamma_i := \iint_{\R^d\times\R^d} x_i\,f_0 \dd x\dd v\,,\quad
  \bar{\gamma}_i := \iint_{\R^d\times\R^d} v_i\,f_0 \dd x\dd
  v\,.
\]
The function
\begin{equation}
  \label{dsm}
  F_{\mathrm{dir}} := \sum_{i=d_\phi+1}^d\left(\gamma_i\,\mathsf
    f^+_{\mathrm{dir}, i}+\bar{\gamma}_i\,\mathsf
    f^-_{\mathrm{dir},i}\right)
\end{equation}
solves~\eqref{eq:mainMicroMacro} and belongs to $\fD_\phi$ as
defined by~\eqref{eq:fD}. Moreover $(f-F_{\mathrm{dir}})$
satisfies the following two global conservation laws: for any
$t\ge0$,
\[
  \label{eq:law-dir1}
  \begin{aligned}
    & \iint_{\R^d\times\R^d} x_i\,\big(
    f(t,x,v)-F_{\mathrm{dir}}(t,x,v) \big) \dd x \dd v=0\,,\\
    & \iint_{\R^d\times\R^d} v_i\,\big(
    f(t,x,v)-F_{\mathrm{dir}}(t,x,v) \big) \dd x \dd v = 0\,.
\end{aligned}
\]

When \emph{all} coordinates are harmonic ($d_\phi=0$), then
$\phi(x)= \frac12 |x|^2 + \frac d2 \log(2\,\pi)$ due to the
normalization~\eqref{eq:phiid} and the \emph{harmonic pulsating
  modes}, defined for all $(t,x,v)\in\R^+\times\R^d\times\R^d$ by
\begin{align*}
  & \mathsf f^+_{\mathrm{pul}}(t,x,v):=\tfrac1{\sqrt d}\,\big(
    x\cdot v \cos (2\,t) + \tfrac12\left(|x|^2-|v|^2\right) \sin
    (2\,t)\big)\,\cM(x,v)\,,\\
  &\mathsf f^-_{\mathrm{pul}}(t,x,v):=\tfrac1{\sqrt d}\,\big(
    \tfrac12\left(|x|^2-|v|^2\right) \cos(2\,t) - x\cdot v \sin
    (2\,t)\big)\,\cM(x,v)\,,
\end{align*}
solve~\eqref{eq:mainMicroMacro}. A direct computation of the
solution of~\eqref{eq:main} with initial datum
$f_0\in \mathrm L^2(\cM^{-1})$ shows that
\begin{align*}
  & \frac{\mathrm d}{\mathrm dt} \iint_{\R^d\times\R^d} (x\cdot
    v)\,f \dd x \dd v = -\,2 \iint_{\R^d\times\R^d}
    \frac12\left(|x|^2-|v|^2\right)f \dd x \dd v\,,\\
  & \frac{\mathrm d}{\mathrm dt} \iint_{\R^d\times\R^d}
    \frac12\left(|x|^2-|v|^2\right)f \dd x \dd v = 2
    \iint_{\R^d\times\R^d} (x\cdot v)\,f \dd x \dd v\,,
\end{align*}
which implies that these two global quantities evolve as an
harmonic oscillator with period equal to $1/2$. With
\[
  \delta := \frac1{\sqrt d} \iint_{\R^d\times\R^d} (x\cdot
  v)\,f_0 \dd x\dd v\,, \quad \bar{\delta} := \frac1{\sqrt d}
  \iint_{\R^d\times\R^d} \frac12\left(|x|^2-|v|^2\right) f_0 \dd
  x \dd v\,,
\]
the function
\begin{equation}
  \label{psm}
  F_{\mathrm{pul}} := \delta\,\mathsf
  f^+_{\mathrm{pul}}+\bar\delta\,\mathsf f^-_{\mathrm{pul}}
\end{equation}
solves~\eqref{eq:mainMicroMacro} and belongs to $\fP_\phi$ as
defined by~\eqref{eq:fP}. Moreover $(f-F_{\mathrm{pul}})$
satisfies the following two global conservation laws: for any
$t\ge0$,
\[
  \label{eq:law-dir2}
  \begin{aligned}
    & \iint_{\R^d\times\R^d} (x\cdot v)\,\big(
    f(t,x,v)-F_{\mathrm{pul}}(t,x,v) \big) \dd x \dd v=0\,,\\
    & \iint_{\R^d\times\R^d} \frac12\left(|x|^2-|v|^2\right) \big(
    f(t,x,v)-F_{\mathrm{pul}}(t,x,v) \big) \dd x \dd v = 0\,.
  \end{aligned}
\]

Let us consider the set of the generators of all above special
macroscopic modes
\[
  \widehat{\cS}:=\{\mathsf f_1,\mathsf f_\cH\}\cup\{\mathsf
  f_{\mathrm{rot},j}\}_{j\in J_\phi}\cup\{\mathsf
  f^\pm_{\mathrm{dir},i}\}_{i\in I_\phi,\pm}\cup\{\mathsf
  f^\pm_{\mathrm{pul}}\}_\pm\,.
\]
We have the following orthogonality property.
\begin{lemma}
  \label{lem:orthn}
  The functions of $\widehat{\cS}$ are orthonormal in
  $\mathrm L^2(\cM)$.
\end{lemma}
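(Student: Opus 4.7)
The plan is a direct systematic computation of all pairwise inner products in $\widehat{\cS}$, exploiting the factorization $\cM(x,v)=\rho(x)\,\mu(v)$ and the fact that every generator has the form $h(t,x,v)\,\cM$ with $h$ a low-degree polynomial in $(x,v)$. Under the identification $\mathsf f = h\,\cM \leftrightarrow h$ between $\mathrm L^2(\cM^{-1})$ and $\mathrm L^2(\cM)$ (in which norms coincide), every scalar product reduces to $\iint h_j\,h_k\,\cM\dd x\dd v$, which splits as a product of one moment of $\rho$ against $x$ and one moment of $\mu$ against $v$, reducing the proof to a bookkeeping exercise.

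The arsenal of identities I will use is: the Gaussian velocity moments $\int\mu\dd v=1$, $\int v_i\,\mu\dd v=0$, $\int v_iv_j\,\mu\dd v=\delta_{ij}$, vanishing of odd moments, $\int|v|^2\mu\dd v=d$ and $\int|v|^4\mu\dd v=d(d+2)$; the centering $\int\rho\dd x=1$, $\int x_i\,\rho\dd x=0$ from~\eqref{hyp:intnorm}; and the harmonic identity $\partial_{x_i}\phi=x_i$ for $i\in I_\phi$, which via integration by parts yields $\int x_ix_j\,\rho\dd x=\delta_{ij}$ whenever $i$ or $j$ lies in $I_\phi$, as well as $\int x_i\,\phi\,\rho\dd x=\int\partial_{x_i}\phi\cdot\rho\dd x=\int x_i\,\rho\dd x=0$. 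When $d_\phi=0$, $\rho$ is the standard Gaussian and all of its moments are explicit. With these identities in hand, the orthogonalities involving $\mathsf f_1$ and $\mathsf f_\cH$ reduce to parity and centering; the rotation modes $\mathsf f_{\mathrm{rot},j}$ are orthogonal to $\mathsf f_1$, $\mathsf f_\cH$ and to all the time-dependent modes by parity in $v$ combined with the skew-symmetry identity $A_j x\cdot x=0$, while their mutual orthonormality requires choosing the basis $(A_j)_{j\in J_\phi}$ so that $(x\mapsto A_j x)_{j\in J_\phi}$ is orthonormal in $\mathrm L^2(\rho;\R^d)$ — always achievable by Gram--Schmidt — which combined with $\int v_p v_q\,\mu\dd v=\delta_{pq}$ gives $\langle\mathsf f_{\mathrm{rot},j},\mathsf f_{\mathrm{rot},k}\rangle=\delta_{jk}$.

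For the time-dependent generators, the key observation is that $\mathsf f^\pm_{\mathrm{dir},i}(t)$ is a $t$-dependent rotation of the ordered pair $(x_i\,\cM,\,v_i\,\cM)$ in the two-dimensional plane they span, and $\mathsf f^\pm_{\mathrm{pul}}(t)$ is similarly a rotation of $\bigl(\tfrac{1}{\sqrt d}(x\cdot v)\,\cM,\;\tfrac{1}{2\sqrt d}(|x|^2-|v|^2)\,\cM\bigr)$. Since rotations preserve inner products, it suffices to verify orthonormality of these two static pairs: immediate from $\int x_i^2\,\rho\dd x=\int v_i^2\,\mu\dd v=1$ (for $i\in I_\phi$) together with $\iint x_i\,v_i\,\cM\dd x\dd v=0$ in the directional case, and from the explicit Gaussian fourth moments when $d_\phi=0$ in the pulsating case. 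The least automatic step, and the main subtlety, is $\mathsf f_\cH\perp\mathsf f^\pm_{\mathrm{pul}}$: after parity elimination it reduces to the identity $\int(\phi-\langle\phi\rangle)\,|x|^2\,\rho\dd x=d$, which follows from the explicit form $\phi-\langle\phi\rangle=\tfrac{1}{2}(|x|^2-d)$ in the fully harmonic case together with $\int|x|^4\,\rho\dd x - d\int|x|^2\,\rho\dd x=d(d+2)-d^2=2d$. This is the only place where two nonzero terms must cancel rather than each vanishing individually, and is the main spot where the Gaussian fourth moment is genuinely used; everything else is parity, centering, or a one-step integration by parts.
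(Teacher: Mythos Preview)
Your proposal is correct and follows exactly the approach the paper indicates: the paper's own proof is the single sentence ``This follows from direct computation using standard properties of Hermite functions,'' and what you have written is a careful execution of that computation. Your observation that the rotation basis $(A_j)_{j\in J_\phi}$ must be chosen \emph{orthonormal} in $\mathrm L^2(\rho;\R^d)$ (not merely normalized, as the paper literally says) for the lemma to hold is a useful clarification, and your identification of the $\mathsf f_\cH\perp\mathsf f^\pm_{\mathrm{pul}}$ check as the only genuinely nontrivial cancellation is accurate.
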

\begin{proof}[Proof of Lemma~\ref{lem:orthn}]
  This follows from direct computation using standard properties
  of Hermite functions.
\end{proof}

As a straightforward consequence of Lemma~\ref{lem:orthn}, we
obtain
\begin{corollary}
  \label{Cor:GlobalConservationLaws}
  Assume that $f \in \cC\big(\R^+ ; \mathrm L^2(\cM^{-1})\big)$
  is a solution to~\eqref{eq:main} with initial datum
  $f_0\in \mathrm L^2(\cM^{-1})$. With the above notations, for
  any $\mathsf f\in\widehat{\cS}$ and any $t\ge0$, we have
  \[
    \iint_{\R^d\times\R^d} \Bigl( f(t)- \alpha\,\cM -
    \beta\,\cH\,\cM -
    F_{\mathrm{rot}}-F_{\mathrm{dir}}(t)-F_{\mathrm{pul}}(t)
    \Bigr)\frac{\mathsf f}\cM\,\dd x \dd v = 0\,.
  \]
\end{corollary}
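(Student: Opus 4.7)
The plan is to establish a single unified conservation principle that subsumes every computation of Section~\ref{ssec:conslaw}: for every $\mathsf f\in\widehat{\cS}$ and every solution $f$ of~\eqref{eq:main}, the pairing
\[
Q_{\mathsf f}(t) := \iint_{\R^d\times\R^d} f(t,x,v)\,\frac{\mathsf f(t,x,v)}{\cM(x,v)}\,\dd x\,\dd v
\]
is independent of~$t$. Granting this, the corollary follows at once by applying the principle to
\[
\tilde f(t) := f(t) - \alpha\,\cM-\beta\,\cH\,\cM-F_{\mathrm{rot}}-F_{\mathrm{dir}}(t)-F_{\mathrm{pul}}(t),
\]
which still solves~\eqref{eq:main} because each subtracted piece is a special macroscopic mode and thus satisfies $\sC(\cdot)=0$ and $\partial_t(\cdot)=\sT(\cdot)$ separately.

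To prove the conservation principle I would differentiate $Q_{\mathsf f}$ in~$t$ using $\partial_t f = \sT f+\sC f$ together with $\partial_t\mathsf f = \sT\mathsf f$, which holds because $\mathsf f$ is itself a special macroscopic mode by design. The collision contribution $\iint(\sC f)(\mathsf f/\cM)\,\dd x\,\dd v$ vanishes: at each fixed $x$ the map $v\mapsto \mathsf f(t,x,v)/\rho(x)$ lies in $\mathrm{Ker}\,\sC$ (since $\sC\mathsf f=0$ and $\sC$ acts only on~$v$), and $\sC$ is self-adjoint in $\mathrm L^2(\mu^{-1})$. The transport contribution reduces to the pointwise identity
\[
v\cdot\nabla_x(\mathsf f/\cM)-\nabla_x\phi\cdot\nabla_v(\mathsf f/\cM) = -\,\sT\mathsf f/\cM,
\]
which is immediate from $\nabla_x\cM/\cM=-\nabla_x\phi$ and $\nabla_v\cM/\cM=-v$; together with an integration by parts justified by the Gaussian weight, this yields $\iint(\sT f)(\mathsf f/\cM)\,\dd x\,\dd v=-\iint f\,(\sT\mathsf f)/\cM\,\dd x\,\dd v$, so the two transport terms cancel and $Q_{\mathsf f}'\equiv 0$.

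Finally, I would evaluate $Q_{\mathsf f}(0)$ for $\tilde f$, which amounts to checking $\langle f_0-\tilde S(0),\mathsf f(0)\rangle_{\mathrm L^2(\cM^{-1})}=0$ for every $\mathsf f\in\widehat{\cS}$, where $\tilde S(0):=\alpha\,\cM+\beta\,\cH\,\cM+F_{\mathrm{rot}}+F_{\mathrm{dir}}(0)+F_{\mathrm{pul}}(0)$. This is an immediate consequence of Lemma~\ref{lem:orthn}: the scalar coefficients fixed by~\eqref{msm}, \eqref{hsm}, \eqref{rsm}, \eqref{dsm} and~\eqref{psm} are by construction the $\mathrm L^2(\cM^{-1})$-inner products of $f_0$ against the corresponding elements of the orthonormal family $\widehat{\cS}$, so by orthonormality only the diagonal term survives and matches. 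The only non-cosmetic step is the pointwise transport identity, and I do not expect any real obstacle: the corollary is essentially a synthesis of Lemma~\ref{lem:orthn} and the conservation laws already derived in Section~\ref{ssec:conslaw}.
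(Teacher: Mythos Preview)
Your argument is correct and matches the paper's viewpoint: the paper simply states the corollary ``as a straightforward consequence of Lemma~\ref{lem:orthn}'' together with the individual conservation laws of Section~\ref{ssec:conslaw}, and your unified computation $Q_{\mathsf f}'\equiv 0$ is precisely a clean repackaging of those same ingredients. The only cosmetic difference is that you prove all the conservation laws at once via the duality identity $\sT(\mathsf f/\cM)=(\sT\mathsf f)/\cM$, whereas the paper has already derived each one separately before stating the corollary.
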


\subsection{A micro-macro decomposition}
\label{ssec:rescaling}

Let us consider a solution
$f \in \mathcal C\big(\R^+ ; \mathrm L^2(\cM^{-1})\big)$
to~\eqref{eq:main}. We get rid of the special macroscopic modes
built
in~\eqref{msm}--\eqref{hsm}--\eqref{rsm}--\eqref{dsm}--\eqref{psm}
and rewrite the evolution problem in $\mathrm L^2(\cM)$ in terms
of
\begin{equation}
  \label{eq:defh}
  h := \frac{ f- \alpha\,\cM - \beta\,\cH\,\cM -
    F_{\mathrm{rot}}-F_{\mathrm{dir}}-F_{\mathrm{pul}}}{\cM}
\end{equation}
for all $(t,x,v)$, where only $h$, $f$, $F_{\mathrm{dir}}$ and
$F_{\mathrm{pul}}$ depend on $t$. Then $h$ satisfies
\begin{equation}
  \label{eq:mainh}
  \partial_t h = \cL\,h := \cT h + \cC h\,, \quad h|_{t=0} = h_0
\end{equation}
with $\sT$ defined as before by
$\sT h = \nabla_x \phi \cdot \nabla_v h - v \cdot \nabla_x h$ and
the new collision operator
\begin{equation}
  \label{eq:cC}
  \cC h := \mu^{-1}\,\sC\left( \mu\,h \right)
\end{equation}
where $\mu$ is defined in~\eqref{mu}.

The operator $\cC$ acts only on the velocity variable, is
self-adjoint in $\mathrm L^2(\cM)$ (when integrating in $x$, $v$)
and $\mathrm L^2(\mu)$ (when integrating in $v$) and
\[
  \label{eq:kerC}
  \mathrm{Ker}\,\cC = \mathop{\mathrm{Span}} \left\{ 1, v_1, \dots,
    v_d, |v|^2 \right\}\,.
\]
Let us consider the \emph{micro-macro decomposition}
\[
  \label{decomph}
  h = \pih + h^\bot\,,\quad \pih := r + m \cdot v + e\,\fE(v)\,,
\]
where $\pih$ is the $\mathrm L^2(\mu)$-orthogonal projection of
$h$ on $\mathrm{Ker}\,\cC$ and $\fE$ defined
by~\eqref{eq:mainMicroMacro2E} is a normalized Hermite polynomial
of degree $2$. In other words,
\begin{equation}
  \label{Eqns:macro}
  \begin{aligned}
    & r(t,x) := \int_{\R^d} h(t,x,v)\,\mu(v) \dd v\\
    & m(t,x) := \int_{\R^d} v\,h(t,x,v)\,\mu(v) \dd v\\
    & e(t,x) := \int_{\R^d} \fE(v)\,h(t,x,v)\,\mu(v) \dd v
  \end{aligned}
\end{equation}
are the \emph{macroscopic quantities} corresponding to the
spatial density, the local flux and thermal energy, while
$h^\bot$ is the \emph{microscopic} part. The
definition~\eqref{Eqns:macro} coincides with the
definition~\eqref{eq:mainMicroMacro2} used in
Section~\ref{ssec:macrosols} to define the special macroscopic
modes.

With these notations~\eqref{eq:hyp-sg-C} reads
\[
  -\,\langle \cC h,h \rangle \ge \mathrm c_{\sC}\,\| h^\bot
  \|^2\,.
\]
According to Corollary~\ref{Cor:GlobalConservationLaws}, $h$ has
\emph{multiple global conservation laws}.

\begin{corollary}
  \label{Cor:GlobalConservationLaws2}
  Assume that $f \in \cC\big(\R^+ ; \mathrm L^2(\cM^{-1})\big)$
  is a solution to~\eqref{eq:main} with initial datum
  $f_0\in \mathrm L^2(\cM^{-1})$. With $h$ and $(r,m,e)$
  respectively defined by~\eqref{eq:defh}--\eqref{eq:mainh}
  and~\eqref{Eqns:macro}, we have the following properties.

  \noindent
  $\rhd$ Conservation of total mass and total energy
  \begin{equation}
    \label{consme}
    \langle r \rangle = 0 \quad \text{and} \quad \sqrt{\tfrac
      d2}\,\langle e \rangle + \langle \phi\,r \rangle = 0\,.
  \end{equation}

  \noindent
  $\rhd$ Global conservation laws associated to rotational
  symmetries of $\phi$
  \begin{equation}
    \label{consm}
    \Pp_\phi m=0\,.
  \end{equation}
  This also means $\Pp (m) \in \cR_\phi^\bot$ as
  in~\eqref{eq:consm}.

  \noindent
  $\rhd$ Global conservation laws corresponding to the
  \emph{harmonic directional modes}
  \begin{equation}
    \label{consd}
    \forall\,i \in I_\phi\,, \quad \langle r\,x_i \rangle = 0
    \quad\text{and}\quad\langle m_i \rangle = 0\,.
  \end{equation}

  \noindent
  $\rhd$ In the fully harmonic case $d_\phi=0$, global
  conservation laws corresponding to the \emph{harmonic pulsating
    modes}
  \begin{equation}
    \label{consp}
    \langle m \cdot x \rangle = 0\quad\text{and}\quad \sqrt{\tfrac
      d2}\,\langle e \rangle - \langle \phi\,r \rangle =0\,.
  \end{equation}
\end{corollary}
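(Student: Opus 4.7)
The plan is to derive each of the four blocks of conservation laws by specialising the identity of Corollary~\ref{Cor:GlobalConservationLaws} to one appropriate generator $\mathsf f\in\widehat{\cS}$ at a time. The preliminary observation is that, by the very definition~\eqref{eq:defh} of $h$, the identity of Corollary~\ref{Cor:GlobalConservationLaws} rewrites as
\[
  \iint_{\R^d\times\R^d} h(t,x,v)\,\mathsf f(t,x,v)\dd x\dd v=0\,,\quad\fa\,\mathsf f\in\widehat{\cS}\,,\quad\fa\,t\ge0\,,
\]
so the rest is a routine macroscopic read-off. At every step the microscopic part $h^\bot$ will drop out, being $\mathrm L^2(\mu)$-orthogonal to $\mathrm{Ker}\,\cC=\mathop{\mathrm{Span}}\{1,v_1,\ldots,v_d,|v|^2\}$, and the Hermite identities $\int\mu\dd v=1$, $\int v_i\,\mu\dd v=0$, $\int v_iv_j\,\mu\dd v=\delta_{ij}$, $\int\fE(v)\,\mu\dd v=0$, $\int v_i\,\fE(v)\,\mu\dd v=0$ will dispose of the remaining velocity integrations.

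For the mass--energy pair~\eqref{consme} I would test first against $\mathsf f_1=\cM$, which immediately gives $\int r\,\rho\dd x=\langle r\rangle=0$, and then against $\mathsf f_\cH$ (proportional to $\cH\,\cM$). The key algebraic step here is to rewrite, via~\eqref{def:calH} and~\eqref{eq:mainMicroMacro2E},
\[
  \cH(x,v)=\sqrt{\tfrac d2}\,\fE(v)+\phi(x)-\langle\phi\rangle\,,
\]
so that the test integral collapses to $\sqrt{d/2}\,\langle e\rangle+\langle\phi\,r\rangle-\langle\phi\rangle\,\langle r\rangle$; combined with the mass relation just obtained, this gives the energy identity.

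For the rotational law~\eqref{consm}, I would test against $\mathsf f_{\mathrm{rot},j}=(A_j\,x\cdot v)\,\cM$ with $A_j$ running through a basis of $\cR_\phi$; integrating first in $v$ produces $\langle A_j\,x,\,m\rangle_{\mathrm L^2(\rho)}=0$ on each such basis element, which is precisely $\Pp_\phi m=0$, equivalent to $\Pp(m)\in\cR_\phi^\bot$ via Lemma~\ref{lem:proj-ort-append}. For the directional law~\eqref{consd}, testing against $\mathsf f^\pm_{\mathrm{dir},i}$ for $i\in I_\phi$ yields time-dependent combinations of the form $\cos t\,\langle x_i\,r\rangle\pm\sin t\,\langle m_i\rangle$; requiring these to vanish for every $t\ge0$ forces both averages to vanish by linear independence of $\cos$ and $\sin$.

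Finally, for~\eqref{consp} in the fully harmonic case, the normalisation~\eqref{eq:phiid} forces $\phi(x)=\tfrac12|x|^2+\tfrac d2\log(2\pi)$, so that $\tfrac12|x|^2=(\phi-\langle\phi\rangle)+\tfrac d2$ and $\tfrac12|v|^2=\sqrt{d/2}\,\fE(v)+\tfrac d2$, whence
\[
  \tfrac12\bigl(|x|^2-|v|^2\bigr)=\bigl(\phi-\langle\phi\rangle\bigr)-\sqrt{\tfrac d2}\,\fE(v)\,.
\]
I would then test against $\mathsf f^\pm_{\mathrm{pul}}$ and separate the $\cos(2t)$ and $\sin(2t)$ components, using $\langle r\rangle=0$ to absorb the spurious $\langle\phi\rangle\,\langle r\rangle$ term. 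The hardest part of the argument is really just the algebraic bookkeeping: once every test function is expanded in the basis $\{1,v_k,\fE(v)\}$ of $\mathrm{Ker}\,\cC$, the microscopic contributions drop out and the claimed moment identities fall out directly.
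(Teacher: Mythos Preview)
Your proposal is correct and follows exactly the route the paper intends: the paper presents Corollary~\ref{Cor:GlobalConservationLaws2} as an immediate consequence of Corollary~\ref{Cor:GlobalConservationLaws} (via the definition~\eqref{eq:defh} of $h$) and does not spell out the moment computations, which are precisely the Hermite bookkeeping you describe. Your derivations of each identity by testing against the appropriate $\mathsf f\in\widehat{\cS}$ and eliminating $h^\bot$ by orthogonality are accurate.
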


\subsection{The equations for the macroscopic modes}

We write the evolution equations for $r$, $m$ and $e$ defined
in~\eqref{Eqns:macro}. In the mathematical literature, such
equations are sometimes called \emph{local conservation laws} but
as this might introduce confusions with the \emph{local
  conservation law of the collision operator}, which we call here
\emph{collision invariants}, and the \emph{global conservation
  laws} studied in Section~\ref{ssec:conslaw}, we shall simply
refer to these equations as the \emph{equations for the
  macroscopic modes} or simply the \emph{macroscopic equations}.

Assume that $h$ solves~\eqref{eq:mainh}. For any Hermite
polynomial \hbox{$p: \R^d \to \R$} considered as a function of the
velocity variable, we compute
$J_p[h] = \int_{\R^d} p\,h\,\mu \dd v$ using standard properties
of Hermite functions:
\[
  J_p[h] = r \int_{\R^d} p\,\mu \dd v + m \cdot \int_{\R^d}
  v\,p\,\mu \dd v + e \int_{\R^d} \fE\,p\,\mu \dd v + J_p[h^\bot]
\]
and, using~\eqref{eq:mainh}, we also get
\begin{align*}
  \partial_t J_p[h] =
  & -\nabla_x r \cdot \int_{\R^d} v\,p\,\mu \dd v -\nabla_x m :
    \int_{\R^d} v \otimes v\,p\,\mu \dd v + m \cdot \nabla_x \phi
    \int_{\R^d} p\,\mu \dd v\\
  & - \nabla_x e \cdot \int_{\R^d} v\, \fE\,p\,\mu \dd v +
    \sqrt{\tfrac2d}\,e\,\nabla \phi \cdot \int_{\R^d} v\,p\,\mu
    \dd v + \int_{\R^d} (\cL\,h^\bot)\,p\,\mu \dd v\,.
\end{align*}
Plugging successively $p=1$, $v$, $\fE$,
$v \otimes v - \mbox{Id}_{d \times d}$ and
$v ( \fE - \sqrt{\frac{2}{d}})$ we get
\begin{subequations}
  \label{rme}
  \begin{align}
    & \partial_t r = \nabla_x^* \cdot m\,,\label{rmer}\\
    & \partial_t m = -\,\nabla_x r + \sqrt{\tfrac2d}\,\nabla_x^*
      e + \nabla_x^* \cdot E[h^\bot]\,,\label{rmem}\\
    & \partial_t e = -\,\sqrt{\tfrac2d}\,\nabla_x \cdot m +
      \nabla_x^* \cdot \Theta[h^\bot]\,,\label{rmee}\\[2mm]
    & \partial_t E[h] = -\,2\,\nabla_x^{\text{\tiny sym}}\,m +
    E[\cL\,h^\bot]\,,\label{rmeE}\\[2mm]
    & \partial_t \Theta[h] = -\left(1+\tfrac2d\right) \nabla_x e +
      \Theta[\cL\,h^\bot]\,,\label{rmeTheta}
  \end{align}
\end{subequations}
where $\nabla_x^{\text{\tiny sym}}\,m$ is defined
by~\eqref{SymmetricGradient} and the matrix valued function
$E[h]$ and the vector valued function $\Theta[h]$ are
higher-order moments of $h$ defined by
\begin{subequations}
\label{eq:EhEhperp}
\begin{align}
\label{eq:EhEhperp1}
&E[h] := \int_{\R^d} \left( v \otimes v - \mathrm{Id}_{d \times d} \right) h\,\mu \dd v = \sqrt{\tfrac2d}\,e\, \mathrm{Id}_{d \times d} + E[h^\bot]\,,\\
\label{eq:EhEhperp2}
&\Theta[h] := \int_{\R^d} v\, \Big(\fE(v)-\sqrt{\tfrac2d}\,\Big)\,h\,\mu \dd v = \Theta[h^\bot]\,.
\end{align}
\end{subequations}
If $f$ is a special macroscopic mode, then~\eqref{rme} is reduced to~\eqref{SMrmem} because, in that case, $h^\bot=0$ and $\Theta[\pih]=0$.

\section{Classification of the special macroscopic modes}
\label{sec:minimizers}
\setcounter{equation}{0}
\setcounter{theorem}{0}

In this section, we prove Part~(1) of Theorem~\ref{theo:main}. We
write $\mathsf a\lesssim\mathsf b$ if there is some positive
constant~$\mathsf c$ such that $\mathsf a\le\mathsf b\,\mathsf c$
and $\mathsf a\simeq\mathsf b$ if and only if
$\mathsf a\lesssim\mathsf b\lesssim\mathsf a$. Throughout this
section, we assume
that~\eqref{eq:kersC}--\eqref{eq:hyp-sg-C}--\eqref{eq:lbound}--\eqref{hyp:intnorm}--\eqref{hyp:regularity}--\eqref{eq:poincarenormal}--\eqref{eq:momentspace}--\eqref{eq:phiid}--\eqref{hyp:semigroup}
hold, without further notice.

\subsection{Statement and preliminary results}

Theorem~\ref{theo:main}--(1) writes:
\begin{proposition}[Special macroscopic modes]
  \label{prop:min}
  If $f$ is a solution of~\eqref{eq:mainMicroMacro}, then $h$
  given by~\eqref{eq:defh} is such that $h=0$.
\end{proposition}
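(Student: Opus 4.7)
Each mode subtracted in~\eqref{eq:defh} is itself a solution of~\eqref{eq:mainMicroMacro}, so $h$ is again such a solution; by construction, $h$ also inherits every global conservation law of Corollary~\ref{Cor:GlobalConservationLaws2}. Applying the spectral gap~\eqref{eq:hyp-sg-C} to $\sC(\cM\,h)=0$ forces the microscopic part $h^\bot=0$, so $h=r_h+m_h\cdot v+e_h\,\fE(v)$ and $(r_h,m_h,e_h)$ solves the closed system~\eqref{SMrmem}. Proposition~\ref{Prop:EDO} then produces a constant $r_0\in\R$, a constant skew-symmetric matrix~$A$, and smooth functions $b:\R^+\to\R^d$, $c:\R^+\to\R$ such that $e_h$, $m_h$, $r_h$ are given by~\eqref{SMeq:expe}--\eqref{SMeq:expm}--\eqref{SMeq:expr} and such that the compatibility identity~\eqref{SMeq:Abcm} holds pointwise in~$(t,x)$.

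The strategy is to feed the global conservation laws of Corollary~\ref{Cor:GlobalConservationLaws2} into this parameterization and kill $r_0$, $A$, $b$, $c$ in succession. First, the centring conditions $\langle x\rangle=0$, $\langle\xi_2\rangle=0$, $\langle\xi_\phi\rangle=0$ combined with mass conservation $\langle r_h\rangle=0$ from~\eqref{consme} immediately give $r_0=0$. Second, $\Pp b=0$ and $\Pp x=0$ in $\mathrm L^2(\rho)$ (using $\langle x\rangle=0$ and the skew-symmetry of the test matrices), so the angular-momentum conservation $\Pp_\phi m_h=0$ of~\eqref{consm} reads $\Pp_\phi(A\,x)=0$ and constrains $A$ to lie in $\cR_\phi^\bot$; a suitable pairing of~\eqref{SMeq:Abcm} with $\nabla_x\phi\cdot A\,x$ in $\mathrm L^2(\rho)$, exploiting the identity $\int u\,\nabla_x\phi\cdot A\,x\,\rho\dd x=-\int A\,x\cdot\nabla_x u\,\rho\dd x$ together with the quantitative rigidity estimate of~\eqref{grad}, then forces $A=0$.

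Third, I split $b=b^\parallel+b^\perp$ along the harmonic ($i\in I_\phi$) and non-harmonic directions and analyse~\eqref{SMeq:Abcm} coordinate by coordinate. For $i\notin I_\phi$, $\partial_{x_i}\phi$ and $x_i$ are linearly independent in $\mathrm L^2(\rho)$ by the very definition of $E_\phi$ in~\eqref{eq:defephi}, so a linear-independence analysis forces $b_i\equiv0$; for $i\in I_\phi$, $\partial_{x_i}\phi=x_i$ merges the contributions into a single scalar relation involving $b_i+b_i''$, and the directional conservation law $\langle m_{h,i}\rangle=0$ of~\eqref{consd} then pins down $b_i\equiv0$. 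With $A=0=b$, the residual form of~\eqref{SMeq:Abcm} couples only $c$, $c'$, $c'''$ through the two scalar profiles $\xi_2(x)$ and $2\,\xi_\phi(x)+\nabla_x\phi\cdot x-d$; outside the fully harmonic case these are $\mathrm L^2(\rho)$-independent, whence $c'\equiv c'''\equiv0$ and energy conservation in~\eqref{consme} kills the remaining constant. The main remaining obstacle is the fully harmonic case $d_\phi=0$: thanks to~\eqref{eq:phiid}, both profiles collapse to multiples of $\xi_2$, so~\eqref{SMeq:Abcm} reduces to the third-order ODE $c'''+4\,c'=0$, whose three-dimensional solution space $\mathrm{Span}\{1,\cos(2t),\sin(2t)\}$ is then precisely annihilated by combining~\eqref{consme} with the pulsating-mode conservation laws~\eqref{consp}.
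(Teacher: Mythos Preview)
Your outline has the right skeleton (reduce to the parameterization~\eqref{SMeq:expe}--\eqref{SMeq:expm}--\eqref{SMeq:expr} and then use the conservation laws of Corollary~\ref{Cor:GlobalConservationLaws2} to annihilate $r_0$, $A$, $b$, $c$), and your treatment of the fully harmonic case is fine. The gap is in the step that is supposed to force $A=0$.

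Pairing~\eqref{SMeq:Abcm} with $\nabla_x\phi\cdot A\,x$ in $\mathrm L^2(\rho)$ does \emph{not} kill all the other terms. Using your integration-by-parts identity (up to a sign: it reads $\int u\,(\nabla_x\phi\cdot A\,x)\,\rho\,\dd x=\int A\,x\cdot\nabla_x u\,\rho\,\dd x$), the pairings with $\xi_\phi$, $\xi_2$, constants and $b''\cdot x$ do vanish, but those with $\nabla_x\phi\cdot x$ and $\nabla_x\phi\cdot b$ give $\langle A\,x,\nabla_x^2\phi\,x\rangle$ and $\langle\nabla_x\phi\,(\nabla_x\phi\cdot A\,x)\rangle\cdot b$, neither of which is zero in general. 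The resulting identity therefore reads
\[
\|\nabla_x\phi\cdot A\,x\|^2=\alpha(A)\,c'(t)+\beta(A)\cdot b(t)
\]
with $\alpha(A)\in\R$, $\beta(A)\in\R^d$ linear in $A$, and this single scalar relation, with $b(t)$ and $c'(t)$ still undetermined, cannot force $A=0$. For the same reason your ``coordinate by coordinate'' elimination of $b$ presupposes that $\partial_{x_i}\phi$ is linearly independent in $\mathrm L^2(\rho)$ from \emph{all} of $\xi_2$, $2\xi_\phi+\nabla_x\phi\cdot x-d$, the $x_j$'s and the other $\partial_{x_j}\phi$'s simultaneously, which is not generically true and is precisely what the rank and invertibility Lemmas~\ref{lem:Mphi-caseI}--\ref{lem:tildePsi} are there to replace.

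The paper's argument is not static but dynamical. First (Lemma~\ref{bbccm}, via the inversion of $M_\phi$ or $\widehat M_\phi$ and the rank-$2$ Lemma~\ref{lem:tildePsi}) one proves only the \emph{inequality} $|b|+|b''|+|c'|+|c'''|\lesssim|A|$; then a Lyapunov function $\cF[h]\simeq\|h\|^2$ is built (Lemmas~\ref{LyapunovSpecial}--\ref{LyapunovSpecialExpDecay}) with $\tfrac{\mathrm d}{\mathrm dt}\cF[h]\le-\lambda\,\cF[h]$, so that $\|h(t)\|\to 0$; since $A$ is time-independent and $|A|^2\lesssim\|h(t)\|^2$, this forces $A=0$ a posteriori, after which $b$, $c'$ vanish by the preceding bound and $c$ is dispatched by~\eqref{consme}. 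If you want a shortcut that avoids the full Lyapunov machinery, the key observation hidden in Lemma~\ref{lem:decayA} is that $\langle X-Y\cdot\nabla_x\phi,\nabla_x\phi\cdot A\,x\rangle$ is bounded uniformly in $t$ (because $\|h\|$ is conserved) while its time derivative equals the \emph{constant} $\|\nabla_x\phi\cdot A\,x\|^2$; a bounded function with constant derivative forces that constant to be zero, hence $A\in\cR_\phi$ and, by~\eqref{consm}, $A=0$. Either way, it is the time structure (constancy of $A$ together with a bounded or decaying global quantity) that closes the estimate on $A$, not a single pairing.
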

We recall that $f$ is a special macroscopic mode if and only if,
by definition~\eqref{eq:mainMicroMacro}, $\sC f = 0$ and
$\partial_t f = \sT f$. With the definitions
of~\eqref{msm}--\eqref{hsm}--\eqref{rsm}--\eqref{dsm}--\eqref{psm},
the function
\[
  F:=f- \alpha\,\cM - \beta\,\cH\,\cM -
  F_{\mathrm{rot}}-F_{\mathrm{dir}}-F_{\mathrm{pul}}
\]
is also a special macroscopic mode and~\eqref{eq:mainMicroMacro2}
implies that $h$ defined in~\eqref{eq:defh} satisfies
$h=h^\parallel=r+ m \cdot v + e\,\fE(v)$. According
to~\eqref{eq:mainh}, $h$ solves the transport equation
$\partial_t h = \cT h$ because $\cC h=0$ (with $\cC$ defined
by~\eqref{eq:cC}) and Section~\ref{ssec:macrosols} proved that
$r$, $m$ and $e$ solve~\eqref{SMrmem}. Proposition~\ref{prop:min}
means that $r=0$, $m=0$, and $e=0$. We split the proof into
several steps. To start with, since $1$, $v$ and $\fE(v)$ are
orthonormal Hermite polynomials, we have
\begin{equation}
\label{Hermite-pih}
\|h\|^2=\|r\|^2+\|m\|^2+\|e\|^2\,.
\end{equation}
\begin{lemma}
\label{lem:piL2}
With the above notations, the function $h$ as in Proposition~\ref{prop:min} satisfies
\[
\frac{{\mathrm d}}{{\mathrm d}t} \| h \|^2 = 0\,.
\]
\end{lemma}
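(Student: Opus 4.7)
The plan is to exploit the fact that the transport operator $\cT$ is skew-adjoint on $\mathrm L^2(\cM)$, which is essentially the reflection at the level of the unknown $h=f/\cM$ of the Hamiltonian nature of the transport flow (Liouville's theorem together with conservation of energy $\cH$ preserves the measure $\cM\,\dd x\,\dd v$). Once this is established, the conclusion is immediate, because the collision term contributes nothing under the hypothesis $\cC h = 0$ from Proposition~\ref{prop:min}.

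More concretely, I would first observe that since $h$ as in Proposition~\ref{prop:min} satisfies $\partial_t h = \cT h$ (the collision part vanishes), one has
\[
  \tfrac{\mathrm d}{\mathrm dt}\|h\|^2 = 2\,\scalar{\partial_t h}{h} = 2\,\scalar{\cT h}{h}\,,
\]
so everything reduces to proving $\scalar{\cT h}{h}=0$. Writing $\cT h = -\,v\cdot\nabla_x h + \nabla_x\phi\cdot\nabla_v h$ and viewing it as $X\cdot\nabla_{x,v} h$ with $X=(-v,\nabla_x\phi)$, the identity
\[
  \scalar{\cT h}{h} = \iint_{\R^d\times\R^d} X\cdot\nabla_{x,v}\left(\tfrac{h^2}{2}\right)\cM\,\dd x\,\dd v
\]
can be integrated by parts. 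Using the identities $\nabla_x\cM = -\nabla_x\phi\,\cM$ and $\nabla_v\cM = -v\,\cM$ together with the divergence-free property $\nabla_{x,v}\cdot X = 0$ recorded in~\eqref{DivergenceFree}, the weighted divergence $\nabla_{x,v}\cdot(X\,\cM)$ vanishes identically, and the two boundary-free contributions cancel. Hence $\scalar{\cT h}{h}=0$ and the norm is conserved.

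The only issue to check is that all integrations by parts are justified without boundary contributions. This follows from the explicit polynomial-in-$v$ form $h=\pih=r+m\cdot v + e\,\fE(v)$ together with the Gaussian decay of $\mu$, while the moment bounds~\eqref{eq:momentspace} on $\phi$ and $\nabla_x\phi$ guarantee integrability of the terms $v\cdot\nabla_x\phi\,h^2\,\cM$ that appear in intermediate steps, once one knows $r,m,e\in\mathrm L^2(\rho)$ (which is encoded in $h\in\mathrm L^2(\cM)$ via the Hermite orthogonality~\eqref{Hermite-pih}). I do not anticipate any real obstacle here, since the computation is a standard energy identity for kinetic equations whose transport part is Hamiltonian; the main conceptual content is simply the skew-adjointness of $\cT$ on $\mathrm L^2(\cM)$, which will be reused throughout the subsequent sections.
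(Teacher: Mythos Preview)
Your proposal is correct and follows essentially the same approach as the paper: use $\cC h=0$ to reduce to $\partial_t h=\cT h$, then conclude from the skew-adjointness $\cT^*=-\cT$ on $\mathrm L^2(\cM)$. The paper's proof is the one-line version that simply invokes $\cT^*=-\cT$ as a known fact, whereas you have spelled out the underlying integration-by-parts computation and the justification of the vanishing boundary terms.
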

\begin{proof}[Proof of Lemma~\ref{lem:piL2}]
It follows from $\cC h = 0$, $\cT^* = -\,\cT$ and $\frac{{\mathrm d}}{{\mathrm d}t} \| h \|^2 = 2\,\langle h, \cT h\rangle = 0$.
\end{proof}

Collecting the results of Section~\ref{ssec:macrosols} and using
$r_0=\langle r \rangle = 0$, we get:
\begin{lemma}
  \label{lem:exprme}
  Consider the function $h$ as in
  Proposition~\ref{prop:min}. With the above notations, let
  $A := \langle \nabla_x^{\text{\tiny skew}}m \rangle$,
  $b(t) := \langle m \rangle$ and $c(t) := \langle e
  \rangle$. Then we have
  \begin{subequations}
    \label{eq:exp}
    \begin{align}
      \label{eq:expr}
      & r(t,x) = -\,x \cdot b'(t) +
        \tfrac{\xi_2(x)}{2\,\sqrt{2\,d}}\,c''(t) +
        \sqrt{\tfrac2d}\,\xi_\phi\,c(t)\,,\\
      \label{eq:expm}
      & m(t,x) = A\,x + b(t) - \tfrac x{\sqrt{2\,d}}\,c'(t)\,,\\
      \label{eq:expe}
      & e(t,x) = c(t)\,,
    \end{align}
  \end{subequations}
  where $A$ is a constant skew-symmetric matrix, while $b$
  and~$c$ are respectively vector valued and scalar functions of
  $t$ related by~\eqref{SMeq:Abcm}.
\end{lemma}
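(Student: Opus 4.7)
The plan is to recognise that this lemma is essentially an immediate consequence of the derivation carried out in Section~\ref{ssec:macrosols}, to which one needs only to append the observation that the constant of integration $r_0$ arising in~\eqref{SMeq:expr} vanishes for the residual $h$ defined by~\eqref{eq:defh}.

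First I would argue that $F := f -\alpha\,\cM -\beta\,\cH\,\cM - F_{\mathrm{rot}} - F_{\mathrm{dir}} - F_{\mathrm{pul}}$ is again a special macroscopic mode: each of the subtracted terms lies in $\mathrm{Ker}\,\sC$ pointwise in~$x$ (since their $v$-dependence is an element of $\mathrm{Span}\{\mu, v_1\mu,\dots,v_d\mu,|v|^2\mu\}$), and each is by construction a solution of the transport part of~\eqref{eq:mainMicroMacro}. Consequently $h = F/\cM$ coincides with its macroscopic projection $\pih = r + m\cdot v + e\,\fE(v)$, and the triple $(r,m,e)$ satisfies the closed system~\eqref{SMrmem}.

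Next I would invoke the chain of deductions already performed in Section~\ref{ssec:macrosols}. Equation~\eqref{SMrmem-e} yields $e(t,x)=c(t)$ at once. Equation~\eqref{SMrmem-d} combined with the Schwarz identity~\eqref{skewsym} forces $\nabla_{\!x}^2 m = 0$, hence the affine ansatz~\eqref{SMeq:expm-bis}. Inserting this into~\eqref{SMrmem-b} and taking its skew-symmetric gradient in $x$ then shows that the skew-symmetric matrix $A(t)$ is time-independent, yielding~\eqref{SMeq:expm}. Taking a primitive of~\eqref{SMrmem-b} in $x$ produces the formula~\eqref{SMeq:expr} with an a priori undetermined integration constant $r_0 = \langle r \rangle$ (shown there to be $t$-independent by integrating against~$\rho$). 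Substituting these expressions back into the continuity equation~\eqref{SMrmem-a} is precisely Proposition~\ref{Prop:EDO} and delivers the ODE~\eqref{SMeq:Abcm} linking $A$, $b$ and $c$.

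The only point not already contained in Section~\ref{ssec:macrosols}, and which I regard as the main (albeit mild) step, is the vanishing of $r_0$. This follows from the way $h$ was defined: each of $\alpha\,\cM$, $\beta\,\cH\,\cM$, $F_{\mathrm{rot}}$, $F_{\mathrm{dir}}$, $F_{\mathrm{pul}}$ has zero total mass (for $\alpha\,\cM$ this is the definition~\eqref{msm} of $\alpha$; for $\beta\,\cH\,\cM$ it follows from $\int_{\R^d}(|v|^2-d)\mu\,\dd v = 0$ together with $\langle \phi-\langle\phi\rangle\rangle=0$; the three remaining modes integrate to zero either by parity in $v$ or, in the fully harmonic case, through the identity $\langle|x|^2\rangle=d$ implied by~\eqref{eq:phiid}). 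Hence $\iint F\,\dd x\,\dd v = 0$, which rewrites as $\langle r\rangle = r_0 = 0$; this is exactly the first identity in~\eqref{consme} of Corollary~\ref{Cor:GlobalConservationLaws2}. Plugging $r_0=0$ into~\eqref{SMeq:expr} produces~\eqref{eq:expr}, and the lemma follows.
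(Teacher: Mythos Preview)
Your approach is essentially identical to the paper's: the lemma is introduced there by the single sentence ``Collecting the results of Section~\ref{ssec:macrosols} and using $r_0=\langle r\rangle=0$, we get:'', and you have unpacked precisely that.

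One small slip in your justification of $r_0=0$: it is not true that $\alpha\,\cM$ has zero total mass (its mass is $\alpha$). The correct statement is that $f$ has mass $\alpha$ by~\eqref{msm}, while $\alpha\,\cM$ also has mass $\alpha$ and the remaining subtracted modes each have mass zero; hence $\iint F\,\dd x\,\dd v=\alpha-\alpha=0$. Your citation of~\eqref{consme} in Corollary~\ref{Cor:GlobalConservationLaws2} is the right anchor for this fact, so the argument stands once the wording is fixed.
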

We recall that the functions $\xi_2$ and $\xi_\phi$ are defined
respectively by~\eqref{xi2}
and~\eqref{xiphi}. Equation~\eqref{SMeq:Abcm} in
Proposition~\eqref{Prop:EDO} provides us with various estimates
on $A$, $b$ and $c$ which are collected in
Sections~\ref{controlAm},~\ref{ssec:controlbcm}
and~\ref{ssec:controlbcmprimitive} in order to prove
Proposition~\ref{prop:min} in Section~\ref{Proof3.1}.

\subsection{Control of \texorpdfstring{$A$}{A}}
\label{controlAm}

For any $A\in\fM^{\text{\tiny skew}}_{d \times d}(\R)$, let us
define
\[
  |A|^2 := \int_{\R^d} |A\,x|^2\,\rho(x) \dd x\,.
\]
The vector space $\fM^{\text{\tiny skew}}_{d \times d}(\R)$ is of
finite dimension: all norms are equivalent to $|\cdot|$. This is
also why the rigidity constant $c_{\mathrm K}$ given
by~\eqref{grad} is positive, which implies the Korn-type
inequality
\begin{equation}
  \label{decayA2}
  \forall\,A \in \cR_\phi^\bot\,,\quad\left\| \nabla_x \phi \cdot
    A\,x \right\|^2 \geq c_{\mathrm K}\,| A |^2\,.
\end{equation}

By multiplying~\eqref{SMeq:Abcm} by $x_k$ for $k=1, \ldots, d$,
then integrating against $\rho(x)$ and performing some
integrations by part, using that $\rho(x) \dd x$ is centred and
that the terms invol\-ving $\nabla \phi$ vanish, we obtain
\begin{equation}
  \label{eq:Abcm2}
  \tfrac1{\sqrt{2\,d}}\,\langle 2\,\phi\,x \rangle\,c' +
  \tfrac1{2\,\sqrt{2\,d}}\,\langle |x|^2\,x \rangle\,c''' -
  \langle x \otimes x \rangle\,b'' = b\,.
\end{equation}
With the notation of~\eqref{eq:exp}, let us define $X$ and $Y$ by
\begin{equation}
  \label{eq:defX}
  X := \tfrac1{\sqrt{2\,d}}\,\big( 2\,\xi_\phi + \nabla_x \phi
  \cdot x - d \big)\, c + \tfrac{\xi_2}{2\,\sqrt{2\,d}}\,c'' - x
  \cdot b'
\end{equation}
and
\begin{equation}
  \label{eq:defY}
  Y := \sqrt{\tfrac2d}\,\langle \phi\,x \rangle\,c +
  \tfrac1{2\,\sqrt{2\,d}}\,\langle |x|^2\,x \rangle\,c'' -
  \langle x \otimes x \rangle\,b'\,.
\end{equation}
Identities~\eqref{SMeq:Abcm} and~\eqref{eq:Abcm2} yield
\[
  \frac{\mathrm d}{\mathrm dt} \left( X - Y \cdot \nabla_x \phi
  \right) = \nabla_x \phi \cdot A\,x
\]
where, according to Lemma~\ref{lem:exprme}, the r.h.s.~is
independent of $t$. As a consequence, we have the following
estimate.
\begin{lemma}
  \label{lem:decayA}
  Consider the function $h$ as in Proposition~\ref{prop:min}. The
  infinitesimal rotation matrix $A$ of Lemma~\ref{lem:exprme}
  satisfies
  \[
    \label{decayA1}
    -\frac{\mathrm d}{\mathrm dt} \big\langle \left( X - Y \cdot
      \nabla_x \phi \right) , \nabla_x \phi \cdot A\,x
    \big\rangle = -\,\| \nabla_x \phi \cdot A\,x \|^2\le
    -\,c_{\mathrm K}\,| A |^2\,.
  \]
\end{lemma}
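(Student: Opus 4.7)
The plan is to exploit directly the identity $\frac{\mathrm d}{\mathrm dt}(X - Y\cdot\nabla_x\phi) = \nabla_x\phi\cdot A\,x$ already displayed just before the lemma, which comes from combining the pointwise identity~\eqref{SMeq:Abcm} with its first moment~\eqref{eq:Abcm2}. Given this, the content of the lemma is very short: differentiating $X$ in~\eqref{eq:defX} reproduces the $c'$, $c'''$, $b''$ terms of~\eqref{SMeq:Abcm}, while differentiating $Y$ in~\eqref{eq:defY} and using~\eqref{eq:Abcm2} yields $Y'=b$, so that $Y'\cdot\nabla_x\phi=\nabla_x\phi\cdot b$ exactly cancels the corresponding contribution in~\eqref{SMeq:Abcm}, leaving $\nabla_x\phi\cdot A\,x$. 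Thus one may treat this identity as given.

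Next, Lemma~\ref{lem:exprme} tells us that the skew-symmetric matrix $A$ is \emph{constant in~$t$}, so the vector field $x\mapsto\nabla_x\phi(x)\cdot A\,x$ is independent of time. The bilinear pairing $\langle\,\cdot\,,\,\cdot\,\rangle$ in the statement is the $\mathrm L^2(\rho)$ scalar product in the $x$-variable only (all terms at play are functions of $x$ and $t$). Pulling the time-independent factor out of the pairing gives
\[
  \frac{\mathrm d}{\mathrm dt}\big\langle X - Y\cdot\nabla_x\phi\,,\,\nabla_x\phi\cdot A\,x\big\rangle = \big\langle \partial_t(X - Y\cdot\nabla_x\phi)\,,\,\nabla_x\phi\cdot A\,x\big\rangle = \|\nabla_x\phi\cdot A\,x\|^2,
\]
which, multiplied by $-1$, is the first equality claimed.

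Finally, for the last inequality it suffices to invoke the Korn-type estimate~\eqref{decayA2}, which asserts $\|\nabla_x\phi\cdot A\,x\|^2\ge c_{\mathrm K}\,|A|^2$ whenever $A\in\cR_\phi^\bot$. That $A$ indeed lies in $\cR_\phi^\bot$ in our setting follows from the global conservation law~\eqref{consm}, namely $\Pp_\phi m=0$, and the identification (via Lemma~\ref{lem:proj-ort-append} in Appendix~\ref{ssec:momvs}) of the rotational component $\langle\nabla_x^{\text{\tiny skew}}m\rangle$ of $m$ with the projection $\Pp m$ onto $\cR$, so that $\Pp_\phi m=0$ forces $A\in\cR_\phi^\bot$. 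The only real ``work'' in the lemma is therefore the preparatory algebra leading to $\frac{\mathrm d}{\mathrm dt}(X-Y\cdot\nabla_x\phi)=\nabla_x\phi\cdot A\,x$; I do not foresee any genuine obstacle in the proof of Lemma~\ref{lem:decayA} itself, which is essentially a two-line consequence combined with one appeal to the Korn-type rigidity estimate.
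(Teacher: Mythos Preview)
Your proposal is correct and follows essentially the same approach as the paper: use the already-established identity $\frac{\mathrm d}{\mathrm dt}(X-Y\cdot\nabla_x\phi)=\nabla_x\phi\cdot A\,x$ together with the time-independence of $A$ to get the first equality, then invoke~\eqref{decayA2} once $A\in\cR_\phi^\bot$ is known. One minor point: the identification $\Pp(m)=A\,x$ and the implication $\Pp_\phi m=0\Rightarrow A\in\cR_\phi^\bot$ are stated directly in Corollary~\ref{Cor:GlobalConservationLaws2} (see the sentence after~\eqref{consm}), so citing~\eqref{consm} alone suffices; Lemma~\ref{lem:proj-ort-append} is not the right reference here (it concerns the time-dependent $m_f$ for a general solution, not the identification $\langle\nabla_x^{\text{\tiny skew}}m\rangle\,x=\Pp m$, which in the present setting follows directly from the explicit form~\eqref{SMeq:expm} of $m$ and the centring $\langle x\rangle=0$).
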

\begin{proof}
  By the conservation law~\eqref{consm}, we know that
  $A\,x=\Pp(m)\in\cR_\phi^\bot$, so that~\eqref{decayA2}
  applies.
\end{proof}

\subsection{Control of \texorpdfstring{$b$, $b''$, $c'$ and
    $c'''$}{b, b'', c' and c'''}}
\label{ssec:controlbcm}

\begin{lemma}
  \label{bbccm}
  Consider the function $h$ as in Proposition~\ref{prop:min}. The
  functions~$b$ and $c$ as defined in Lemma~\ref{lem:exprme} are
  such that
  \begin{equation}
    \label{eq:bbccm}
    |b| + |b''| + |c'| + |c'''| \lesssim |A|\,.
  \end{equation}
\end{lemma}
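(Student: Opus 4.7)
The strategy is to read the pointwise-in-$x$ identity~\eqref{SMeq:Abcm}, rewritten as
\[
  \nabla_x \phi \cdot A\,x = \tfrac{2\,\xi_\phi + \nabla_x \phi \cdot x - d}{\sqrt{2\,d}}\,c' + \tfrac{\xi_2}{2\,\sqrt{2\,d}}\,c''' - \nabla_x \phi \cdot b - b''\cdot x\,,
\]
as a linear equation in the finite-dimensional vector $(b,b'',c',c''')$ at each fixed time $t$. The right-hand side is itself controlled by $|A|$ in $\mathrm L^2(\rho)$: the Cauchy-Schwarz inequality together with the moment bound~\eqref{eq:momentspace} gives $\|\nabla_x\phi\cdot A\,x\| \lesssim \|\nabla_x\phi\|_{\mathrm L^4(\rho)}\,\|A\,x\|_{\mathrm L^4(\rho)}\lesssim |A|$ by equivalence of norms on the finite-dimensional space $\mathfrak M_{d\times d}^{\text{\tiny skew}}(\R)$.

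\smallskip

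The first step is to control $|b|$. The integrated identity~\eqref{eq:Abcm2}, which is obtained by testing~\eqref{SMeq:Abcm} against $x_k\,\rho$ and integrating by parts (the contribution of $\nabla_x \phi \cdot A\,x$ vanishes because $A$ is skew-symmetric), reads
\[
  b = \tfrac1{\sqrt{2\,d}}\,\langle 2\,\phi\,x \rangle\,c' + \tfrac1{2\,\sqrt{2\,d}}\,\langle |x|^2\,x \rangle\,c''' - \langle x \otimes x \rangle\,b''\,,
\]
and~\eqref{eq:momentspace} ensures that the three coefficients on the right are finite, giving $|b| \lesssim |b''| + |c'| + |c'''|$. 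It remains to bound the latter three quantities by $|A|$.

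\smallskip

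Substituting this expression for $b$ back into~\eqref{SMeq:Abcm}, the identity becomes $T(b'',c',c''')=\nabla_x\phi\cdot A\,x$ for a linear map $T$ whose image lies in the finite-dimensional subspace of $\mathrm L^2(\rho)$ spanned by $\{x_i,\partial_i\phi\}_{i=1,\ldots,d}$, $\xi_2$ and $2\,\xi_\phi+\nabla_x\phi\cdot x-d$. The kernel of $T$ corresponds exactly to the coefficient patterns generating the harmonic directional modes of $\fD_\phi$ and the pulsating modes of $\fP_\phi$: in a harmonic direction $i\in I_\phi$ the functions $x_i$ and $\partial_i\phi$ coincide, and in the fully harmonic case $\xi_\phi$ and $\xi_2$ are proportional while $\nabla_x\phi\cdot x$ becomes a multiple of $\xi_2$. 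These modes have been subtracted in the definition~\eqref{eq:defh} of $h$, so the global conservation laws from Corollary~\ref{Cor:GlobalConservationLaws2} -- namely~\eqref{consm},~\eqref{consd}, and, in the fully harmonic case,~\eqref{consp} -- constrain $(b'',c',c''')$ to lie in the orthogonal complement of $\mathrm{Ker}\,T$. On this complement $T$ is injective and, being a map between finite-dimensional spaces, admits a bounded inverse, yielding $|b''|+|c'|+|c'''|\lesssim\|\nabla_x\phi\cdot A\,x\|\lesssim|A|$. Combined with the bound on $|b|$ from the previous step, this gives~\eqref{eq:bbccm}.

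\smallskip

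The main obstacle is to make this inversion quantitative, with a constant depending only on moments of $\phi$, the Poincar\'e constant $c_{\text{\tiny P}}$ and the rigidity constant $c_{\mathrm K}$ of~\eqref{grad}. This requires a case analysis according to the value of $d_\phi$ in order to match each vector of $\mathrm{Ker}\,T$ with the precise conservation law from Corollary~\ref{Cor:GlobalConservationLaws2} that eliminates it, and to show that the resulting invertible block of $T$ has a coercivity constant that can be read off from~\eqref{hyp:regularity}--\eqref{eq:momentspace}.
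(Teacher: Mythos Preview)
Your overall strategy---treat~\eqref{SMeq:Abcm} as a finite-dimensional linear system in $(b,b'',c',c''')$, eliminate one variable, and invert on the complement of the kernel using the conservation laws---is sound and matches the paper's philosophy. Your first step, extracting $|b|\lesssim|b''|+|c'|+|c'''|$ from~\eqref{eq:Abcm2}, is correct.

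The paper, however, eliminates in the opposite order. Instead of testing against $x_k$ to solve for $b$, it tests~\eqref{SMeq:Abcm} against $\nabla_x\phi$ and integrates to obtain $b''=-b+\tfrac{1}{\sqrt{2d}}\langle\nabla_x^2\phi\,x\rangle c'+R_1$ with $R_1=\langle(\nabla_x\phi\cdot A\,x)\nabla_x\phi\rangle=\mathcal O(A)$. Substituting this for $b''$ produces an equation of the form $\Psi_1(x)\,c'+\Psi_2(x)\,c'''-\Phi(x)\cdot b=R_2$, where $\Phi(x)=\nabla_x\phi-x$. The advantage is that $\Phi$ vanishes \emph{precisely} in the harmonic directions, so the matrix $M_\phi=\langle\Phi\otimes\Phi\rangle$ (or its $d_\phi\times d_\phi$ block $\widehat M_\phi$) is invertible on the non-harmonic subspace by an elementary argument (Lemma~\ref{lem:Mphi-caseI}). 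After solving for $b$ (or $\hat b$, once the conservation law $\check b=0$ from~\eqref{consd} is invoked), one is left with $\widetilde\Psi_1\,c'+\widetilde\Psi_2\,c'''=\mathcal O(A)$, and Lemma~\ref{lem:tildePsi} shows these two functions have rank~$2$, which finishes the estimate. The fully harmonic case is handled separately: there~\eqref{consd} and~\eqref{consp} force $b=c=0$ directly.

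Your abstract claim that $\ker T$ ``corresponds exactly'' to the harmonic and pulsating patterns is correct (one can verify, for instance, that $\langle x\otimes x\rangle e_i=e_i$ for $i\in I_\phi$ since $\langle x_i\partial_j\phi\rangle=\delta_{ij}$, so $(-e_i,0,0)\in\ker T$), and the conservation laws do place $(b'',c',c''')$ in a complement of this kernel. But establishing injectivity of $T$ on that complement \emph{with a constant depending only on the data of the problem} still requires exactly the rank and invertibility computations that the paper packages into Lemmas~\ref{lem:Mphi-caseI} and~\ref{lem:tildePsi}---which is what your final paragraph concedes. The paper's elimination order makes those lemmas particularly clean because $\Phi=\nabla_x\phi-x$ carries the geometry of $E_\phi$ directly, whereas your coefficient $\langle x\otimes x\rangle\nabla_x\phi-x$ on $b''$ obscures it.
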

\begin{proof}[Proof of Lemma~\ref{bbccm}]
  Multiplying~\eqref{SMeq:Abcm} by $\nabla_x \phi$ and
  integrating against $\rho(x)$, after integration by parts,
  using that $\rho$ is centred and observing that the terms
  involving $2\,\xi_\phi - d$ and $c'''$ vanish, it follows that
  \begin{equation}
    \label{eq:b''m}
    b'' = -\,\langle \nabla^2_x \phi \rangle\,b +
    \tfrac1{\sqrt{2\,d}}\,\langle \nabla^2_x \phi\,x \rangle\,c'
    + R_1 = -\,b + \tfrac1{\sqrt{2\,d}}\,\langle \nabla^2_x
    \phi\,x \rangle\,c' + R_1
  \end{equation}
  with $R_0 := -\,\nabla_x \phi\cdot A\,x$ and
  $R_1 := \langle R_0\,\nabla_x \phi \rangle =\cO(A)$. By
  inserting~\eqref{eq:b''m} into~\eqref{SMeq:Abcm}, one gets
  \begin{equation}
    \label{eq:bc-bism}
    \Psi_1(x)\,c' + \Psi_2(x)\,c''' - \Phi(x) \cdot b =R_2
  \end{equation}
  with
  \[
    \Phi (x) := \nabla_x \phi - \langle \nabla^2_x \phi
    \rangle\,x = \nabla_x \phi - x\,, 
  \]
  \[
    \Psi_1(x) := \tfrac{2\,\xi_\phi(x) + \nabla_x \phi \cdot x -
      d}{\sqrt{2\,d}}-x\cdot\tfrac{\langle \nabla^2_x \phi\,x
      \rangle}{\sqrt{2\,d}}\,, \quad \Psi_2(x) :=
    \tfrac{\xi_2(x)}{2\,\sqrt{2\,d}}\,,
  \]
  and $ R_2 := R_1 \cdot x - R_0$. Let
  \begin{equation}
    \label{eq:Mphim}
    M_\phi := \langle \Phi \otimes \Phi \rangle \in \fM_{d\times
      d}^{\text{\tiny sym}}(\R)\,,\quad \alpha_i := \langle
    \Psi_i\,\Phi \rangle \in\R^d
  \end{equation}
  with $i=1$, $2$. A multiplication of~\eqref{eq:bc-bism} by
  $\Phi$ and an integration against $\rho$ yields
  \begin{equation}
    \label{eq:Mphibm}
    M_\phi\,b = \alpha_1\,c' + \alpha_2\,c''' + R_3
  \end{equation}
  where $ R_3 := -\,\langle R_2\,\Phi \rangle = \cO(A)$ thanks to
  the moment bounds on $\phi$ deduced
  from~\eqref{eq:momentspace}. Inverting the matrix $M_\phi$
  allows to control $b$ by $c$ and $c''$, and
  rewrite~\eqref{eq:bc-bism} as an ordinary differential equation
  on $c$, up to an error term of the order of $\cO(A)$. If
  $M_\phi$ is not invertible, a similar estimate can still be
  done after taking into account the global conservation laws of
  Corollary~\ref{Cor:GlobalConservationLaws2}.

  We recall that $E_\phi$ is defined by~\eqref{eq:defephi}. We
  distinguish three cases.

  \medskip\noindent$\rhd$ \emph{Fully non-harmonic case
    $(d_\phi=d)$}. The matrix $M_\phi$ is invertible (see
  Lemma~\ref{lem:Mphi-caseI} in Appendix~\ref{app:tech})
  and~\eqref{eq:Mphibm} yields
  \begin{equation}
    \label{eq:b-caseIm}
    b = M_\phi^{-1}\left(\alpha_1\,c' + \alpha_2\,c''' +
      R_3\right)
  \end{equation}
  and hence, together with~\eqref{eq:bc-bism}, it follows that
  \begin{equation}
    \label{eq:Psi-c-caseIm}
    \widetilde \Psi_1(x)\,c' + \widetilde \Psi_2(x)\,c''' =
    R_4\,,
  \end{equation}
  with $ R_4 := R_2 + \Phi(x) \cdot M_\phi^{-1}R_3 $ and
  \begin{equation}
    \label{tildepsipsi}
    \widetilde \Psi_1(x) := \Psi_1(x) - \Phi(x) \cdot
    M_\phi^{-1}\,\alpha_1\,, \quad \widetilde \Psi_2(x) :=
    \Psi_2(x) - \Phi(x) \cdot M_\phi^{-1}\,\alpha_2\,.
  \end{equation}
  {}From Lemma~\ref{lem:tildePsi} we know that $\mathrm{Rank}
  (\widetilde \Psi_1 , \widetilde \Psi_2 ) = 2$ and deduce
  from~\eqref{eq:Psi-c-caseIm} that $c'=\cO(A)$ and
  $c'''=\cO(A)$. Using then~\eqref{eq:b-caseIm}
  and~\eqref{eq:b''m}, we also deduce $b=\cO(A)$ and
  $b''=\cO(A)$, and the proof is complete in this case.

  \medskip\noindent$\rhd$ \emph{Partially harmonic case
    $(1 \le d_\phi \le d-1)$}. Let
  $\{ e_1 , \ldots , e_{d_\phi} , e_{d_\phi+1} , \ldots , e_d \}$
  be a basis of $\R^d$ such that
  $\{ e_1 , \ldots , e_{d_\phi} \}$ generates $E_\phi$. For any
  vector $x \in \R^d$, we shall write $x = (\hat x , \check x )$
  with $\hat x \in \R^{d_\phi}$ and $\check x \in
  \R^{d-d_\phi}$. Similarly, we use the notation
  $\xi(x) = ( \hat \xi(x) , \check \xi(x))$ for a vector-field
  $\xi : \R^d \to \R^d$. In particular one has
  $\Phi = (\widehat\Phi , 0)$ and also $\check b =0$ so that
  $b=(\hat b, 0)$ as a consequence
  of~\eqref{consd}. Hence~\eqref{eq:bc-bism} becomes
  \begin{equation}
    \label{eq:bc-IIm}
    \Psi_1\,c' + \Psi_2\,c''' - \widehat\Phi \cdot \hat b = R_2\,.
  \end{equation}
  The matrix $M_\phi$ defined in~\eqref{eq:Mphim} is given by
  \[
    M_\phi =
    \begin{pmatrix}
      \widehat M_\phi & 0\\
      0 & 0
    \end{pmatrix}
  \]
  where
  \begin{equation}
    \label{eq:hatMphim}
    \widehat M_\phi := \langle \widehat\Phi \otimes \widehat\Phi
    \rangle \in \fM^{\text{\tiny sym}}_{d_\phi \times d_\phi}
    (\R)\,.
  \end{equation}
  Following the same procedure as in the fully non-harmonic case,
  we obtain after multiplication by $\widehat\Phi$ and
  integration against $\rho$ that
  \begin{equation}
    \label{eq:Mphib-IIm}
    \widehat M_\phi\,\hat b = \hat\alpha_1\,c' +
    \hat\alpha_2\,c''' + \hat R_3\,,
  \end{equation}
  with
  $\hat R_3 := -\,\langle R_2\,\widehat\Phi \rangle = \cO(A)$,
  $\hat \alpha_1 := \langle \Psi_1\,\widehat\Phi \rangle$ and
  $ \hat \alpha_2 := \langle \Psi_2\,\widehat\Phi \rangle$. The
  matrix $\widehat M_\phi$ is invertible (see
  Lemma~\ref{lem:Mphi-caseI} in Appendix~\ref{app:tech})
  and~\eqref{eq:Mphib-IIm} yields
  \begin{equation}
    \label{eq:b-caseIIm}
    \hat b = \widehat M_\phi^{-1}\,\big(\hat \alpha_1\,c' + \hat
    \alpha_2\,c''' + \hat R_3\big)\,.
  \end{equation}
  Hence, together with~\eqref{eq:bc-IIm}, it follows that
  \[
    \widehat \Psi_1 (x)\,c' + \widehat \Psi_2(x)\,c''' = \hat R_4
  \]
  with
  $\hat R_4 := R_2 + \widehat\Phi(x) \cdot \widehat
  M_\phi^{-1}\,\hat R_3 = \cO(A)$ and
  \begin{equation}
    \label{hatpsipsi}
    \widehat \Psi_1(x) := \Psi_1(x) - \widehat\Phi(x) \cdot
    \widehat M_\phi^{-1}\,\hat \alpha_1\,, \quad \widehat
    \Psi_2(x) := \Psi_2(x) - \widehat\Phi(x) \cdot \widehat
    M_\phi^{-1}\,\hat \alpha_2\,.
  \end{equation}
  As in the full rank case,
  $\mathrm{Rank} (\widehat \Psi_1 , \widehat \Psi_2 ) = 2$
  according to Lemma~\ref{lem:tildePsi} and we deduce that
  $c'=\cO(A)$ and $c'''=\cO(A)$. From~\eqref{eq:b-caseIIm}
  and~\eqref{eq:b''m}, we also get $\hat b=\cO(A)$ and
  $\hat b''=\cO(A)$, and since $\check b =0$ we eventually get
  $b=\cO(A)$ and $b''=\cO(A)$, which completes the proof of the
  case partially harmonic case.

  \medskip\noindent$\rhd$ \emph{Fully harmonic case
    $(d_\phi = 0)$}. We read from~\eqref{eq:expe}
  and~\eqref{consp} that $\langle e\rangle=c=0=c'=c''$ and
  from~\eqref{eq:expm},~\eqref{consd} and~\eqref{hyp:intnorm}
  that $\langle m\rangle=b=0=b'=b''$, which completes the proof
  of the case fully harmonic case.
\end{proof}

\subsection{Control of \texorpdfstring{$b'$, $c''$ and $c$}{b',
    c'' and c}}
\label{ssec:controlbcmprimitive}

\begin{lemma}
  \label{lem:Liapccc-bb}
  Consider the function $h$ as in Proposition~\ref{prop:min}. The
  functions~$b$ and $c$ as defined in Lemma~\ref{lem:exprme} obey
  the two differential inequalities
  \[
    \label{eq:Liapccc-bb-macro}
    \frac{\mathrm d}{\mathrm dt} \langle -b,b' \rangle \le
    -\,|b'|^2 + \cO(|A|^2)\quad\mbox{and}\quad \frac{\mathrm
      d}{\mathrm dt} \langle - c' , c'' \rangle \le -\,|c''|^2 +
    \cO(|A|^2)\,.
  \]
\end{lemma}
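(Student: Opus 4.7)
The claim is an almost immediate consequence of \refres{Lemma}{bbccm}, which already controls $|b|$, $|b''|$, $|c'|$ and $|c'''|$ by $|A|$, but does \emph{not} control the remaining derivatives $b'$ and $c''$. My plan is to use standard Lyapunov-type functionals designed so that, upon differentiation in $t$, the missing coercive term $|b'|^2$ (resp.\ $|c''|^2$) appears with the correct sign while the leftover cross term involves only quantities already estimated in \refres{Lemma}{bbccm}.

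Concretely, since $b(t)$ is a function of $t$ only with values in $\R^d$ and $c(t)$ is scalar, the brackets $\langle -b,b'\rangle$ and $\langle -c',c''\rangle$ stand for $-b(t)\cdot b'(t)$ and $-c'(t)\,c''(t)$ respectively (Euclidean inner product in $\R^d$, respectively in $\R$). Differentiating in $t$ and using the product rule, I would write
\[
\frac{\mathrm d}{\mathrm dt}\langle -b,b'\rangle = -|b'|^2 - b\cdot b''\,,\qquad
\frac{\mathrm d}{\mathrm dt}\langle -c',c''\rangle = -|c''|^2 - c'\,c'''\,.
\]
Then the Cauchy--Schwarz inequality (in $\R^d$ in the first case, and trivially in the second) together with the bound from \refres{Lemma}{bbccm} yields
\[
|b\cdot b''|\le |b|\,|b''|\lesssim |A|^2\,,\qquad |c'\,c'''|\le |c'|\,|c'''|\lesssim |A|^2\,,
\]
so both cross terms are $\cO(|A|^2)$ and the two differential inequalities follow at once.

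Because the argument simply combines the product rule, Cauchy--Schwarz, and the previously established a priori bound, there is no genuine obstacle here; the substantive work was done in \refres{Lemma}{bbccm}. The only point worth pausing on is the choice of the Lyapunov functionals $-b\cdot b'$ and $-c'\,c''$: they are tailored precisely so that the second time-derivative drops to one slot below what \refres{Lemma}{bbccm} controls, thereby converting the a priori control on $b,b'',c',c'''$ into coercivity on the missing slots $b'$ and $c''$. This is the mechanism that will later be combined with \refres{Lemma}{decayA} (control of $A$) in \refres{Proposition}{prop:min} to close the cascade and force $h\equiv 0$.
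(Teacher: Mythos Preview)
Your proof is correct and essentially identical to the paper's: differentiate by the product rule to get $-|b'|^2 - b\cdot b''$ and $-|c''|^2 - c'\,c'''$, then bound the cross terms by $\cO(|A|^2)$ using \eqref{eq:bbccm}. The paper states it in one line without even invoking Cauchy--Schwarz explicitly, but the argument is the same.
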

\begin{proof}[Proof of Lemma~\ref{lem:Liapccc-bb}]
  We write
  \[
    \frac{\mathrm d}{\mathrm dt} \langle -b,b' \rangle = \langle
    -b',b' \rangle + \langle -b,b''
    \rangle\quad\mbox{and}\quad\frac{\mathrm d}{\mathrm dt} \langle
    -c',c'' \rangle = \langle -c'',c'' \rangle + \langle -c',c'''
    \rangle
  \]
  and notice that $\langle -b,b'' \rangle=\cO(|A|^2)$ and
  $\langle -c',c''' \rangle=\cO(|A|^2)$ by~\eqref{eq:bbccm}.
\end{proof}

\begin{lemma}
  \label{lem:eq:c<bbccc}
  The function $c$ as defined in Lemma~\ref{lem:exprme} is such
  that
  \begin{equation}
    \label{eq:c<bbccc2}
    |c| \lesssim |b'| + |c''|
    \quad \mbox{ and } \quad 
    |c''| \lesssim |b'| + |c|\,.
  \end{equation}
\end{lemma}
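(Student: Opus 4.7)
The plan is to deduce both inequalities from a single scalar linear identity linking $c(t)$, $c''(t)$ and $b'(t)$, obtained by evaluating the global conservation of total energy (the second identity in~\eqref{consme} of Corollary~\ref{Cor:GlobalConservationLaws2}) on the explicit form of $r$ from~\eqref{eq:expr}. Since $\langle e\rangle = c$, the conservation law reads $\sqrt{\tfrac{d}{2}}\,c + \langle\phi\,r\rangle = 0$ at every time $t\ge0$.

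Substituting $r = -x\cdot b' + \tfrac{\xi_2}{2\sqrt{2d}}\,c'' + \sqrt{\tfrac2d}\,\xi_\phi\,c$ into $\langle\phi\,r\rangle$ and using $\langle\phi\,\xi_\phi\rangle = \langle\xi_\phi^2\rangle$ together with $\langle\phi\,\xi_2\rangle = \langle\xi_\phi\,\xi_2\rangle$, the conservation law rewrites as the scalar equation
\[
\Bigl(\sqrt{\tfrac d2} + \sqrt{\tfrac 2d}\,\langle\xi_\phi^2\rangle\Bigr)\,c + \frac{\langle\xi_\phi\,\xi_2\rangle}{2\sqrt{2d}}\,c'' = \langle\phi\,x\rangle\cdot b',
\]
whose three coefficients are all finite thanks to the moment bounds~\eqref{eq:momentspace}. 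The first inequality follows immediately: the coefficient of $c$ is bounded below by $\sqrt{d/2}>0$, so solving for $c$ and applying the triangle inequality gives $|c|\lesssim|b'|+|c''|$ with constants depending only on bounded moments of $\phi$.

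The second inequality would be obtained symmetrically by solving the same identity for $c''$, and I expect this to be the main obstacle: one must verify that the coefficient $\langle\xi_\phi\,\xi_2\rangle$ does not vanish. This non-vanishing is very plausible because $\phi$ is centred and confining (by~\eqref{hyp:intnorm}) and satisfies $\langle\nabla_x^2\phi\rangle = \mathrm{Id}$ (by~\eqref{eq:phiid}), so $\phi$ and $|x|^2$ tend to be positively correlated; a clean justification should follow from a monotonicity or coupling argument exploiting these structural hypotheses, perhaps by comparison with the harmonic potential $|x|^2/2$. Once this non-vanishing is secured, solving the displayed identity for $c''$ yields $|c''|\lesssim|b'|+|c|$ with constants depending on $|\langle\xi_\phi\,\xi_2\rangle|^{-1}$ and the bounded moments from~\eqref{eq:momentspace}, completing the plan.
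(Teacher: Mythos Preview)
Your approach is exactly the paper's: the paper multiplies the expression~\eqref{eq:expr} for $r$ by $\xi_\phi$, integrates against $\rho$, uses $\langle r\,\phi\rangle=-\sqrt{d/2}\,c$ and $\langle r\rangle=0$ from~\eqref{consme}, and obtains precisely the scalar identity you derived; it then writes ``which completes the proof'' with no further justification. In particular, the paper does not verify $\langle\xi_\phi\,\xi_2\rangle\neq 0$ either---the point you flag as the main obstacle for the second inequality is left implicit in the original argument as well, so your proposal matches the paper both in method and in level of detail.
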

\begin{proof}[Proof of Lemma~\ref{lem:eq:c<bbccc}.]
  Multiplying~\eqref{eq:expr} by $\xi_\phi$ and integrating
  against $\rho$, we obtain
  \[
    c \left(\sqrt{\tfrac2d}\left\langle \xi_\phi^2 \right\rangle
      + \sqrt{\tfrac d2}\,\right) = \langle
    x\,\xi_\phi\rangle\cdot
    b'-\tfrac{\langle\xi_2\,\xi_\phi\rangle}{2\,\sqrt{2\,d}}\,c''
  \]
  using $\langle r\,\phi\rangle=-\sqrt{\tfrac d2}\,c$ and
  $\langle r\rangle=0$ by~\eqref{consme}, which completes the
  proof.
\end{proof}

\subsection{A Lyapunov function method}
\label{Proof3.1}

We define the Lyapunov function
\[
  \cF [h] := \| h \|^2 -
  \varepsilon_A\,\big\langle(X-Y\cdot\nabla_x\phi),\nabla_x\phi\cdot
  A\,x\big\rangle - \varepsilon_b\,\langle b,b' \rangle -
  \varepsilon_c\,\langle c' , c'' \rangle\,,
\]
for some positive constants $\varepsilon_A$, $\varepsilon_b$ and
$\varepsilon_c$ to be chosen later.
\begin{lemma}
  \label{LyapunovSpecial}
  With the above notations, if $h$ is defined as in
  Proposition~\ref{prop:min}, then
  \begin{equation}
    \label{eq:cFgepi}
    \cF[h]\simeq\|h\|^2
  \end{equation}
  for $\varepsilon_A$, $\varepsilon_b$ and $\varepsilon_c$ small
  enough.
\end{lemma}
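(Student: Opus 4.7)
The plan is to show that each of the three correction terms subtracted from $\|h\|^2$ in $\cF[h]$ is bounded by a universal constant times $\|h\|^2$; the equivalence~\eqref{eq:cFgepi} then follows by choosing $\varepsilon_A$, $\varepsilon_b$, $\varepsilon_c$ small enough that the total perturbation cannot swallow $\|h\|^2$.

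First, I would derive uniform a priori bounds on the six scalar quantities $|A|$, $|b|$, $|b'|$, $|c|$, $|c'|$, $|c''|$ in terms of $\|h\|$. Using the representation~\eqref{eq:expm} together with $\langle x\rangle = 0$ from~\eqref{hyp:intnorm} and the skew-symmetry of $A$ (which makes $\langle A\,x\cdot x\rangle = 0$ and $\langle A\,x\cdot b\rangle = 0$), an explicit expansion yields
\[
  \|m\|^2 = |A|^2 + |b|^2 + \tfrac{(c')^2}{2\,d}\,\langle|x|^2\rangle,
\]
which, combined with~\eqref{Hermite-pih}, already gives $|A|^2 + |b|^2 + (c')^2 \lesssim \|h\|^2$. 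For the remaining three quantities, I would differentiate $b(t) = \langle m\rangle$ and $c(t) = \langle e\rangle$ and use the macroscopic equations~\eqref{rmem}--\eqref{rmee} (in which $h^\bot = 0$ and $\Theta[h^\bot]=0$ because $h$ is a special mode), then integrate by parts against $\rho = e^{-\phi}$ to get representation formulas such as $b' = -\langle r\,\nabla_x\phi\rangle$ and analogous expressions for $c'$, $c''$. Cauchy--Schwarz together with~\eqref{eq:momentspace} yields $|b'| + |c'| + |c''| \lesssim \|r\| + \|m\| + \|e\| \lesssim \|h\|$, and Lemma~\ref{lem:eq:c<bbccc} then provides $|c| \lesssim |b'| + |c''| \lesssim \|h\|$.

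Next, I would bound each correction. Direct Cauchy--Schwarz gives $|\langle b,b'\rangle| \leq |b|\,|b'| \lesssim \|h\|^2$ and $|\langle c',c''\rangle| \leq |c'|\,|c''| \lesssim \|h\|^2$. For the $\varepsilon_A$-term, the definitions~\eqref{eq:defX}--\eqref{eq:defY} of $X$ and $Y$ together with the moment bounds~\eqref{eq:momentspace} yield
\[
  \|X - Y\cdot\nabla_x\phi\| \lesssim |c| + |c''| + |b'|, \qquad \|\nabla_x\phi\cdot A\,x\| \lesssim |A|,
\]
so another application of Cauchy--Schwarz and the bounds from Step~1 produce
\[
  \bigl|\langle X - Y\cdot\nabla_x\phi,\,\nabla_x\phi\cdot A\,x\rangle\bigr| \lesssim (|c| + |c''| + |b'|)\,|A| \lesssim \|h\|^2.
\]

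Finally, collecting these bounds with a universal constant $C_0 > 0$ (depending only on moments of $\phi$ and the dimension) gives
\[
  \bigl|\cF[h] - \|h\|^2\bigr| \leq C_0\,(\varepsilon_A + \varepsilon_b + \varepsilon_c)\,\|h\|^2,
\]
so choosing $\varepsilon_A$, $\varepsilon_b$, $\varepsilon_c$ with $C_0(\varepsilon_A + \varepsilon_b + \varepsilon_c) \leq \tfrac12$ yields $\tfrac12\|h\|^2 \leq \cF[h] \leq \tfrac32\|h\|^2$, establishing~\eqref{eq:cFgepi}. The main obstacle is Step~1: whereas $|A|$, $|b|$, $|c'|$ fall out of the orthogonal decomposition of $\|m\|^2$, controlling $|b'|$, $|c''|$, $|c|$ by $\|h\|$ requires invoking the macroscopic equations specialised to $h^\bot=0$ and using the moment assumption~\eqref{eq:momentspace} together with Lemma~\ref{lem:eq:c<bbccc}; care is also needed so that the $\simeq$ constants remain independent of the partially harmonic decomposition of $\phi$.
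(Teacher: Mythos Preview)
Your proof is correct and follows essentially the same strategy as the paper: bound each of the three correction terms by a universal constant times $\|h\|^2$ via the representation formulas~\eqref{eq:exp} and integration-by-parts identities such as $b'=-\langle r\,\nabla_x\phi\rangle$, then absorb them by choosing the $\varepsilon$'s small. Two minor remarks: your detour through Lemma~\ref{lem:eq:c<bbccc} to control $|c|$ is unnecessary since $|c|=|\langle e\rangle|\le\|e\|\le\|h\|$ holds directly; and the paper, in the course of its argument, also records the sharper intermediate bound $\cF[h]\lesssim|A|^2+|b'|^2+|c''|^2$ (equation~\eqref{eq:FAbc}), which is not needed for the present lemma but is used in the dissipation estimate of the next one.
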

\begin{proof}[Proof of Lemma~\ref{LyapunovSpecial}]
  {}From~\eqref{Hermite-pih}, we know that
  $\|h\|^2=\|r\|^2+\|m\|^2+\|e\|^2$ and it follows
  from~\eqref{eq:expr},~\eqref{eq:expm} and~\eqref{eq:expe} that
  \begin{equation}
    \label{eq:mre}
    \|r\|^2\lesssim|b'|^2+|c|^2+|c''|^2\,,\quad \|m\|^2 \simeq
    |b|^2 + |A|^2 + |c'|^2 \quad \mbox{and} \quad \|e\|^2=|c|^2\,.
  \end{equation}
  By Lemma~\ref{bbccm}, we obtain
  \[
    \|m\|^2\lesssim|A|^2\,,\quad\big|\langle b,b'
    \rangle\big|\lesssim|A|^2+|b'|^2\,,\quad\big|\langle c',c''
    \rangle\big|\lesssim|A|^2+|c''|^2\,.
  \]
  By Lemma~\ref{lem:eq:c<bbccc}, we know that
  $|c|^2\lesssim|b'|^2+|c''|^2$ and obtain
  \[
    \|r\|^2\lesssim|b'|^2+|c''|^2\,, \quad \|c\|^2 \lesssim
    |b'|^2 + |c''|^2\,, \quad \|e\|^2 \lesssim |b'|^2 + |c''|^2\,,
  \]
  \[
    \big|\big\langle(X-Y\cdot\nabla_x\phi),\nabla_x\phi\cdot
    A\,x\big\rangle\big|\lesssim|A|^2+|b'|^2+|c''|^2\,.
  \]
  Altogether we have the upper estimate
  \begin{equation}
    \label{eq:FAbc}
    \cF[h]\lesssim|A|^2+|b'|^2+|c''|^2
  \end{equation}
  and, using~\eqref{eq:c<bbccc2},
  \[
    \cF[h]\lesssim|A|^2+|b'|^2+|c|^2\,.
  \]
  Using~\eqref{SMrmem-b}, we notice that
  \begin{equation}
    \label{eq:br}
    b'=\langle\partial_tm\rangle=-\,\langle\nabla_xr\rangle=-\,\langle
    r\,\nabla_x\phi\rangle\le\|r\|\,\|\nabla_x\phi\|\lesssim\|r\|
  \end{equation}
  performing one integration by parts and using Cauchy-Schwarz
  inequality. It is then clear that $|A|^2\lesssim\|m\|^2$ and
  $|c|^2=\|e\|^2$, so that by~\eqref{Hermite-pih},
  \[
    \cF[h]\lesssim\|r\|^2+\|m\|^2+\|e\|^2=\|h\|^2\,.
  \]

  Then, using~\eqref{eq:mre} again, we have the lower bound
  estimate
  \begin{multline*}
    2\,\cF[h]-\|h\|^2\gtrsim\|r\|^2+\big(|b|^2+|A|^2+|c'|^2\big)+|c|^2\\
    \hspace*{2cm}-\,2\,\varepsilon_A\,\big( |A|^2 + |b'|^2 +
    |c''|^2 \big) - 2\, \varepsilon_b\,\big(|A|^2+|b'|^2\big)
    -2\,\varepsilon_c\,\big(|A|^2+|c''|^2\big)\,.
  \end{multline*}
  We know from~\eqref{eq:br} that $|b'|^2\lesssim\|r\|^2$ and,
  using~\eqref{eq:c<bbccc2}, we also have that
  $|c''|^2\lesssim|b'|^2+|c|^2\lesssim\|r\|^2+|A|^2$. As a
  consequence, we have that
  \[
    2\,\cF[h]-\|h\|^2\ge0
  \]
  if $\varepsilon_A$, $\varepsilon_b$ and $\varepsilon_c$ are
  chosen small enough, which completes the proof.
\end{proof}

\begin{lemma}
  \label{LyapunovSpecialExpDecay}
  With the above notations, if $h$ is defined as in
  Proposition~\ref{prop:min}, then for some $\varepsilon_A$,
  $\varepsilon_b$ and $\varepsilon_c$ small enough, there is a
  positive constant $\lambda$ such that
  \[
    \frac{\mathrm d}{\mathrm dt} \cF [h] \le -\,\lambda\,\cF[h]\,.
  \]
\end{lemma}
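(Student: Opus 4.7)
The plan is to differentiate $\cF[h]$ term by term and combine the three decay-type estimates that have already been established.

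First I would use Lemma~\ref{lem:piL2} to kill the main term: $\frac{\mathrm d}{\mathrm dt}\|h\|^2 = 0$. Then Lemma~\ref{lem:decayA} provides the negative $|A|^2$ contribution coming from the rigidity constant $c_{\mathrm K}$, namely
\[
-\,\varepsilon_A\,\frac{\mathrm d}{\mathrm dt}\big\langle (X-Y\cdot\nabla_x\phi),\nabla_x\phi\cdot A\,x\big\rangle \le -\,\varepsilon_A\,c_{\mathrm K}\,|A|^2\,,
\]
while the two inequalities of Lemma~\ref{lem:Liapccc-bb} yield
\[
-\,\varepsilon_b\,\frac{\mathrm d}{\mathrm dt}\langle b,b'\rangle \le -\,\varepsilon_b\,|b'|^2 + \varepsilon_b\,\mathcal O(|A|^2)\quad\text{and}\quad -\,\varepsilon_c\,\frac{\mathrm d}{\mathrm dt}\langle c',c''\rangle \le -\,\varepsilon_c\,|c''|^2 + \varepsilon_c\,\mathcal O(|A|^2)\,.
\]
Summing these estimates gives a control of the form
\[
\frac{\mathrm d}{\mathrm dt}\cF[h] \le -\,\big(\varepsilon_A\,c_{\mathrm K} - C\,(\varepsilon_b+\varepsilon_c)\big)\,|A|^2 - \varepsilon_b\,|b'|^2 - \varepsilon_c\,|c''|^2
\]
for some constant $C>0$ coming from the $\mathcal O(|A|^2)$ terms.

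Next, I would fix $\varepsilon_A$ (small enough that Lemma~\ref{LyapunovSpecial} applies) and then choose $\varepsilon_b$ and $\varepsilon_c$ much smaller than $\varepsilon_A\,c_{\mathrm K}/C$, so that the $|A|^2$ coefficient stays positive, say $\ge \tfrac12\,\varepsilon_A\,c_{\mathrm K}$. This yields
\[
\frac{\mathrm d}{\mathrm dt}\cF[h] \le -\,\lambda_0\,\big(|A|^2 + |b'|^2 + |c''|^2\big)
\]
with $\lambda_0 := \min\big\{\tfrac12\,\varepsilon_A\,c_{\mathrm K},\varepsilon_b,\varepsilon_c\big\}$.

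To close the argument I would invoke~\eqref{eq:FAbc} from the proof of Lemma~\ref{LyapunovSpecial}, which states precisely $\cF[h] \lesssim |A|^2 + |b'|^2 + |c''|^2$. This provides a constant $c_1>0$ such that $c_1\,\cF[h] \le |A|^2 + |b'|^2 + |c''|^2$, and therefore
\[
\frac{\mathrm d}{\mathrm dt}\cF[h] \le -\,\lambda\,\cF[h]\,,\qquad \lambda := c_1\,\lambda_0\,,
\]
which is the desired inequality. The one point requiring care is the compatibility of the smallness choice for $\varepsilon_A$, $\varepsilon_b$, $\varepsilon_c$: they must simultaneously ensure $\cF[h] \simeq \|h\|^2$ (Lemma~\ref{LyapunovSpecial}) and the positive coefficient of $|A|^2$ above. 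Since both conditions are of the form ``$\varepsilon_b,\varepsilon_c$ sufficiently small relative to $\varepsilon_A$'', they can be met by a single final choice of parameters, and this is the only mildly delicate step of the proof.
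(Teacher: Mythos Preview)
Your proof is correct and follows essentially the same approach as the paper's own argument: differentiate $\cF[h]$ term by term using Lemmas~\ref{lem:piL2}, \ref{lem:decayA} and~\ref{lem:Liapccc-bb}, choose $\varepsilon_b,\varepsilon_c$ small relative to $\varepsilon_A$ to keep the $|A|^2$ coefficient positive, and then close with~\eqref{eq:FAbc}. Your added remark on the compatibility of the smallness conditions with Lemma~\ref{LyapunovSpecial} is a nice clarification that the paper leaves implicit.
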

\begin{proof}[Proof of Lemma~\ref{LyapunovSpecialExpDecay}]
  Using Lemma~\ref{lem:decayA}, Lemma~\ref{lem:Liapccc-bb}
  and~\eqref{decayA2}, we have
  \begin{multline*}
    -\,\frac{\mathrm d}{\mathrm
      dt}\cF[h]=-\,\varepsilon_A\,\|\nabla_x\phi\cdot
    A\,x\|^2+\varepsilon_b\,|b'|^2 + \varepsilon_c\, |c''|^2-
    (\varepsilon_b+\varepsilon_c) \, \cO(|A|^2)\\
    \ge\varepsilon'_A\,|A|^2 + \varepsilon_b\,|b'|^2 +
    \varepsilon_c\, |c''|^2 \gtrsim \cF[h]\,,
  \end{multline*}
  by choosing $\varepsilon_b$ and $\varepsilon_c$ small enough
  compared to $\varepsilon_A$ and using~\eqref{eq:FAbc} in the
  last inequality.
\end{proof}

\begin{proof}[Proof of Proposition~\ref{prop:min}]
  Let $h_0=h(t=0)$. Thanks to Gr\"onwall's lemma and the
  equivalence~\eqref{eq:cFgepi}, we deduce
  \[
    \forall\,t\ge0\,,\quad\| h(t) \|^2 \lesssim \cF[h(t)] \le
    e^{-\lambda\,t}\,\cF[h_0]\lesssim
    e^{-\lambda\,t}\,\|h_0\|^2\,.
  \]
  By Lemma~\ref{lem:exprme}, we know that $A$ is constant in
  time. Using for instance~\eqref{eq:mre}, we deduce from
  \[
    |A|^2 \lesssim \lim_{t \to +\infty}\| h(t) \|^2 = 0
  \]
  that $A = 0$. By Lemma~\ref{bbccm}, we get that $b=0$ and
  $c'=0$ for any $t\ge0$ so that $c$ is independent of
  $t$. Taking for instance~\eqref{eq:FAbc} into account, we
  conclude that $h = 0$.
\end{proof}

Completing the proof of Proposition~\ref{prop:min} means that
Part~(1) of Theorem~\ref{theo:main} is established.

\section{Proof of hypocoercivity by the micro-macro method}
\label{sec:micmac}
\setcounter{equation}{0}
\setcounter{theorem}{0}

In this section we prove Part (2) of Theorem~\ref{theo:main} on
hypocoercivity using the \emph{micro-macro decomposition} of the
solution as in Section~\ref{ssec:rescaling}. The proof of
Proposition~\ref{prop:min} is our a guideline for a new cascade
of estimates, but the analysis is however more complex due to the
presence of microscopic terms.

\subsection{Statement}
\label{ssec:Statement2}

Theorem~\ref{theo:main}, Part (2) can be rewritten as follows.
\begin{proposition}
  \label{prop:hypomm}
  Consider a solution $h$ to~\eqref{eq:defh}--\eqref{eq:mainh} in
  $\mathrm L^2(\cM)$ with initial datum $h_0$. There exist two
  positive constants $C$ and $\kappa$ such that
  \[
    \forall\,t\ge0\,,\quad\| h(t) \| \le C\,e^{-\kappa\,t}
    \left\| h_0 \right\|\,.
  \]
\end{proposition}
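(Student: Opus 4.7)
The plan is to mimic the Lyapunov strategy of Section~\ref{sec:minimizers}, but in order to close the estimates I must absorb microscopic error terms coming from $h^\bot$. Using the micro-macro decomposition $h=\pih+h^\bot$ with $\pih=r+m\cdot v+e\,\fE(v)$, the macroscopic equations~\eqref{rme} now carry source terms involving $E[h^\bot]$ and $\Theta[h^\bot]$, and the quantities $A(t):=\langle\nabla_x^{\text{\tiny skew}}m\rangle$, $b(t):=\langle m\rangle$, $c(t):=\langle e\rangle$ no longer satisfy the closed system of Section~\ref{ssec:macrosols} exactly. Instead, after projecting~\eqref{rme} onto appropriate modes and using the global conservation laws of Corollary~\ref{Cor:GlobalConservationLaws2}, each relation used in Lemmas~\ref{lem:decayA}--\ref{lem:eq:c<bbccc} gets an additional remainder bounded by $\|h^\bot\|$ thanks to~\eqref{eq:lbound}, which is precisely the quantity controlled by the entropy dissipation~\eqref{eq:hyp-sg-C}.

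First I would use $\cT^*=-\cT$ and~\eqref{eq:hyp-sg-C} to write the basic identity
\[
\tfrac{\mathrm d}{\mathrm dt}\|h\|^2=2\,\langle\cC h,h\rangle\le-\,2\,\mathrm c_\cC\,\|h^\bot\|^2.
\]
Next, to convert this microscopic dissipation into dissipation for the \emph{non-special} macroscopic components ($r$, the traceless/non-rotational part of $\nabla_{\!x}^{\text{\tiny sym}}m$, and $\nabla_x e$), I would add Hilbertian hypocoercive correctors in the spirit of~\cite{DMS15,HN04}. Concretely, using the Witten-Laplace $\Omega$ from~\eqref{omegadef} and the $0$th-order Poincaré inequality cited in the introduction, terms of the form $\langle\Omega^{-1}\nabla_x r,m\rangle$, $\langle\Omega^{-1}\nabla_x\cdot(E[\pih]-\sqrt{2/d}\,e\,\mathrm{Id}),m\rangle$ and $\langle\Omega^{-1}\nabla_x e,\Theta[\pih]\rangle$ (suitably restricted to avoid the protected modes) produce, after differentiation in time and use of~\eqref{rme}, the dissipative contributions $\|r-\langle r\rangle\|^2$, $\|m-Ax-b+\tfrac{c'}{\sqrt{2d}}x\|^2$ and $\|e-c\|^2$, modulo $\cO(\|h^\bot\|^2)$.

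The second stage, and the core of the argument, adapts the Lyapunov function of Section~\ref{sec:minimizers} to recover the \emph{special} components $A(t)$, $b(t)$, $c(t)$. I would augment the functional with
\[
-\var_A\,\big\langle X-Y\cdot\nabla_x\phi,\,\nabla_x\phi\cdot Ax\big\rangle-\var_b\,\langle b,b'\rangle-\var_c\,\langle c',c''\rangle,
\]
using the same $X,Y$ defined in~\eqref{eq:defX}--\eqref{eq:defY}. Differentiating, the computations of Lemmas~\ref{lem:decayA}, \ref{bbccm}, \ref{lem:Liapccc-bb} and~\ref{lem:eq:c<bbccc} go through but each identity picks up microscopic remainders controlled by $C\|h^\bot\|$; the Korn-type rigidity bound~\eqref{decayA2} and Lemma~\ref{lem:tildePsi} (and its partially harmonic analogue using $\widehat M_\phi$) then yield, after Cauchy-Schwarz and Young,
\[
\tfrac{\mathrm d}{\mathrm dt}\mathcal F[h]\le-\var_A'\,|A|^2-\var_b'\,|b'|^2-\var_c'\,|c''|^2-\tfrac{\mathrm c_\cC}2\,\|h^\bot\|^2+\cO(\|h^\bot\|\cdot(|A|+|b'|+|c''|)).
\]
Choosing $\var_A\gg\var_b,\var_c$ all small enough, the cross terms are absorbed and one reaches $\tfrac{\mathrm d}{\mathrm dt}\mathcal F[h]\le-\lambda\,\mathcal F[h]$ with $\mathcal F[h]\simeq\|h\|^2$ exactly as in Lemma~\ref{LyapunovSpecial}, concluding by Grönwall.

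The main obstacle is step two: the identities used in Lemmas~\ref{bbccm} and~\ref{lem:eq:c<bbccc} rely on multiplying~\eqref{SMeq:Abcm} by test functions $\nabla_x\phi$, $\Phi$, $\xi_\phi$ and integrating, and the analogous operations on~\eqref{rme} generate moments of $\cL h^\bot$ against polynomial weights. Controlling these by $\|h^\bot\|$ requires precisely the bounded moments hypothesis~\eqref{eq:lbound} together with~\eqref{eq:momentspace}, and the constants must be tracked to ensure that the $\var_b,\var_c$ chosen in the final inequality remain small enough to dominate the remainder proportional to $\|h^\bot\|$ via the microscopic dissipation $\mathrm c_\cC\|h^\bot\|^2$. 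Getting this hierarchy of smallness parameters consistent (the same difficulty as in the proof of Lemma~\ref{LyapunovSpecialExpDecay}, now with four levels of corrector and a microscopic baseline) is what makes the estimate both explicit and delicate.
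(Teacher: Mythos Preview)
Your overall strategy---a two-stage Lyapunov functional, first adding Hilbertian correctors in $\Omega^{-1}$ to dissipate the ``infinite-dimensional'' macroscopic deviations, then grafting on the finite-dimensional correctors of Section~\ref{sec:minimizers}---is exactly the paper's micro-macro method. But two concrete steps in your implementation do not close.

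\medskip
\textbf{1. The corrector for $r$ is wrong: you need $w$, not $r$.} Differentiating $\langle\Omega^{-1}\nabla_x r,m\rangle$ and inserting~\eqref{rmem} produces, besides $-\|\Omega^{-1/2}\nabla_x r\|^2$, the cross term $\sqrt{2/d}\,\langle\Omega^{-1}\nabla_x r,\nabla_x^* e\rangle$. Since $\nabla_x^* e=e\,\nabla_x\phi-\nabla_x e$ and $e=c+e_s$ with $c=\langle e\rangle$, this contains $c\,\langle\Omega^{-1}\nabla_x r,\nabla_x\phi\rangle$, which is of order $\|h\|^2$ and cannot be absorbed by any previously controlled quantity. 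The paper fixes this by replacing $r$ with $w:=r-\sqrt{2/d}\,\langle e\rangle\,\phi$ (see~\eqref{w}): then $\partial_t m=-\nabla_x w+\sqrt{2/d}\,\nabla_x^* e_s+\nabla_x^*\!\cdot E[h^\bot]$, and the problematic $c$--contribution disappears. The actual corrector is $\langle\Omega^{-1}\nabla_x w_s,m_s\rangle$ (Lemma~\ref{lem4:wsLyap}), yielding dissipation of $\|w_s\|^2$ with errors $\cO(\|e_s\|^2+\|h^\bot\|^2+\|m_s\|\,\|h\|)$.

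\medskip
\textbf{2. A fourth infinite-dimensional corrector is missing.} In the finite-dimensional stage, the analogue of~\eqref{SMeq:Abcm} is~\eqref{eq:Abchypo}, whose right-hand side is $\nabla_x^*\!\cdot m_s-\partial_t z$ (with $z$ defined in~\eqref{eq:defz}). To reproduce Lemma~\ref{bbccm} you must test this identity against $\Phi$, $\nabla_x\phi$, etc., and the resulting remainders are not $\cO(\|h^\bot\|)$ as you claim but
\[
|b|+|b''|+|c'|+|c'''|\ \lesssim\ |A|+\|\Omega^{-1/2}\partial_t w_s\|+\|m_s\|+\|h^\bot\|
\]
(Lemma~\ref{bbccmhypo}). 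The term $\|\Omega^{-1/2}\partial_t w_s\|$ is not controlled by your three correctors; the paper adds a fourth one, $\langle-\Omega^{-1}\partial_t w_s,w_s\rangle$ (second part of Lemma~\ref{lem4:wsLyap}), whose time derivative yields $-\|\Omega^{-1/2}\partial_t w_s\|^2+\cO(\|w_s\|\,\|h\|)$. Without it, the finite-dimensional cascade cannot be closed.

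\medskip
A secondary point: your claim that the infinite-dimensional errors are ``modulo $\cO(\|h^\bot\|^2)$'' is too optimistic. Each corrector produces errors of the form (lower-level quantity)$\times\|h\|$, not squared; see Lemmata~\ref{lem2:esLyap}--\ref{lem4:wsLyap}. This is why the paper uses the genuinely cascaded choice $\varepsilon_i=\varepsilon^{a_i}$ with strictly increasing exponents~\eqref{varepsilon}, and why the final remainder in Lemma~\ref{lem5:F1Lyap} is $\varepsilon^2\,C_0\,\|h\|^2$, to be beaten only after the finite-dimensional stage makes $\cD_2[h]\gtrsim\|h\|^2$.
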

Here $C$ and $\kappa$ depend only on bounded moments constants,
spectral gap constants or explicitly computable quantities
associated to $\phi$ such as the rigidity constant defined
in~\eqref{grad}. We split $h$ into a microscopic part~$h^\bot$
and a macroscopic part $\pih$ such that
\[
  h = \pih+h^\bot = r + m \cdot v + e\,\fE(v) + h^\bot
\]
where $r$, $m$ and $e$ defined by~\eqref{Eqns:macro} evolve
according to the macroscopic equations~\eqref{rme} involving the
matrix valued function $E[h]$ and the vector valued function
$\Theta[h]$ defined by~\eqref{eq:EhEhperp1}
and~\eqref{eq:EhEhperp2}. By construction, we~have
\begin{equation}
  \label{eq:Nh=Nrmehperp}
  \| h \|^2 = \| r \|^2 + \| m \|^2 + \| e \|^2 + \| h^\bot
  \|^2\,.
\end{equation}
Let deviations from averages, or \emph{space} inhomogeneous,
terms be defined~by
\begin{subequations}
  \label{emrwws}
  \begin{align}
    & r_s := r - \langle \nabla_x r \rangle \cdot x -
      \tfrac1{2\,d}\,\langle \Delta_x r
      \rangle\,\xi_2 \label{rs}\,,\\
    & m_s := m - \langle \nabla_x^{\text{\tiny skew}}m \rangle\,x
      - \tfrac1d\,\langle \nabla_x \cdot m \rangle\,x - \langle m
      \rangle \label{ms}\,,\\
    & e_s := e - \langle e \rangle \label{es}\,,\\
    & w := r - \sqrt{\tfrac2d}\,\langle e
      \rangle\,\phi \label{w}\,,\\
    & w_s := r_s - \sqrt{\tfrac2d}\,\langle e \rangle\,\phi_s
      \quad \mbox{with} \quad \phi_s := \xi_\phi -
      \tfrac1{2\,d}\,\langle \Delta_x \phi
      \rangle\,\xi_2 \label{ws}\,.
  \end{align}
\end{subequations}
We recall that $\xi_2(x) := |x|^2 - \langle |x|^2 \rangle$ and
$\xi_\phi (x):= \phi-\langle \phi \rangle$ were already defined
in~\eqref{xi2} and~\eqref{xiphi} while
$\nabla_x^{\text{\tiny skew}}m$ refers
to~\eqref{SkewSymmetricGradient}. In particular
\begin{equation}
  \label{wws}
  w_s = w - \langle \nabla_x w \rangle\,x -
  \tfrac1{2\,d}\,\langle \Delta w \rangle\,\xi_2 +
  \sqrt{\tfrac2d}\,\langle e \rangle\,\langle \phi \rangle\,.
\end{equation}

After introducing some geometric tools in
Section~\ref{ssec:toolbox}, we split the proof of
Proposition~\ref{prop:hypomm} by considering infinite-dimensional
quantities in Section~\ref{ssec:hypomicro} and finite-dimen\-sional
quantities in Section~\ref{ssec:hypomacro}; in the latter the
analysis closely follows the strategy of
Section~\ref{sec:minimizers}. From now on, we assume that $h$ is
as in Proposition~\ref{prop:hypomm}.

\subsection{Witten-Hodge operator and Korn inequality: a toolbox}
\label{ssec:toolbox}

Here we collect several classical and less classical estimates
that will be used to control the macroscopic quantities. We refer
to~\cite{CDHMM21} for references and details of constructive
proofs. Assumptions~\eqref{hyp:intnorm}--\eqref{hyp:regularity}--\eqref{eq:poincarenormal}
coincide with the hypotheses of~\cite[Section~1.2]{CDHMM21}. Let
\[
\lfloor \nabla \phi \rceil := \sqrt{ 1+ |\nabla \phi|^2}\,.
\]
$\rhd$ The \emph{strong Poincar\'e inequality}
\begin{align}
  \label{eq:poincarestrong}
  \forall\,\varphi\in\mathrm H^1(\rho)\,,\quad\int_{\R^d} |
  \varphi - \langle \varphi \rangle |^2\,\lfloor \nabla_x \phi
  \rceil^2\,\rho\,\dd x \lesssim \int_{\R^d} | \nabla_x \varphi
  |^2\,\rho\,\dd x
\end{align}
is proven in~\cite[Proposition~5]{CDHMM21}.\\
$\rhd$ In order to work in $\mathrm L^2(\rho)$, we shall use the
operator $\Omega$ introduced in~\eqref{omegadef} and considered
as an operator acting either on scalar or vector-valued
functions. As a consequence of~\eqref{eq:poincarestrong}, we have
the \emph{zeroth-order strong Poincar\'e inequality}
(see~\cite[Proposition 8]{CDHMM21}) according to which, for any
$\varphi \in \mathrm L^2(\rho)$,
\begin{equation}
  \label{eq:boundOmega-1BIS}
  \big\|\,\Omega^{-1}\,\nabla^2_x \varphi \big\| +
  \big\|\,\Omega^{-1}\,\big(\lfloor \nabla_x \phi
  \rceil\,\nabla_x \varphi\big)\big\| +
  \big\|\,\Omega^{-1}\,\big(\lfloor \nabla_x \phi
  \rceil^2\,\varphi\big)\big\| \lesssim \| \varphi \|\,.
\end{equation}
$\rhd$ The following zeroth order Poincar\'e inequality,
sometimes called the \emph{Poincar\'e-Lions inequality},
\begin{equation}
  \label{eq:poincareL2}
  \forall\,\varphi\in\mathrm L^2(\rho)\,,\quad\| \varphi -
  \langle \varphi \rangle \| \lesssim
  \big\|\,\Omega^{-\frac12}\,\nabla_x \varphi \big\| \lesssim \|
  \varphi- \langle \varphi \rangle \|\,,
\end{equation}
is proven in~~\cite[Proposition 5]{CDHMM21}.\\
$\rhd$ The \emph{$(-1)^{th}$ order Poincar\'e-Lions inequality}
\begin{equation}
  \label{eq:poincareH-1}
  \forall\,\varphi\in\mathrm
  H^{-1}(\rho)\,,\quad\big\|\,\Omega^{-\frac12}\,\big( \varphi -
  \langle \varphi \rangle \big) \big\| \lesssim
  \big\|\,\Omega^{-1}\,\nabla_x \varphi \big\| \lesssim
  \big\|\,\Omega^{-\frac12}\,\big( \varphi - \langle \varphi
  \rangle \big) \big\|\,,
\end{equation}
is proven in~\cite[Lemma~10]{CDHMM21} as well as its variant
\begin{equation}
  \label{eq:poincareVar}
  \forall\,\varphi\in\mathrm L^2(\rho)\,,\quad\| \varphi - \langle
  \varphi \rangle \| \lesssim \big\|
  \nabla_x\,\Omega^{-\frac12}\,\varphi \big\| +
  \big\|\,\Omega^{-\frac12}\,\nabla_x \varphi \big\| \lesssim
  \big\| \varphi - \langle \varphi \rangle \big\|\,.
\end{equation}
$\rhd$ Another key estimate is the \emph{zeroth-order
  Korn-Poincar\'e inequality}: for any vector field
$u : \R^d \to \R^d$ such that $\langle u \rangle = 0$ and
$ \langle \nabla_x^{\text{\tiny skew}}u \rangle = 0$,
\begin{equation}
  \label{eq:Korn1}
  \| u \| \lesssim
  \big\|\,\Omega^{-\frac12}\,\nabla_x^{\text{\tiny sym}} u
  \big\|\,,
\end{equation}
which is established in~\cite[Theorem~4]{CDHMM21}
using~\eqref{skewsym}.

\subsection{Control of infinite-dimensional quantities}
\label{ssec:hypomicro}

We build an entropy function by assembling dissipative
functionals for $h^\bot$ and the space inhomogeneous terms
defined in~\eqref{emrwws}.

\subsubsection{Control of \texorpdfstring{$h^\bot$}{hbot}}

We first control the dissipation of the microscopic part.
\begin{lemma}
  \label{lem1:hperpLyap}
  If $h$ is a solution to~\eqref{eq:mainh} in $\mathrm L^2(\cM)$,
  then
  \begin{equation}
    \label{eq:hperpLyap}
    \frac{\mathrm d}{\mathrm dt} \| h \|^2 \le -\,2 \, \mathrm
    c_{\sC}\,\| h^\bot \|^2\,.
  \end{equation}
\end{lemma}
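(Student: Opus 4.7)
The strategy is standard: differentiate $\|h\|^2$, use that the transport $\cT$ is antisymmetric in the weighted space $\mathrm L^2(\cM)$, and translate the spectral gap assumption \eqref{eq:hyp-sg-C} for $\sC$ on $\mathrm L^2(\mu^{-1})$ into a spectral gap for $\cC$ on $\mathrm L^2(\cM)$. Since $h$ solves $\partial_t h = \cT h + \cC h$, this yields
\[
  \tfrac12\,\tfrac{\mathrm d}{\mathrm dt}\|h\|^2 = \langle h,\cT h\rangle + \langle h,\cC h\rangle \le -\,\mathrm c_{\sC}\,\|h^\bot\|^2,
\]
which is exactly \eqref{eq:hperpLyap}.

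For the antisymmetry of $\cT$ in $\mathrm L^2(\cM)$, I would write $\cT = X\cdot\nabla_{x,v}$ with $X=(-v,\nabla_x\phi)$. Two integrations by parts (one in $x$, one in $v$) give
\[
  2\,\langle \cT h, h\rangle = -\iint_{\R^d\times\R^d} h^2\,\bigl(\nabla_{x,v}\cdot X + X\cdot\nabla_{x,v}\log\cM\bigr)\,\cM\,\dd x\,\dd v = 0,
\]
because $\nabla_{x,v}\cdot X = 0$ by \eqref{DivergenceFree} and because $\cM$ depends on $(x,v)$ only through $\cH$ with $X\cdot\nabla_{x,v}\cH = -\,v\cdot\nabla_x\phi + \nabla_x\phi\cdot v = 0$; equivalently, the two contributions of $-\,v\cdot\nabla_x h \cdot h\,\cM$ and $\nabla_x\phi\cdot\nabla_v h \cdot h\,\cM$ yield after integration by parts opposite terms of the form $\mp\tfrac12\,h^2\,v\cdot\nabla_x\phi\,\cM$ that cancel.

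For the spectral gap of $\cC$ in $\mathrm L^2(\cM)$, I would set $f_x(v):=\mu(v)\,h(x,v)$ at fixed $x$ and use the definition $\cC h = \mu^{-1}\,\sC(\mu h)$ from \eqref{eq:cC} to write
\[
  -\,\langle \cC h, h\rangle = -\int_{\R^d}\rho(x)\int_{\R^d}(\sC f_x)(v)\,f_x(v)\,\mu(v)^{-1}\,\dd v\,\dd x \ge \mathrm c_{\sC}\int_{\R^d}\rho(x)\,\|f_x-\Pi f_x\|_{\mathrm L^2(\mu^{-1})}^2\,\dd x,
\]
by applying \eqref{eq:hyp-sg-C} pointwise in $x$. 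Since $\{\mu, v_i\,\mu, \fE(v)\,\mu\}$ is orthonormal in $\mathrm L^2(\mu^{-1})$ and spans $\mathrm{Ker}\,\sC$, and since by \eqref{Eqns:macro} the coefficients $r$, $m_i$, $e$ are exactly the $\mathrm L^2(\mu)$-inner products of $h$ with $\{1, v_i, \fE\}$, one has $\Pi(\mu h) = \mu\,\pih$. Therefore $\|f_x-\Pi f_x\|_{\mathrm L^2(\mu^{-1})}^2 = \int_{\R^d}\mu(v)\,|h^\bot(x,v)|^2\,\dd v$, and integrating in $x$ against $\rho$ produces $-\,\langle \cC h,h\rangle \ge \mathrm c_{\sC}\,\|h^\bot\|^2$.

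There is no real obstacle: the computation is routine and essentially forced by the structure of the problem. The only point requiring mild care is the bookkeeping for the change of unknown $f = \mu h$ and the identification $\Pi(\mu h) = \mu\,\pih$, which bridges the two Hilbertian frameworks $\mathrm L^2(\mu^{-1})$ and $\mathrm L^2(\cM)$.
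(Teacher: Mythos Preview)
Your proof is correct and follows exactly the same approach as the paper: differentiate $\|h\|^2$, use the skew-adjointness of $\cT$ and the self-adjointness of $\cC$ in $\mathrm L^2(\cM)$, then apply the spectral gap~\eqref{eq:hyp-sg-C}. The paper's proof is simply more terse, invoking $\cT^*=-\cT$ and the identity $-\langle \cC h,h\rangle \ge \mathrm c_{\sC}\|h^\bot\|^2$ (already recorded in Section~\ref{ssec:rescaling}) without spelling out the integrations by parts or the passage from $\sC$ to $\cC$ that you have made explicit.
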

\begin{proof}[Proof of Lemma~\ref{lem1:hperpLyap}.]
Since $\cC^* = \cC$ and $\cT^* = -\,\cT$, there holds
\[
  \frac12 \frac{\mathrm d}{\mathrm dt} \| h \|^2 = \scalar{\cC h}h\,.
\]
We conclude that~\eqref{eq:hperpLyap} holds by the spectral gap
assumption~\eqref{eq:hyp-sg-C} on $\sC$.
\end{proof}

\subsubsection{Control of \texorpdfstring{$e_s$}{es}}

Let us consider $e_s$ as defined in~\eqref{es}.
\begin{lemma}
  \label{lem2:esLyap}
  There are some positive constants $\kappa_1$ and $C$ such that
  \begin{equation}
    \label{eq:esLyap}
    \frac{\mathrm d}{\mathrm dt} \left\langle
      \Omega^{-1}\,\nabla_x e , \Theta[h] \right\rangle \le
    -\,\kappa_1\,\|e_s\|^2 + C\,\| h^\bot \|\,\| h \|\,.
  \end{equation}
\end{lemma}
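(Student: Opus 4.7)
The plan is to differentiate the pairing using the product rule and substitute the macroscopic equations~\eqref{rmee} and~\eqref{rmeTheta} for $\partial_t e$ and $\partial_t \Theta[h]$. The crucial point is that $\Theta[h]=\Theta[h^\bot]$ by~\eqref{eq:EhEhperp2}, so this quantity is purely microscopic and, together with $\Theta[\cL h^\bot]$, will only contribute error terms bounded by $\|h^\bot\|\,\|h\|$.

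More concretely, I would first write
\[
\frac{\mathrm d}{\mathrm dt}\left\langle\Omega^{-1}\nabla_x e,\Theta[h]\right\rangle=\left\langle\Omega^{-1}\nabla_x\partial_t e,\Theta[h^\bot]\right\rangle+\left\langle\Omega^{-1}\nabla_x e,\partial_t\Theta[h]\right\rangle
\]
and isolate from the second term, via~\eqref{rmeTheta}, the coercive contribution
\[
-\left(1+\tfrac2d\right)\!\left\langle\Omega^{-1}\nabla_x e,\nabla_x e\right\rangle=-\left(1+\tfrac2d\right)\!\big\|\Omega^{-1/2}\nabla_x e\big\|^2.
\]
By the Poincar\'e-Lions inequality~\eqref{eq:poincareL2}, together with $e_s=e-\langle e\rangle$ from~\eqref{es}, this is bounded above by $-\kappa_1\|e_s\|^2$ for some explicit $\kappa_1>0$.

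For the remaining contributions, I expect to bound everything by $C\|h^\bot\|\,\|h\|$. The residual piece $\langle\Omega^{-1}\nabla_x e,\Theta[\cL h^\bot]\rangle$ of the second term is microscopic: after moving the $\nabla_x$ in $\cT h^\bot$ across the pairing via adjointness and using~\eqref{eq:boundOmega-1BIS} to control $\Omega^{-1}$ composed with a derivative or with $\lfloor\nabla_x\phi\rceil$, one gets factors $\|h\|$ (or $\|e\|\lesssim\|h\|$) multiplying $\|h^\bot\|$, the latter via the action of $\cC$ governed by~\eqref{eq:lbound} and the bounded transport moments. For the first term, I use~\eqref{rmee} to split
\[
\left\langle\Omega^{-1}\nabla_x\partial_t e,\Theta[h^\bot]\right\rangle=-\sqrt{\tfrac2d}\left\langle\Omega^{-1}\nabla_x\nabla_x\!\cdot m,\Theta[h^\bot]\right\rangle+\left\langle\Omega^{-1}\nabla_x\nabla_x^*\!\cdot\Theta[h^\bot],\Theta[h^\bot]\right\rangle.
\]
The first piece is bounded by $\|\Omega^{-1}\nabla_x^2 m\|\,\|\Theta[h^\bot]\|\lesssim\|m\|\,\|h^\bot\|\le\|h\|\,\|h^\bot\|$ thanks to the first estimate in~\eqref{eq:boundOmega-1BIS} and the obvious control $\|\Theta[h^\bot]\|\lesssim\|h^\bot\|$. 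For the second piece, one commutes $\nabla_x^*$ over via its definition $\nabla_x^*=-\nabla_x+\nabla_x\phi$, so that the dangerous derivative is absorbed by $\Omega^{-1}\nabla_x$ (which is bounded) and the $\nabla_x\phi$ factor is handled via the last estimate of~\eqref{eq:boundOmega-1BIS}; both give a clean $\|h^\bot\|^2\le\|h^\bot\|\,\|h\|$ bound.

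The main obstacle will be the careful accounting for the $\Theta[\cT h^\bot]$ contribution, since $\cT h^\bot$ carries an $x$-derivative and there is no direct bound on $\|\nabla_x h^\bot\|$. Integrating by parts to move that derivative onto $\Omega^{-1}\nabla_x e$ and then invoking the second-order bound $\|\Omega^{-1}\nabla_x^2 e\|\lesssim\|e\|\le\|h\|$ from~\eqref{eq:boundOmega-1BIS} is the key manoeuvre. Combining the coercive term with all error bounds yields~\eqref{eq:esLyap}.
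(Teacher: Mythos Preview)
Your proposal is correct and matches the paper's approach: both differentiate by the product rule, pull out the coercive term $-(1+\tfrac2d)\|\Omega^{-1/2}\nabla_x e\|^2$ from~\eqref{rmeTheta}, bound it below by $\kappa_1\|e_s\|^2$ via~\eqref{eq:poincareL2}, and estimate the residual pieces by $C\|h^\bot\|\,\|h\|$ using $\Theta[h]=\Theta[h^\bot]$,~\eqref{rmee},~\eqref{eq:lbound} and the toolbox bounds~\eqref{eq:boundOmega-1BIS}--\eqref{eq:poincareL2}. One small wrinkle in your last paragraph: integrating by parts places the derivative \emph{outside} $\Omega^{-1}$, producing $\nabla_x\Omega^{-1}\nabla_x e$ rather than $\Omega^{-1}\nabla_x^2 e$, but this is harmless since $\|\nabla_x\Omega^{-1}\nabla_x e\|\le\|\Omega^{-1/2}\nabla_x e\|\lesssim\|e_s\|$; alternatively, as the paper does, one simply splits the pairing as $\langle\Omega^{-1/2}\nabla_x e,\Omega^{-1/2}\Theta[\cL h^\bot]\rangle$ and bounds the second factor directly by $\|h^\bot\|$.
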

\begin{proof}[Proof of Lemma~\ref{lem2:esLyap}.]
  Recall that $\Theta[h] = \Theta[h^\bot]$
  from~\eqref{eq:EhEhperp2}. We compute
  \begin{align*}
    & \frac{\mathrm d}{\mathrm dt} \left\langle
      \Omega^{-1}\,\nabla_x e , \Theta[h] \right\rangle \\
    & = \left\langle \Omega^{-1}\,\nabla_x e\,,
      -\left(1+\tfrac2d\right) \nabla_x e + \Theta[\cL\,h^\bot]
      \right\rangle + \left\langle \Omega^{-1}\,\nabla_x
      (\partial_t e)\,, \Theta[h] \right\rangle \\
    &\le -\tfrac12\left(1+\tfrac2d\right)
      \big\|\,\Omega^{-\frac12}\,\nabla_x e \big\|^2 +
      C\,\big\|\,\Omega^{-\frac12}\,\Theta[\cL\,h^\bot] \big\|^2 +
      C\,\big\|\,\Omega^{-1}\,\nabla_x (\partial_t e) \big\|\,\|
      h^\bot \|\,,
  \end{align*}
  by using Cauchy-Schwarz and Young inequalities. We read
  from~\eqref{eq:poincareL2} that
  \[\|\,\Omega^{-1/2}\,\nabla_x e \|^2\gtrsim\|e_s\|^2\,.\] According
  to~\eqref{eq:poincareL2},~\eqref{eq:lbound}
  and~\eqref{eq:momentspace}, we have
  \[
    \big\|\Omega^{-\frac12}\,\Theta[\cL\,h^\bot] \lesssim \|h^\bot\|\,.
  \]
  It follows from~\eqref{rmee} that
  \[
    \Omega^{-1}\,\nabla_x (\partial_t e) =
    -\,\sqrt{\tfrac2d}\,\Omega^{-1}\,\nabla_x(\nabla_x \cdot m) +
    \Omega^{-1}\,\nabla_x\big(\nabla_x^* \cdot
    \Theta[h^\bot]\big)\,,
  \]
  so that
  $\big\|\,\Omega^{-1}\,\nabla_x (\partial_t e) \big\|\lesssim
  \|h\|$ by~\eqref{eq:boundOmega-1BIS}. This completes the proof
  of~\eqref{eq:esLyap}.
\end{proof}

\subsubsection{Control of \texorpdfstring{$m_s$}{ms}}

Let us consider $\nabla_x^{\text{\tiny sym}} m_s$ as defined
by~\eqref{SkewSymmetricGradient} and~\eqref{ms}.
\begin{lemma}
  \label{lem3:msLyap}
  There are some positive constants $\kappa_2$ and $C$ such that
  \begin{equation}
    \label{eq:msLyap}
    \frac{\mathrm d}{\mathrm dt} \Bigl\langle
    \Omega^{-1}\,\nabla_x^{\text{\tiny sym}} m_s , E[h] -
    \sqrt{\tfrac2d}\,\langle e \rangle\,\mathrm{Id}_{d \times d}
    \Bigr\rangle \le -\,\kappa_2\,\| m_s \|^2 + C\,\big( \| e_s
    \| + \| h^\bot \| \big)\, \| h \|\,.
  \end{equation}
\end{lemma}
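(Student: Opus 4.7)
The plan is to mirror the structure of Lemma~\ref{lem2:esLyap}: differentiate the functional
\[
\mathcal J := \big\langle \Omega^{-1}\,\nabla_x^{\text{\tiny sym}} m_s\,,\ E[h] - \sqrt{\tfrac2d}\,\langle e\rangle\,\mathrm{Id}_{d \times d} \big\rangle
\]
via the product rule, extract a dissipative term from the evolution of the second slot, and close the estimate using the zeroth-order Korn-Poincar\'e inequality~\eqref{eq:Korn1}.

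The key preliminary identity I would establish is
\[
\partial_t\Big(E[h] - \sqrt{\tfrac2d}\,\langle e\rangle\,\mathrm{Id}_{d \times d}\Big) = -\,2\,\nabla_x^{\text{\tiny sym}} m_s + E[\cL h^\bot].
\]
This follows from~\eqref{rmeE} and~\eqref{rmee} together with two observations. First, $\int_{\R^d} \nabla_x^*\cdot\Theta[h^\bot]\,\rho\,\dd x = 0$ by the very definition of the adjoint, which yields $\sqrt{2/d}\,\partial_t\langle e\rangle = -\tfrac2d\,\langle\nabla_x\cdot m\rangle$. Second, from~\eqref{ms}, the difference $m - m_s = \langle\nabla_x^{\text{\tiny skew}} m\rangle\,x + \tfrac1d\,\langle\nabla_x\cdot m\rangle\,x + \langle m\rangle$ has symmetric gradient equal to $\tfrac1d\,\langle\nabla_x\cdot m\rangle\,\mathrm{Id}_{d \times d}$, since $\nabla_x^{\text{\tiny sym}}$ annihilates constants and skew-linear maps $x\mapsto A\,x$; the two correction terms then cancel exactly.

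With this identity, the time derivative of the second slot contributes $-2\,\|\Omega^{-1/2}\,\nabla_x^{\text{\tiny sym}} m_s\|^2 + \langle \Omega^{-1}\,\nabla_x^{\text{\tiny sym}} m_s, E[\cL h^\bot]\rangle$. The inequality~\eqref{eq:Korn1} applies because $\langle m_s\rangle = 0$ (using $\langle x\rangle=0$ from~\eqref{hyp:intnorm}) and $\langle\nabla_x^{\text{\tiny skew}} m_s\rangle = 0$ hold by construction, giving $\|m_s\|^2 \lesssim \|\Omega^{-1/2}\,\nabla_x^{\text{\tiny sym}} m_s\|^2$, which provides the main dissipation. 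The cross term is controlled by Cauchy-Schwarz and the moment estimate $\|\Omega^{-1/2}\,E[\cL h^\bot]\| \lesssim \|h^\bot\|$, a consequence of~\eqref{eq:lbound}--\eqref{eq:momentspace} and~\eqref{eq:boundOmega-1BIS}. The remaining piece $\langle \Omega^{-1}\,\nabla_x^{\text{\tiny sym}}\partial_t m_s\,,\ E[h] - \sqrt{2/d}\,\langle e\rangle\,\mathrm{Id}_{d \times d}\rangle$ is handled by decomposing $E[h] - \sqrt{2/d}\,\langle e\rangle\,\mathrm{Id}_{d \times d} = \sqrt{2/d}\,e_s\,\mathrm{Id}_{d \times d} + E[h^\bot]$ via~\eqref{eq:EhEhperp1}, which produces the prefactor $\|e_s\| + \|h^\bot\|$, together with the uniform bound $\|\Omega^{-1}\,\nabla_x^{\text{\tiny sym}}\partial_t m_s\| \lesssim \|h\|$ obtained from~\eqref{rmem}.

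The main obstacle I anticipate is precisely this last bound: each constituent of $\partial_t m$, namely $\nabla_x r$, $\sqrt{2/d}\,\nabla_x^* e = \sqrt{2/d}\,(-\nabla_x e + e\,\nabla_x\phi)$ and $\nabla_x^*\cdot E[h^\bot]$, produces after a further symmetric gradient a combination of second-order space derivatives and terms carrying potentially unbounded factors of $\nabla_x\phi$. The Witten-Hodge estimates~\eqref{eq:boundOmega-1BIS} from~\cite{CDHMM21} are calibrated to absorb exactly these quantities into $\Omega^{-1}$, while the constant-in-$x$ correction $c'(t)\,\mathrm{Id}_{d \times d}$ coming from the difference $m-m_s$ is harmless because $\Omega^{-1}$ acts trivially on constants. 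A final Young-type inequality absorbs the $\|m_s\|$-factors on the right into the main dissipation, yielding~\eqref{eq:msLyap}.
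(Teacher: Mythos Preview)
Your proposal is correct and follows essentially the same route as the paper's proof: the key identity $\partial_t\big(E[h]-\sqrt{2/d}\,\langle e\rangle\,\mathrm{Id}\big)=-2\,\nabla_x^{\text{\tiny sym}} m_s + E[\cL h^\bot]$, the decomposition $E[h]-\sqrt{2/d}\,\langle e\rangle\,\mathrm{Id}=\sqrt{2/d}\,e_s\,\mathrm{Id}+E[h^\bot]$, the Korn--Poincar\'e inequality~\eqref{eq:Korn1}, and the bound $\|\Omega^{-1}\nabla_x^{\text{\tiny sym}}(\partial_t m_s)\|\lesssim\|h\|$ via~\eqref{eq:boundOmega-1BIS} are exactly the ingredients the paper uses. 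Your discussion of the ``main obstacle'' and its resolution through the Witten--Hodge estimates is accurate and mirrors the paper's reference to~\eqref{eq:boundOmega-1BIS} and~\eqref{eq:poincareVar}.
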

\begin{proof}[Proof of Lemma~\ref{lem3:msLyap}]
  Let us remark that from~\eqref{ms} one has
  \[
    \nabla_x^{\text{\tiny sym}}\,m = \nabla_x^{\text{\tiny
        sym}}\,m_s + \tfrac1d\,\langle \nabla_x \cdot m
    \rangle\,\mathrm{Id}_{d \times d}\,,
  \]
  and from~\eqref{eq:EhEhperp1},
  \[
    E[h] - \sqrt{\tfrac2d}\,\langle e \rangle\,\mathrm{Id}_{d
      \times d} = \sqrt{\tfrac2d}\,e_s\,\mathrm{Id}_{d \times d}
    + E[h^\bot]\,.
  \]
  Moreover, from~\eqref{rmee}, one gets
  \[
    \frac{\mathrm d}{\mathrm dt} \langle e \rangle =
    -\,\sqrt{\tfrac2d}\,\langle \nabla_x \cdot m \rangle\,.
  \]
  As a consequence, from~\eqref{rmeE}, one obtains
  \begin{align*}
    & \frac{\mathrm d}{\mathrm dt} \Big\langle
      \Omega^{-1}\,\nabla_x^{\text{\tiny sym}}\,m_s , E[h] -
      \sqrt{\tfrac2d}\,\langle e \rangle\,\mathrm{Id}_{d \times
      d} \Big\rangle \\
    & = \left\langle \Omega^{-1}\,\nabla_x^{\text{\tiny sym}} m_s
      , - 2\,\nabla_x^{\text{\tiny sym}}\,m + E[\cL\,h^\bot ] +
      \tfrac2d \langle \nabla_x \cdot m \rangle\, \mathrm{Id}_{d
      \times d} \right\rangle \\
    &\hspace*{1cm} + \Big\langle
      \Omega^{-1}\,\nabla_x^{\text{\tiny sym}}(\partial_t m_s ) ,
      E[h] - \sqrt{\tfrac2d}\,\langle e \rangle\,\mathrm{Id}_{d
      \times d} \Big\rangle \\
    &= -\,2\,\big\|\,\Omega^{-\frac12}\,\nabla_x^{\text{\tiny
      sym}}\,m_s \big\|^2 + \Big\langle
      \Omega^{-\frac12}\,\nabla_x^{\text{\tiny sym}}\,m_s\,,
      \Omega^{-\frac12}\,E[\cL\,h^\bot] \Big\rangle \\
    &\hspace*{1cm} + \Big\langle
      \Omega^{-1}\,\nabla_x^{\text{\tiny sym}}(\partial_t m_s) ,
      \sqrt{\tfrac2d}\,e_s\,\mathrm{Id}_{d \times d} + E[h^\bot]
      \Big\rangle\,.
  \end{align*}
  Using the Cauchy-Schwarz inequality, we deduce
  \begin{multline*}
    \frac{\mathrm d}{\mathrm dt} \Big\langle
    \Omega^{-1}\,\nabla_x^{\text{\tiny sym}}\,m_s , E[h] -
    \sqrt{\tfrac2d}\,\langle e \rangle \mathrm{Id}_{d \times d}
    \Big\rangle \\
    \le -\,\big\|\,\Omega^{-\frac12}\,\nabla_x^{\text{\tiny
        sym}}\,m_s \big\|^2 +
    C\,\big\|\,\Omega^{-\frac12}\,E[\cL\,h^\bot] \big\|^2 \\
    +\,C\,\big\|\,\Omega^{-1}\,\nabla_x^{\text{\tiny
        sym}}(\partial_t m_s ) \big\|\,\Big\|
    \sqrt{\tfrac2d}\,e_s \mathrm{Id}_{d \times d} + E[h^\bot]
    \Big\|\,.
  \end{multline*}
  Using Korn's inequality~\eqref{eq:Korn1} and observing
  by~\eqref{rmem} that
  \[
    \big\|\,\Omega^{-1}\,\nabla_x^{\text{\tiny sym}}(\partial_t
    m_s ) \big\| = \cO(\| h \|)\quad\mbox{and}\quad
    \big\|\,\Omega^{-\frac12}\,E[\cL\,h^\bot] \big\| =
    \cO(\|h^\bot\|)
  \]
  from~\eqref{eq:boundOmega-1BIS} and~\eqref{eq:poincareVar} as
  in the proof of Lemma~\ref{lem2:esLyap}, we
  prove~\eqref{eq:msLyap}.
\end{proof}

\subsubsection{Control of \texorpdfstring{$w_s$}{ws}}

Let us consider $w_s$ as defined in~\eqref{ws}.
\begin{lemma}
  \label{lem4:wsLyap}
  There are some positive constants $\kappa_3$ and $C$ such that
  \begin{equation}
    \label{eq:wsLyap}
    \frac{\mathrm d}{\mathrm dt} \left\langle
      \Omega^{-1}\,\nabla_x w_s , m_s \right\rangle \le
    -\,\kappa_3\,\| w_s \|^2 + C\,\| e_s \|^2 + C\,\| h^\bot \|^2
    + C\,\| m_s \|\,\| h \|
  \end{equation}
  and
  \begin{equation}
    \label{eq:dtwsLyap}
    \frac{\mathrm d}{\mathrm dt} \left\langle
      -\,\Omega^{-1}\,\partial_t w_s , w_s \right\rangle \le
    -\,\big\|\,\Omega^{-\frac12}\,\partial_t w_s \big\|^2 + C\,\|
    w_s \|\,\| h \|\,.
  \end{equation}
\end{lemma}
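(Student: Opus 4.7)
The plan is to expand each time derivative by Leibniz's rule, substitute the macroscopic equations \eqref{rmer}--\eqref{rmee} for $\partial_t r$, $\partial_t m$, $\partial_t e$, and isolate the dominant dissipative quadratic. For \eqref{eq:wsLyap}, I write
\[
\frac{\mathrm d}{\mathrm dt}\langle \Omega^{-1}\nabla_x w_s, m_s\rangle = \langle \Omega^{-1}\nabla_x \partial_t w_s, m_s\rangle + \langle \Omega^{-1}\nabla_x w_s, \partial_t m_s\rangle
\]
and focus on the second summand. Combining \eqref{rmem} with $w = r - \sqrt{2/d}\,\langle e\rangle\,\phi$ and the identity $\nabla_x^* e = -\,\nabla_x e + e\,\nabla_x\phi$ yields the clean rewriting
\[
\partial_t m = -\,\nabla_x w + \sqrt{\tfrac2d}\,\nabla_x^* e_s + \nabla_x^*\cdot E[h^\bot],
\]
and $\partial_t m_s$ equals the right-hand side minus an affine-in-$x$ projection whose coefficients are $O(\|h\|)$ by direct inspection of \eqref{rmem}. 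These affine pieces, when paired against $\Omega^{-1}\nabla_x w_s$, produce terms bounded by $C\|w_s\|\|h\|$, to be absorbed later by Young's inequality.

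The principal contribution is $-\langle \Omega^{-1}\nabla_x w_s, \nabla_x w\rangle$. Decomposing $\nabla_x w = \nabla_x w_s + \langle \nabla_x w\rangle + \tfrac1d\langle \Delta_x w\rangle\,x$ from \eqref{wws}, the leading part gives $-\,\|\Omega^{-\frac12}\nabla_x w_s\|^2$. Since $\langle w_s\rangle = 0$ — a consequence of $\langle r\rangle = 0$ from \eqref{consme} combined with $\langle x\rangle = 0$ (by \eqref{hyp:intnorm}) and $\langle \xi_2\rangle = \langle \xi_\phi\rangle = 0$ by definition — the Poincaré--Lions inequality \eqref{eq:poincareL2} upgrades this into $-\,\kappa_3\|w_s\|^2$ for some constructive $\kappa_3>0$. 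The two remaining pieces of $\partial_t m$ are handled by Cauchy-Schwarz together with the zeroth-order strong Poincaré bound \eqref{eq:boundOmega-1BIS}, yielding at most $C\|w_s\|\|e_s\|$ and $C\|w_s\|\|h^\bot\|$; these combine with the affine-correction terms and a fraction of the principal dissipation via Young's inequality to produce the advertised $C\|e_s\|^2 + C\|h^\bot\|^2$. The first summand $\langle \Omega^{-1}\nabla_x \partial_t w_s, m_s\rangle$ is bounded by $\|\Omega^{-\frac12}\partial_t w_s\|\,\|m_s\|$ thanks to the spectral identity $\|\Omega^{-\frac12}\nabla_x f\|\le\|f\|$ built into \eqref{eq:boundOmega-1BIS}; using $\partial_t w_s = \partial_t r_s - \sqrt{2/d}\,\langle\partial_t e\rangle\,\phi_s$ and the macroscopic equations one gets $\|\Omega^{-\frac12}\partial_t w_s\|\lesssim\|h\|$, whence the final $C\|m_s\|\|h\|$ contribution.

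For \eqref{eq:dtwsLyap}, a direct expansion gives
\[
\frac{\mathrm d}{\mathrm dt}\langle -\,\Omega^{-1}\partial_t w_s, w_s\rangle = -\,\|\Omega^{-\frac12}\partial_t w_s\|^2 - \langle \Omega^{-1}\partial_t^2 w_s, w_s\rangle,
\]
the first term being exactly the desired dissipation. For the second, Cauchy-Schwarz reduces matters to the estimate $\|\Omega^{-1}\partial_t^2 w_s\|\lesssim\|h\|$. Differentiating the definitions of $w_s$ and $r_s$ once more in time and using \eqref{rmer}--\eqref{rmee} together with \eqref{rmem}, one expresses $\partial_t^2 w_s$ as a linear combination of $\nabla_x^*\!\cdot\partial_t m$-type quantities and time derivatives of $\langle e\rangle$; the boundedness of $\Omega^{-1}\nabla_x^*$ from $\mathrm L^2(\rho)^d$ to $\mathrm L^2(\rho)$ supplied by \eqref{eq:boundOmega-1BIS} then yields the claimed control.

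The principal difficulty is the bookkeeping in \eqref{eq:wsLyap}: the affine projections that distinguish $w$ from $w_s$ and $m$ from $m_s$ must interact compatibly so that the leading quadratic remains a clean $-\,\kappa_3\|w_s\|^2$ rather than a mixed indefinite form, and the mean-zero property $\langle w_s\rangle = 0$ on which the Poincaré--Lions reduction rests genuinely depends on the global conservation laws of Corollary~\ref{Cor:GlobalConservationLaws2}, not on the definition of $w_s$ alone. A secondary technical point is handling the commutators between $\Omega^{-1/2}$, $\nabla_x$, and $\nabla_x^*$ appearing in the residual terms, which are all controlled via the spectral identity $\|\Omega^{-\frac12}\nabla_x f\|\le\|f\|$.
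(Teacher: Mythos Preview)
Your outline for \eqref{eq:dtwsLyap} is correct and matches the paper's argument. The issue is in \eqref{eq:wsLyap}: your treatment of the affine corrections is too crude and produces error terms that the lemma's right-hand side does not allow.

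You write that the affine pieces (from the decomposition $\nabla_x w = \nabla_x w_s + \langle\nabla_x w\rangle + \tfrac1d\langle\Delta_x w\rangle\,x$ and from subtracting off the moments defining $m_s$) are ``$O(\|h\|)$'' and, paired against $\Omega^{-1}\nabla_x w_s$, yield $C\|w_s\|\,\|h\|$ to be absorbed by Young. But Young on $C\|w_s\|\,\|h\|$ gives $\epsilon\|w_s\|^2 + C_\epsilon\|h\|^2$, and there is no $\|h\|^2$ term in the statement of \eqref{eq:wsLyap}; nor can such a term be tolerated downstream in Lemma~\ref{lem5:F1Lyap}, since it would enter with coefficient $\varepsilon_3$ rather than $\varepsilon_3^2/\varepsilon_2 = \varepsilon^2$ and swamp the dissipation.

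What the paper actually does---and what your last paragraph correctly identifies as the crux but your sketch does not execute---is to compute $\partial_t m_s$ \emph{exactly}. Using
\[
\tfrac{\mathrm d}{\mathrm dt}\langle m\rangle = -\langle\nabla_x w\rangle,\quad
\tfrac{\mathrm d}{\mathrm dt}\langle\nabla_x\!\cdot m\rangle = -\langle\Delta_x w\rangle + \sqrt{\tfrac2d}\,\langle e_s\,\Delta_x\phi\rangle + \big\langle\mathrm{Tr}(E[h^\bot]\nabla_x^2\phi)\big\rangle,\quad
\tfrac{\mathrm d}{\mathrm dt}\langle\nabla_x^{\text{\tiny skew}} m\rangle = \big\langle(E[h^\bot]\nabla_x^2\phi)^{\text{\tiny skew}}\big\rangle,
\]
one finds that the affine parts of $\nabla_x w$ are \emph{exactly cancelled} by the time derivatives of the moments defining $m_s$, leaving
\[
\partial_t m_s = -\,\nabla_x w_s + \sqrt{\tfrac2d}\,\Big(\nabla_x^* e_s - \tfrac1d\langle e_s\,\Delta_x\phi\rangle\,x\Big) + m_E,
\]
where $m_E$ is built only from $E[h^\bot]$. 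Thus the residual is $O(\|e_s\|,\|h^\bot\|)$, \emph{not} $O(\|h\|)$, and pairing with $\Omega^{-1}\nabla_x w_s$ gives $C\|w_s\|(\|e_s\|+\|h^\bot\|)$, which Young converts cleanly into $\tfrac12\|\Omega^{-1/2}\nabla_x w_s\|^2 + C(\|e_s\|^2+\|h^\bot\|^2)$. This cancellation is not an accident: the projections defining $w_s$ and $m_s$ in \eqref{emrwws} are designed precisely so that it occurs.
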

\begin{proof}[Proof of Lemma~\ref{lem4:wsLyap}.]
  Observe that~\eqref{rmem},~\eqref{es} and~\eqref{wws} imply
  \begin{align*}
    \partial_t m
    & = -\,\nabla_x w + \sqrt{\tfrac2d}\,\nabla_x^* e_s +
      \nabla_x^* \cdot E[h^\bot] \\
    & = -\,\nabla_x w_s - \langle \nabla_x w \rangle -
      \tfrac1d\,\langle \Delta_x w \rangle\,x + \sqrt{\tfrac2d}\,
      \nabla_x^* e_s + \nabla_x^* \cdot E[h^\bot]\,.
  \end{align*}
  Integrating~\eqref{rmem} and using~\eqref{w}, one gets
  \[
    \frac{{\mathrm d}}{{\mathrm d}t} \langle m \rangle = -\,\langle
    \nabla_x r \rangle = -\,\langle \nabla_x w \rangle
  \]
  and
  \begin{align*}
    \frac{{\mathrm d}}{{\mathrm d}t} \langle \nabla_x \cdot m \rangle
    & = -\,\langle \Delta_x r \rangle + \sqrt{\tfrac2d}\,\langle
      e\,\Delta_x \phi \rangle + \big\langle \mathrm{Tr}
      (E[h^\bot]\,\nabla_{\!x}^2 \phi) \big\rangle \\
    & = -\,\langle \Delta_x w \rangle + \sqrt{\tfrac2d}\, \langle
      e_s\,\Delta_x \phi \rangle + \big\langle \mathrm{Tr}
      (E[h^\bot]\,\nabla_{\!x}^2 \phi) \big\rangle\,.
  \end{align*}
  Finally, by differentiating~\eqref{rmem}, one has
  \[
    \partial_t \nabla_x m = -\,\nabla^2_x r + \sqrt{\tfrac2d}\,
    \nabla_x^*\,\nabla_x e + \sqrt{\tfrac2d}\,\nabla^2_x \phi\,e
    + \nabla_x^* \cdot (\nabla_x \otimes E[h^\bot]) +
    E[h^\bot]\,\nabla_{\!x}^2 \phi
  \]
  and the integration of the skew-symmetric part yields
  \begin{equation}
    \label{eq:dtA}
    \frac{{\mathrm d}}{{\mathrm d}t} \langle \nabla_x^{\text{\tiny
        skew}}m \rangle = \big\langle \big(
    E[h^\bot]\,\nabla_x^2\phi \big)^{\text{\tiny skew}}
    \big\rangle\,.
  \end{equation}
  As a consequence of these identities and~\eqref{ms}, one gets
  \[
    \partial_t m_s = -\,\nabla_x w_s +
    \sqrt{\tfrac2d}\,\Big(\nabla_x^* e_s - \tfrac1d\,\langle
    e_s\,\Delta_x \phi \rangle\,x \Big)+m_E
  \]
  where $m_E:=\nabla_x^* \cdot E[h^\bot]-\big\langle \big(
  E[h^\bot]\,\nabla_x^2\phi \big)^{\text{\tiny skew}}
  \big\rangle\,x - \tfrac1d\,\big\langle \mathrm{Tr}\big(
  E[h^\bot]\,\nabla_x^2\phi \big) \big\rangle\,x$. Hence
  \begin{align*}
    \left\langle\Omega^{-1}\,\nabla_xw_s,\partial_t m_s\right\rangle=
    & -\,\big\|\Omega^{-\frac12}\,\nabla_xw_s\big\|^2
      +\,\big\|\Omega^{-\frac12} \, \nabla_xw_s \big\| \, \big\| 
      \Omega^{-\frac12} \, m_E\big\| \\
    & +\,\sqrt{\tfrac2d}\, \big\|\Omega^{-\frac12} \, \nabla_xw_s
      \big\| \, \big\|\Omega^{-\frac12}\, \big( \nabla_x^* e_s -
      \tfrac1d \, \langle e_s\,\Delta_x \phi \rangle\,x\big)\big\|
  \end{align*}
  Using the zeroth order Poincar\'e
  inequality~\eqref{eq:poincareL2} and~\eqref{es}, we can
  estimate $\|\Omega^{-1/2}\,\nabla_x^* e_s\|$ by $\|e_s\|$. Up
  to a few integrations by parts,
  using~\eqref{hyp:regularity},~\eqref{eq:momentspace}
  and~\eqref{eq:phiid}, we end up for some constant $C>0$ with
  \begin{equation}
    \label{eq:dtms}
    \left\langle\Omega^{-1}\,\nabla_xw_s,\partial_t
      m_s\right\rangle\le -\,\frac12 \, \big\|\Omega^{-\frac12}
    \, \nabla_xw_s \big\|^2 + C \left(\|e_s\|^2+\|h^\bot\|^2\right).
  \end{equation}

  {}From the definitions~\eqref{rs} and~\eqref{ws}, we have
  \[
    \partial_t w_s = \partial_t r - \langle \nabla_x \partial_t r
    \rangle \cdot x - \tfrac1{2\,d}\,\langle \Delta_x \partial_t
    r \rangle\,\xi_2, - \sqrt{\tfrac2d}\,\langle \partial_t e
    \rangle\,\phi_s
  \]
  so that, by~\eqref{rmer} and~\eqref{rmee},
  \begin{equation}
    \label{eq:dtws}
    \partial_t w_s = \nabla_x^* m - \langle \nabla_x \nabla_x^*
    \cdot m \rangle \cdot x - \tfrac1{2\,d}\,\langle \Delta_x
    \nabla_x^* \cdot m \rangle\,\xi_2 - {\tfrac2d} \langle
    \nabla_x \cdot m \rangle\,\phi_s\,.
  \end{equation}
  Using~\eqref{eq:boundOmega-1BIS} in order to estimate the first
  term, and performing several integration by parts and using the
  boundedness assumption~\eqref{eq:momentspace} on $\phi$ in
  order to estimate the three last terms, we obtain
  \begin{equation}
    \label{eq:Omega-1dtws}
    \|\Omega^{-1}\,\nabla_x\,\partial_t w_s\|
    \lesssim\|m\|\lesssim\|h\|\,.
  \end{equation}
  Inserting~\eqref{eq:dtms} and~\eqref{eq:Omega-1dtws}
  in
  \[
    \frac{\mathrm d}{\mathrm dt}
    \left\langle \Omega^{-1} \, \nabla_xw_s, m_s \right\rangle =
    \left\langle \Omega^{-1}\,\nabla_x\,\partial_t w_s,
      m_s\right\rangle + \left\langle\Omega^{-1}\,\nabla_xw_s,\partial_t
      m_s\right\rangle
  \]
  completes the proof of~\eqref{eq:wsLyap}.

  In order to control the time-derivative of $w_s$, we write
  \begin{equation}
    \label{eq:Omega-1d2ttwsws}
    \frac{\mathrm d}{\mathrm dt} \big\langle
    -\Omega^{-1}\,\partial_t w_s , w_s \big\rangle =
    -\,\big\|\,\Omega^{-\frac12}\,\partial_t w_s \big\|^2 -
    \big\langle \Omega^{-1}\,\partial^2_{tt} w_s\,, w_s
    \big\rangle\,.
  \end{equation}
  Differentiating~\eqref{eq:dtws} with respect to $t$, we have
  \[
    \partial^2_{tt} w_s = \nabla_x^* \cdot (\partial_t m) -
    \langle \nabla_x \nabla_x^* \cdot (\partial_t m) \rangle
    \cdot x - \tfrac1{2\,d}\langle \Delta_x \nabla_x^* \cdot
    (\partial_t m) \rangle\,\xi_2 - {\tfrac2d} \langle \nabla_x
    \cdot (\partial_t m) \rangle\,\phi_s\,,
  \]
  where the first term is obtained by
  differentiating~\eqref{rmem} and amounts to
  \[
    \nabla_x^* (\partial_t m) = -\,\nabla^*_x \cdot \nabla_x w +
    \sqrt{\tfrac2d}\,\nabla^*_x \cdot \nabla_x^* e_s + \nabla^*_x
    \cdot\nabla_x^* \cdot E[h^\bot]\,,
  \]
  using~\eqref{es} and~\eqref{w}. Similar expressions hold for
  the three next terms. Arguing similarly as
  for~\eqref{eq:Omega-1dtws}, we have
  \[
    \Omega^{-1}\,\partial^2_{tt} w_s = \cO ( \| h \|)\,.
  \]
  Together with~\eqref{eq:Omega-1d2ttwsws} this
  proves~\eqref{eq:dtwsLyap}.
\end{proof}

\subsubsection{First Lyapunov functional}

We end this section by introducing a first, partial Lyapunov
functional
\begin{multline}
  \label{eq:F1h}
  \cF_1 [h] := \| h \|^2 + \varepsilon_1 \left\langle
    \Omega^{-1}\,\nabla_x e , \Theta[h] \right\rangle +
  \varepsilon_2\,\big\langle\Omega^{-1}\,\nabla_x^{\text{\tiny
      sym}}\,m_s , E[h] - \sqrt{\tfrac2d}\,\langle e
  \rangle\,\mathrm{Id}_{d \times d} \big\rangle \\
  + \varepsilon_3 \left\langle \Omega^{-1}\,\nabla_x w_s , m_s
  \right\rangle + \varepsilon_4\,\langle -\Omega^{-1}\,\partial_t
  w_s , w_s \rangle
\end{multline}
where
\begin{equation}
  \label{varepsilon}
  \varepsilon_1=\varepsilon\,,\quad \varepsilon_2=\varepsilon^{3/2}\,,
  \quad \varepsilon_3 = \varepsilon^{7/4}\,,\quad
  \varepsilon_4=\varepsilon^{15/8}\,.
\end{equation}
Let us define the dissipation functional
\begin{equation}
  \label{eq:D1h}
  \cD_1 [h] := \| h^\perp \|^2 + \| e_s \|^2 + \| m_s \|^2 + \|
  w_s\|^2 + \|\,\Omega^{-\frac12}\,\partial_t w_s \big\|^2\,.
\end{equation}
\begin{lemma}
  \label{lem5:F1Lyap}
  There are some positive constants $\kappa_0$, $C_0$ and
  $\kappa$ such that for any $\varepsilon > 0$ small enough, we
  have
  \[
    \label{eq:dtF1mMh}
    \frac{{\mathrm d}}{{\mathrm d}t} \cF_1 [h] \le -\,\kappa_0\, \|
    h^\bot \|^2 - \varepsilon^\frac{15}8\,\kappa\,\cD_1[h] +
    \varepsilon^2\,C_0\,\| h \|^2\,.
  \]
\end{lemma}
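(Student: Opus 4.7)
The plan is to combine the five inequalities~\eqref{eq:hperpLyap},~\eqref{eq:esLyap},~\eqref{eq:msLyap},~\eqref{eq:wsLyap} and~\eqref{eq:dtwsLyap} with the weights $1,\varepsilon_1,\varepsilon_2,\varepsilon_3,\varepsilon_4$ prescribed by~\eqref{eq:F1h}, and to absorb every resulting cross term by Young's inequality. The choice of exponents $1,3/2,7/4,15/8$ in~\eqref{varepsilon} is precisely what is needed so that each Young residue produces an $\|h\|^2$ contribution of order $\varepsilon^2$.

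Summing the five estimates yields $\frac{\mathrm d}{\mathrm dt}\cF_1[h]$ on the left and, on the right, the dissipation
\[
  -\,2\,\mathrm c_{\sC}\|h^\bot\|^2-\varepsilon_1\kappa_1\|e_s\|^2-\varepsilon_2\kappa_2\|m_s\|^2-\varepsilon_3\kappa_3\|w_s\|^2-\varepsilon_4\,\big\|\,\Omega^{-\frac12}\partial_tw_s\big\|^2
\]
together with two kinds of error terms. The pure absorption terms $\varepsilon_3 C\|e_s\|^2$ and $\varepsilon_3 C\|h^\bot\|^2$ coming from~\eqref{eq:wsLyap} are absorbed into $\varepsilon_1\kappa_1\|e_s\|^2$ and $\mathrm c_{\sC}\|h^\bot\|^2$ as soon as $\varepsilon$ is small, since $\varepsilon_3\ll\varepsilon_1$ and $\varepsilon_3\ll 1$. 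The Young-type cross terms $\varepsilon_i C\|X\|\,\|h\|$ (for $X\in\{h^\bot,e_s,h^\bot,m_s,w_s\}$) are split against the dissipation of $X$: writing for instance $\varepsilon_2 C\|e_s\|\|h\|\le\tfrac14\varepsilon_1\kappa_1\|e_s\|^2+C'(\varepsilon_2^2/\varepsilon_1)\|h\|^2$ and the three analogous estimates for $\|m_s\|$, $\|w_s\|$ and $\|h^\bot\|$, one checks that the coefficients of $\|h\|^2$ are $\varepsilon_{i+1}^2/\varepsilon_i=\varepsilon^2$ and $\varepsilon_i^2\le\varepsilon^2$. This is the combinatorial identity encoded in~\eqref{varepsilon}: the cascade $\varepsilon_{i+1}^2=\varepsilon^2\,\varepsilon_i$ starting from $\varepsilon_1=\varepsilon$ gives the exponents $1,\,3/2,\,7/4,\,15/8$.

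After these absorptions, half of each dissipation survives. Since $\varepsilon_4=\varepsilon^{15/8}$ is the smallest weight, the remaining contributions in $\|e_s\|^2$, $\|m_s\|^2$, $\|w_s\|^2$ and $\|\,\Omega^{-1/2}\partial_tw_s\|^2$ are bounded below by $\varepsilon^{15/8}\kappa\,\cD_1[h]$ up to the $\|h^\bot\|^2$ contribution, which one simply splits: part of $\mathrm c_{\sC}\|h^\bot\|^2$ is kept outside as $\kappa_0\|h^\bot\|^2$, and part is absorbed into the $\varepsilon^{15/8}\kappa\|h^\bot\|^2$ piece inside $\cD_1[h]$. This delivers the claimed inequality with $\kappa_0$ a fixed fraction of $\mathrm c_{\sC}$ and $C_0$ determined by the Young constants. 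There is no analytic difficulty beyond the five lemmata already proved; the only real point is the bookkeeping, and the observation that the cascade~\eqref{varepsilon} is the unique geometric one that simultaneously balances every Young residue at $\varepsilon^2\|h\|^2$ — any slower decay of the $\varepsilon_i$ would fail to absorb a cross term, while any faster one would degrade the final dissipation rate below $\varepsilon^{15/8}$.
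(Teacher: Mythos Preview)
Your proposal is correct and follows essentially the same approach as the paper: sum the five differential inequalities with the prescribed weights, absorb the pure error terms $\varepsilon_3 C\|e_s\|^2$ and $\varepsilon_3 C\|h^\bot\|^2$ directly, and split each cross term $\varepsilon_i C\|X\|\,\|h\|$ by Young's inequality against the dissipation of $X$ at level $\varepsilon_{i-1}$, so that the residues are governed by $\varepsilon_i^2/\varepsilon_{i-1}=\varepsilon^2$. The only quibble is your closing remark that the cascade~\eqref{varepsilon} is ``unique'': in fact any choice with $\varepsilon_{i+1}^2/\varepsilon_i\le\varepsilon^2$ and $\varepsilon_4\ge\varepsilon^{2-\delta}$ for some $\delta>0$ would work, so the exponents $1,3/2,7/4,15/8$ are merely the natural extremal choice, not the only one.
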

\begin{proof}[Proof of Lemma~\ref{lem5:F1Lyap}.]
  By collecting the results of
  Lemmata~\ref{lem1:hperpLyap},~\ref{lem2:esLyap},~\ref{lem3:msLyap}
  and~\ref{lem4:wsLyap}, we obtain
  \[
    \label{dF1dt0}
    \begin{aligned}
      \frac{\mathrm d}{\mathrm dt} \mathcal F_1 [h] \le
      -\,2\,\mathrm c_{\sC}\,\| h^\bot \|^2
      &- \varepsilon_1\,\kappa_1\,\| e_s \|^2 +
      \varepsilon_1\,C\,\| h^\bot \|\,\| h \| \\
      &- \varepsilon_2\,\kappa_2\,\| m_s \|^2 +
      \varepsilon_2\,C\,\big(\| h^\bot \| + \| e_s \|\big)\, \| h
      \| \\
      & - \varepsilon_3\,\kappa_3\,\| w_s \|^2 +
      \varepsilon_3\,C\,\big(\| e_s \|^2 + \| h^\bot \|^2 + \|
      m_s \|\,\| h \| \big) \\
      &- \varepsilon_4\,\|\,\Omega^{-\frac12}\,\partial_t w_s
      \|^2 + \varepsilon_4\,C\,\| w_s \|\,\| h \|
    \end{aligned}
  \]
  for any $\varepsilon_i \in (0,1)$, $i=1,2,3,4$, up to a
  renaming of the generic constant $C>0$. Using repeatedly
  Young's inequality, we have
  \begin{align*}
    & \varepsilon_1\,C\,\| h^\bot \|\,\| h \| \le
      \tfrac12\,\mathrm c_{\sC}\,\| h^\bot \|^2 +
      \varepsilon_1^2\,\tfrac{C^2}{2\,\mathrm c_{\sC}}\,\| h
      \|^2\,, \\
    &\varepsilon_2\,C\,\| h^\bot \|\,\| h \| \le
      \tfrac12\,\mathrm c_{\sC}\,\| h^\bot \|^2 +
      \varepsilon_2^2\,\tfrac{C^2}{2\,\mathrm c_{\sC}}\,\| h
      \|^2\,, \\
    &\varepsilon_2\,C\,\| e_s \|\,\| h \| \le
      \tfrac12\,\varepsilon_1\,\kappa_1\,\| e_s \|^2 +
      \tfrac{\varepsilon_2^2}{\varepsilon_1}\,\tfrac{C^2}{2\,\kappa_1}\,\|
      h \|^2\,, \\
    &\varepsilon_3\,C\,\| m_s \|\,\| h \| \le
      \tfrac12\,\varepsilon_2\,\kappa_2\,\| m_s \|^2 +
      \tfrac{\varepsilon_3^2}{\varepsilon_2}\,\tfrac{C^2}{2\,\kappa_2}\,\|
      h \|^2\,, \\
    & \varepsilon_4\,C\,\| w_s \|\,\| h \| \le
      \tfrac12\,\varepsilon_3\,\kappa_3\,\| w_s \|^2 +
      \tfrac{\varepsilon_4^2}{\varepsilon_3}\,\tfrac{C^2}{2\,\kappa_3}\,\|
      h \|^2\,,
  \end{align*}
  and therefore
  \begin{multline*}
    \label{dFdt1}
    \frac{\mathrm d}{\mathrm dt} \mathcal F_1 [h] \le
    -\left(\mathrm c_{\sC} - \varepsilon_3\,C \right) \| h^\bot
    \|^2 - \varepsilon_1\,\kappa_1\,\big(\tfrac12 -
    \tfrac{\varepsilon_3}{\kappa_1\,\varepsilon_1}\,C \big) \|
    e_s \|^2 - \tfrac12\,\varepsilon_2\,\kappa_2\,\| m_s \|^2 \\
    - \tfrac12\,\varepsilon_3\,\kappa_3\,\| w_s \|^2 -
    \varepsilon_4\,\big\|\,\Omega^{-\frac12}\,\partial_t w_s
    \big\|^2 \\
    +\,\tfrac12\,C^2\left(\tfrac{\varepsilon_1^2}{\mathrm
        c_{\sC}} + \tfrac{\varepsilon_2^2}{\mathrm c_{\sC}} +
      \tfrac{\varepsilon_2^2}{\kappa_1\,\varepsilon_1} +
      \tfrac{\varepsilon_3^2}{\kappa_2\,\varepsilon_2} +
      \tfrac{\varepsilon_4^2}{\kappa_3\,\varepsilon_3}\right) \|
    h \|^2\,.
  \end{multline*}
  The choice $\kappa_0=\tfrac14\,\mathrm c_{\sC}$,
  $\kappa=\min\left\{\tfrac{\mathrm
      c_{\sC}}4,\tfrac{\kappa_1}4,\tfrac{\kappa_2}2,\tfrac{\kappa_3}2,1\right\}$
  and
  $C_0=\tfrac12\,C^2\,\big(\tfrac2{\mathrm
    c_{\sC}}+\tfrac1{\kappa_1}+\tfrac1{\kappa_2}+\tfrac1{\kappa_3}\big)$
  with $\varepsilon_i$, $i=1,\dots,4,$ given
  by~\eqref{varepsilon} and
  \[
    0<\varepsilon<\min\left\{ 1, (\tfrac{4\,C}{\kappa_1})^{-4/3},
      (\tfrac{2\,C}{\mathrm c_{\sC}})^{-4/7} \right\}
  \]
  completes the proof.
\end{proof}

\subsection{Control of finite-dimensional quantities}
\label{ssec:hypomacro}

After estimating the decay of the \emph{deviations from averages
  terms} defined~by~\eqref{emrwws}, let us consider the
time-dependent \emph{global scalar} quantities
$\langle e \rangle$,
$\langle \nabla_x^{\text{\tiny skew}}m \rangle$,
$\langle \nabla_x \cdot m \rangle$, $\langle m \rangle$,
$\langle \nabla_x r \rangle$, $\langle \Delta_x r \rangle$. We
proceed as in the proof of Proposition~\ref{prop:min}. Let
\begin{subequations}
  \label{eq:defAbcz}
  \begin{align}
    \label{eq:defAbc}
    & A(t) := \langle \nabla_x^{\text{\tiny skew}}m
      \rangle\,,\quad b (t):= \langle m \rangle\,,\quad c (t) :=
      \langle e \rangle\,, \\
    \label{eq:defz}
    & z(t,x) := r(t,x) + b'(t) \cdot x -
      c''(t)\,\tfrac{\xi_2(x)}{2\,\sqrt{2\,d}} -
      c(t)\,\sqrt{\tfrac2d}\,\xi_\phi(x)\,.
  \end{align}
\end{subequations}
By comparison with~\eqref{eq:expr}, we know that $z=0$ if $h$
corresponds to a special macroscopic mode. We observe here that
there is no reason for $A(t)$ to be neither the orthogonal
projection of $m$ onto infinitesimal rotation matrices nor
independent of $t$. We shall have to take this fact into account
later and remember that, according to~\eqref{eq:dtA},
\begin{equation}
  \label{eq:dtA2}
  A'(t) = \big\langle \big( E[h^\bot]\,\nabla_x^2\phi
  \big)^{\text{\tiny skew}} \big\rangle\,.
\end{equation}

\subsubsection{The macroscopic equations}

As defined by~\eqref{Eqns:macro}, the functions $r$, $m$ and $e$
can be rewritten in the new variables as follows.

\begin{lemma}
  \label{lem:hypoexprme}
  With previous notations, if $h$
  solves~\eqref{eq:defh}--\eqref{eq:mainh} in $\mathrm L^2(\cM)$,
  then
  \begin{subequations}
    \label{eq:exphypo}
    \begin{align}
      \label{eq:exprhypo}
      & r(t,x) = -\,b'(t) \cdot x +
        c''(t)\,\tfrac{\xi_2(x)}{2\,\sqrt{2\,d}} +
        c(t)\,\sqrt{\tfrac2d}\,\xi_\phi(x) + z(t,x)\,,\\[0.5mm]
      \label{eq:expmhypo}
      & m(t,x) = A(t)\,x + b(t) - c'(t)\,\tfrac1{\sqrt{2\,d}}\,x
        + m_s(t,x)\,,\\[2mm]
      \label{eq:expehypo}
      & e(t,x) = c(t) + e_s(t,x)\,,
    \end{align}
  \end{subequations}
  where $z$ obeys the bounds
  \begin{subequations}
    \label{eq:BDweh}
    \begin{align}
      \label{eq:zBDweh}
      & \| z \|^2 \lesssim \| w_s \|^2 + \| e_s \|^2 + \| h^\bot
        \|^2,\\[2mm]
      \label{eq:dtzBDweh}
      & \big\|\,\Omega^{-\frac12}\,\partial_t z \big\|^2 \lesssim
        \big\|\,\Omega^{-\frac12}\,\partial_t w_s \big\|^2 + \|
        m_s \|^2 + \| h^\bot \|^2\,.
    \end{align}
  \end{subequations}
\end{lemma}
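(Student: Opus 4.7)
The three decomposition formulas \eqref{eq:exprhypo}--\eqref{eq:expmhypo}--\eqref{eq:expehypo} follow by unpacking the definitions \eqref{emrwws}--\eqref{eq:defAbcz} and invoking averaged versions of the macroscopic equations \eqref{rme}. Identity \eqref{eq:expehypo} is immediate from \eqref{es}. For \eqref{eq:expmhypo}, rearranging \eqref{ms} yields $m = A(t)\,x + \tfrac{1}{d}\langle \nabla_x \cdot m\rangle\,x + b(t) + m_s$, so it suffices to check that $\tfrac{1}{d}\langle \nabla_x \cdot m\rangle = -c'(t)/\sqrt{2d}$. I would obtain this by averaging \eqref{rmee} against $\rho$ and using that $\langle \nabla_x^* \cdot \Theta[h^\bot]\rangle = 0$, by integration by parts in $\mathrm L^2(\rho)$ (constants lying in the kernel of $\nabla_x$). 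Identity \eqref{eq:exprhypo} is simply \eqref{eq:defz} rearranged.

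The heart of the lemma is \eqref{eq:zBDweh}. The plan is to show that $z - w_s$ is a microscopic correction controlled by $e_s$ and $h^\bot$. A direct computation from \eqref{eq:defz} and \eqref{ws}--\eqref{wws}, together with $\langle \Delta_x \phi\rangle = d$ granted by \eqref{eq:phiid}, gives
\[
z - w_s \;=\; \bigl(b' + \langle \nabla_x r\rangle\bigr)\cdot x \;+\; \tfrac{1}{2d}\bigl(\langle \Delta_x r\rangle - c\,\sqrt{2d} - \sqrt{d/2}\,c''\bigr)\,\xi_2\,.
\]
Two identities extracted from \eqref{rme} are then needed. The first, $\langle \nabla_x r\rangle = -b'$, follows by averaging \eqref{rmem} against $\rho$ and integrating by parts, exploiting the cancellation $\langle \nabla_x e\rangle = \langle e\,\nabla_x\phi\rangle$ and $\langle \nabla_x^* \cdot E[h^\bot]\rangle = 0$. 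The second is an identity for $\langle \Delta_x r\rangle$ obtained by computing $c'' = \langle \partial_t^2 e\rangle$: differentiating \eqref{rmee} in $t$ and inserting \eqref{rmem} for $\partial_t m$ yields, after one further integration by parts,
\[
c'' \;=\; \sqrt{2/d}\,\langle \Delta_x r\rangle \;-\; (2/d)\,\langle e\,\Delta_x\phi\rangle \;-\; \sqrt{2/d}\,\bigl\langle (\nabla_x^* \cdot E[h^\bot])\cdot \nabla_x\phi\bigr\rangle\,.
\]
Splitting $\langle e\,\Delta_x\phi\rangle = c\,d + \langle e_s\,\Delta_x\phi\rangle$ via \eqref{es} and \eqref{eq:phiid} and substituting back, the leading contributions cancel and $z - w_s$ reduces to a linear combination of $\langle e_s\,\Delta_x\phi\rangle\,\xi_2$ and a purely microscopic remainder. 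The bound \eqref{eq:zBDweh} then follows by Cauchy--Schwarz in $\mathrm L^2(\rho)$, the moment bound \eqref{eq:momentspace} to control $\|\xi_2\|$ and $\|\Delta_x\phi\|_{\mathrm L^2(\rho)}$ (the latter combined with \eqref{hyp:regularity}), and the bound \eqref{eq:lbound} applied to $E[h^\bot]$.

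For the time-derivative bound \eqref{eq:dtzBDweh}, I would proceed analogously by differentiating the expressions of $z$ and $w_s$ in $t$, replacing $\partial_t r$ by $\nabla_x^* \cdot m$ via \eqref{rmer}, and splitting $m = A\,x + b - c'\,x/\sqrt{2d} + m_s$ according to \eqref{eq:expmhypo}. The contributions of the first three summands are purely polynomial in $x$ with time-dependent scalar coefficients, and the algebraic cancellations already exploited for \eqref{eq:zBDweh} ensure that their $\Omega^{-1/2}$-weighted norms are absorbed in $\|\Omega^{-1/2}\partial_t w_s\|$, while the $m_s$-dependent part is directly bounded by $\|m_s\|$ and the microscopic terms coming from $\Theta[h^\bot]$ and $E[h^\bot]$ in \eqref{rmem}--\eqref{rmee} are bounded by $\|h^\bot\|$. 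The weighting $\Omega^{-1/2}$ converts the $x$-, $\xi_2$- and $\xi_\phi$-prefactors into bounded multipliers via the zeroth-order estimates \eqref{eq:boundOmega-1BIS}--\eqref{eq:poincareL2} of Section~\ref{ssec:toolbox}.

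The main difficulty lies in the systematic bookkeeping of the microscopic corrections produced by $\Theta[h^\bot]$ and $E[h^\bot]$ in the identities for $\langle \nabla_x r\rangle$, $\langle \Delta_x r\rangle$ and their time derivatives, and in verifying that all such corrections can be absorbed by $\|h^\bot\|$ (possibly weighted by $\Omega^{-1/2}$) using the toolbox of Section~\ref{ssec:toolbox}. The algebraic cancellations that kill the dominant terms of $z - w_s$ are essentially forced by Proposition~\ref{prop:min}: when $h^\bot = 0$, the function $z$ must vanish identically, which rigidly dictates the structure of the identities above.
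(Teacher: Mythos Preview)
Your proposal is correct and follows essentially the same approach as the paper. The derivation of \eqref{eq:exprhypo}--\eqref{eq:expehypo}, the identities $b'=-\langle\nabla_x r\rangle$ and the formula for $c''$, and the resulting explicit expression
\[
z = w_s + \tfrac1{2d}\,\big\langle E[h^\bot]:\nabla_x^2\phi\big\rangle\,\xi_2 + \tfrac1{d\sqrt{2d}}\,\langle e_s\,\Delta_x\phi\rangle\,\xi_2
\]
all match the paper's argument exactly. The only minor difference is in the handling of \eqref{eq:dtzBDweh}: rather than recomputing $\partial_t z$ and $\partial_t w_s$ separately from the definitions and re-running the cancellations, the paper simply differentiates the explicit formula above in $t$, then uses \eqref{rmeE}--\eqref{eq:EhEhperp1} to express $\partial_t E[h^\bot]$ in terms of $\nabla_x^{\text{\tiny sym}}m_s$, $E[\cL h^\bot]$ and $\partial_t e_s$; the $\partial_t e_s$ contribution cancels against the one already present, leaving
\[
\partial_t z = \partial_t w_s - \tfrac1d\,\big\langle \nabla_x^{\text{\tiny sym}}m_s:\nabla_x^2\phi\big\rangle\,\xi_2 + \tfrac1{2d}\,\big\langle E[\cL h^\bot]:\nabla_x^2\phi\big\rangle\,\xi_2\,,
\]
from which \eqref{eq:dtzBDweh} is immediate after one integration by parts. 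Your route would reach the same conclusion but with more bookkeeping.
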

\begin{proof}[Proof of Lemma~\ref{lem:hypoexprme}.]
  The expression~\eqref{eq:exprhypo} follows from the definition
  of $z$ in~\eqref{eq:defz}, while~\eqref{eq:expehypo} is no more
  than a rewriting of~\eqref{es}. From~\eqref{rmee} one observes
  that
  \begin{equation}
    \label{c'=(Dm)}
    c' = \frac{\mathrm d}{\mathrm dt} \langle e \rangle =
    -\,\sqrt{\tfrac2d}\,\langle \nabla_x \cdot m \rangle\,,
  \end{equation}
  so that~\eqref{eq:expmhypo} follows from the
  definition~\eqref{ms} of $m_s$.

  {}From~\eqref{rmem} we have
  \begin{equation}
    \label{b'=(Dr)}
    b' = \frac{\mathrm d}{\mathrm dt} \langle m \rangle =
    -\,\langle \nabla_x r \rangle\,.
  \end{equation}
  Using~\eqref{rs} and~\eqref{ws}, we write
  \begin{align*}
    r & = w_s + \sqrt{\tfrac2d}\,\langle e \rangle\,\phi_s +
        \langle \nabla_x r \rangle \cdot x +
        \tfrac1{2\,d}\,\langle \Delta_x r \rangle\,\xi_2 \\
      &= w_s + \sqrt{\tfrac2d}\,c\,\big( \xi_\phi -
        \tfrac1{2\,d}\,\langle \Delta_x \phi \rangle\,\xi_2 \big)
        - b' \cdot x + \tfrac1{2\,d}\,\langle \Delta_x r
        \rangle\,\xi_2\,.
  \end{align*}
  {}From~\eqref{eq:exprhypo}, we deduce
  \begin{equation}
    \label{zws}
    z = w_s + \tfrac1{\sqrt{2\,d}} \, \Big(\tfrac1{\sqrt{2\,d}}
    \, \langle \Delta_x r \rangle - \tfrac12\,c'' - \tfrac1d \, \langle
    \Delta_x \phi \rangle\,c \Big)\,\xi_2\,.
  \end{equation}
  Finally, thanks to~\eqref{c'=(Dm)} and~\eqref{rmem}, we compute
  \[
    c'' = -\,\sqrt{\tfrac2d} \left\langle \nabla_x \cdot
      \partial_t m \right\rangle = \sqrt{\tfrac2d} \left\langle
      \Delta_x r \right\rangle - \tfrac2d \left\langle \nabla_x
      \cdot \nabla_x^* e \right\rangle -
    \sqrt{\tfrac2d}\,\big\langle \nabla_x \cdot \big( \nabla_x^*
    \cdot E[h^\bot] \big) \big\rangle\,,
  \]
  and thus obtain
  \begin{equation}
    \label{eq:c''=(D)}
    c'' = \sqrt{\tfrac2d}\,\langle \Delta_x r\rangle -
    \tfrac2d\,\langle e\,\Delta\,\phi \rangle -
    \sqrt{\tfrac2d}\,\big\langle \mathrm{Tr} \big(E
    [h^\bot]\,\nabla_{\!x}^2 \phi \big) \big\rangle\,.
  \end{equation}
  By inserting~\eqref{eq:c''=(D)} in~\eqref{zws}, we obtain
  \[
    z = w_s + \tfrac1{2\,d}\,\big\langle E[h^\bot] : \nabla_{\!x}^2
    \phi \big\rangle\,\xi_2 + \tfrac1{d\,\sqrt{2\,d}} \left\langle
      e_s\,\Delta_x \phi \right\rangle\,\xi_2\,,
  \]
  from which~\eqref{eq:zBDweh} follows. By differentiating $z$
  with respect to $t$, we get
  \[
    \partial_t z = \partial_t w_s + \tfrac1{2\,d}\,\big\langle
    \partial_tE[h^\bot] : \nabla_{\!x}^2 \phi \big\rangle\,\xi_2
    + \tfrac1{d\,\sqrt{2\,d}} \left\langle
      \partial_te_s\,\Delta_x \phi \right\rangle\,\xi_2\,.
  \]
  By~\eqref{rmeE} and~\eqref{eq:EhEhperp1}, we know that
  \[
    \partial_t E[h] = -\,2\,\nabla_x^{\text{\tiny sym}}\,m +
    E[\cL\,h^\bot]= \sqrt{\tfrac2d}\,\partial_te\, \mathrm{Id}_{d
      \times d} + \partial_tE[h^\bot]\,.
  \]
  Besides, we learn from~\eqref{ms} and~\eqref{c'=(Dm)} that
  \[
    \nabla_x^{\text{\tiny sym}}\,m=\nabla_x^{\text{\tiny
        sym}}\,m_s+\tfrac1d\,\langle\nabla_x\cdot
    m\rangle\,\mathrm{Id}_{d\times d}=\nabla_x^{\text{\tiny
        sym}}\,m_s-\tfrac1{\sqrt{2\,d}}\,\langle \partial_te
    \rangle\,\mathrm{Id}_{d\times d}
  \]
  and, as a consequence,
  \[
    \partial_tE[h^\bot]=-\,2\,\nabla_x^{\text{\tiny sym}}\,m_s+
    E[\cL\,h^\bot]-\sqrt{\tfrac2d}\,\partial_te_s\,
    \mathrm{Id}_{d \times d}\,.
  \]
  Hence
  \[
    \partial_t z = \partial_t w_s - \tfrac1d\,\big\langle
    \nabla_x^{\text{\tiny sym}}\,m_s : \nabla_{\!x}^2 \phi
    \big\rangle\,\xi_2 + \tfrac1{2\,d}\,\big\langle
    E[\cL\,h^\bot] : \nabla_{\!x}^2 \phi \big\rangle\,\xi_2
  \]
  and Estimate~\eqref{eq:dtzBDweh} follows using an integration
  by parts and~\eqref{eq:momentspace}.
\end{proof}

Using~\eqref{eq:exprhypo} on the one hand, and~\eqref{rmer}
combined with~\eqref{eq:expmhypo} on the other hand, we write
\begin{align*}
  \partial_t r
  & = \sqrt{\tfrac2d}\,\xi_\phi\,c' - x \cdot b'' +
    \tfrac{\xi_2}{2\,\sqrt{2\,d}}\,c''' +\partial_t z \\
  &= \nabla_x^* \cdot m_s + \nabla_x \phi\cdot A\,x -
    \tfrac1{\sqrt{2\,d}}\,(\nabla_x \phi \cdot x - d)\,c' +
    \nabla_x \phi \cdot b\,.
\end{align*}
We deduce a differential equation which is very similar
to~\eqref{SMeq:Abcm} up to additional terms involving $m_s$
and~$z$, namely
\begin{proposition}
  \label{Prop:EDO2}
  The functions $A$, $b$, $c$, $z$ and $m_s$ defined
  by~\eqref{eq:defAbcz} and~\eqref{ms} solve
  \begin{multline}
    \label{eq:Abchypo}
    \tfrac{2\,\xi_\phi(x) + \nabla_x \phi \cdot x -
      d}{\sqrt{2\,d}}\,c' + \tfrac{\xi_2}{2\,\sqrt{2\,d}}\,c''' -
    \nabla_x \phi \cdot b - x \cdot b'' -\nabla_x \phi\cdot A\,x \\
    = \nabla_x^* \cdot m_s - \partial_t z\,.
  \end{multline}
\end{proposition}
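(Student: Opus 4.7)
The plan is to derive~\eqref{eq:Abchypo} by computing $\partial_t r$ in two different ways and equating the results. This mirrors the derivation of~\eqref{SMeq:Abcm} in Section~\ref{ssec:macrosols}: in the special-mode setting one has $m_s = 0$ and $z = 0$, so~\eqref{eq:Abchypo} should reduce to~\eqref{SMeq:Abcm}, and the right-hand side $\nabla_x^*\cdot m_s - \partial_t z$ is precisely the microscopic/inhomogeneous correction to the special-mode equation.

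First, I would differentiate the decomposition~\eqref{eq:exprhypo} of $r$ with respect to $t$. Since $x$, $\xi_2$ and $\xi_\phi$ are time-independent, only the prefactors $b'(t)$, $c''(t)$, $c(t)$ and the function $z(t,\cdot)$ carry time-dependence, giving a first expression for $\partial_t r$ whose contributions are $-\,x\cdot b''$, $c'''\,\xi_2/(2\sqrt{2\,d})$, $c'\,\sqrt{2/d}\,\xi_\phi$ and $\partial_t z$. On the other hand, the first macroscopic equation~\eqref{rmer} reads $\partial_t r = \nabla_x^*\cdot m$, and I would substitute the decomposition~\eqref{eq:expmhypo} of $m$ and unfold $\nabla_x^*\cdot u = -\,\nabla_x\cdot u + \nabla_x\phi\cdot u$. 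Because $A$ is skew-symmetric one has $\nabla_x\cdot(A\,x) = \mathrm{tr}\,A = 0$, while $\nabla_x\cdot x = d$; the divergence and drift contributions then regroup into $\nabla_x^*\cdot m_s$ together with $\nabla_x\phi\cdot A\,x + \nabla_x\phi\cdot b$ and the single $c'$-term $-\,(\nabla_x\phi\cdot x - d)\,c'/\sqrt{2\,d}$.

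Equating the two expressions for $\partial_t r$ and combining the two $c'$-coefficients via the identity $\sqrt{2/d}\,\xi_\phi = 2\,\xi_\phi/\sqrt{2\,d}$ yields~\eqref{eq:Abchypo}. There is no conceptual obstacle here: the argument is purely algebraic once the decomposition of Lemma~\ref{lem:hypoexprme} is in hand. The only point requiring care is the bookkeeping of the adjoint divergence acting on the affine part of $m$, so that the $\mathrm{tr}\,A = 0$ cancellation is used and the $d$-correction produced by $\nabla_x\cdot x$ combines cleanly with the $\nabla_x\phi\cdot x$-term. In fact, the variable $z$ was defined in~\eqref{eq:defz} precisely so that after these cancellations the only inhomogeneous quantities remaining on the right-hand side are $\nabla_x^*\cdot m_s$ and $-\,\partial_t z$, making the analogy with~\eqref{SMeq:Abcm} transparent and directly usable in the subsequent cascade of macroscopic estimates.
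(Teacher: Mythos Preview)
Your proposal is correct and follows exactly the same approach as the paper: compute $\partial_t r$ first by differentiating the decomposition~\eqref{eq:exprhypo}, then via~\eqref{rmer} combined with~\eqref{eq:expmhypo}, and equate. Your bookkeeping of $\nabla_x^*\cdot m$ (using $\mathrm{tr}\,A=0$ and $\nabla_x\cdot x=d$) is precisely what the paper does implicitly in its two-line computation preceding the proposition.
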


\subsubsection{Control of \texorpdfstring{$A$}{A}}

The counterpart of Section~\ref{controlAm} goes as follows.
\begin{lemma}
  \label{lem:decayAhypo}
  There are some positive constants $\kappa_4$ and $C$ such that
  \begin{equation}
    \label{decayAhypo}
    -\frac{\mathrm d}{\mathrm dt} \big\langle \left(X - Y \cdot
      \nabla_x \phi\right) , \nabla_x \phi \cdot A\,x \big\rangle
    \le-\,\kappa_4\, |A|^2 + C\left(\cD_1[h] + C\,\| h^\bot
      \|\,\| h \|\right),
  \end{equation}
  where $X = X(b',c,c'')$, $Y = Y(b',c,c'')$ and $\cD_1[h]$ are
  defined respectively in~\eqref{eq:defX},~\eqref{eq:defY}
  and~\eqref{eq:D1h}.
\end{lemma}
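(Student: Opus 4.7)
The plan is to mirror the proof of Lemma~\ref{lem:decayA}, using Proposition~\ref{Prop:EDO2} in place of~\eqref{SMeq:Abcm} and tracking the additional microscopic terms. First I would compute the time derivative of $X - Y\cdot\nabla_x\phi$ explicitly: direct differentiation of~\eqref{eq:defX}--\eqref{eq:defY} combined with~\eqref{eq:Abchypo} gives $\partial_t X = \nabla_x\phi\cdot b + \nabla_x\phi\cdot Ax + \nabla_x^*\cdot m_s - \partial_t z$, while multiplying~\eqref{eq:Abchypo} by $x$ and integrating against $\rho$ (as in the derivation of~\eqref{eq:Abcm2}, using $\nabla_x\rho = -\rho\,\nabla_x\phi$, the centering~\eqref{hyp:intnorm}, and the vanishing of $\langle x\,\nabla_x\phi\cdot Ax\rangle$ coming from the skew-symmetry of $A$) yields $\partial_t Y = b + \langle x\,(\nabla_x^*\cdot m_s - \partial_t z)\rangle$. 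Subtracting and combining with~\eqref{eq:dtA2} for $A'$, I obtain
\[
\tfrac{\mathrm d}{\mathrm dt}\big\langle X - Y\cdot\nabla_x\phi,\,\nabla_x\phi\cdot Ax\big\rangle = \|\nabla_x\phi\cdot Ax\|^2 + \big\langle\eta,\,\nabla_x\phi\cdot Ax\big\rangle + \big\langle X - Y\cdot\nabla_x\phi,\,\nabla_x\phi\cdot A'x\big\rangle\,,
\]
with $\eta := (\nabla_x^*\cdot m_s - \partial_t z) - \nabla_x\phi\cdot\langle x\,(\nabla_x^*\cdot m_s - \partial_t z)\rangle$.

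The main new difficulty, compared to Section~\ref{sec:minimizers}, is that $A$ need not lie in $\cR_\phi^\bot$, so~\eqref{decayA2} does not apply directly to $A$. I would split $A = A^\bot + A^{\parallel}$ along $\cR_\phi^\bot \oplus \cR_\phi$. Since $\nabla_x\phi\cdot A^{\parallel}\,x \equiv 0$ by~\eqref{eq:Rphi}, one has $\|\nabla_x\phi\cdot Ax\|^2 = \|\nabla_x\phi\cdot A^\bot x\|^2 \ge c_{\mathrm K}\,(|A|^2 - |A^{\parallel}|^2)$. The control of $|A^{\parallel}|$ comes from the conservation law $\Pp_\phi m = 0$ of~\eqref{consm}: inserting~\eqref{eq:expmhypo} and using that constant vectors and scalar multiples of $x$ are orthogonal to infinitesimal rotations in $\mathrm L^2(\rho)$ (thanks to the centering~\eqref{hyp:intnorm} and the symmetry of $\langle x\otimes x\rangle$), one finds $A^{\parallel}x = -\,\Pp_\phi m_s$, whence $|A^{\parallel}|^2 \le \|m_s\|^2 \le \cD_1[h]$.

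To bound the error terms, for $\langle\eta,\nabla_x\phi\cdot Ax\rangle$ I would integrate the $\nabla_x^*\cdot m_s$ piece by parts against $\nabla_x\phi\cdot Ax$ and use $\|\nabla_x(\nabla_x\phi\cdot Ax)\|\lesssim|A|$ (a consequence of~\eqref{hyp:regularity}--\eqref{eq:momentspace}) to obtain a contribution $\lesssim\|m_s\|\,|A|$, while the $\partial_t z$ piece is handled by pairing weighted norms, $|\langle\partial_t z,\nabla_x\phi\cdot Ax\rangle|\le\|\Omega^{-1/2}\partial_t z\|\,\|\Omega^{1/2}(\nabla_x\phi\cdot Ax)\|\lesssim|A|\sqrt{\cD_1[h]}$ by~\eqref{eq:dtzBDweh}; the ``average'' remainder in $\eta$ is treated analogously using $\|\Omega^{1/2}x\|<\infty$. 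For the last scalar product, Cauchy--Schwarz combined with $\|X - Y\cdot\nabla_x\phi\|\lesssim\|h\|$ (read off~\eqref{eq:defX}--\eqref{eq:defY} together with the moment bounds) and $|A'|\lesssim\|h^\bot\|$ from~\eqref{eq:dtA2} gives a bound $\lesssim\|h\|\,\|h^\bot\|$. Collecting these and applying Young's inequality to absorb small multiples of $|A|^2$ into $c_{\mathrm K}\,|A|^2$ yields~\eqref{decayAhypo}, for instance with $\kappa_4 = c_{\mathrm K}/2$.

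The hardest step will be the orthogonal decomposition of $A$: the $\cR_\phi$-component $A^{\parallel}$, which was trivially zero in Section~\ref{sec:minimizers} by the exact conservation law, is genuinely present here and must be fully absorbed into the dissipation $\cD_1[h]$ via~\eqref{consm} before the Korn inequality~\eqref{decayA2} can produce the desired $-\kappa_4|A|^2$ term on the right-hand side.
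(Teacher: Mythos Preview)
Your proposal is correct and follows essentially the same route as the paper's proof: the same identity for $\partial_t(X-Y\cdot\nabla_x\phi)$ is derived from~\eqref{eq:Abchypo} and its $x$-moment, the same error terms are bounded via~\eqref{eq:dtzBDweh} and~\eqref{eq:dtA2}, and the rigidity constant~\eqref{decayA2} together with the conservation law~\eqref{consm} produces the coercive $-\kappa_4|A|^2$. The only cosmetic difference is in how~\eqref{decayA2} is invoked: the paper notes that $\Pp(m)=A\,x+\Pp(m_s)\in\cR_\phi^\bot$ and applies the Korn bound to the sum, obtaining $c_{\mathrm K}|A|^2\le 4\|\nabla_x\phi\cdot A\,x\|^2+C\|m_s\|^2$, whereas you split $A=A^\bot+A^\parallel$ and control $|A^\parallel|$ via $A^\parallel x=-\Pp_\phi m_s$---an equivalent maneuver.
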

\begin{proof}[Proof of Lemma~\ref{lem:decayAhypo}]
  We argue as for Lemma~\ref{lem:decayA}. We
  multiply~\eqref{eq:Abchypo} by $x_k$ for $k=1, \ldots, d$ and
  after integration, we get
  \begin{equation}
    \label{eq:Abc2}
    \sqrt{\tfrac2d}\,\big\langle \xi_\phi\,x\big\rangle\,c' +
    \tfrac1{2\,\sqrt{2\,d}}\,\langle \xi_2\,x\,\rangle\,c'''- b -
    \langle x\otimes x \rangle\,b'' = \langle m_s \rangle -
    \langle x\,\partial_t z \rangle\,.
  \end{equation}
  Using the definitions of $X$ and $Y$,~\eqref{eq:Abchypo}
  and~\eqref{eq:Abc2} yield
  \[
    \frac{\mathrm d}{\mathrm dt} ( X - Y \cdot \nabla_x \phi) =
    \nabla_x \phi\cdot A\,x + \nabla_x^* \cdot m_s - \langle m_s
    \rangle \cdot \nabla_x \phi - \partial_t z + \langle
    x\,\partial_t z \rangle \cdot \nabla_x \phi\,.
  \]
  Using~\eqref{eq:defAbc} and~\eqref{eq:dtA2}, we obtain
  \begin{align*}
    & \frac{\mathrm d}{\mathrm dt} \big\langle -(X - Y \cdot
      \nabla_x \phi) , \nabla_x \phi\cdot A\,x \big\rangle \\
    & = -\,\| \nabla_x \phi\cdot A\,x \|^2 - \langle \nabla_x^*
      \cdot m_s , \nabla_x \phi \cdot A\,x \rangle + \big\langle
      \langle m_s \rangle \cdot \nabla_x \phi\,, \nabla_x
      \phi\cdot A\,x\big\rangle \\
    & \quad +\,\langle \partial_t z , \nabla_x \phi\cdot
      A\,x\rangle - \big\langle \langle x\,\partial_t z \rangle
      \cdot \nabla_x \phi\,, \nabla_x \phi\cdot A\,x \big\rangle \\
    & \quad - \Big\langle \big(X - Y \cdot \nabla_x \phi \big) ,
      \nabla_x \phi \cdot \big\langle \big( E
      [h^\bot]\,\nabla_{\!x}^2 \phi \big)^{\text{\tiny skew}}
      \big\rangle \, x \Big\rangle\,.
  \end{align*}
  For the first term and thanks to the conservation
  law~\eqref{consm}, we note that
  \[
    \cR_\phi^\bot \ni \Pp(m) = A\,x + \Pp(m_s)\,,
  \]
  so that we can apply inequality~\eqref{decayA2} to
  $x \mapsto A\,x+\Pp(m_s)$ to get
  \[
    c_{\mathrm K}\, \| A\,x + \Pp(m_s) \|^2 \leq \| \nabla_x
    \phi\cdot A\,x + \nabla_x \phi \cdot \Pp(m_s)\|^2\,,
  \]
  which yields
  \begin{equation}
    \label{controlAhypo}
    c_{\mathrm K}\, |A|^2= c_{\mathrm K}\, \| A\,x \|^2 \leq
    4\,\| \nabla_x \phi\cdot A\,x \|^2 + C\,\|m_s\|^2
  \end{equation}
  for any $C>4+c_{\mathrm K}$. In order to estimate the other
  terms, we use
  \begin{multline*}
    \Big\langle \big(X - Y \cdot \nabla_x \phi \big) , \nabla_x
    \phi \cdot \big\langle \big( E [h^\bot]\,\nabla_{\!x}^2 \phi
    \big)^{\text{\tiny skew}} \big\rangle \, x \Big\rangle \\
    \lesssim \big\|\,\Omega^{-1}\,(X - Y \cdot \nabla_x \phi)
    \big\|\,\big\|\,\Omega\,\big(\nabla_x \phi\,x\big)
    \big\|\,\big| \big\langle \big( E [h^\bot]\,\nabla_{\!x}^2
    \phi
    \big)^{\text{\tiny skew}} \big\rangle\big| \\
    \lesssim \big(|b'|+|c|+|c''| \big)\,\| h^\bot \|\,,
  \end{multline*}
  and
  \[
    \big\langle \langle m_s \rangle \cdot \nabla_x \phi\,, \nabla_x
    \phi \cdot A\,x \big\rangle \lesssim \| m_s \|\,\| \nabla_x \phi
    \cdot A\,x \| \lesssim \| m_s \|\,|A|\,.
  \]
  Thanks to the zeroth order Poincar\'e
  inequality~\eqref{eq:poincareL2}, we also have
  \[
    \big\langle \nabla_x^* \cdot m_s , \nabla_x \phi\cdot A\,x
    \big\rangle = \big\langle \Omega^{-\frac12}\,\big(\nabla_x^*
    \cdot m_s \big)\,,\,\Omega^{\frac12}\,\big(\nabla_x \phi\cdot
    A\,x \big) \big\rangle \lesssim \| m_s \|\,|A|
  \]
  as well as similar estimates for the terms in $\partial_t z$
  and $ \langle x\,\partial_t z \rangle \cdot \nabla_x
  \phi$. Collecting these estimates with~\eqref{controlAhypo}, we
  get
  \begin{multline*}
    \frac{\mathrm d}{\mathrm dt} \big\langle - \big(X - Y \cdot
    \nabla_x \phi \big) , \nabla_x \phi\cdot A\,x \big\rangle \\
    \le -\,\tfrac14\,c_{\mathrm K}\,|A|^2 +C\,\big(\,\| m_s \| +
    \big\|\,\Omega^{-\frac12}\,\partial_t z \big\|\,\big)\,|A| +
    \big(|b'|+|c|+|c''| \big)\,\| h^\bot \|\,,
  \end{multline*}
  for some $\kappa_4<\tfrac14\,c_{\mathrm K}$ and $C>0$ large
  enough. Young's inequality and~\eqref{eq:dtzBDweh} conclude the
  proof of~\eqref{decayAhypo}.
\end{proof}

\subsubsection{Control of \texorpdfstring{$b$, $b'$, $b''$ and
    $c$, $c'$, $c''$ and $c'''$}{b b' b'' c c' c''}}

It follows from three lemmata.
\begin{lemma}
  \label{bbccmhypo}
  The following estimate holds
  \begin{equation}
    \label{eq:bbccmhypo}
    |b| + |b''| + |c'| + |c'''| \lesssim |A| +
    \big\|\,\Omega^{-\frac12}\,\partial_t w_s \big\| + \| m_s \|
    + \| h^\bot \|\,.
  \end{equation}
\end{lemma}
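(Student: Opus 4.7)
The plan is to adapt the strategy of Lemma~\ref{bbccm} to the inhomogeneous equation~\eqref{eq:Abchypo}, which differs from~\eqref{SMeq:Abcm} only by the right-hand side $\nabla_x^*\cdot m_s - \partial_t z$. Every algebraic step performed in the proof of Lemma~\ref{bbccm} will still go through, but will now generate extra remainder terms, which must be bounded by the three norms appearing on the right of~\eqref{eq:bbccmhypo}. The contribution of $\nabla_x^*\cdot m_s$, after integration by parts against smooth $\rho$-weighted test functions, yields bounds in $\|m_s\|$ thanks to~\eqref{hyp:regularity} and~\eqref{eq:momentspace}; the $\partial_t z$ contribution is treated by duality and, via the estimate~\eqref{eq:dtzBDweh}, is controlled by $\big\|\,\Omega^{-\frac12}\partial_t w_s\big\| + \|m_s\| + \|h^\bot\|$.

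The algebraic computation would follow~\eqref{eq:b''m}--\eqref{eq:bc-bism}--\eqref{eq:Mphibm} verbatim. First I would multiply~\eqref{eq:Abchypo} by $\nabla_x\phi$ and integrate against $\rho$; after integrations by parts and using~\eqref{hyp:intnorm} and~\eqref{eq:phiid}, the $\xi_\phi\,c'$ and $\xi_2\,c'''$ terms cancel, giving $b''$ in terms of $b$ and $c'$ plus a remainder of admissible size, exactly as in~\eqref{eq:b''m}. Substituting back into~\eqref{eq:Abchypo} eliminates $b''$ and produces the counterpart of~\eqref{eq:bc-bism}, with the same $\Phi,\Psi_1,\Psi_2$ as in Lemma~\ref{bbccm}. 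Pairing with $\Phi$ in $\mathrm L^2(\rho)$ then yields the linear relation $M_\phi\,b = \alpha_1\,c' + \alpha_2\,c''' + \widetilde R_3$, where $\widetilde R_3$ is the original $R_3 = \cO(|A|)$ augmented by the admissible $m_s$ and $\partial_t z$ contributions.

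The case distinction proceeds as in Lemma~\ref{bbccm}. In the fully non-harmonic case, $M_\phi$ is invertible by Lemma~\ref{lem:Mphi-caseI}, and substitution produces a $2\times 2$ system in $c'$ and $c'''$; since $\mathrm{Rank}(\widetilde\Psi_1,\widetilde\Psi_2)=2$ by Lemma~\ref{lem:tildePsi}, we obtain control of $|c'|+|c'''|$ by the right-hand side of~\eqref{eq:bbccmhypo}, after which $|b|$ and $|b''|$ follow from the analogs of~\eqref{eq:b-caseIm} and~\eqref{eq:b''m}. In the partially harmonic case, the conservation law~\eqref{consd} yields $\check b=0$, and the argument is replayed on $(\hat b, c', c''')$ using the invertibility of $\widehat M_\phi$. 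In the fully harmonic case, combining~\eqref{consme},~\eqref{consd} and~\eqref{consp} forces $b\equiv 0$ and $c\equiv 0$, so all quantities on the left of~\eqref{eq:bbccmhypo} vanish and the estimate is trivial.

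The main obstacle is the careful bookkeeping of the $\partial_t z$ forcing through the above manipulations, since only the dual bound~\eqref{eq:dtzBDweh} in $\|\,\Omega^{-\frac12}\cdot\|$ is available. Every inner product $\langle \partial_t z,\psi\rangle$ appearing above must be recast as $\big\langle\Omega^{-\frac12}\partial_t z,\Omega^{\frac12}\psi\big\rangle$, and the relevant test functions $\psi$ (products of $\nabla_x\phi$, $\Phi$ and polynomials in $x$) must belong to $\Omega^{\frac12}\mathrm L^2(\rho)$. This regularity is precisely what is guaranteed by~\eqref{hyp:regularity} together with the moment bounds~\eqref{eq:momentspace}. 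Once this admissibility is checked, the algebra of Lemma~\ref{bbccm} applies unchanged, the new forcing is absorbed into the right-hand side of~\eqref{eq:bbccmhypo}, and the estimate follows.
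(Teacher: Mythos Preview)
Your proposal is correct and follows essentially the same route as the paper's proof: redo the algebra of Lemma~\ref{bbccm} with the forcing $R_0 = \nabla_x\phi\cdot A\,x + \nabla_x^*\cdot m_s - \partial_t z$, bound the $m_s$ contribution by integration by parts and the $\partial_t z$ contribution by the duality $\langle\partial_t z,\psi\rangle = \langle\Omega^{-1/2}\partial_t z,\Omega^{1/2}\psi\rangle$ together with~\eqref{eq:dtzBDweh}, and invoke the same case distinction. The paper compresses steps~\eqref{eq:b''m}--\eqref{eq:Mphibm} by recording directly the effective test function $\widetilde\Phi := \nabla_x\phi - x - \langle(\nabla_x\phi - x)\otimes x\rangle\,\nabla_x\phi$ against which $R_0$ is paired, but the content is identical.
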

\begin{proof}
  Using~\eqref{eq:Abchypo}, we can write
  \[
    \label{eq:bc}
    \tfrac{2\,\xi_\phi + \nabla_x \phi \cdot x -
      d}{\sqrt{2\,d}}\,c' + \tfrac{\xi_2}{2\,\sqrt{2\,d}}\,c''' -
    \nabla_x \phi \cdot b - x \cdot b'' =R_0
  \]
  with
  $R_0 := \nabla_x \phi\cdot A\,x + \nabla_x^* \cdot m_s -
  \partial_t z$. Arguing as in the proof of Lemma~\ref{bbccm}
  with this new definition of $R_0$, we obtain
  that~\eqref{eq:Mphibm} holds with
  \[
    R_3 := -\,\langle R_0\, \widetilde\Phi \rangle\,,\quad
    \widetilde\Phi := \nabla \phi - x - \langle (\nabla \phi - x
    ) \otimes x \rangle\,\nabla \phi\,.
  \]
  Observing that
  \begin{multline*}
    R_3 = -\,\big\langle \nabla_x \phi\cdot A\,x\,\widetilde\Phi
    \big\rangle - \big\langle\,{}^T\! \big(D\,
    \widetilde\Phi\big)\,m_s\big\rangle + \big\langle
    \Omega^{\frac12}\,\widetilde\Phi,
    \Omega^{-\frac12}\,\partial_t z \big\rangle \\
    = \cO \big(\,|A| + \| m_s \| +
    \big\|\,\Omega^{-\frac12}\,\partial_t z \big\|\,\big)
  \end{multline*}
  and using~\eqref{eq:dtzBDweh}, the proof
  of~\eqref{eq:bbccmhypo} follows for the same reasons as in the
  proof of Lemma~\ref{bbccm}.
\end{proof}

With $\cD_1[h]$ defined in~\eqref{eq:D1h}, we obtain the
counterpart of Lemma~\ref{lem:Liapccc-bb}.
\begin{lemma}
  \label{lem:Liapccc-bbhypo}
  There exists a constant $C > 0$ so that
  \[
    \label{eq:Liapccc-bb}
    \begin{aligned}
      & \frac{{\mathrm d}}{{\mathrm d}t} \langle -b,b' \rangle \le
        -\,|b'|^2 + C\,|A|^2 + C\,\cD_1[h]\,, \\
      & \frac{{\mathrm d}}{{\mathrm d}t} \langle - c' , c'' \rangle \le
        -\,|c''|^2 + C\,|A|^2 + C\,\cD_1[h]\,.
    \end{aligned}
  \]
\end{lemma}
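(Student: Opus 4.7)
The plan is to mimic the argument of Lemma~\ref{lem:Liapccc-bb} from the special-modes analysis, with the understanding that the extra terms originating from the microscopic residual $h^\bot$ and from the deviations $m_s$, $w_s$ will now contribute error terms, all of which are controlled by $|A|^2+\cD_1[h]$ thanks to the a~priori estimate~\eqref{eq:bbccmhypo} of Lemma~\ref{bbccmhypo}.

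First, I would simply differentiate in time and obtain
\[
\frac{\mathrm d}{\mathrm dt}\langle -b,b'\rangle=-\,|b'|^2-\langle b,b''\rangle,\qquad
\frac{\mathrm d}{\mathrm dt}\langle -c',c''\rangle=-\,|c''|^2-\langle c',c'''\rangle,
\]
using that $\langle\cdot,\cdot\rangle$ stands for the Euclidean inner product in $\R^d$ (resp.\ in $\R$) since $b(t)$, $c(t)$ are finite-dimensional. This isolates the desired negative squares, and the remaining task is only to absorb the cross terms $\langle b,b''\rangle$ and $\langle c',c'''\rangle$.

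Next, I would apply Cauchy-Schwarz to get $|\langle b,b''\rangle|\le|b|\,|b''|$ and $|\langle c',c'''\rangle|\le|c'|\,|c'''|$ and then invoke Lemma~\ref{bbccmhypo}, which bounds each of $|b|$, $|b''|$, $|c'|$, $|c'''|$ by
\[
|A| + \big\|\,\Omega^{-1/2}\,\partial_t w_s \big\| + \| m_s \| + \| h^\bot \|.
\]
An application of Young's inequality $xy\le\frac12(x^2+y^2)$ then produces a bound proportional to
\[
|A|^2+\big\|\,\Omega^{-1/2}\,\partial_t w_s\big\|^2+\|m_s\|^2+\|h^\bot\|^2.
\]
Recalling the definition~\eqref{eq:D1h} of $\cD_1[h]$, the last three terms are dominated by $\cD_1[h]$, so the full cross term is bounded by $C|A|^2+C\cD_1[h]$ for some $C>0$, yielding both claimed differential inequalities.

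No real obstacle is anticipated here: Lemma~\ref{bbccmhypo} has already done the heavy lifting of controlling the high-order finite-dimensional quantities by $|A|$ and the dissipative norm $\cD_1[h]$, so Lemma~\ref{lem:Liapccc-bbhypo} is essentially a two-line consequence combining that estimate with Cauchy-Schwarz and Young. The only point to watch is to make sure that $\|\Omega^{-1/2}\partial_t w_s\|^2$ is indeed one of the components of $\cD_1[h]$ (which is built precisely to accommodate it), so that no new functional needs to be introduced at this stage of the cascade.
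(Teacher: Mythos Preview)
Your proposal is correct and matches the paper's intended argument. The paper does not actually spell out a proof of Lemma~\ref{lem:Liapccc-bbhypo}; it merely introduces it as ``the counterpart of Lemma~\ref{lem:Liapccc-bb}'', and the proof of that earlier lemma is precisely the computation you wrote (differentiate, isolate $-|b'|^2$ and $-|c''|^2$, and bound the cross terms via~\eqref{eq:bbccm}, here replaced by~\eqref{eq:bbccmhypo}).
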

\begin{lemma}
  \label{lem:boundchypo}
  The following estimates hold
  \begin{align}
    \label{eq:boundchypo}
    & |c| \lesssim |b'| + |c''| + \| w_s \|^2 + \| e_s \|^2 + \|
      h^\bot \|^2, \\
    \label{eq:boundrhypo}
    & \| r \| \lesssim |b'| + |c''| + \| w_s \|^2 + \| e_s \|^2 +
      \| h^\bot \|^2\,.
  \end{align}
\end{lemma}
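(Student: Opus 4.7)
The plan is to mimic the proof of Lemma~\ref{lem:eq:c<bbccc}, tracking the extra contribution of the new term $z$ appearing in the expression~\eqref{eq:exprhypo} for $r$. I would first multiply~\eqref{eq:exprhypo} by $\xi_\phi$ and integrate against $\rho$. On the one hand, the global conservation laws~\eqref{consme} give $\langle r \rangle = 0$ and $\langle \phi\,r \rangle = -\sqrt{d/2}\,c$, so that $\langle r\,\xi_\phi \rangle = -\sqrt{d/2}\,c$. On the other hand, expanding the right-hand side of~\eqref{eq:exprhypo} yields
\[
\langle r\,\xi_\phi \rangle = -\,\langle x\,\xi_\phi\rangle \cdot b' + \tfrac{\langle \xi_2\,\xi_\phi\rangle}{2\sqrt{2d}}\,c'' + \sqrt{\tfrac2d}\,c\,\langle \xi_\phi^2\rangle + \langle z\,\xi_\phi\rangle.
\]
Equating the two expressions for $\langle r\,\xi_\phi\rangle$, the coefficient of $c$ in the resulting identity is bounded below by $\sqrt{d/2}>0$, whence
\[
|c| \lesssim |b'| + |c''| + |\langle z\,\xi_\phi\rangle|.
\]

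Next, I would control the new remainder by Cauchy--Schwarz together with the moment bound~\eqref{eq:momentspace}, giving $|\langle z\,\xi_\phi\rangle| \lesssim \|z\|$, and then invoke~\eqref{eq:zBDweh} from Lemma~\ref{lem:hypoexprme} to conclude that $\|z\| \lesssim \|w_s\| + \|e_s\| + \|h^\bot\|$. This yields~\eqref{eq:boundchypo}.

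For~\eqref{eq:boundrhypo}, applying the triangle inequality directly to~\eqref{eq:exprhypo} gives
\[
\|r\| \le |b'|\,\|x\| + \tfrac{|c''|}{2\sqrt{2d}}\,\|\xi_2\| + \sqrt{\tfrac2d}\,|c|\,\|\xi_\phi\| + \|z\|,
\]
and the moment assumption~\eqref{eq:momentspace} guarantees that $\|x\|$, $\|\xi_2\|$ and $\|\xi_\phi\|$ are $\cO(1)$. Hence $\|r\| \lesssim |b'| + |c''| + |c| + \|z\|$, and inserting the bound on $|c|$ just proven, together with $\|z\| \lesssim \|w_s\| + \|e_s\| + \|h^\bot\|$, gives~\eqref{eq:boundrhypo}. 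The whole argument parallels the pure-macroscopic case of Lemma~\ref{lem:eq:c<bbccc} almost verbatim; the only new ingredient is the treatment of the inhomogeneous remainder $z$, which is precisely what \eqref{eq:zBDweh} is designed to handle, so I do not expect any serious obstacle.
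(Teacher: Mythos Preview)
Your argument is correct and essentially identical to the paper's proof: both multiply~\eqref{eq:exprhypo} by $\xi_\phi$, use the conservation laws~\eqref{consme} to rewrite $\langle r\,\xi_\phi\rangle$ as $-\sqrt{d/2}\,c$, and control the remainder through~\eqref{eq:zBDweh}. Note that both your derivation and the paper's actually yield the bound with $\|w_s\|+\|e_s\|+\|h^\bot\|$ rather than the squared terms appearing in the displayed statement; the squares seem to be a typo, and indeed the linear version is what is invoked downstream in Lemma~\ref{lem:equivD2h}.
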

\begin{proof}
  According to~\eqref{eq:exprhypo}, we can write $r$ as
  \begin{equation}
    \label{eq:r=c}
    r = \sqrt{\tfrac2d}\,\xi_\phi\,c + R_5
  \end{equation}
  where
  $R_5 := z - x \cdot b' + \tfrac{\xi_2}{2\,\sqrt{2\,d}}\,c'' =
  \cO( |b'| + |c''| + \cD_1[h])$ because
  of~\eqref{eq:zBDweh}. Using this expression in~\eqref{consme}
  and recalling that $\langle e \rangle=c$ yields
  \[
    \sqrt{\tfrac d2}\,c \left(1 + \tfrac2d\,\langle \xi_\phi^2
      \big\rangle\right) =- \left\langle \xi_\phi\,R_5
    \right\rangle\,,
  \]
  from which~\eqref{eq:boundchypo} follows. Coming back
  to~\eqref{eq:r=c}, we establish~\eqref{eq:boundrhypo}.
\end{proof}

\subsubsection{Second Lyapunov functional}

Let us introduce the Lyapunov function
\begin{equation}
  \label{eq:F2h}
  \cF_2[h] := \cF_1[h] - \varepsilon_5\, \langle ( X - Y \cdot
  \nabla_x \phi ) , \nabla_x \phi \cdot A\,x \rangle -
  \varepsilon_6 \,\langle b,b' \rangle- \varepsilon_6 \,\langle
  c' , c'' \rangle
\end{equation}
for some additional small parameters $\varepsilon_5$ and
$\varepsilon_6$, and the associated dissipation functional
\begin{equation}
  \label{eq:D2h}
  \cD_2[h] := \cD_1[h] + |A|^2 + |b'|^2 + |c''|^2\,.
\end{equation}
\begin{lemma}
  \label{lem:equivD2h}
  For any
  $0 < \varepsilon_6 < \varepsilon_5 < \varepsilon_4 <
  \varepsilon_3 < \varepsilon_2 < \varepsilon_1$ with
  $\varepsilon_1$ small enough, there holds
  \begin{equation}
    \label{eq:equivD2h}
    \| h \|^2 \lesssim \cF_2[h] \lesssim \cD_2[h] \lesssim \| h
    \|^2\,.
\end{equation}
\end{lemma}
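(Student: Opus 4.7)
The plan is to show that all three quantities $\|h\|^2$, $\cF_2[h]$, and $\cD_2[h]$ are comparable, by reducing every bound to a control of the macroscopic objects $A$, $b$, $b'$, $c$, $c'$, $c''$ via the finite-dimensional estimates proved in Lemmata~\ref{bbccmhypo} and~\ref{lem:boundchypo}, and by exploiting the smallness of the parameters $\varepsilon_1,\dots,\varepsilon_6$ fixed in~\eqref{varepsilon}.

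First I would prove the easy upper bound $\cD_2[h]\lesssim\|h\|^2$. Each term of $\cD_1[h]$ is directly controlled by $\|h\|$: the microscopic $h^\bot$ and the deviations $e_s,m_s,w_s$ are bounded projections of $h$ (using \eqref{eq:Nh=Nrmehperp}, the moment bounds~\eqref{eq:momentspace}, and $\|\phi_s\|\lesssim 1$), while $\|\Omega^{-1/2}\partial_t w_s\|\lesssim\|h\|$ is exactly \eqref{eq:Omega-1dtws} from the proof of Lemma~\ref{lem4:wsLyap}. For the new ingredients $|A|^2,\,|b'|^2,\,|c''|^2$, I would use the integral expressions from Section~\ref{ssec:conslaw}: $|A|\lesssim\|m\|$ after integration by parts against $\rho$ using~\eqref{eq:momentspace}; $|b'|\lesssim\|r\|$ from~\eqref{b'=(Dr)}; and $|c''|\lesssim\|r\|+\|e\|+\|h^\bot\|$ from~\eqref{eq:c''=(D)} after an integration by parts and using~\eqref{hyp:regularity}--\eqref{eq:momentspace}.

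Second, I would establish the key non-trivial bound $\|h\|^2\lesssim\cD_2[h]$. By~\eqref{eq:Nh=Nrmehperp} it suffices to bound $\|r\|^2$, $\|m\|^2$, $\|e\|^2$. From the decomposition~\eqref{eq:expehypo}, $\|e\|^2\le|c|^2+\|e_s\|^2$, and Lemma~\ref{lem:boundchypo} gives $|c|\lesssim|b'|+|c''|+\|w_s\|+\|e_s\|+\|h^\bot\|$, hence $\|e\|^2\lesssim\cD_2[h]$. The bound~\eqref{eq:boundrhypo} immediately yields $\|r\|^2\lesssim\cD_2[h]$. For $\|m\|^2$, I use~\eqref{eq:expmhypo} together with~\eqref{eq:momentspace} to get $\|m\|^2\lesssim|A|^2+|b|^2+|c'|^2+\|m_s\|^2$, and Lemma~\ref{bbccmhypo} controls $|b|$ and $|c'|$ by $|A|+\|\Omega^{-1/2}\partial_t w_s\|+\|m_s\|+\|h^\bot\|$, all of which are pieces of $\cD_2[h]$.

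Third, I would show $\cF_2[h]\simeq\|h\|^2$. Each cross term in $\cF_2[h]$ is estimated via Cauchy--Schwarz and the Witten--Hodge toolbox of Section~\ref{ssec:toolbox}: for instance $|\langle\Omega^{-1}\nabla_x e,\Theta[h]\rangle|\lesssim\|e_s\|\,\|h^\bot\|$ by~\eqref{eq:poincareL2}, and analogously for the $\nabla_x^{\text{\tiny sym}}m_s$, $\nabla_x w_s$, $\partial_t w_s$ terms; the three finite-dimensional corrections are bounded via Lemma~\ref{bbccmhypo} and Step~1. Summing, one obtains $|\cF_2[h]-\|h\|^2|\le C(\varepsilon_1+\cdots+\varepsilon_6)\|h\|^2$, so choosing $\varepsilon_1$ small (and preserving the ordering $\varepsilon_6<\cdots<\varepsilon_1$) gives $\tfrac12\|h\|^2\le\cF_2[h]\lesssim\|h\|^2$. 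Combining with Step~2 then yields $\cF_2[h]\lesssim\|h\|^2\lesssim\cD_2[h]$, completing the chain~\eqref{eq:equivD2h}. The main obstacle is Step~2: one must carefully chain the differential relations between $A$, $b$, $c$ encoded in Proposition~\ref{Prop:EDO2} and~\eqref{eq:dtA2}, because the quantities $|b|$, $|c|$ and $|c'|$ are \emph{not} present in $\cD_2[h]$ and only become controllable through the algebraic identity~\eqref{eq:Abchypo} together with the Korn-type/rigidity content already used in Section~\ref{controlAm}.
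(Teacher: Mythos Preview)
Your proposal is correct and follows essentially the same route as the paper: you first show every ingredient of $\cD_2[h]$ (and of the correction terms in $\cF_2[h]-\|h\|^2$) is bounded by $\|h\|$ via the integral identities~\eqref{c'=(Dm)}, \eqref{b'=(Dr)}, \eqref{eq:c''=(D)} and~\eqref{eq:Omega-1dtws}, then use Lemmata~\ref{bbccmhypo} and~\ref{lem:boundchypo} to control $|b|,|c'|,|c|$ and hence $\|m\|,\|e\|,\|r\|$ by $\cD_2[h]$, and finally absorb the $\varepsilon_i$-perturbations to get $\cF_2[h]\simeq\|h\|^2$. The paper's proof does exactly this, only organizing the bounds in a slightly different order (it first collects all the $\lesssim\|h\|$ controls in one block, then deduces both equivalences).
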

\begin{proof}[Proof of Lemma~\ref{lem:equivD2h}]
  We can control all quantities involved in the definitions of
  $\cF_2$ and $\cD_2$ by $\| h \|^2$. Indeed,
  from~\eqref{eq:Nh=Nrmehperp} and~\eqref{eq:defAbc}, we have
  \begin{equation}
    \label{eq:rmehbcLESSh}
    \| r \| + \| m \| + \| e \| + \| h^\bot\| + |b| + |c|
    \lesssim \| h \|
  \end{equation}
  and thus also
  $\| e_s \| \lesssim \| e \| + |c| \lesssim \| h \|$
  from~\eqref{eq:expehypo}. Next, we observe from~\eqref{c'=(Dm)}
  that
  \[
    |c'| = \sqrt{\tfrac2d}\;|\langle m \cdot \nabla \phi \rangle|
    \lesssim \| m \| \le \| h \|\,,
  \]
  from~\eqref{eq:defAbc} that
  \[
    |A| = |\langle m \nabla_x^{\text{\tiny skew}}\phi \rangle|
    \lesssim \| m \| \le \| h \|\,,
  \]
  and thus also
  $\| m_s \| \lesssim \| m \| + |A| + |b| + |c'| \lesssim \| h
  \|$ from~\eqref{eq:expmhypo}. Similarly, we observe
  from~\eqref{b'=(Dr)} that
  \[
    |b'| = |\langle r \nabla_x \phi \rangle| \lesssim \| r \| \le
    \| h \|\,.
  \]
  Coming back to the definition of $w_s$ and
  using~\eqref{b'=(Dr)}, we get
  \[
    w_s = r - \sqrt{\tfrac2d}\,c\,\big( \xi_\phi -
    \tfrac1{2\,d}\,\langle \Delta_x \phi \rangle\,\xi_2 \big) +
    b' \cdot x - \tfrac1{2\,d}\,\left\langle
      r\,\big(|\nabla\phi|^2 - \Delta \phi\big)
    \right\rangle\xi_2\,,
  \]
  and deduce
  $\| w_s \| \lesssim \| r \| + |c| + |b'| \lesssim \| h
  \|$. Similarly, from~\eqref{eq:c''=(D)}, we also have
  $|c''| \lesssim \| r \| + \| e \| + \| h^\bot \| \le \| h
  \|$. Summing up, we have proved
  \begin{equation}
    \label{eq:esmswsAcbcLESSh}
    \| e_s \| + \| m_s \| + \| w_s \| + |A| + |c'| + |b'| + |c''|
    \lesssim \| h \|\,.
  \end{equation}
  
  We finally have to control the terms
  $\|\,\Omega^{-1/2}\,\partial_t w_s
  \|$. From~\eqref{ws},~\eqref{rs} and~\eqref{rme}, we have
  \[
    \partial_t w_s = \nabla_x^* \cdot m - \langle \nabla_x
    \nabla_x^* \cdot m \rangle \cdot x -
    \tfrac1{2\,d}\,\left\langle \Delta_x \nabla_x^* \cdot m
    \right\rangle\,\xi_2 -\sqrt{\tfrac2d}\, c' \phi_s
  \]
  and, after performing several integration by parts,
  \begin{equation}
    \label{eq:OdtweLESSh}
    \left\|\Omega^{-\frac12}\,\partial_t w_s \right\| \lesssim \|
    m \| + |c'| \lesssim \| h \|\,. 
  \end{equation}
  As a consequence of the
  estimates~\eqref{eq:rmehbcLESSh},~\eqref{eq:esmswsAcbcLESSh},~\eqref{eq:OdtweLESSh}
  and of the definition~\eqref{eq:F2h} of $\cF_2$ (also
  see~\eqref{eq:F1h}), we have
  \[
    \left| \| h \|^2 - \cF_2[h] \right| \le C\,\varepsilon_1\,\|
    h \|^2\,.
  \]
  This completes the proof of the first equivalence
  in~\eqref{eq:equivD2h}. For the same reason, we have
  $\cD_2[h] \lesssim \| h \|^2$. On the other way round,
  from~\eqref{eq:exprhypo} and~\eqref{eq:boundchypo}, we have
  \[
    \| r \| \lesssim |b'| + |c''| + |c| + \| w_s \| + \| e_s \| +
    \| h^\bot \| \lesssim |b'| + |c''| + \| w_s \| + \| e_s \| +
    \| h^\bot \|
  \]
  and similarly, from~\eqref{eq:expmhypo} and~\eqref{eq:bbccmhypo}, we have
  \[
    \| m \| \lesssim |A| + |b| + |c'| + \| m_s \| \lesssim |A| +
    |c'| + \| m_s \| + \|\,\Omega^{-\frac12}\,\partial_t w_s \| +
    \| h^\bot \|\,.
  \]
  Combining the last two
  estimates,~\eqref{eq:Nh=Nrmehperp},~\eqref{eq:expehypo} and the
  definition~\eqref{eq:D1h} of~$\cD_2$, we deduce the reverse
  inequality $\| h \|^2 \lesssim \cD_2[h]$, which completes the
  proof of the second equivalence in~\eqref{eq:equivD2h}.
\end{proof}

\subsection{Proof of Proposition~\ref{prop:hypomm}}
\label{ssec:hypomacro2}

\begin{proof}[Proof of Proposition~\ref{prop:hypomm}]
  We differentiate with respect to $t$ the Lyapunov function
  $\cF_2[h]$ and use
  Lemmata~\ref{lem5:F1Lyap},~\ref{lem:decayAhypo}
  and~\ref{lem:Liapccc-bbhypo} to get
  \begin{align*}
    \frac{\mathrm d}{\mathrm dt} \mathcal F_2 [h] \le
    & \, -\,\kappa_0\, \| h^\bot \|^2 -
      \varepsilon^\frac{15}8\,\kappa\,\cD_1[h] +
      \varepsilon^2\,C_0\,\| h \|^2 \\
    & \,-\,\kappa_4\,\varepsilon_5\, |A|^2 +
      C\,\varepsilon_5\left(\cD_1[h] + C\,\| h^\bot \|\,\| h
      \|\right) \\
    & \,+\,\,\varepsilon_6\left(-\,|b'|^2-\,|c''|^2 + 2\,C\,|A|^2
      + 2\,C\,\cD_1[h]\right).
  \end{align*}
  Using Young's inequality we have
  \[
    \varepsilon_5\,C^2\,\| h^\bot \|\,\| h \| \le
    \kappa_0\,\|h^\bot\|^2+\varepsilon_5^2\,\kappa_0^{-1}\,C^4\,\|h\|^2
  \]
  and we deduce for some new constants $C_1$, $C_2$, $C_3>0$ that
  \begin{align*}
    \frac{\mathrm d}{\mathrm dt} \mathcal F_2 [h] \le
    & \,-\,\varepsilon^\frac{15}8\,\kappa\,\cD_1[h] +
      \varepsilon^2\,C_0\,\| h \|^2 \\
    & \,-\,\kappa_4\,\varepsilon_5\, |A|^2 +
      C_1\,(\varepsilon_5+\varepsilon_6)\,\cD_1[h]
      +C_2\,\varepsilon_5^2\,\|h\|^2 \\
    & \,-\,\varepsilon_6 \left( |b'|^2+|c''|^2 \right) + C_3\,
      \varepsilon_6 \,|A|^2.
  \end{align*}
  As in the proof of Lemma~\ref{lem5:F1Lyap}, we choose
  appropriately the small parameters~$\varepsilon_i$ such that
  the quantities $\varepsilon_5$, $\varepsilon_6$,
  $\varepsilon_5/\varepsilon_4$ and $\varepsilon_6/\varepsilon_5$
  are small enough in terms of $\varepsilon$. With
  $\varepsilon_5 := \varepsilon^{61/32}$ and
  $\varepsilon_6 := \varepsilon^{62/32}$, we obtain
  \begin{align*}
    \frac{{\mathrm d}}{{\mathrm d}t} \mathcal F_2 [h] \le
    & -\, \varepsilon^{15/8} \left(\kappa -
      2\,\varepsilon^{1/32}\,C_1\right) \cD_1[h] -
      \varepsilon^{61/32} \left( \kappa_4 -
      \varepsilon^{1/32}\,C_3 \right)|A|^2 \\
    & -\, \varepsilon^{62/32} \left( |b'|^2+|c''|^2 \right) +
      \varepsilon^2 \left( C_0+ C_2\,\varepsilon^{29/16} \right)
     \| h \|^2\,.
  \end{align*}
  Choosing $\varepsilon>0$ small enough, the differential
  inequality simplifies into
  \[
    \frac{{\mathrm d}}{{\mathrm d}t} \mathcal F_2 [h] \le -\,
    \varepsilon^{62/32}\,\cD_2[h] + 5\,C\,\varepsilon^2\,\| h
    \|^2\,.
  \]
  Because of the equivalences established in
  Lemma~\ref{lem:equivD2h}, there are two constants $K_i > 0$,
  such that
  \[
    \frac{{\mathrm d}}{{\mathrm d}t} \mathcal F_2 [h] \le
    -\,\varepsilon^{62/32} \left(K_1 - K_2\,\varepsilon^{2/32}
    \right)\cF_2[h]\,.
  \]
  Choosing $\varepsilon > 0$ smaller if necessary, we obtain
  \[
    \frac{{\mathrm d}}{{\mathrm d}t} \mathcal F_2 [h] \le -\,\kappa\,
    \cF_2[h]\,,
  \]
  for some $\kappa > 0$, which implies
  $\cF_2[h(t)] \le e^{-\kappa\,t} \cF_2[h_0]$. This completes the
  proof of Proposition~\ref{prop:hypomm}, that is, of Part (2) of
  Theorem~\ref{theo:main}, by using once again the equivalences
  of Lemma~\ref{lem:equivD2h}.
\end{proof}

\section{Proof of hypocoercivity by the commutator method}
\label{sec:cascade}
\setcounter{equation}{0}

In this section we give an alternative proof of our main result
in Theorem~\ref{theo:main} using a commutator method, under the
additional hypotheses that
\begin{equation}
  \label{hyp:boundedCollisionOperator}
  \tag{H9}
  \mbox{\emph{The linear collision operator $\sC$ is bounded in
      $\mathrm L^2(\mu^{-1})$}}\,,
\end{equation}
\begin{equation}
  \label{hyp:PotentialRestrictions}
  \tag{H10}
  \mbox{\emph{$\|\nabla_x^2\,\phi\|_{\mathrm
        L^\infty(\R^d)}<\infty$ and $\displaystyle \int_{\R^d} x\,
      \phi(x)\,e^{-\phi(x)} \dd x =0$}}\,.
\end{equation}
Assumption~\eqref{hyp:boundedCollisionOperator} means that the
operator $\cC$ as defined in~\eqref{eq:cC} is bounded on
$\mathrm L^2(\mu)$ while~\eqref{hyp:PotentialRestrictions} means
that the potential $\phi$ has bounded second derivatives and is
superlinear at infinity. These assumptions are added merely in
order to simplify the computations but the
bound~\eqref{hyp:boundedCollisionOperator} on the collision
operator can be relaxed into just~\eqref{eq:lbound} by using
$\tilde A_i := \Pi^* A_i \Pi$ instead of the $A_i$'s defined
in~\eqref{eq:Ai} in Proposition~\ref{prop:error} (where
$\Pi h = h^\parallel$ is the orthogonal projection on the
macroscopic part), and the bound
in~\eqref{hyp:PotentialRestrictions} can be relaxed into simply
$|\nabla_x ^2 \phi| \lesssim 1 + |\nabla_x \phi|$ thanks to the
additional weight $\lfloor \nabla_x \phi \rceil$ in the Poincaré
inequality~\eqref{eq:poincarenormal}.  We include the commutator
method because of its interesting algebraic properties and
potential applications to a larger class of equations. Then
part (2) of Theorem~\ref{theo:main} writes:
\begin{proposition}
  \label{prop:hypoc}
  Assume that~\eqref{eq:kersC}--\eqref{hyp:PotentialRestrictions}
  hold and consider a solution~$h$
  to~\eqref{eq:defh}--\eqref{eq:mainh} in $\mathrm
  L^2(\cM)$. Then there are explicit constants $C>0$ and $\kappa>0$ such that
  \[
    \| h(t) \| \leq C\,e^{-\kappa\,t} \left\| h_0 \right\|
  \]
  where $C$ and $\kappa$ depend only on bounded moments constants,
  spectral gap constants or explicitly computable quantities
  associated to $\phi$ such as the rigidity constant defined
  in~\eqref{grad}.
\end{proposition}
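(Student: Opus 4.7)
The plan is to construct a modified Lyapunov functional $\cF[h]$ equivalent to $\|h\|^2$ in $\mathrm L^2(\cM)$ whose time derivative along~\eqref{eq:mainh} satisfies $\frac{\mathrm d}{\mathrm dt}\cF[h]\le -\kappa\,\cF[h]$, following the Hilbertian hypocoercivity method of~\cite{HN04,Vil09}. The naive energy identity $\frac12\frac{\mathrm d}{\mathrm dt}\|h\|^2 = \scalar{\cC h}{h}\le -\mathrm c_\sC\|h^\bot\|^2$ gives only microscopic dissipation, so the role of the additional cross terms in $\cF$ is to transfer part of this dissipation onto the macroscopic direction $\pih = r+m\cdot v + e\,\fE$, which is invariant under $\cC$.

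The key algebraic identity singled out in the introduction is $[\nabla_v,\cT]=-\nabla_x$; iterated commutators with $\cT$ only generate lower-order contributions involving $\nabla_x^2\phi$. Since $\mathrm{Ker}\,\cC$ is $(d+2)$-dimensional, I would build a three-step cascade
\[
  A_0 := \nabla_v\,,\quad A_1 := [\cT,A_0] = \nabla_x\,,\quad A_2 := [\cT,A_1]\,,\quad A_3 := [\cT,A_2]\,,
\]
and form
\[
  \cF[h] := \|h\|^2 + \varepsilon_1\,\scalar{A_0 h}{A_1 h} + \varepsilon_2\,\scalar{A_1 h}{A_2 h} + \varepsilon_3\,\scalar{A_2 h}{A_3 h}\,,
\]
with $0<\varepsilon_3\ll\varepsilon_2\ll\varepsilon_1\ll 1$. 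Differentiating the first cross term produces the dominant coercive contribution $-\|\nabla_x h\|^2$, which after the Poincar\'e inequality~\eqref{eq:poincarenormal} controls $\|r-\langle r\rangle\|^2$ together with partial contributions to $m$ and $e$. The second and third levels would then pick up the remaining finite-dimensional macroscopic pieces associated with momentum and energy, by exploiting the Korn-type rigidity constant~\eqref{grad} and the Poincar\'e-Lions inequalities of Section~\ref{ssec:toolbox}. The global conservation laws of Corollary~\ref{Cor:GlobalConservationLaws2}, built into the definition~\eqref{eq:defh} of $h$, are essential since they eliminate the null directions of all these Poincar\'e-Korn inequalities.

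The main obstacle I anticipate is the tight bookkeeping of the cascade: at each level, the positive coercive contribution must dominate both the commutator remainders carried over from previous levels and the microscopic cross-terms created by the interaction of $\cC$ with the $A_k$'s, all absorbed into the basic dissipation $\|h^\bot\|^2$ via Young's inequality at the expense of the strict hierarchy $\varepsilon_3\ll\varepsilon_2\ll\varepsilon_1$. This is precisely the role of the additional assumptions~\eqref{hyp:boundedCollisionOperator} and~\eqref{hyp:PotentialRestrictions}: the boundedness of $\sC$ in $\mathrm L^2(\mu^{-1})$ dominates all $[\cC,A_k]$ terms, the boundedness of $\nabla_x^2\phi$ dominates the iterated transport commutators appearing in $A_2$ and $A_3$, and the centering $\int x\,\phi\,e^{-\phi}\dd x = 0$ eliminates the resonant zero-order correction in $A_2$. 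Once the parameters are calibrated so that both $\cF[h]\simeq\|h\|^2$ and $\frac{\mathrm d}{\mathrm dt}\cF[h]\lesssim -\cF[h]$ hold, Gr\"onwall's lemma closes the argument with explicit constants depending only on $\mathrm c_\sC$, $c_{\text{\tiny P}}$, $c_{\mathrm K}$ and the moment and regularity constants of $\phi$.
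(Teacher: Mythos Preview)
Your overall philosophy (build a modified entropy with cross terms, use $[\nabla_v,\cT]=-\nabla_x$, hierarchy of small parameters, Gr\"onwall) is correct, but the concrete cascade you propose would not close, and two structural ingredients of the paper's argument are missing.

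\textbf{First gap: unbounded correctors and wrong error terms.} Your operators $A_0=\nabla_v$, $A_1=\nabla_x$, $A_2=[\cT,\nabla_x]=-H_\phi\nabla_v$, \dots\ are unbounded on $\mathrm L^2(\cM)$, so the cross terms $\langle A_k h,A_{k+1}h\rangle$ are not $O(\|h\|^2)$ and $\cF[h]\simeq\|h\|^2$ fails. More seriously, the error terms you must absorb are of the form $\|\nabla_v h\|^2$ (e.g.\ from $\langle\nabla_v h,H_\phi\nabla_v h\rangle$), but the only dissipation available is $\|h^\bot\|^2$, and $\nabla_v h^\parallel\neq 0$ since $h^\parallel$ contains $m\cdot v$ and $e\,\fE(v)$. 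This is the essential difference with the Fokker--Planck situation of~\cite{HN04,Vil09}, where the dissipation \emph{is} $\|\nabla_v h\|^2$. The paper fixes both problems at once: the correctors~\eqref{eq:Ai} are \emph{bounded} sixth-order operators of the schematic form
\[
  A_i \;=\; (\nabla^*)^{\otimes 3}\,\Gamma^{-a_i}\Lambda^{-b_i}:\Lambda^{-b_i}\Gamma^{-a_i}\,\nabla_v^{\otimes(3-i)}\otimes\nabla_x^{\otimes i}\,,
\]
with $\Lambda=\nabla_v^*\!\cdot\!\nabla_v+\nabla_x^*\!\cdot\!\nabla_x+1$ and $\Gamma=\nabla_v^*\!\cdot\!\nabla_v+1$. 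The regularisers $\Gamma^{-a_i}\Lambda^{-b_i}$ make each $A_i$ bounded, and the block $\nabla_v^{\otimes(3-i)}$ on the right annihilates $\R_{2-i}[v]$, so that $A_2$ sees only $h^\bot$, $A_1$ sees only $e\,\fE+h^\bot$, and $A_0$ sees only $m\cdot v+e\,\fE+h^\bot$. This polynomial filtering is what makes all remainder terms genuinely controlled by $\|h^\bot\|$, $\|\tilde e\|$, $\|\tilde m\|$ already estimated at earlier stages of the cascade (Proposition~\ref{prop:error}). Your iterated-commutator cascade has no such filtering mechanism.

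\textbf{Second gap: the finite-dimensional tail.} The commutator cascade, even done correctly, only controls the \emph{deviations} $\tilde r$, $\tilde m$, $\tilde e$ (via the higher-order Poincar\'e inequalities of Lemma~\ref{lem:poincaretensoriel}). The remaining finite-dimensional quantities $\langle e\rangle$, $\langle m\rangle$, $\langle\nabla_x m\rangle$, $\langle\nabla_x r\rangle$, $\langle\nabla_x^2 r\rangle$ are \emph{not} captured by the cascade and require a separate treatment: the paper adds further corrector terms (building norms $\|\cdot\|_{\cH_2}$, $\|\cdot\|_{\cH_3}$, $\|\cdot\|_{\cH_4}$ in Section~\ref{ssec:correctors} and~\ref{ssec:moments}) and then reuses verbatim the analysis of Sections~\ref{ssec:hypomacro}--\ref{ssec:hypomacro2} to close on the modes tied to rotational and harmonic symmetries. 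Your sentence ``the second and third levels would then pick up the remaining finite-dimensional macroscopic pieces'' is where this work should be, and it is precisely where the rigidity constant~\eqref{grad} and the centring condition $\int x\,\phi\,e^{-\phi}\dd x=0$ in~\eqref{hyp:PotentialRestrictions} are actually used.
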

While $\nabla_x$ and $\nabla_v$ map scalar functions to
vector-valued functions, their adjoints in $\mathrm L^2(\cM)$ are
$\nabla_x^* = -\,\nabla_x \cdot +\,\nabla_x \phi$ and
$\nabla_v^* = -\,\nabla_v \cdot +\,v\,\cdot$ and map
vector-valued functions back to scalar functions. The operators
$\nabla_x$ and $\nabla_v$ commute but each does not commute
with its adjoint. We have
\begin{equation}
  \label{eq:commutators-T}
  \begin{aligned}
    & [\nabla_v,\cT] = -\,\nabla_x\,,\quad [\nabla_x,\cT] =
    H_\phi\,\nabla_v\,, \\
    & [\nabla_v^*,\cT]=-\,\nabla_x^*\,,\quad [\nabla_x^*,\cT] =
    \left( H_\phi \,\nabla_v \right)^*,
  \end{aligned}
\end{equation}
where $[A,B]=A\,B-B\,A$ is the commutator and
$H_\phi := (\partial^2_{x_ix_j} \phi)_{i,j}$.

In addition to $\Omega = \nabla_x^* \cdot \nabla_x+1$ defined
in~\eqref{omegadef}, we also introduce
\[
  \Gamma = \nabla_v^* \cdot \nabla_v +1\,,\quad \Lambda =
  \nabla_v^* \cdot \nabla_v + \nabla_x^* \cdot \nabla_x+1\,.
\]
These scalar operators also act, coordinate by coordinate, on
tensors.

{}From, \emph{e.g.},~\cite{CDHMM21} (see
Sections~\ref{ssec:hypomacro}-\ref{ssec:hypomacro2})
or~\cite{HN05,HN04}, these operators are self-adjoint in
$\mathrm L^2(\cM)$. As in Section~\ref{sec:micmac}, we construct
a cascade of estimates.

\subsection{Cascade of infinite-dimensional
  correctors}
\label{ssec:correctors}

The three following operators play the role of \emph{correctors}:
\begin{equation}
  \label{eq:Ai}
  \begin{dcases}
    A_0 := \nabla_x^* \otimes \nabla_x^* \otimes
    \nabla_x^*\,\Lambda^{-\frac32} : \Lambda^{-\frac32}\,\nabla_v
    \otimes \nabla_x \otimes \nabla_x\,,\\[2mm]
    A_1 := \nabla_x^* \otimes \nabla_v^* \otimes
    \nabla_x^*\,\Gamma^{-\frac12}\,\Lambda^{-1} :
    \Lambda^{-1}\,\Gamma^{-\frac12}\,\nabla_v \otimes \nabla_v
    \otimes \nabla_x\,,\\[2mm]
    A_2 := \nabla_v^* \otimes \nabla_v^* \otimes
    \nabla_x^*\,\Gamma^{-1}\,\Lambda^{-\frac12} :
    \Lambda^{-\frac12}\,\Gamma^{-1}\,\nabla_v \otimes \nabla_v
    \otimes \nabla_v\,.
  \end{dcases}
\end{equation}
{}From~\cite{HN05,HN04} or by standard pseudo-differential
calculus arguments (see Lemma~\ref{lem:pseudobnd}), $A_0, A_1,
A_2$ are bounded operators in $\mathrm L^2(\cM)$. Let 
$\R_n[V]$ be the space of real polynomials of $v$ with degree
less or equal than $n \in \N$, then 
\[
  \begin{dcases}
    \nabla_v^{\otimes 3}\left( \R_2[V] \right) = \{0\}\,,
    \quad \nabla_v^{\otimes 2}\left( \R_1[V] \right) =
    \{0\}\,,\quad \nabla_v \left( \R_0[V] \right) =
    \{0\}\,,\\[2mm]
    A_2 \left( \R_2[V] \right) = \{0\}\,,\quad A_1\left(
      \R_1[V] \right) = \{0\}\,,\quad A_0 \left( \R_0[V]
    \right) = \{0\}\,.
  \end{dcases}
\]
This means for instance that $A_2$ acts at the level of the local energy
without seeing the local density and local momentum, in a
descending cascade.
Note that the simplest ``order $1$'' corrector $A_0$ is inspired
by the corrector
$\nabla^*_x\,\Lambda^{-1/2} : \Lambda^{-1/2}\,\nabla_v$
introduced in~\cite{HN04} for Fokker-Planck type equations.
We define macroscopic deviations from averages quantities that
are slighlty different from~\eqref{rs}--\eqref{es}:
\begin{align*}
  & \tilde{e} := e - \langle e \rangle\,, \\
  & \tilde{m} := m - \langle \nabla_x m \rangle\,x - \langle m
    \rangle\,, \\
  & \tilde{r} := r - \frac12\,\langle \nabla_x^{\otimes 2} r
    \rangle : \big( x\otimes x -\langle x\otimes x \rangle \big)
    - \langle \nabla_x r \rangle \cdot x -\langle r \rangle\,.
\end{align*}
The core commutator estimates are:
\begin{proposition}
  \label{prop:error}
  For all $i \in \{0,1,2\}$, we have
  \begin{equation}
    \label{eq:derivcom}
    \frac{\mathrm d}{\mathrm dt} \langle A_i\,h, h \rangle =
    -\,\langle \Lambda_i\,h,h \rangle + \langle B_i h,h \rangle
  \end{equation}
  where the $B_i$'s are bounded operators that satisfy
  \begin{align*}
    \label{eq:error}
    & \langle B_0 h, h \rangle \lesssim \| h \| \left( \| h^\bot \|
      + \left\| \tilde{e} \right\| + \| \tilde{m} \| \right), \\
    & \langle B_1 h, h \rangle \lesssim \| h \| \left( \| h^\bot \|
      + \left\| \tilde{e} \right\| \right), \\
    & \langle B_2 h, h \rangle \lesssim \| h \|\,\| h^\bot \|\,,
  \end{align*}
  and where the $\Lambda_i$'s are nonnegative self-adjoint
  operators that satisfy, for some $C_0$, $C_1$, $C_2 >0$ and
  $\bar{\lambda}_0$, $\bar{\lambda}_1$, $\bar{\lambda}_2>0$
  \begin{align*}
    & -\,\langle \Lambda_0\,h, h \rangle \leq -\bar{\lambda}_0
      \left\| \tilde{r} \right\|^2 + C_0 \left( \left\| \tilde{m}
      \right\| + \left\| \tilde{e} \right\| + \| h^\bot \|
      \right) \| h \|\,, \\
    & -\,\langle \Lambda_1\,h, h \rangle \leq -\bar{\lambda}_1
      \left\| \tilde{m} \right\|^2 + C_1\left( \left\| \tilde{e}
      \right\| + \| h^\bot \| \right) \| h \|\,, \\
    & -\,\langle \Lambda_2\,h, h \rangle \leq -\bar{\lambda}_2
      \left\| \tilde{e} \right\|^2 + C_2\,\| h^\bot \|\,\| h
      \|\,.
  \end{align*}
\end{proposition}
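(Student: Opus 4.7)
The plan is to differentiate $\langle A_i h, h\rangle$ along the flow using $\partial_t h = \cT h + \cC h$ and to split the result into a coercive piece $-\langle\Lambda_i h,h\rangle$ and a remainder $\langle B_i h,h\rangle$. Since $\cT^* = -\cT$ and $\cC$ is self-adjoint on $\mathrm L^2(\cM)$, one has
\begin{equation*}
\tfrac{{\mathrm d}}{{\mathrm d}t}\langle A_i h, h\rangle = \langle [A_i,\cT] h, h\rangle + \langle A_i \cC h, h\rangle + \langle \cC A_i^* h, h\rangle\,.
\end{equation*}
The collision contribution lands immediately in $\langle B_i h, h\rangle$: since $\cC h = \cC h^\bot$ (because $\pih$ lies in the kernel of $\cC$) and $\cC$, $A_i$, $A_i^*$ are all bounded on $\mathrm L^2(\cM)$ by~\eqref{hyp:boundedCollisionOperator} and the pseudo-differential estimates quoted just after~\eqref{eq:Ai}, this term is $\cO(\|h^\bot\|\,\|h\|)$.

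The core of the argument is the unfolding of $[A_i,\cT]$. Using the elementary commutators~\eqref{eq:commutators-T}—in particular the cascade-driving identity $[\nabla_v,\cT] = -\nabla_x$ and the error-producing identity $[\nabla_x,\cT] = H_\phi\nabla_v$, whose boundedness we secure via~\eqref{hyp:PotentialRestrictions}—together with the fact that $\Lambda$ and $\Gamma$ commute with $\cT$ modulo bounded $H_\phi$-corrections, I would apply Leibniz to the degree-three composite operators defining each $A_i$. The principal contribution of $[A_i,\cT]$ is then naturally a negative quadratic form: schematically $-2\|\Lambda^{-3/2}\nabla_x^{\otimes 3}h\|^2$ for $A_0$, $-2\|\Lambda^{-1}\Gamma^{-1/2}\nabla_v\nabla_x^{\otimes 2}h\|^2$ for $A_1$, and $-2\|\Lambda^{-1/2}\Gamma^{-1}\nabla_v^{\otimes 2}\nabla_x h\|^2$ for $A_2$. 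Each $H_\phi$-remainder trades a $\nabla_x$ for a $\nabla_v$ against a bounded multiplier; by Cauchy-Schwarz and pseudo-differential bounds, it is either absorbed into $\Lambda_i$ as a strictly smaller quadratic form, or into $B_i$ with factor $\|h^\bot\|$ after using the micro-macro decomposition.

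The last step is to extract the correct macroscopic norms. Plugging $h = r + m\cdot v + e\,\fE(v) + h^\bot$ into the principal quadratic form and integrating in $v$ against $\mu$, the orthogonality of the Hermite polynomials $1,\,v,\,\fE$ isolates, for $i=0,1,2$, the diagonal contributions $\|\Omega^{-3/2}\nabla_x^{\otimes 3} r\|^2$, $\|\Omega^{-1}\nabla_x^{\otimes 2} m\|^2$ and $\|\Omega^{-1/2}\nabla_x e\|^2$, respectively; off-diagonal Hermite cross-terms and $h^\bot$-terms are controlled by Cauchy-Schwarz with $\|h\|$ and feed the $(\|\tilde m\|+\|\tilde e\|+\|h^\bot\|)\|h\|$ tail of the claimed bound. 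Iterating the Poincar\'e-Lions estimates~\eqref{eq:poincareL2} and~\eqref{eq:poincareH-1} three times for $A_0$, twice for $A_1$, and once for $A_2$, each diagonal norm is bounded below by $\|\tilde r\|^2$, $\|\tilde m\|^2$, $\|\tilde e\|^2$ respectively, the key point being that $\ker\nabla_x^{\otimes k}$ equals the polynomials of degree $<k$, which is exactly the space of low modes subtracted in the definitions of $\tilde r$, $\tilde m$, $\tilde e$.

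The main obstacle is the bookkeeping in the commutator expansion. At each level of the cascade one must verify that the $H_\phi$-remainders and the Hermite cross-terms cannot feed back onto the \emph{same} macroscopic mode whose coercivity is being established (for $A_0$, in particular, they must be controlled by $\|\tilde m\|+\|\tilde e\|+\|h^\bot\|$, never by $\|\tilde r\|$), otherwise the coercive gain would be eaten. This requires a careful Hermite-mode analysis of the remainders and sharp pseudo-differential estimates for the many compositions of $\Lambda^{-s}, \Gamma^{-s}, \nabla_v, \nabla_x$ that arise, in the spirit of~\cite{HN04,HN05}. Once these cancellations are verified, the identities~\eqref{eq:derivcom} with the stated bounds on $\Lambda_i$ and $B_i$ follow.
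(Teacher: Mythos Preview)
Your strategy matches the paper's, but two steps need more than the bookkeeping you flag. First, for $i=1,2$ the operators $\Lambda_i$ are \emph{not} single squared norms as your schematic ``$-2\|\cdot\|^2$'' suggests: each $\nabla_v$ on the right of $A_i$ contributes one term via $[\nabla_v,\cT]=-\nabla_x$, so $\Lambda_1$ is a sum of two cross-terms and $\Lambda_2$ of three, and their nonnegativity is not automatic. The paper verifies it by an explicit symmetrization, e.g.\ $\langle\Lambda_1 f,f\rangle=\tfrac12\sum_{i,j,k}\|\Lambda^{-1}\Gamma^{-1/2}(\partial_{x_k}\partial_{v_j}+\partial_{v_k}\partial_{x_j})\partial_{x_i}f\|^2$. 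Second, evaluating $\langle\Lambda_1(m\cdot v),m\cdot v\rangle$ yields $2\,\|\Omega^{-1}\nabla_x\nabla_x^{\text{\tiny sym}}m\|^2$, not $\|\Omega^{-1}\nabla_x^{\otimes 2}m\|^2$; passing from the symmetric-gradient norm to the full second-gradient norm requires the Schwarz identity $\partial_{ij}^2 m_k=\partial_i(\nabla^{\text{\tiny sym}}m)_{jk}+\partial_j(\nabla^{\text{\tiny sym}}m)_{ik}-\partial_k(\nabla^{\text{\tiny sym}}m)_{ij}$, a Korn-type step your sketch does not mention. (Minor slip: the second collision term is $\langle\cC A_i h,h\rangle$, not $\langle\cC A_i^* h,h\rangle$.)
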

\begin{proof}[Proof of Proposition~\ref{prop:error}]
  Since $\cC$ is self-adjoint and $\cT$ is skew-adjoint, 
  \begin{align*}
    \frac{\mathrm d}{\mathrm dt} \langle A_i\,h, h \rangle =
    \Big\langle \left(\left[A_i,\cT\right] + A_i\,\cC +
    \cC\,A_i\right)h, h \Big\rangle\quad\mbox{for}\quad
    i \in \{0,1,2\}.
  \end{align*}
  We can therefore write
  $[A_i,\cT] + A_i\,\cC +\cC\,A_i =: B_i - \Lambda_i$ where, by
  using~\eqref{eq:commutators-T}, we have the following explicit
  formulas
  \begin{align*}
    & B_0 := A_0\,\cC +\cC\,A_0 \\
    & \qquad +\,\nabla_x^* \otimes \nabla_x^* \otimes \nabla_x^*
      \big[\Lambda^{-\frac32},\cT \big] :
      \Lambda^{-\frac32}\,\nabla_v \otimes \nabla_x \otimes
      \nabla_x \\
    & \qquad +\,\nabla_x^* \otimes \nabla_x^* \otimes
      \nabla_x^*\,\Lambda^{-\frac32} :
      \big[\Lambda^{-\frac32},\cT \big] \nabla_v \otimes \nabla_x
      \otimes \nabla_x \\
    & \qquad +\,\nabla_x^* \otimes \nabla_x^* \otimes
      \nabla_x^*\,\Lambda^{-\frac32} :
      \Lambda^{-\frac32}\,\nabla_v \otimes \nabla_x \otimes
      (H_\phi \nabla_v) \\
    & \qquad +\,\nabla_x^* \otimes \nabla_x^* \otimes
      \nabla_x^*\Lambda^{-\frac32} : \Lambda^{-\frac32}\,\nabla_v
      \otimes (H_\phi \nabla_v) \otimes \nabla_x \\
    & \qquad +\,(H_\phi \nabla_v)^* \otimes \nabla_x^* \otimes
      \nabla_x^*\,\Lambda^{-\frac32} :
      \Lambda^{-\frac32}\,\nabla_v \otimes \nabla_x \otimes
      \nabla_x \\
    & \qquad +\,\nabla_x^* \otimes \nabla_x^* \otimes (H_\phi
      \nabla_v)^*\,\Lambda^{-\frac32} :
      \Lambda^{-\frac32}\,\nabla_v \otimes \nabla_x \otimes
      \nabla_x \\
    & \qquad +\,\nabla_x^* \otimes (H_\phi \nabla_v)^* \otimes
      \nabla_x ^* \Lambda^{-\frac32} :
      \Lambda^{-\frac32}\,\nabla_v \otimes \nabla_x \otimes
      \nabla_x\,, \\
    & B_1 := A_1\,\cC +\cC\,A_1 \\
    & \qquad +\,\nabla_x^* \otimes \nabla_v^* \otimes \nabla_x^*
      \big[\Gamma^{-\frac12}\,\Lambda^{-1}, \cT\big] :
      \Lambda^{-1}\,\Gamma^{-\frac12}\,\nabla_v \otimes \nabla_v \otimes
      \nabla_x \\
    & \qquad +\,\nabla_x^* \otimes \nabla_v^* \otimes
      \nabla_x^*\,\Gamma^{-\frac12} \Lambda^{-1} :
      \big[\Lambda^{-1}\,\Gamma^{-\frac12}, \cT\big] \nabla_v
      \otimes \nabla_v \otimes \nabla_x \\
    & \qquad +\,\nabla_x^* \otimes \nabla_v^* \otimes
      \nabla_x^*\,\Gamma^{-\frac12}\,\Lambda^{-1} :
      \Lambda^{-1}\,\Gamma^{-\frac12}\,\nabla_v \otimes \nabla_v
      \otimes (H_\phi \nabla_v) \\
    & \qquad +\,\nabla_x^* \otimes \nabla_v^* \otimes (H_\phi
      \nabla_v)^*\,\Gamma^{-\frac12}\,\Lambda^{-1} :
      \Lambda^{-1}\,\Gamma^{-\frac12}\,\nabla_v \otimes \nabla_v
      \otimes \nabla_x \\
    & \qquad -\,\nabla_x^* \otimes \nabla_x^* \otimes
      \nabla_x^*\,\Gamma^{-\frac12}\,\Lambda^{-1} :
      \Lambda^{-1}\,\Gamma^{-\frac12}\,\nabla_v \otimes \nabla_v
      \otimes \nabla_x \\
    & \qquad +\,(H_\phi \nabla_v)^* \otimes \nabla_v^* \otimes
      \nabla_x^*\,\Gamma^{-\frac12}\,\Lambda^{-1} :
      \Lambda^{-1}\,\Gamma^{-\frac12}\,\nabla_v \otimes \nabla_v
      \otimes \nabla_x\,, \\
    & B_2 := A_2\,\cC +\cC A_2 \\
    & \qquad + \nabla_v^* \otimes \nabla_v^* \otimes \nabla_x^*
      \big[\Lambda^{-\frac12}\,\Gamma^{-1}, \cT\big] :
      \Gamma^{-1}\,\Lambda^{-\frac12}\,\nabla_v \otimes \nabla_v
      \otimes \nabla_v \\
    & \qquad + \nabla_v^* \otimes \nabla_v^* \otimes
      \nabla_x^*\,\Lambda^{-\frac12}\,\Gamma^{-1} :
      \big[\Gamma^{-1}\,\Lambda^{-\frac12}, \cT\big] \nabla_v
      \otimes \nabla_v \otimes \nabla_v \\
    & \qquad + \nabla_v^* \otimes \nabla_v^* \otimes (H_\phi
      \nabla_v)^*\,\Gamma^{-1}\,\Lambda^{-\frac12} :
      \Lambda^{-\frac12}\,\Gamma^{-1}\,\nabla_v \otimes \nabla_v
      \otimes \nabla_v \\
    & \qquad - \nabla_v^* \otimes \nabla_x^* \otimes
      \nabla_x^*\,\Gamma^{-1}\,\Lambda^{-\frac12} :
      \Lambda^{-\frac12}\,\Gamma^{-1} \nabla_v \otimes \nabla_v
      \otimes \nabla_v\,,\\
    & \qquad - \nabla_x^* \otimes \nabla_v^* \otimes
      \nabla_x^*\,\Gamma^{-1}\,\Lambda^{-\frac12} :
      \Lambda^{-\frac12}\,\Gamma^{-1} : \nabla_v \otimes \nabla_v
      \otimes \nabla_v\,,
  \end{align*}
  and 
  \begin{align*}
    \Lambda_0 :=
    & \, \nabla_x^* \otimes \nabla_x^* \otimes
      \nabla_x^*\,\Lambda^{-\frac32} :
      \Lambda^{-\frac32}\,\nabla_x\otimes\nabla_x \otimes
      \nabla_x\,, \\
    \Lambda_1 :=
    & \, \nabla_x^* \otimes \nabla_v^* \otimes
      \nabla_x^*\,\Gamma^{-\frac12}\,\Lambda^{-1} :
      \Lambda^{-1}\,\Gamma^{-\frac12}\,\nabla_x\otimes \nabla_v
      \otimes \nabla_x \\
    & +\, \nabla_x^* \otimes \nabla_v^* \otimes
      \nabla_x^*\,\Gamma^{-\frac12}\,\Lambda^{-1} :
      \Lambda^{-1}\,\Gamma^{-\frac12}\,\nabla_v \otimes
      \nabla_x\otimes \nabla_x\,, \\
    \Lambda_2 :=
    & \, \nabla_v^* \otimes \nabla_v^* \otimes
      \nabla_x^*\,\Gamma^{-1}\,\Lambda^{-\frac12} :
      \Lambda^{-\frac12}\,\Gamma^{-1}\,\nabla_x\otimes
      \nabla_v \otimes \nabla_v \\
    & +\, \nabla_v^* \otimes \nabla_v^* \otimes
      \nabla_x^*\,\Gamma^{-1}\,\Lambda^{-\frac12} :
      \Lambda^{-\frac12}\,\Gamma^{-1}\,\nabla_v \otimes
      \nabla_x\otimes \nabla_v \\
    & +\, \nabla_v^* \otimes \nabla_v^* \otimes
      \nabla_x^*\,\Gamma^{-1}\,\Lambda^{-\frac12} :
      \Lambda^{-\frac12}\,\Gamma^{-1}\,\nabla_v \otimes \nabla_v
      \otimes \nabla_x\,.
  \end{align*}
  The $\Lambda_i$'s operators are nonnegative, self-adjoint (see
  Appendix~\ref{verifcom}) and boun\-ded (see
  Lemma~\ref{lem:pseudobnd} again).

  Note that for all $i \in \{0,1,2\}$:
  \begin{align*}
    & |\langle A_i\,\cC\,h, h \rangle| = |\langle A_i\,\cC\,h^\bot, h
      \rangle| \lesssim \| h^\bot \|\,\| h \|\,, \\
    & |\langle \cC\,A_i\,h, h \rangle| = |\langle A_i\,h, \cC h
      \rangle| = |\langle A_i\,h, \cC h^\bot \rangle | \lesssim \|
      h \|\,\| h^\bot \|\,,
  \end{align*}
  because $\cC $ is self-adjoint and bounded
  by~\eqref{hyp:boundedCollisionOperator}.

Now we deal with $B_2$. Let us denote by $B_{2,\ell}$ the
  $\ell$-th line. Since
  $\Lambda^{1/2}\,\Gamma\,[\Lambda^{-1/2}\,\Gamma^{-1}, \cT]$ is
  bounded by standard pseudo-differential calculus
  (see~\cite{HN05}),
  \[b_{2,2} := \nabla_v^* \otimes \nabla_v^* \otimes
  \nabla_x^*\,[\Lambda^{-\frac12}\,\Gamma^{-1}, \cT]\] is bounded
  and
  \begin{multline*}
    \left| \left\langle B_{2,2} \,h, h \right\rangle \right| =
    \left| \left\langle b_{2,2} :
        \Gamma^{-1}\,\Lambda^{-\frac12}\,\nabla_v \otimes
        \nabla_v
        \otimes \nabla_v h, h \right\rangle \right| \\
    = \left| \left\langle b_{2,2} :
        \Gamma^{-1}\,\Lambda^{-\frac12}\,\nabla_v \otimes
        \nabla_v \otimes \nabla_v h^\bot, h \right\rangle \right|
    \lesssim \|h\|\,\| h^\bot \|
  \end{multline*}
  since the three derivatives in velocity on the right hand side
  cancel all macroscopic quantities. The other lines are dealt
  with similarly, using the boundedness of $H_\phi$, and we
  deduce
  $|\langle B_2 h, h \rangle| \lesssim \| h \|\,\| h^\bot \|$.

  Now we deal with $B_1$. We consider the second line $B_{1,2}$.
  Then
  $\Lambda\,\Gamma^{1/2}\,[\Lambda^{-1}\,\Gamma^{-1/2}, \cT]$ is
  bounded by standard pseudo-differential calculus and so
 \[b_{1,2} := \nabla_x^* \otimes \nabla_v^* \otimes
  \nabla_x^*\,\big[\Lambda^{-1}\,\Gamma^{-\frac12}, \cT\big]\] is
  bounded. Thus
  \begin{multline*}
    \left| \left\langle B_{1,2} h, h \right\rangle \right| =
    \left| \left\langle b_{1,2} :
        \Gamma^{-\frac12}\,\Lambda^{-1}\,\nabla_v \otimes
        \nabla_v
        \otimes \nabla_xh, h \right\rangle \right| \\    \quad \lesssim \left| \left\langle b_{1,2} :
        \Gamma^{-\frac12}\,\Lambda^{-1}\,\nabla_v \otimes
        \nabla_v \otimes \nabla_x (e\,\fE + h^\bot), h
      \right\rangle \right|\,.
  \end{multline*}
  The macroscopic quantities $r$ and $m$ are canceled since two
  derivatives in velocity are involved. We then use
  $\nabla_v \otimes \nabla_v \otimes \nabla_x e\,\fE = \nabla_x e
  \otimes \mathrm{Id}_{d \times d}$ and Appendix~\ref{ssec:poin}
  shows (using $\Omega \ge 1$ for the first inequality)
  \[
    \big\|\,\Omega^{-1}\,\nabla_x e\big\| \leq
    \big\|\,\Omega^{-\frac12}\,\nabla_x e\big\| \lesssim
    \|\tilde{e}\| \quad \mbox{ so }
    \quad 
    \left| \left\langle B_{1,2} h, h \right\rangle \right| \lesssim
    \| h \| \left(\| \tilde{e} \| + \|h^\bot\| \right)\,.
  \]
  The other lines are similar and yield the same estimate.

  We then deal with $B_0$, and focus on the second line $B_{0,2}$
  again. The operator
  \[b_{0,2} := \nabla_v^* \otimes \nabla_v^* \otimes \nabla_x^*
  \left[ \Lambda^{-\frac32}, \cT \right]\] is bounded arguing as
  before, and
  \begin{equation}
    \label{eq:B02}
    \left| \left\langle B_{0,2} h, h \right\rangle \right|
    \!=\! \left| \left\langle b_{0,2} : \Lambda^{-\frac32}\,\nabla_v
        \otimes \nabla_x \otimes \nabla_xh, h \right\rangle \right|
    \lesssim \| h \| \! \left( \| \tilde{e} \| + \| \tilde{m} \| + \|h^\bot\|
    \right).
  \end{equation}
  Indeed a direct computation gives
  \[
    \Gamma^{-1}\,\Lambda^{-\frac12}\,\nabla_v \otimes \nabla_x
    \otimes \nabla_xh = \Omega^{-\frac32}\,\nabla_x \otimes
    \nabla_x m + \tilde{\Omega}^{-\frac32}\,\nabla_x \otimes
    \nabla_x \otimes (v\,e)
  \]
  with $\tilde{\Omega} := (\nabla^*_x\,\nabla_x + 2)$. The factor
  $2$ comes from the fact that for all $\alpha \in \R$,
  \[
    \Lambda^{\alpha} (v\,e) = (\nabla^*_x\,\nabla_x +
    \nabla^*_v\,\nabla_v + 1)^\alpha\,(v\,e) =
    (\nabla^*_x\,\nabla_x +1 +1)^\alpha\,(v\,e)
  \]
  since $v$ is an eigenfunction of $\nabla^*_v\,\nabla_v$ with
  eigenvalue $1$. To complete the proof of~\eqref{eq:B02}, it is
  sufficient to notice that
  $\|\Omega^{-\frac32}\,\nabla_{\!x}^2 e \| \leq
  \|\Omega^{-1}\,\nabla_{\!x}^2 e \| \lesssim \|\tilde{e}\|$ and also that,
  by Appendix~\ref{ssec:poin},
  $\|\Omega^{-\frac32}\,\nabla_{\!x}^2 m \| \leq
  \|\Omega^{-1}\,\nabla_{\!x}^2 m \| \lesssim \|\tilde{m}\|$. The
  other lines in $B_0$ are treated similarly again.

  Now we deal with the main nonnegative terms
  $\langle \Lambda_i\,h,h \rangle$ in~\eqref{eq:derivcom}.
  We first compute the contribution of $\Lambda_0$, $\Lambda_1$
  and $\Lambda_2$ acting respectively on $r$, $m$ and~$e$. Due to
  the number of derivatives in velocity in the right hand side of
  the expressions giving the $\Lambda_i$'s, we have
  $\Lambda_i ( \R_{i-1}[V] ) = \{0\}$ for $i \in \{1,2\}$ and
  $\Lambda_i ( \R_i[V] ) \subset \R_i[V] $ for $i \in \{0,1,2\}$
  and
  \begin{equation}
    \label{eq:lambdarem0}
     \left\langle \Lambda_0\,r, r\right\rangle =
      \iint_{\R^d\times\R^d} \left| \Omega^{-\frac32}\,\nabla_x^3
        r \right|^2 \cM\,\dd x \dd v = \int_{\R^d} \left|
        \Omega^{-\frac32}\,\nabla_x^3r \right|^2 \rho\,\dd
      x\,,
    \end{equation}
  \begin{equation}
      \label{eq:lambdarem1}
  \begin{aligned}
      \big\langle \Lambda_1\,(m(x)\cdot v)\,, (m(x)\cdot v)
      \big\rangle
      &= \iint_{\R^d\times\R^d} \Big(
      \left(\Omega^{-1} \partial^2_{x_i x_k} m_j\right)
      \left(\Omega^{-1}\,\partial^2 _{x_i x_k} m_j\right)\\
      &\quad +
      \left(\Omega^{-1}\partial^2 _{x_i x_k} m_j\right)
      \left(\Omega^{-1}\,\partial^2 _{x_j x_k}
        m_i\right) \Big)\,\cM\,\dd x \dd v\\
       &= 2 \int_{\R^d} \left| \Omega^{-1}\nabla_x
        \nabla_x^{\text{\tiny sym}}\,m \right|^2\,\rho\dd
      x\,,
  \end{aligned}
    \end{equation}
    \begin{align}
      \label{eq:lambdarem2}
      & \big\langle \Lambda_2 \left( e(x)\,\fE(v) \right), \left(
        e(x)\,\fE(v) \right) \big\rangle \\ \nonumber
      & \quad = \iint_{\R^d\times\R^d} \Big(
      \left(\Omega^{-\frac12}\,\partial_{x_k}e\right)
      \left(\partial_{v_i\,v_j}^2\,\fE\right)
      \left(\Omega^{-\frac12}\,\partial_{x_k}e\right)
      \left(\partial_{v_i\,v_j}^2\,\fE\right) \\ \nonumber
      & \hspace*{1cm} +
      \left(\Omega^{-\frac12}\,\partial_{x_j}e\right)
      \left(\partial_{v_i v_k}^2\,\fE\right)
      \left(\Omega^{-\frac12}\,\partial_{x_k}e\right)
      \left(\partial_{v_i\,v_j}^2\,\fE\right) \\ \nonumber
      & \hspace*{1cm} +
      \left(\Omega^{-\frac12}\,\partial_{x_i}e\right)
      \left(\partial_{v_j v_k}^2\,\fE\right)
      \left(\Omega^{-\frac12}\,\partial_{x_k}e\right)
      \left(\partial_{v_i\,v_j}^2\,\fE\right) \Big)\,\cM\,\dd x
      \dd v \\ \nonumber
      & \quad = \int_{\R^d} \left( 2
        \left(\Omega^{-\frac12}\,\partial_{x_k}e\right)^2 +
        \frac2d\left(\Omega^{-\frac12}\,\partial_{x_i}e\right)^2
        + \frac2d
        \left(\Omega^{-\frac12}\,\partial_{x_j}e\right)^2 \right)
      \rho\dd x \\ \nonumber
      & \quad = \left( \frac4d+2 \right) \int_{\R^d}
      \left|\Omega^{-\frac12}\,\nabla_x e\right|^2\,\rho\,\dd
      x\,.
    \end{align}
  Next we use the cascade of Poincar\'e inequalities of
  Lemma~\ref{lem:poincaretensoriel}. For the density $r$, this
  implies that there isa constant $\overline{\lambda}_0 >0$ such
  that
  \begin{equation}
    \label{eq:minorr}
    \begin{aligned}
      & \langle \Lambda_0\,r, r \rangle = \int_{\R^d} \left|
        \Omega^{-\frac32}\,\nabla_x^3r \right|^2 \rho\,\dd x \\
      & \geq 2\,\overline\lambda_0 \left\|\, r - \langle r
        \rangle - \langle \nabla_x r \rangle \cdot x -
        \tfrac12\,\langle \nabla_{\!x}^2 r \rangle : \left(x
          \otimes x - \langle x \otimes x\rangle\right)\right\|^2
      = 2\,\overline{\lambda}_0 \left\| \tilde{r} \right\|^2\,.
    \end{aligned}
  \end{equation}
  Regarding the momentum $m$, one first observes that
  \[
    |\Omega^{-1}\nabla_x \nabla_x^{\text{\tiny sym}}\,m|^2 \ge
    \frac19\,|\Omega^{-1}\,\nabla_{\!x}^2 m|^2
  \]
  thanks to the Schwarz lemma written as
  \[
    \forall\,i,j,k \in \{1, \dots, d \}\,,\; \partial_{ij}^2 m_k
    = \partial_i \left( \nabla^{\text{\tiny sym}} m \right)_{jk}
    + \partial_j \left(\nabla_x^{\text{\tiny sym}}\,m
    \right)_{ik} - \partial_k \left(\nabla_x^{\text{\tiny
          sym}}\,m \right)_{ij}\,.
  \]
  Applied to~\eqref{eq:lambdarem0}--\eqref{eq:lambdarem2}, the cascade of Poincar\'e
  inequalities at order $2$ stated in
  Lemma~\ref{lem:poincaretensoriel} implies that there exists a
  constant $\overline\lambda_1$ such that
  \begin{equation}
    \label{eq:minorm}
    \begin{aligned}
      \langle \Lambda_1\,(m \cdot v) , m \cdot v \rangle
      & \ge \frac29\, \|\,\Omega^{-1}\,\nabla_{\!x}^2 m \|^2 \\
      & \ge 2\,\overline\lambda_1 \left\| m - \langle m \rangle -
        \langle\nabla_x m \rangle\,x \right\|^2 =
      2\,\overline\lambda_1 \| \tilde{m} \|^2.
    \end{aligned}
  \end{equation}
  Regarding the energy $e$, we use the standard Poincar\'e
  inequality in $\mathrm L^2(\rho)$ (the order $1$ inequality of
  Lemma~\ref{lem:poincaretensoriel}) to get
  $\overline \lambda_2 >0$ such that
  \begin{equation}
    \label{eq:minore}
    \langle  \Lambda_2\,(e\,\mathfrak{E}(v)),
    e\,\mathfrak{E}(v) \rangle = \left( \tfrac4d + 2 \right)
    \|\,\Omega^{-\frac12}\,\nabla_x e \|^2 \ge 2\,\overline
    \lambda_2\,\|e -\langle e \rangle\|^2 = 2\,\overline
    \lambda_2\,\|\tilde{e} \|^2\,.
  \end{equation}

  With the above estimates in hand, we can investigate all terms
  appearing in $\langle \Lambda_i\,h , h \rangle$. According to
  the number of velocity and space gradients in $\Lambda_2$, we
  get that
  \[
    \langle \Lambda_2\,h, h \rangle = \Big\langle
    \Lambda_2\,(e\,\fE(v)), e\,\fE(v) \Big\rangle + \left\langle
      \Lambda_2\,(e\,\fE(v)), h^\bot \right\rangle + \left\langle
      \Lambda_2\,h^\bot, h \right\rangle\,,
  \]
  from which one obtains with~\eqref{eq:minore} that
  \[
    \label{eq:sg2}
    - \langle \Lambda_2\,h, h \rangle \le
    -\,\overline\lambda_2\,\| \tilde{e} \|^2 + \cO \left( \|
      h^\bot \|\,\| h \| \right).
  \]
  Similarly for $m$, using in addition that $\Lambda_1$ is
  self-adjoint and
  $\Lambda_1\,(e\,\fE(v)) = \Lambda_1\,(\tilde{e}\,\fE(v))$, one
  has
  \begin{multline*}
    \langle \Lambda_1\,h, h \rangle = \left\langle \Lambda_1\,(m
      \cdot v), m \cdot v \right\rangle + \left\langle
      \Lambda_1\,(m \cdot v), (e- \langle e \rangle)\,\fE(v) +
      h^\bot \right\rangle \\
    + \left\langle \Lambda_1 \left( \left( e - \langle e \rangle
        \right) \fE(v) + h^\bot \right), h \right\rangle\,,
  \end{multline*}
  which implies using~\eqref{eq:minorm} that
  \[
    \label{eq:sg1}
    -\,\langle \Lambda_1\,h, h \rangle \le
    -\,\overline\lambda_1\,\| \tilde{m} \|^2 + \cO \big( \|
    \tilde{e} \|\,\| h \|\big) +\cO \big( \| h^\bot \|\,\| h \|
    \big)\,.
  \]
  Finally, regarding the local density $r$, we get similarly
  \begin{align*}
    \langle \Lambda_0\,h, h \rangle =
    & \,\langle \Lambda_0\,r, r \rangle \\
    & + \left\langle \Lambda_0\,r,  \tilde{m} \cdot v + \tilde{e}
      \fE(v) + h^\bot \right\rangle + \left\langle \Lambda_0\,(
      \tilde{m} \cdot v + \tilde{e} \fE(v) + h^\bot) , h
      \right\rangle
  \end{align*}
  and it follows from~\eqref{eq:minorr} that
  \[
    \label{eq:sg0}
    -\,\langle \Lambda_0\,h, h \rangle \le
    -\,\overline\lambda_0\,\| \tilde{r} \|^2 +\cO \big( \|
    \tilde{m} \|\,\| h \| \big) +\cO \big( \|\tilde{e} \|\,\| h
    \| \big) +\cO \big( \| h^\bot \|\,\| h \| \big)\,.
  \]
  The proof of the proposition is complete.
\end{proof}

We collect the previous estimates into a first partial
Lyapunov inequality:
\begin{lemma}
  \label{lem:step1com}
  Define the following norm 
  \begin{align*}
    \| h \|_{\cH_1}^2 := \| h \|^2 + \varepsilon_0\,\langle
    A_0\,h\,, h \rangle + \varepsilon_1\,\langle A_1\,h , h \rangle
    + \varepsilon_2\,\langle A_2\,h,h \rangle
  \end{align*}
  for $\varepsilon_0$, $\varepsilon_1$, $\varepsilon_2>0$, then for
  $\mathrm c_{\sC} \gg \varepsilon_2\,\gg \varepsilon_1\,\gg
  \varepsilon_0$, we have
  \begin{align*}
    \label{eq:hypo-kin1}
    \frac12\,\frac{\mathrm d}{\mathrm dt} \| h \|_{\cH_1}^2
    & \le -\,\frac{\mathrm c_{\sC}}2\,\| h^\bot \|^2 -
      \frac{\varepsilon_2}2\,\bar\lambda_2 \left\| \widetilde e
      \right\|^2 - \frac{\varepsilon_1}2\,\overline\lambda_1\,\|
      \widetilde m \|^2 - \varepsilon_0\,\overline\lambda_0
      \left\| \widetilde r \right\|^2 + \eta_1\,\| h \|^2\,,
  \end{align*}
  for some $0 < \eta_1 \ll \varepsilon_0$.
\end{lemma}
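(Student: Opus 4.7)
The plan is to differentiate each summand of $\|h\|_{\cH_1}^2$ in turn and combine them via Young's inequality with a hierarchy $\varepsilon_0 \ll \varepsilon_1 \ll \varepsilon_2 \ll \mathrm c_{\sC}$. First, Lemma~\ref{lem1:hperpLyap} gives $\frac{\mathrm d}{\mathrm dt}\|h\|^2 \le -2\,\mathrm c_{\sC}\,\|h^\bot\|^2$. Next, for each $i \in \{0,1,2\}$, Proposition~\ref{prop:error} yields $\frac{\mathrm d}{\mathrm dt}\langle A_i h,h\rangle = -\langle \Lambda_i h,h\rangle + \langle B_i h,h\rangle$; weighting these by $\varepsilon_i$ and summing, I obtain
\[
\frac{\mathrm d}{\mathrm dt}\|h\|_{\cH_1}^2 \le -2\,\mathrm c_{\sC}\,\|h^\bot\|^2 - 2\,\varepsilon_0\,\bar\lambda_0\,\|\widetilde r\|^2 - \varepsilon_1\,\bar\lambda_1\,\|\widetilde m\|^2 - \varepsilon_2\,\bar\lambda_2\,\|\widetilde e\|^2 + \mathfrak R,
\]
where $\mathfrak R$ collects all cross terms; each such term has the form $\varepsilon_i\,C\,\|h\|\,\|X_j\|$ with $X_j \in \{\|\widetilde m\|,\|\widetilde e\|,\|h^\bot\|\}$, and the crucial point is that the bounds of Proposition~\ref{prop:error} are \emph{triangular}: only indices $j \ge i$ (together with $h^\bot$) appear on the right of the $i$-th estimate.

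The absorption step uses Young in the form
\[
\varepsilon_i\,C\,\|h\|\,\|X_j\| \;\le\; \tfrac{\varepsilon_j\,\bar\lambda_j}{16}\,\|X_j\|^2 + \tfrac{4\,C^2\,\varepsilon_i^2}{\varepsilon_j\,\bar\lambda_j}\,\|h\|^2,
\]
with $\varepsilon_j\,\bar\lambda_j$ replaced by $\mathrm c_{\sC}$ whenever $X_j=\|h^\bot\|$. Summing these estimates, the $\|X_j\|^2$ penalties are reabsorbed into the leading dissipations (losing at most a factor two in $\varepsilon_1\,\bar\lambda_1\,\|\widetilde m\|^2$, $\varepsilon_2\,\bar\lambda_2\,\|\widetilde e\|^2$ and $2\,\mathrm c_{\sC}\,\|h^\bot\|^2$), while the $\|h\|^2$ residues accumulate into a constant $2\,\eta_1$ of order $\varepsilon_2^2/\mathrm c_{\sC} + \varepsilon_1^2/\varepsilon_2 + \varepsilon_0^2/\varepsilon_1$. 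Note that the coefficient of $\|\widetilde r\|^2$ is left untouched because no cross term targets it. Dividing the resulting inequality by two yields exactly the statement, and taking for instance $\varepsilon_2 = \varepsilon$, $\varepsilon_1 = \varepsilon^2$, $\varepsilon_0 = \varepsilon^3$ for $\varepsilon > 0$ sufficiently small ensures $\eta_1 \lesssim \varepsilon^3 \ll \varepsilon_0$.

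The main obstacle I anticipate is not any single estimate but the bookkeeping of the cascade: each cross term must be absorbed by a \emph{strictly larger-level} dissipation, never the reverse. This is only possible because the correctors $A_i$ are designed so that $A_i$ annihilates $\R_{i-1}[V]$, which translates into the triangular form of the $B_i$ and $\Lambda_i$ estimates recorded in Proposition~\ref{prop:error}. Once this structure is accepted, the hierarchy $\varepsilon_0 \ll \varepsilon_1 \ll \varepsilon_2 \ll \mathrm c_{\sC}$ is forced, and the accumulated $\|h\|^2$ remainder can always be made $\ll \varepsilon_0$ by a suitably chosen super-geometric scaling of the parameters.
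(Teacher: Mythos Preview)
Your argument is correct and mirrors the paper's proof: combine Lemma~\ref{lem1:hperpLyap} with Proposition~\ref{prop:error}, exploit the triangular structure of the cross terms, and absorb via Young's inequality. Two small corrections: in your first display the coefficient of $\|\widetilde r\|^2$ should be $\varepsilon_0\bar\lambda_0$, not $2\varepsilon_0\bar\lambda_0$ (Proposition~\ref{prop:error} gives $-\langle\Lambda_0 h,h\rangle\le -\bar\lambda_0\|\widetilde r\|^2+\ldots$, with no extra factor of $2$); and your concrete scaling $\varepsilon_2=\varepsilon$, $\varepsilon_1=\varepsilon^2$, $\varepsilon_0=\varepsilon^3$ does \emph{not} yield $\eta_1\ll\varepsilon_0$, since the dominant residue $\varepsilon_2^2/\mathrm c_{\sC}\sim\varepsilon^2\gg\varepsilon^3=\varepsilon_0$. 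A choice such as $\varepsilon_2=\varepsilon$, $\varepsilon_1=\varepsilon^{5/4}$, $\varepsilon_0=\varepsilon^{3/2}$ (so that all residues are $O(\varepsilon^2)\ll\varepsilon^{3/2}$) fixes this; the paper itself simply writes ``repeated uses of Young's inequality'' without committing to explicit exponents.
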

\begin{proof}[Proof of Lemma~\ref{lem:step1com}]
  Propositions~\ref{prop:error} combined with
  Lemma~\ref{lem1:hperpLyap} imply
  \begin{align*}
    \frac12 \frac{\mathrm d}{\mathrm dt} \| h \|_{\cH_1}^2 \le
    & \,-\,\mathrm c_{\sC}\,\| h^\bot \|^2 -
      \varepsilon_2\,\bar\lambda_2 \left\| \widetilde e \right\|^2
      - \varepsilon_1\,\overline\lambda_1\,\| \widetilde m \|^2 -
      \varepsilon_0\,\overline\lambda_0 \left\| \widetilde r
      \right\|^2 \\
    & \,+ C\,\varepsilon_2\,\| h \|\,\| h^\bot \| +
      C\,\varepsilon_1\,\| h \| \left( \| h^\bot \| + \left\|
      \widetilde e \right\| \right) \\
    &\,+ C\,\varepsilon_0\,\| h \| \left( \| h^\bot \| + \|
      \widetilde m \| + \left\| \widetilde e \right\| \right)
  \end{align*}
  for some constant $C>0$. The statement then follows from
  repeated uses of Young's inequality for products. 
\end{proof}

In fact the time derivatives of the local density, momentum and
energy can also be controlled as follows:
\begin{equation}
  \label{eq:control-variation}
  \left\{
  \begin{aligned}
    & \frac{\mathrm d}{\mathrm dt} \left\langle \partial_t
      \widetilde r\,, \Omega^{-1}\,\widetilde r \right\rangle \ge
    \big\|\,\Omega^{-\frac12}\,\partial_t \widetilde r \,\big\|^2
    - \cO \big(\|h\|\,\left\| \widetilde r \right\| \big)\,, \\
    & \frac{\mathrm d}{\mathrm dt} \big\langle \partial_t
    \widetilde m\,, \Omega^{-1}\,\widetilde m \,\big\rangle \ge
    \|\,\Omega^{-\frac12}\,\partial_t \widetilde m \,\|^2 - \cO
    \big(\|h\|\,\| \widetilde m\|\big)\,, \\
    & \frac{\mathrm d}{\mathrm dt} \big\langle \partial_t
    \widetilde e\,, \Omega^{-1}\,\widetilde e \,\big\rangle \ge
    \|\,\Omega^{-\frac12}\,\partial_t \widetilde e \,\|^2 - \cO
    \big(\|h\|\,\| \widetilde e\|\big)\,.
  \end{aligned}
  \right.
\end{equation}
This leads to second improved partial Lyapunov inequality:
\begin{lemma}
  \label{lem:lyap-2}
  Given $1 \gg \varepsilon_2\,\gg \varepsilon'_2\,\gg
  \varepsilon_1\,\gg \varepsilon_1 '\,\gg \varepsilon_0 \gg
  \varepsilon_0' \gg \eta_1$, the norm
  \[
    \label{eq:hypo-H2}
    \| h \|_{\cH_2}^2 := \| h \|_{\cH_1}^2 -
    \varepsilon_2'\,\big\langle \partial_t
    \widetilde e\,, \Omega^{-1}\,\widetilde e \,\big\rangle -
    \varepsilon_1' \big\langle \partial_t \widetilde m\,,
    \Omega^{-1}\,\widetilde m \,\big\rangle - \varepsilon_0 '
    \left\langle \partial_t \widetilde r\,,
      \Omega^{-1}\,\widetilde r \right\rangle\,.
  \]
  satisfies
  \begin{align*}
    \frac12 \frac{\mathrm d}{\mathrm dt} \| h \|_{\cH_2}^2 \le&\,
    -\,\frac{\mathrm c_{\sC}}4\,\| h^\bot \|^2 -
    \frac{\varepsilon_2}4\,\bar\lambda_2 \left\| \widetilde e
    \right\|^2 - \frac{\varepsilon_1}4\,\overline\lambda_1\,\|
    \widetilde m \|^2 - \frac{\varepsilon_0}2\,\overline\lambda_0
    \left\|
      \widetilde r \right\|^2 \\
    &\,-
    \varepsilon'_2\,\overline\lambda_2\,\big\|\,\Omega^{-\frac12}\,\partial_t
    \widetilde e \big\|^2 -
    \varepsilon'_1\,\overline\lambda_1\,\big\|\,\Omega^{-\frac12}\,\partial_t
    \widetilde m \big\|^2 -
    \varepsilon_0'\,\overline\lambda_0\,\big\|\,\Omega^{-\frac12}\,\partial_t
    \widetilde r \big\|^2 \\
    &\, + \eta_2\,\| h \|^2
  \end{align*}
  for some $0 < \eta_2 \ll \varepsilon'_0$.
\end{lemma}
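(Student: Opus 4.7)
The plan is to differentiate $\|h\|_{\cH_2}^2$ in time, apply Lemma~\ref{lem:step1com} to the $\|h\|_{\cH_1}^2$ contribution, and apply the three estimates in~\eqref{eq:control-variation} to the three new corrector terms $-\,\varepsilon_i' \langle \partial_t \widetilde{\cdot}, \Omega^{-1}\,\widetilde{\cdot} \rangle$. The algebraic structure is clear: each such corrector, upon time differentiation (and a sign), yields a good dissipative quadratic term $\varepsilon_i' \|\,\Omega^{-\frac12}\partial_t \widetilde{\cdot}\,\|^2$ together with a cross term bounded by $C\,\varepsilon_i' \| h \|\,\|\widetilde{\cdot}\|$.

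Concretely, after these substitutions I expect an inequality of the form
\[
\tfrac12\tfrac{\mathrm d}{\mathrm dt}\|h\|_{\cH_2}^2 \le -\,\tfrac{\mathrm c_{\sC}}2 \|h^\bot\|^2 -\tfrac{\varepsilon_2}2\bar\lambda_2\|\widetilde e\|^2 -\tfrac{\varepsilon_1}2\bar\lambda_1\|\widetilde m\|^2 -\varepsilon_0\bar\lambda_0\|\widetilde r\|^2
\]
\[
-\,\varepsilon_2'\|\,\Omega^{-\frac12}\partial_t\widetilde e\,\|^2 -\varepsilon_1'\|\,\Omega^{-\frac12}\partial_t\widetilde m\,\|^2 -\varepsilon_0'\|\,\Omega^{-\frac12}\partial_t\widetilde r\,\|^2
\]
\[
+\,\eta_1\|h\|^2 + C\bigl(\varepsilon_2'\|h\|\|\widetilde e\| +\varepsilon_1'\|h\|\|\widetilde m\| +\varepsilon_0'\|h\|\|\widetilde r\|\bigr).
\]
The main (only) step is then to absorb the three cross terms into the already-present negative quadratic terms via Young's inequality. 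For instance, writing $C\,\varepsilon_2' \|h\|\|\widetilde e\| \le \tfrac{\varepsilon_2}4 \bar\lambda_2 \|\widetilde e\|^2 + \tfrac{C^2 (\varepsilon_2')^2}{\varepsilon_2 \bar\lambda_2}\|h\|^2$ uses precisely the hypothesis $\varepsilon_2' \ll \varepsilon_2$; analogous splittings using $\varepsilon_1' \ll \varepsilon_1$ and $\varepsilon_0' \ll \varepsilon_0$ absorb the other two cross terms into $\tfrac{\varepsilon_1}4\bar\lambda_1\|\widetilde m\|^2$ and $\tfrac{\varepsilon_0}2\bar\lambda_0\|\widetilde r\|^2$. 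The extra $\|h\|^2$ contributions produced by these Young splittings, together with $\eta_1\|h\|^2$ inherited from Lemma~\ref{lem:step1com}, combine into a single $\eta_2\|h\|^2$ with $\eta_2 = \eta_1 + \cO\bigl((\varepsilon_0')^2/\varepsilon_0 + (\varepsilon_1')^2/\varepsilon_1 + (\varepsilon_2')^2/\varepsilon_2\bigr)$, which satisfies $\eta_2 \ll \varepsilon_0'$ thanks to the cascade hierarchy.

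The proof is essentially bookkeeping once Lemma~\ref{lem:step1com} and~\eqref{eq:control-variation} are in hand; no new functional analytic estimate is required. The only mild subtlety, and the main thing to get right, is the order of magnitude of the small constants: the hierarchy $\varepsilon_2 \gg \varepsilon_2' \gg \varepsilon_1 \gg \varepsilon_1' \gg \varepsilon_0 \gg \varepsilon_0' \gg \eta_1$ must be preserved so that (i) each Young cross term is absorbed into the dissipation term at its own level rather than at a lower level, and (ii) the induced $\|h\|^2$ remainder stays strictly smaller than $\varepsilon_0'$, as required in the final estimate. An eventual verification that $\|h\|_{\cH_2}^2 \simeq \|h\|^2$, needed to promote the differential inequality into exponential decay in the next step, is immediate from the boundedness of $A_0, A_1, A_2$ and of the $\Omega^{-1}$-weighted correctors, together with smallness of the parameters.
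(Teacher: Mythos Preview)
Your proposal is correct and follows essentially the same approach as the paper: differentiate $\|h\|_{\cH_2}^2$, combine Lemma~\ref{lem:step1com} with the three inequalities~\eqref{eq:control-variation}, and absorb the resulting cross terms $C\,\varepsilon_i'\,\|h\|\,\|\widetilde{\cdot}\|$ via Young's inequality using the hierarchy $\varepsilon_i'\ll\varepsilon_i$. The paper's proof is simply a terser version of what you wrote, additionally noting (as you implicitly use) that the corrector terms $\langle \partial_t\widetilde{\cdot},\Omega^{-1}\widetilde{\cdot}\rangle$ are $\cO(\|h\|^2)$ so that $\|h\|_{\cH_2}^2\simeq\|h\|^2$.
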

\begin{proof}[Proof of Lemma~\ref{lem:lyap-2}]
  This follows from~\eqref{eq:control-variation} and 
  \[
    \left| \big\langle \partial_t \widetilde e\,,
      \Omega^{-1}\,\widetilde e \,\big\rangle \right| + \left|
      \big\langle \partial_t \widetilde m\,,
      \Omega^{-1}\,\widetilde m \,\big\rangle \right| + \left|
      \left\langle \partial_t \widetilde r\,,
        \Omega^{-1}\,\widetilde r \right\rangle \right|
    \lesssim \| h \|^2
  \]
  and the fact that second order time derivatives of the
  macroscopic quantities can be controlled by $\Omega$ (as in
  Section~\ref{sec:micmac}).
\end{proof}

\subsection{Cascade of finite-dimensional correctors}

In view of Lemma~\ref{lem:step1com}, what remains to be
controlled are the finite dimensional terms
\[
  \langle r \rangle\,,\quad \langle \nabla_x m \rangle\,x +
  \langle m \rangle \quad \mbox{and} \quad \frac12\,\langle
  \nabla_x^{\otimes 2} r \rangle : \big( x\otimes x -\langle
  x\otimes x \rangle \big) - \langle\nabla_x r \rangle \cdot x
  -\langle r \rangle\,.
\]

\subsubsection{Control of moments of the local momentum}

We compute
\begin{align*}
  \frac{\mathrm d}{\mathrm dt} \langle v_i\,v_j\,h \rangle = 2
  \left\langle \left( \nabla_x^{\text{\tiny sym}}\,m \right)_{ij}
  \right\rangle + \left\langle \big( \cT(v_i\,v_j) + \cC
  (v_i\,v_j)\big)\, h^\bot \right\rangle\,,
\end{align*}
which yields 
\begin{align*}
  & \frac{\mathrm d}{\mathrm dt} \left(\langle v_i\,v_j\,h
    \rangle\,\frac{\mathrm d}{\mathrm dt} \langle v_i\,v_j\,h
    \rangle\right) \\
  & \ge 2 \left\langle \left( \nabla_x^{\text{\tiny sym}}\,m
    \right)_{i,j} \right\rangle^2 -
    \left|\big\langle\big(\cT(v_i\,v_j) + \cC (v_i\,v_j)\big)\,
    h^\bot \big\rangle^2 \right| + \langle v_i\,v_j\,h
    \rangle\,\frac{{\mathrm d}^2}{\mathrm d t^2} \langle v_i\,v_j\,h
    \rangle
\end{align*}
and
\[
  \left\{
    \begin{aligned}
    & \langle v_i\,v_j\,h \rangle\,\frac{\mathrm d}{\mathrm dt}
    \langle v_i\,v_j\,h \rangle \lesssim \| h \|\,\| h^\bot \|\,, \\
    & \left|\big\langle\big(\cT(v_i\,v_j) + \cC (v_i\,v_j)\big)\,
      h^\bot \big\rangle^2 \right| + \langle v_i\,v_j\,h
    \rangle\,\frac{{\mathrm d}^2}{\mathrm d t^2} \langle v_i\,v_j\,h
    \rangle \lesssim \| h \|\,\| h^\bot \|\,.
  \end{aligned}
  \right.
\]

Define for all $i \in \{ 1,\dots,d \}$
\begin{align*}
  \psi_i (v) := 1 + \sqrt{\tfrac d2} \left( 1 + \tfrac4d \right)
  \fE(v) - \sqrt{\tfrac d2}\,|v_i|^2\,\fE(v)
\end{align*}
which is orthogonal to $1$, $v$, $|v|^2$. We then compute
\begin{align*}
\frac{\mathrm d}{\mathrm dt} \langle \psi_i\,h \rangle = 4
  \left\langle \tfrac1d\,\nabla_x \cdot m - \partial_{x_i} m_i
  \right\rangle + \left\langle \left[ \cT \left(\psi_i \right) +
  \cC \left(\psi_i \right) \right] h^\bot \right\rangle\,,
\end{align*}
which yields
\begin{align*}
  & \frac{\mathrm d}{\mathrm dt} \left( \langle \psi_i\,h
    \rangle\,\frac{\mathrm d}{\mathrm dt} \langle \psi_i\,h
    \rangle \right) \\
  & \ge 8 \left\langle \tfrac1d\,\nabla_x \cdot m -
    \partial_{x_i} m_i \right\rangle^2 - \left\langle [ \cT(
    \psi_i(v)) + \cC (\psi_i(v))] h^\bot \right\rangle^2 +
    \langle \psi_i\,h \rangle\,\frac{{\mathrm d}^2}{\mathrm d t^2}
    \langle \psi_i\,h \rangle
\end{align*}
and
\[
  \left\{
  \begin{aligned}
    & \langle \psi_i\,h \rangle\,\frac{\mathrm d}{\mathrm dt}
    \langle \psi_i\,h \rangle \lesssim \| h \|^2\,, \\
    & \left|\left\langle [ \cT( \psi_i(v)) + \cC (\psi_i(v))]
        h^\bot \right\rangle \right|^2 + \langle \psi_i\,h
    \rangle\,\frac{{\mathrm d}^2}{\mathrm d t^2} \langle \psi_i\,h
    \rangle \lesssim \| h \|\,\| h^\bot \|\,.
  \end{aligned}
  \right.
\]
We finally introduce the third norm
\begin{multline*}
  \| h \|_{\cH_3}^2 := \| h \|_{\cH_2}^2 - \varepsilon_3 \sum_{i
    \neq j} \langle v_i\,v_j\,h \rangle\,\frac{\mathrm d}{\mathrm
    dt} \langle v_i\,v_j\,h \rangle - \varepsilon_3\,\langle
  \psi_i\,h \rangle\,\frac{\mathrm d}{\mathrm dt} \langle
  \psi_i\,h \rangle \\
  - \varepsilon_3' \sum_{i \neq j} \big\langle \frac{\mathrm
    d}{\mathrm dt} \left( \nabla^{\text{\tiny sym}}_x m
  \right)_{i,j} \big\rangle \big\langle \left(
    \nabla_x^{\text{\tiny sym}}\,m \right)_{i,j} \big\rangle \\
  - \varepsilon_3'\,\big\langle \frac{\mathrm d}{\mathrm dt}
  \left(\tfrac1d\,\nabla_x \cdot m - \partial_{x_i} m_i \right)
  \big\rangle \,\big\langle \tfrac1d\,\nabla_x \cdot m -
  \partial_{x_i} m_i \big\rangle
\end{multline*}
for
$1 \gg \varepsilon_3\,\gg \varepsilon_3'\,\gg \varepsilon_2\,\gg
\varepsilon_2'\,\gg \varepsilon_1\,\gg \varepsilon_1'\,\gg
\varepsilon_0 \gg \varepsilon_0'$. Therefore, defining as
in~\eqref{ms} and~\eqref{es} the \emph{space} inhomogeneous terms
\[
  \label{eq:tilde-mc}
  m_s := m - \langle \nabla_x^{\text{\tiny skew}}m \rangle\,x -
  \tfrac1d\,\langle \nabla_x \cdot m \rangle\,x - \langle m
  \rangle \quad \mbox{and} \quad e_s := \tilde{e} = e-\langle e
  \rangle\,,
\]
we obtain
\[
  \label{eq:hypo-kin3}
  \begin{aligned}
    \frac12\,\frac{\mathrm d}{\mathrm dt} \| h \|_{\cH_3}^2 \le
    & - \frac{\mathrm c_{\sC}}{8}\,\| h^\bot \|^2 -
    \frac{\varepsilon_2}4\,\bar\lambda_2 \left\| e_s \right\|^2 \\
    & - \frac{\varepsilon_1}{8}\,\overline\lambda_1 \left\|
      \widetilde m \right\|^2 - \frac{\varepsilon_1}{8}\,
    \overline\lambda_1 \left\| m_s \right\|^2 -
    \frac{\varepsilon_0}2\,\overline\lambda_0 \left\| \widetilde
      r \right\|^2 \\
    & -
    \frac{\varepsilon'_2}2\,\overline\lambda_2\, \big\|\,
    \Omega^{-\frac12}\, \partial_t \widetilde e \,\big\|^2 \\
    & -
    \frac{\varepsilon'_1} 2\, \overline\lambda_1\, \big\|\,
    \Omega^{-\frac12}\, \partial_t \widetilde m \,\big\|^2 -
    \frac{\varepsilon'_1}2\,\overline\lambda_1\, \big\|\,
    \Omega^{-\frac12}\, \partial_t m_s \,\big\|^2 \\
    & - \varepsilon_0'\,\overline\lambda_0\,
    \big\|\,\Omega^{-\frac12}\, \partial_t \widetilde r
    \,\big\|^2 + \eta_3\,\| h \|^2 
  \end{aligned}
\]
for another $0<\eta_3 \ll \varepsilon_0'$.

\subsubsection{Control of moments of the local density and
  energy}
\label{ssec:moments}

We now control the difference between the finite dimensional
quantities
\[
  r - \frac12\,\langle \nabla_x^{\otimes 2} r \rangle : \big( x\otimes x
  -\langle x\otimes x \rangle \big) - \langle \nabla_x r \rangle
  \cdot x -\langle r \rangle
\]
and
\begin{align*}
  w_s & := r - \frac{1}{2\,d}\,\langle \Delta_x r \rangle \left(
        |x|^2- \langle |x|^2 \right) - \langle \nabla_x r \rangle
        \cdot x \\
      & \quad - \sqrt{\frac{2}{d}}\,\langle e \rangle \left[ \phi -
        \langle \phi \rangle - \frac{1}{2\,d}\,\langle \Delta_x \phi
        \rangle \left( |x|^2 - \langle |x|^2 \rangle \right) \right]\end{align*}
defined in~\eqref{ws} in Section~\ref{sec:micmac}, which is made
of the two terms
(using that $\langle r \rangle =0$)
\[
  \label{eq:remainder}
  \left\{
    \begin{aligned}
      & I_1 := - \frac12 \sum_{1 \le i, j \le d} \left( \langle
        \partial^2_{x_i x_j} r \rangle - \frac1d\,\langle \Delta_x r
        \rangle \delta_{ij} \right) \left( x_i\,x_j - \langle x_i
        x_j \rangle \right)\,,\\
      & I_2:= \sqrt{\frac{2}{d}}\, \langle e
      \rangle \left[ \phi - \langle \phi \rangle - \frac{1}{2\,d}\,
        \langle \Delta_x \phi \rangle \left( |x|^2 - \langle
          |x|^2 \rangle \right) \right]\,.
    \end{aligned}
    \right.
  \]
The first term is controlled by using 
\begin{multline*}
  \frac{{\mathrm d}}{{\mathrm d}t} \left( \big\langle (\nabla^{\text{\tiny
        sym}}_x m)_{ij} \big\rangle - \frac1d\,\langle \nabla_x \cdot
    m \rangle\,
    \delta_{ij} \right) \\
  = - \left( \langle \partial^2_{x_i x_j} r \rangle - \frac1d\,
    \langle \Delta_x r \rangle \delta_{ij} \right) + \
  \mbox{controlled terms}
\end{multline*}
since the left hand side is already under control, and the second
term is controlled by observing that $\langle e \rangle = \langle
r\,\phi \rangle$ due to the energy conservation, and
\[
  \langle r\,\phi \rangle = \langle \tilde r\,\phi \rangle +
  \frac12\,\langle \nabla^2_x r \rangle : \langle \left( x \otimes
    x - \langle x \otimes x \rangle \right) \phi \rangle
\]
provided that $\int_{\R^d} x\,\phi\,e^{-\phi} \dd x=0$, and $\nabla^2_x r =
\nabla^2_x \tilde r$, so finally
\[
  |\langle e \rangle|  =| \langle r\,\phi \rangle | \lesssim \|
  \tilde r \|\,. 
\]
This allows to define the final and fourth norm
\begin{align*}
  \| h \|_{\cH_4}^2 := &\,\| h \|_{\cH_3}^2\\ 
  &\,+ \varepsilon_w\,
  \frac{{\mathrm d}}{{\mathrm d}t} \left(\!\big\langle (\nabla^{\text{\tiny
        sym}}_x m)_{ij} \big\rangle - \frac1d\,\langle \nabla_x \cdot
    m \rangle
    \,\delta_{ij} \right)
  \left(\!\langle \partial^2_{x_i x_j} r \rangle - \frac1d\,
    \langle \Delta_x r \rangle \,\delta_{ij} \right) \\
  &\,- \varepsilon_w '
  \left(\!\langle \partial^2_{x_i x_j} r \rangle - \frac1d\,
    \langle \Delta_x r \rangle \,\delta_{ij} \right)
  \frac{{\mathrm d}}{{\mathrm d}t}
  \left(\!\langle \partial^2_{x_i x_j} r \rangle - \frac1d\,
    \langle \Delta_x r \rangle \,\delta_{ij} \right)\end{align*}
for
$1 \gg \varepsilon_3\,\gg \varepsilon_3'\,\gg \varepsilon_2\,\gg
\varepsilon_2'\,\gg \varepsilon_1\,\gg \varepsilon_1'\, \gg
\varepsilon_0 \gg \varepsilon_0' \gg \varepsilon_w \gg
\varepsilon_w'$, with 
\[
  \label{eq:hypo-kin5}
  \begin{aligned}
    \frac12\,\frac{\mathrm d}{\mathrm dt} \| h \|_{\cH_4}^2 \le &
    -\,\frac{\mathrm c_{\sC}}{16}\,\| h^\bot \|^2 -
    \frac{\varepsilon_2}{8}\,\bar\lambda_2 \left\| e_s \right\|^2
    -
    \frac{\varepsilon_w}2\,\left\| w_s \right\|^2 \\
    & -\, \frac{\varepsilon_1}{8}\,\overline\lambda_1 \left\|
      \widetilde m \right\|^2 -
    \frac{\varepsilon_1}{16}\,\overline\lambda_1 \left\| m_s
    \right\|^2 - \frac{\varepsilon_0}2\,\overline\lambda_0
    \left\|
      \widetilde r \right\|^2 \\
    & -\,\frac{\varepsilon'_2}2\, \overline\lambda_2
    \,\big\|\,\Omega^{-\frac12}\,\partial_t \widetilde e
    \,\big\|^2 - \frac{\varepsilon'_1}2\, \overline\lambda_1
    \,\big\|\,\Omega^{-\frac12}\,\partial_t \widetilde m \,\big\|^2\\
    & -\,\frac{\varepsilon'_1}2\, \overline\lambda_1
    \,\big\|\,\Omega^{-\frac12}\,\partial_t m_s \,\big\|^2-
    \varepsilon_0'\,\overline\lambda_0
    \,\big\|\,\Omega^{-\frac12}\,\partial_t \widetilde r \,\big\|^2\\
    & -\,\varepsilon_w' \,\big\|\,\Omega^{-\frac12}\,\partial_t
    w_s \,\big\|^2 + \eta_4\,\| h \|^2
  \end{aligned}
\]
for $0<\eta_4 \ll \varepsilon_w'$. Denote by $\cC_4[h]$ the
semi-norm of the controlled quantities
\begin{multline*}
  \cC_4[h] := \Big(\| h^\bot \|^2 + \left\| \widetilde e
  \right\|^2 +\left\| \widetilde m \right\|^2 +\left\| m_s
  \right\|^2 + \left\| \widetilde r \right\|^2 + \left\| w_s
  \right\|^2 + \big\|\,\Omega^{-\frac12}\,\partial_t e_s
  \,\big\|^2 \\
  + \big\|\,\Omega^{-\frac12}\,\partial_t \widetilde m \,\big\|^2
  + \big\|\,\Omega^{-\frac12}\,\partial_t m_s \,\big\|^2 +
  \big\|\,\Omega^{-\frac12}\,\partial_t \widetilde r \,\big\|^2 +
  \big\|\,\Omega^{-\frac12}\,\partial_t w_s \,\big\|^2
  \Big)^{1/2}
\end{multline*}
and adjust the constants to
get, for some $0 < \eta \ll \varepsilon \ll 1$,
\begin{equation}
  \label{eq:hypo-kin6}
  \frac12\,\frac{\mathrm d}{\mathrm dt} \| h \|_{\cH_4}^2 \le -\,
  \varepsilon\,\cC_4[h]^2 + \eta\,\| h \|^2\,.
\end{equation}

\subsection{Control of the remaining finite-dimensional
  quantities related to the special macroscopic modes}

Estimate~\eqref{eq:hypo-kin6} controls the same microscopic and
macroscopic parts of the solution as in Lemma~\ref{lem5:F1Lyap}
in the micro-macro method. The remaining finite-dimensional
quantities related to the special macroscopic modes can then be
treated exactly as in Sections~\ref{ssec:hypomacro}
and~\ref{ssec:hypomacro2}. This completes the proof of
Proposition~\ref{prop:hypoc}.

\appendix
\section{Some technical computations}
\label{app:tech}
\setcounter{equation}{0}

\subsection{Momentum conservation \emph{versus} infinitesimal
  rotations}
\label{ssec:momvs}

Here we prove~\eqref{eq:consm} for a solution
$f \in \cC\big(\R^+ ; \mathrm L^2(\cM^{-1})\big)$
to~\eqref{eq:main} with initial datum
$f_0\in \mathrm L^2(\cM^{-1})$. With
$x \mapsto A\,x := \Pp_\phi m_0(x)$,
\begin{align*}
  & m_0(x) := e^{\phi(x)} \int_{\R^d} v\,f_0(x,v) \dd v\,,\,\quad
  & m_f(t,x) := e^{\phi(x)} \int_{\R^d} v\,f(t,x,v) \dd v\,, \\
  & r_f(t,x) := e^{\phi(x)} \int_{\R^d} f(t,x,v) \dd v\,, \quad
  & e_f(t,x) := e^{\phi(x)} \int_{\R^d} \fE(v)\,f(t,x,v) \dd v\,,
\end{align*}
let us define $h^\bot$ such that
$f = r_f\,\cM + m_f\cdot v\,\cM + e_f\,\fE\,\cM + h^\bot \cM$.
\begin{lemma}
  \label{lem:proj-ort-append}
  With the above notations, we have have
  $\Pp(m_f-m_0) \in \cR_\phi^\bot$.
\end{lemma}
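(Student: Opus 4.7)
The plan is to establish the lemma as a direct consequence of the conservation of angular momentum associated with each infinitesimal rotation in $\cR_\phi$. Since $\Pp(m_f - m_0)$ lives in $\cR$ by construction, showing it lies in $\cR_\phi^\bot$ amounts to checking that $\langle A\,x, m_f(t) - m_0\rangle_{\mathrm L^2(\rho)} = 0$ for every $A \in \cR_\phi$. Rewriting this scalar product using the definition of $m_f$ and $m_0$ and recalling that $\rho = e^{-\phi}$, one has
\[
\langle A\,x, m_f(t)\rangle_{\mathrm L^2(\rho)} = \iint_{\R^d \times \R^d}(A\,x \cdot v)\,f(t,x,v)\dd x\dd v\,,
\]
and similarly for $m_0$; thus the claim reduces to showing that the total angular momentum $t \mapsto \iint (A\,x \cdot v)\,f(t,x,v)\dd x\dd v$ is conserved along the flow of~\eqref{eq:main}.

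To prove this conservation, I would differentiate in time and split $\sL = \sT + \sC$. For the collision part, using self-adjointness of $\sC$ in $\mathrm L^2(\mu^{-1})$ and the fact that for each fixed $x$ the function $v \mapsto (A\,x \cdot v)\,\mu(v)$ is a linear combination of the $v_i\,\mu$ and therefore lies in $\mathrm{Ker}\,\sC$ by~\eqref{eq:kersC}, one obtains
\[
\iint (A\,x\cdot v)\,\sC f \dd x \dd v = \iint \frac{\sC\big((A\,x\cdot v)\mu\big)}{\mu}\,f \dd x\dd v = 0\,.
\]
For the transport part, one integrates by parts in $x$ and in $v$ to transfer derivatives onto $A\,x \cdot v$. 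The $x$-derivative contribution yields $v\cdot \nabla_x(A\,x\cdot v) = v \cdot (A^T v) = v^T A^T v$, which vanishes because $A$ is skew-symmetric. The $v$-derivative contribution yields $\nabla_x\phi\cdot\nabla_v(A\,x\cdot v) = \nabla_x\phi \cdot A\,x$, which vanishes by the very definition~\eqref{eq:Rphi} of $\cR_\phi$.

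Combining these two cancellations gives $\frac{\mathrm d}{\mathrm dt}\iint(A\,x\cdot v)\,f \dd x \dd v = 0$, hence $\langle A\,x, m_f(t)-m_0\rangle_{\mathrm L^2(\rho)}=0$ for every $A\in\cR_\phi$. Since $A\,x \in \cR$, this orthogonality is preserved after applying $\Pp$, and one concludes $\Pp(m_f(t)-m_0)\in\cR_\phi^\bot$.

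The argument is almost mechanical; the only subtle point is justifying that the integrations by parts in $x$ and $v$ produce no boundary contributions, which is guaranteed by the $\mathrm L^2(\cM^{-1})$ framework together with the growth of the weight $e^{\phi(x)+|v|^2/2}$ relative to the polynomial factor $A\,x\cdot v$. This follows from the moment bounds~\eqref{eq:momentspace} and the assumption~\eqref{hyp:semigroup}, so no real obstacle remains.
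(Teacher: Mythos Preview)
Your proof is correct and establishes the same conservation law, but via a route that differs from the paper's. You work directly at the kinetic level: you rewrite $\langle A\,x, m_f(t)\rangle_{\mathrm L^2(\rho)}$ as the phase-space integral $\iint (A\,x\cdot v)\,f\,\dd x\dd v$ and differentiate in time using the full equation $\partial_t f = \sT f + \sC f$, dispatching the collision term via $\mathrm{Ker}\,\sC$ and the transport term via integration by parts in both $x$ and $v$. This is precisely the Noether-type argument sketched informally in Section~\ref{ssec:conslaw} around~\eqref{GlobalConservation}. The paper's appendix proof instead stays at the macroscopic level: it first integrates out $v$ to obtain the evolution equation for $m_f$ (namely $\partial_t m_f = -\nabla_x r_f + \sqrt{2/d}\,\nabla_x^* e_f + \nabla_x^*\cdot E[h^\bot]$), and then shows $\langle \partial_t m_f, B\,x\rangle = 0$ by integrating by parts in $x$ only, using that $\nabla_x^*\cdot(B\,x)=0$, $\nabla_x\cdot(B\,x)=0$, and that $E[h^\bot]$ is symmetric while $B$ is skew. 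Your approach is more elementary and self-contained; the paper's approach is consistent with the macroscopic machinery used throughout Sections~\ref{sec:minimizers}--\ref{sec:micmac} and has the minor advantage of never invoking the self-adjointness of $\sC$ or the kernel condition~\eqref{eq:kersC} explicitly (those are already baked into the macroscopic equations~\eqref{rme}).
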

\begin{proof}[Proof of Lemma~\ref{lem:proj-ort-append}]
  At $t=0$, we have \hbox{$\Pp\,m_f(0)-\Pp\,m_0 = 0$}. Let
  $B\in\cR_\phi$. To prove that $\Pp(m_f-m_0)$ is orthogonal to
  $x \mapsto B\,x$, it is sufficient to prove that
  \[
    \forall\,t \geq 0\,,\quad \left\langle m_f(t)-m_0, B\,x
    \right\rangle = 0\,.
  \]
  By direct computation, we have
  \[
    \partial_t m_f = -\,\nabla_x r_f +
    \sqrt{\tfrac2d}\,\nabla_x^*\,e_f + \nabla_x^* \cdot E[h^\bot]
  \]
  where
  $E[h^\bot] = \int_{\R^d} \left( v \otimes v - \mathrm{Id}_{d
      \times d} \right) h^\bot\,\mu \dd v$, and use it to compute
  \begin{align*}
    & \frac{\mathrm d}{\mathrm dt} \langle m_f\!-\!m_0, B\,x
      \rangle \\
    & = \langle \partial_t m_f, B\,x \rangle \\
    & = \Big\langle -\nabla_x r_f + \sqrt{\tfrac2d}\,
      \nabla_x^*\,e_f + \nabla_x^* \cdot E\,(h^\bot), B\,x
      \Big\rangle \\
    & = -\,\big\langle r_f,\nabla_x^* \cdot B\,x \big\rangle +
      \sqrt{\tfrac2d}\,\Big\langle e_f, \nabla_x \cdot B\,x
      \Big\rangle + \big\langle E[h^\bot] : \nabla \otimes B\,x
      \big\rangle
  \end{align*}
  where the last line follows from an integration by parts. The
  first term in the right hand side vanishes because
  $\nabla_x^* \cdot B\,x = -\,\nabla_x \cdot B\,x + \nabla \phi
  \cdot B\,x= 0$ since~$B$ is skew-symmetric and
  $(x\mapsto B\,x) \in \cR_\phi$. The second term vanishes as
  well because $\nabla_x \cdot B\,x = 0$. Since
  $E[h^\bot] : \nabla \otimes B\,x = -\,E[h^\bot] : B = 0$
  because $E[h^\bot]$ is symmetric and $B$ is skew-symmetric, the
  third term also vanishes. This proves that
  $\frac{\mathrm d}{\mathrm d t} \langle m(t)-m_0, B\,x \rangle =
  0$ and completes the proof.
\end{proof}

\subsection{Special macroscopic modes: the invertibility and
  rank}

We state and prove two results used in
Section~\ref{ssec:controlbcm} and implicitely in
Section~\ref{ssec:hypomacro2}. The first result deals with the
invertibility of the matrices $M_\phi$ and $\widehat M_\phi$
defined respectively in~\eqref{eq:Mphim} and~\eqref{eq:hatMphim}.
\begin{lemma}
  \label{lem:Mphi-caseI}
  If $d_\phi=d$, the matrix $M_\phi$ is invertible. If
  $1\leq d_\phi \leq d-1$, the matrix $\widehat M_\phi$ is
  invertible.
\end{lemma}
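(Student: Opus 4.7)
The plan is to exploit the fact that both matrices are Gram matrices and to identify their kernels with the orthogonal complement of $E_\phi$ (in the appropriate ambient space).

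First I would observe that $M_\phi = \langle \Phi \otimes \Phi \rangle$ is symmetric positive semi-definite, since for any $\xi \in \R^d$,
\[
  \xi^T M_\phi\,\xi = \int_{\R^d} |\Phi(x) \cdot \xi|^2\,\rho(x)\dd x \ge 0\,,
\]
and this quantity vanishes if and only if $\Phi(x) \cdot \xi = 0$ for $\rho\dd x$-a.e.~$x$. Since $\rho(x) = e^{-\phi(x)} > 0$ everywhere and $\Phi = \nabla_x\phi - x$ is continuous (by \eqref{hyp:regularity}, $\phi \in C^2$), a.e.~vanishing upgrades to pointwise vanishing on $\R^d$. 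By definition~\eqref{eq:defephi} of $E_\phi$ as the linear span of $\{\Phi(x)\}_{x \in \R^d}$, this pointwise condition is equivalent to $\xi \in E_\phi^\bot$. Symmetry and positive semi-definiteness of $M_\phi$ give $\ker M_\phi = E_\phi^\bot$.

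In the fully non-harmonic case $d_\phi = d$, we have $E_\phi = \R^d$, hence $E_\phi^\bot = \{0\}$, so $\ker M_\phi = \{0\}$ and $M_\phi$ is invertible.

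In the partially harmonic case $1 \le d_\phi \le d - 1$, I would use the chosen coordinates in which $\{e_1, \ldots, e_{d_\phi}\}$ spans $E_\phi$ and the identification $\Phi = (\widehat\Phi, 0)$. For any $\hat\eta \in \R^{d_\phi}$, the extended vector $\tilde\eta := (\hat\eta, 0) \in \R^d$ lies in $E_\phi$ by construction, and a direct computation gives
\[
  \hat\eta^T \widehat M_\phi\,\hat\eta = \tilde\eta^T M_\phi\,\tilde\eta = \int_{\R^d} |\widehat\Phi(x) \cdot \hat\eta|^2\,\rho(x)\dd x\,.
\]
If this vanishes, the same continuity argument as above yields $\tilde\eta \in E_\phi^\bot$, so $\tilde\eta \in E_\phi \cap E_\phi^\bot = \{0\}$, whence $\hat\eta = 0$. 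Thus $\widehat M_\phi$ is positive definite, hence invertible.

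The argument is essentially a clean linear-algebra verification; the only mildly delicate point is the passage from a.e.~to pointwise vanishing of $\Phi \cdot \xi$, which is the step that really uses the regularity of $\phi$ and the positivity of $\rho$. No significant obstacle is expected.
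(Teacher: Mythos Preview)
Your proof is correct and follows essentially the same approach as the paper: both exploit the Gram-matrix identity $\xi^T M_\phi\,\xi = \langle |\Phi\cdot\xi|^2\rangle$ and deduce from its vanishing that $\Phi\cdot\xi \equiv 0$, hence $\xi \in E_\phi^\bot$. Your version is slightly more detailed (making the continuity step and the identification $\ker M_\phi = E_\phi^\bot$ explicit), but the underlying argument is identical.
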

\begin{proof}[Proof of Lemma~\ref{lem:Mphi-caseI}]
  Assume that $d_\phi=d$ in~\eqref{eq:defephi}. Let $u \in \R^d$
  be such that \hbox{$M_\phi u = 0$}. Then
  $M_\phi u \cdot u = \langle |\Phi \cdot u|^2 \rangle = 0$,
  which implies that $\Phi(x) \cdot u = 0$ for any $x \in \R^d$,
  hence $u=0$. This means that $\mathrm{Ker}\, M_\phi =
  \{0\}$. The proof in the case $d_\phi \leq d-1$ follows exactly
  the same scheme.
\end{proof}

The second result deals with the linear independence of the two
functions $\widetilde \Psi_1$ and $\widetilde \Psi_2$ defined
in~\eqref{tildepsipsi}, and similarly for $\widehat \Psi_1$ and
$\widehat \Psi_2$ defined in~\eqref{hatpsipsi}.
\begin{lemma}
  \label{lem:tildePsi}
  If $d_\phi=d$, we have
  $\mathrm{Rank} (\widetilde \Psi_1 , \widetilde \Psi_2 ) =
  2$.\\ If $1 \leq d_\phi \leq d-1$, we have
  $\mathrm{Rank} (\widehat \Psi_1 , \widehat \Psi_2 ) = 2$.
\end{lemma}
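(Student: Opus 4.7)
The plan is organised around the observation that $\widetilde\Psi_i$ (resp.\ $\widehat\Psi_i$) is the $\mathrm L^2(\rho)$-orthogonal projection of $\Psi_i$ onto the orthogonal complement of $V := \mathrm{Span}_{\mathrm L^2(\rho)}\{\Phi_1,\ldots,\Phi_d\}$ (resp.\ $\widehat V := \mathrm{Span}_{\mathrm L^2(\rho)}\{\widehat\Phi_1,\ldots,\widehat\Phi_{d_\phi}\}$). Indeed, by Lemma~\ref{lem:Mphi-caseI} the Gram matrix $M_\phi$ (resp.\ $\widehat M_\phi$) is non-singular, so $M_\phi^{-1}\alpha_i$ is precisely the coordinate vector of the projection of $\Psi_i$ onto $V$ in the basis $(\Phi_j)$, and subtracting that projection places $\widetilde\Psi_i$ in $V^\perp$. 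Consequently, $\mathrm{Rank}(\widetilde\Psi_1,\widetilde\Psi_2) = 2$ is equivalent to the statement that no non-trivial linear combination $\lambda_1\Psi_1+\lambda_2\Psi_2$ lies in $V$.

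I would argue by contradiction. Assume $(\lambda_1,\lambda_2)\neq(0,0)$ and $\beta\in\R^d$ satisfy $\lambda_1\Psi_1+\lambda_2\Psi_2 = \beta\cdot\Phi$ in $\mathrm L^2(\rho)$. By~\eqref{hyp:regularity} both sides are $C^2$ functions, so the identity holds pointwise; after multiplication by $\sqrt{2d}$ it reads
\begin{equation*}
2\lambda_1\,\xi_\phi(x) + \lambda_1\bigl(\nabla_x\phi\cdot x - d\bigr) - \lambda_1\,\langle\nabla_x^2\phi\,x\rangle\cdot x + \tfrac{\lambda_2}{2}\,\xi_2(x) = \sqrt{2d}\,\beta\cdot\bigl(\nabla_x\phi(x) - x\bigr).
\end{equation*}
Differentiating this pointwise identity produces a vector equation of the form $3\lambda_1\nabla_x\phi + \lambda_1\,\nabla_x^2\phi\cdot x + \lambda_2\,x + (\mathrm{const}) = \sqrt{2d}\,(\nabla_x^2\phi\cdot\beta - \beta)$; taking its divergence and integrating against $\rho$, and using the normalization~\eqref{eq:phiid} together with the integration-by-parts identities ensured by~\eqref{hyp:intnorm}--\eqref{eq:momentspace}, extracts a scalar relation isolating a non-zero multiple of $\lambda_2 d$ alongside a linear combination of the $\Phi_i$ and the quantity $\Delta_x(\beta\cdot\nabla_x\phi)$.

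The fully non-harmonic hypothesis $d_\phi = d$, equivalently the linear independence in $\mathrm L^2(\rho)$ of the $d$ functions $\partial_{x_i}\phi-x_i$, then forces $\lambda_2 = 0$; substituting back into the pointwise identity reduces it to $\lambda_1\Psi_1 = \beta\cdot\Phi$, and the linear independence of $\Phi_i$ combined with the fact that $\Psi_1$ is not in $V$ (as $\Psi_1$ contains a $\phi$-contribution that cannot be reconstructed from the $\partial_{x_i}\phi - x_i$'s alone in the non-harmonic regime) concludes $\lambda_1 = 0$ and $\beta = 0$, a contradiction. The partially harmonic case $1\le d_\phi\le d-1$ reduces to the above after choosing coordinates adapted to $E_\phi$: since $\partial_{x_i}\phi = x_i$ for $i\in I_\phi$, the corresponding components of $\Phi$ vanish identically and one replaces $d$, $M_\phi$, $\Phi$ by $d_\phi$, $\widehat M_\phi$, $\widehat\Phi$ throughout. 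The hard part will be the bookkeeping in the differentiation/integration step: one must carefully track the affine contributions and verify that after integration against $\rho$ the resulting scalar relation genuinely isolates a non-zero multiple of $\lambda_2$ uncontaminated by the $\lambda_1$-obstructions, so that the $V$-linear-independence invoked at the final step can be applied cleanly.
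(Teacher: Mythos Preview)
Your opening reformulation is correct and useful: $\widetilde\Psi_i$ is indeed the $\mathrm L^2(\rho)$-projection of $\Psi_i$ onto $V^\perp$, and rank $2$ is equivalent to the non-existence of $(\lambda_1,\lambda_2)\neq(0,0)$ and $\beta\in\R^d$ with $\lambda_1\Psi_1+\lambda_2\Psi_2=\beta\cdot\Phi$. However, the execution after this point has two genuine gaps.

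First, the ``differentiate, take divergence, integrate against $\rho$'' step does not isolate $\lambda_2$. Carrying it out, the divergence of the gradient identity is
\[
4\lambda_1\,\Delta_x\phi+\lambda_1\,\nabla_x(\Delta_x\phi)\cdot x+\lambda_2\,d=\sqrt{2d}\,\beta\cdot\nabla_x(\Delta_x\phi),
\]
and integrating against $\rho$ yields a \emph{single} scalar relation in which the $\lambda_1$-terms do not vanish (for instance $\langle\Delta_x\phi\rangle=d$ by~\eqref{eq:phiid}). You are left with one equation in the $d+2$ unknowns $\lambda_1,\lambda_2,\beta$, and the invocation of ``linear independence of the $\Phi_i$'' is a non-sequitur at that point: linear independence is a statement about functions, not about a scalar moment relation. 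Your own closing caveat (``one must carefully verify that the scalar relation genuinely isolates a non-zero multiple of $\lambda_2$ uncontaminated by the $\lambda_1$-obstructions'') is exactly the hole, and it does not close.

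Second, even granting $\lambda_2=0$, the assertion that $\Psi_1\notin V$ because ``$\Psi_1$ contains a $\phi$-contribution that cannot be reconstructed from the $\partial_{x_i}\phi-x_i$'s'' is just the $\lambda_2=0$ instance of the lemma, restated rather than proved. A relation $\Psi_1=\beta\cdot\Phi$ is the functional equation $2\phi+\nabla_x\phi\cdot x-\beta\cdot\nabla_x\phi=\text{affine}$, and ruling this out requires more than the linear independence of the $\Phi_i$.

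The paper's proof avoids both issues by treating the dependence relation directly as a functional equation for $\phi$: writing it as
\[
\phi+\tfrac12\,\nabla_x\phi\cdot x+\alpha\cdot\nabla_x\phi=\tfrac\lambda4\,|x|^2+\beta\cdot x+\gamma,
\]
subtracting an explicit quadratic particular solution $\phi_0$, and observing that the difference $\psi$ (after a translation) satisfies the Euler-type identity $\psi=\tfrac12\,y\cdot\nabla_y\psi$, whose only solution continuous at the origin is $\psi\equiv0$. This forces $\phi$ to be quadratic, and the normalizations~\eqref{hyp:intnorm}--\eqref{eq:phiid} then give the fully harmonic potential, contradicting $d_\phi\ge1$. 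This argument simultaneously handles all $(\lambda_1,\lambda_2)$ with $\lambda_1\neq0$ (and the case $\lambda_1=0$ reduces to $\xi_2\in V$, which forces $\beta\cdot\nabla_x\phi$ to be a specific quadratic and is handled similarly).
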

\begin{proof}[Proof of Lemma~\ref{lem:tildePsi}]
  Let us assume that $d_\phi=d$ and argue by
  contradiction. Assume that
  $\widetilde \Psi_1 = \lambda\,\widetilde \Psi_2$ for some
  $\lambda \in \R^*$, that is, there are constants $\alpha$,
  $\beta \in \R^d$ and $\gamma \in \R$ such that
  \begin{equation}
    \label{eq:phi-tildePsi}
    \phi + \tfrac12\,\nabla_x \phi \cdot x + \alpha \cdot
    \nabla_x \phi = \tfrac\lambda4\,|x|^2 + \beta \cdot x +
    \gamma\,.
  \end{equation}
  We first look for quadratic solutions
  to~\eqref{eq:phi-tildePsi} of the form
  $\phi_0 = x \cdot M_0\,x + b_0 \cdot x + c_0$ with
  $M_0 \in \mathfrak{M}_{d \times d}(\R)$, $b_0 \in \R^d$ and
  $c_0 \in \R$. Plugging $\phi_0$ into~\eqref{eq:phi-tildePsi},
  one obtains $M_0 = \frac\lambda8\, \mathrm{Id}_{d \times d}$,
  $b_0 = \frac23\,\big(\beta - \frac\lambda4\, \alpha\big)$ and
  $c_0 = \gamma - \frac23\, \alpha \cdot \big(\beta -
  \frac\lambda4\, \alpha\big)$. Now let $\phi$ be a solution
  to~\eqref{eq:phi-tildePsi}. Define
  $\psi_0(x) = \phi(x) - \phi_0(x)$ and then
  $\psi (y) = \psi_0(y-2\,\alpha)$, which hence verifies
  \[
    \label{eq:phi-tildePsi-bis}
    \psi(y) - \tfrac12\,\nabla_y \psi(y) \cdot y = 0\,.
  \]
  Let $\zeta(y) = |y|^2\,\psi(y)$ so that
  $\nabla_y \zeta(y) \cdot y = 2\,|y|^2\,\big(\psi(y) -
  \tfrac12\,\nabla_y \psi(y) \cdot y \big) = 0$ for any
  $y \in \R^d$. In polar coordinates $(r,\theta)$, this implies
  that $\zeta(y) = \zeta(\theta)$ and hence
  \[
    \forall\,r > 0\,, \quad\psi(r,\theta) =
    \frac{\zeta(\theta)}{r^2}\,.
  \]
  But $\psi$ is by assumption continuous at the origin, therefore
  $\lim_{r \to 0} \psi(r,\theta)$ is finite, which in turn
  implies that $\psi(r,\theta) = 0$. Finally one gets
  $\phi = \phi_0$. Thanks to the
  normalizations~\eqref{hyp:intnorm} and~\eqref{eq:phiid}, one
  gets $\phi(x) = \tfrac12\,|x|^2 + \frac d2\,\log(2\,\pi)$ and,
  by definition~\eqref{eq:defephi}, $E_\phi=\{ 0 \big\}$, which
  contradicts the hypothesis $d_\phi = d$. This completes the
  proof when $d_\phi=d$. When $d_\phi \leq d-1$, we argue
  similarly on $\widehat \Psi_1$ and $ \widehat \Psi_2$.
\end{proof}

\subsection{Some computations for the commutator method}
\label{verifcom}

Here we prove technical claims used in
Section~\ref{sec:cascade}. The first result is concerned with
boundedness of the operators defined in
Section~\ref{ssec:correctors} under
Assumptions~\eqref{hyp:boundedCollisionOperator}
and~\eqref{hyp:PotentialRestrictions}.
\begin{lemma}
  \label{lem:pseudobnd}
  Assume that~\eqref{eq:kersC}--\eqref{eq:hyp-sg-C}--\eqref{eq:lbound}--\eqref{hyp:intnorm}--\eqref{hyp:regularity}--\eqref{eq:poincarenormal}--\eqref{eq:momentspace}--\eqref{eq:phiid}--\eqref{hyp:semigroup}--~\eqref{hyp:boundedCollisionOperator}--\eqref{hyp:PotentialRestrictions} hold. Then the operators $\Lambda_i$, $A_i$ and $B_i$,
  $i \in \{ 1, \dots, 3\}$, are bounded.
\end{lemma}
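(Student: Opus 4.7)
The plan is to reduce the boundedness of each of $A_i$, $\Lambda_i$ and $B_i$ to that of a short list of elementary building blocks on $\mathrm L^2(\cM)$, relying on the Witten/Fokker--Planck pseudo-differential calculus of~\cite{HN04,HN05}. The elementary bounded operators I would use are
\[
\nabla_x\,\Omega^{-\tfrac12}\,,\quad\Omega^{-\tfrac12}\,\nabla_x^*\,,\quad\nabla_v\,\Gamma^{-\tfrac12}\,,\quad\Gamma^{-\tfrac12}\,\nabla_v^*\,,
\]
and their mixed analogues obtained by replacing $\Omega^{-1/2}$ or $\Gamma^{-1/2}$ by $\Lambda^{-1/2}$. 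Their uniform boundedness follows from the spectral identities $\|\nabla_x u\|^2=\langle(\Omega-1)u,u\rangle$ and $\|\nabla_v u\|^2=\langle(\Gamma-1)u,u\rangle$ together with the functional calculus for the self-adjoint non-negative operators $\Omega$, $\Gamma$, $\Lambda$. A second essential fact is the boundedness of the multiplication operator $H_\phi=\nabla_{\!x}^2\phi$ on $\mathrm L^2(\cM)$, which is exactly the first half of~\eqref{hyp:PotentialRestrictions}.

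The first step would be to deal with the $A_i$'s. I would split each fractional power $\Lambda^{-3/2}$, $\Gamma^{-1/2}\Lambda^{-1}$, $\Gamma^{-1}\Lambda^{-1/2}$ into halves in such a way as to assign one power of $\Lambda^{-1/2}$ (or $\Gamma^{-1/2}$) to each of the derivatives appearing on the left and on the right of the tensor contraction. After this matching, each $A_i$ reads as a finite composition of the elementary bounded building blocks listed above, which gives its boundedness on $\mathrm L^2(\cM)$. The operators $\Lambda_i$ have exactly the same structure with the single $\nabla_v$ in the rightmost factor of the tensor replaced by a $\nabla_x$ (and symmetrically on the left), so the same splitting yields their boundedness; the non-negativity and self-adjointness already pointed out in the main text are made explicit by writing $\Lambda_i$ as $T_i^*T_i$ in this decomposition.

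The main obstacle is the analysis of the $B_i$'s, which contain two types of dangerous contributions. The ``easy'' ones are those in which an adjoint derivative $\nabla_x^*$ or $\nabla_v^*$ has been exchanged for $H_\phi\nabla_v$ or $(H_\phi\nabla_v)^*$ via~\eqref{eq:commutators-T}; since $H_\phi$ is bounded under~\eqref{hyp:PotentialRestrictions}, the same matching strategy used for the $A_i$'s applies verbatim. The harder contributions are the commutators $[\Lambda^{-s},\cT]$ with $s\in\{\tfrac12,1,\tfrac32\}$ sandwiched between derivatives, together with the analogous terms involving $[\Gamma^{-s},\cT]$ and the mixed $[\Gamma^{-1/2}\Lambda^{-1},\cT]$, $[\Gamma^{-1}\Lambda^{-1/2},\cT]$. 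For these I would invoke the Helffer--Nier resolvent formula
\[
[\Lambda^{-s},\cT]=c_s\int_0^\infty t^{-s}\,(\Lambda+t)^{-1}\,[\Lambda,\cT]\,(\Lambda+t)^{-1}\dd t\,,
\]
and its analogues, which reduces everything to the commutator $[\Lambda,\cT]$. Using~\eqref{eq:commutators-T} one verifies that $[\Lambda,\cT]$ is a first-order differential operator whose coefficients involve at most one factor of $H_\phi$, hence are bounded under~\eqref{hyp:PotentialRestrictions}, so that $[\Lambda,\cT]$ is controlled in operator norm by $\Lambda^{1/2}$. Plugging this estimate into the resolvent integral shows that $\Lambda^{s-1/2}[\Lambda^{-s},\cT]$ is bounded, and the remaining factors of $\Lambda^{1/2-s}$ are then absorbed into the adjacent derivatives via the building-block decomposition used for the $A_i$'s. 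Finally, the contributions $A_i\cC+\cC A_i$ to $B_i$ are bounded by~\eqref{hyp:boundedCollisionOperator} together with the already proven boundedness of $A_i$. The main technical point to watch carefully is the bookkeeping of which $\Lambda^{-1/2}$ (or $\Gamma^{-1/2}$) factor is paired with which derivative after commutation, since the composition of several such commutators must stay within the bounded Sobolev scale generated by $\Omega$, $\Gamma$, $\Lambda$.
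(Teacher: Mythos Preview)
Your overall plan is sound and close in spirit to the paper's argument: both reduce the boundedness of $A_i$, $\Lambda_i$, $B_i$ to elementary pieces controlled by the Witten/Fokker--Planck calculus of~\cite{HN04,HN05}, and both use the boundedness of $H_\phi$ from~\eqref{hyp:PotentialRestrictions} and of $\cC$ from~\eqref{hyp:boundedCollisionOperator}. Your treatment of the $A_i$'s, the $\Lambda_i$'s, and the ``easy'' lines of the $B_i$'s (those where a $\nabla_x$ or $\nabla_x^*$ has been replaced by $H_\phi\nabla_v$ or $(H_\phi\nabla_v)^*$) is essentially correct.

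There is, however, a genuine gap in your treatment of the commutator lines. You assert that $[\Lambda,\cT]$ is a first-order differential operator; it is not. Using~\eqref{eq:commutators-T} one computes
\[
[\Lambda,\cT]=-\,\nabla_x^*\!\cdot\nabla_v-\nabla_v^*\!\cdot\nabla_x+(H_\phi\nabla_v)^*\!\cdot\nabla_x+\nabla_x^*\!\cdot H_\phi\nabla_v\,,
\]
which is second order (it contains, for instance, $\nabla_x\!\cdot\nabla_v$). The correct statement is that $[\Lambda,\cT]$ is of the same order as $\Lambda$, i.e.\ $\Lambda^{-1/2}[\Lambda,\cT]\Lambda^{-1/2}$ is bounded, not $\Lambda^{-1/2}[\Lambda,\cT]$. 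With your (incorrect) bound, the conclusion ``$\Lambda^{s-1/2}[\Lambda^{-s},\cT]$ is bounded'' is too weak: for $B_{0,2}$ you would be left with $\nabla_x^*\otimes\nabla_x^*\otimes\nabla_x^*\,\Lambda^{-1}$, three derivatives against only $\Lambda^{-1}$, which is unbounded. What is actually needed (and what the paper uses) is the stronger fact that $\Lambda^{s}[\Lambda^{-s},\cT]$, and more generally $\Lambda^a\Gamma^b\,[\Lambda^{-a}\Gamma^{-b},\cT]$, is bounded. Your resolvent-integral approach can in principle reach this, but only after inserting the two-sided bound $\Lambda^{-1/2}[\Lambda,\cT]\Lambda^{-1/2}=O(1)$ and splitting the gain symmetrically between the two resolvents; the one-sided estimate you write down leads to a divergent integral at infinity.

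By contrast, the paper avoids this bookkeeping by working directly in the Weyl--H\"ormander calculus: after conjugation by $\cM^{1/2}$ one checks that $\tilde\Gamma^{-1/2}(\partial_{v_k}+v_k/2)$ and $\tilde\Lambda^{-1}(\partial_{x_i}+\partial_{x_i}\phi/2)(\partial_{v_j}+v_j/2)$ are of order~$0$ in the metrics $(\mathrm dv^2+\mathrm d\eta^2)/(1+|v|^2+|\eta|^2)$ and $(\mathrm dx^2+\mathrm dv^2+\mathrm d\xi^2+\mathrm d\eta^2)/(1+|v|^2+|\eta|^2+|\nabla\phi|^2+|\xi|^2)$, and applies the Calder\'on--Vaillancourt theorem. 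The commutator terms are handled by the same principle: commutation with $\cT$ decreases the order by~$1$ in this calculus, so $\Lambda\Gamma^{1/2}[\Lambda^{-1}\Gamma^{-1/2},\cT]$ is of order~$0$ and bounded. This route is less elementary than yours but sidesteps the delicate resolvent estimates.
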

\begin{proof}[Proof of Lemma~\ref{lem:pseudobnd}]
  As a typical example, we focus on $\Lambda_1$ for which it is
  sufficient to show that
  $\Lambda^{-1}\,\Gamma^{-1/2}\,\partial_{x_i} \partial_{v_j}
  \partial_{v_k}$ is bounded in $\mathrm L^2(\cM)$. Adopting the
  point of view of~\cite[Proposition A.7]{HN05}, we first
  conjugate with $\cM^{1/2}$ and only have to check that
  $\tilde{\Gamma}^{-1/2}\,(\partial_{v_k}+v_k/2)$ and
  $\tilde{\Lambda}^{-1}\,(\partial_{x_i}+x_i/2)\,(\partial_{v_j}+v_j/2)
  $ are bounded in $\mathrm L^2(\R^d,\mathrm dx\mathrm dv)$,
  where
  \begin{align*}
    & \tilde{\Lambda} = \sum_i \left(-\,\partial_{x_i}+
      \frac{x_i}2 \right) \left(\partial_{x_i} + \frac{x_i}2
      \right) + \left(-\,\partial_{v_i} + \frac{v_i}2
      \right)\left( \partial_{v_i} + \frac{v_i}2 \right), \\
    & \tilde{\Gamma} = \sum_i \left( -\,\partial_{v_i} +
      \frac{v_i}2 \right) \left( \partial_{v_i} + \frac{v_i}2
      \right).
  \end{align*}
  For $\tilde{\Gamma}^{-1/2} (\partial_{v_k}+v_k/2)$ this is due
  to the fact that $\tilde{\Gamma}^{-1/2}$ is of order $-1$ and
  $\partial_{v_k}+v_k/2$ of order $1$ in the pseudo-differential
  calculus associated to the metric
  $(\mathrm d v^2 + \mathrm d \eta^2)/(1+|v|^2+|\eta|^2)$, $\eta$
  being the dual variable of $v$. The composition is then of
  order $0$ and the Calder\'on-Vaillancourt Theorem
  (see~\cite{MR298480}) implies the boundedness. For
  $\tilde{\Lambda}^{-1} (\partial_{x_i} + x_i/2) (\partial_{v_j}
  +v_j/2) $, the result is also true because $\Lambda^{-1}$ is of
  order $-2$ and
  $(\partial_{x_i} + x_i/2) (\partial_{v_j} +v_j/2) $ is of order
  $2$ in the pseudo-differential calculus associated to the
  metric
  $(\mathrm d x^2+ \mathrm d v^2 + \mathrm d \xi^2 + \mathrm d
  \eta^2)/(1+|\eta|^2+|v|^2 + |\nabla \phi|^2 + |\xi|^2)$, $\xi$
  being the dual variable of $v$. This implies the desired
  boundedness. Such calculus with two levels (involving~$\Lambda$
  in all variables and $\Gamma$ only in velocity variables) is
  also at the core of the boundedness of terms like
  $\Lambda\,\Gamma^{1/2}\,\big[\Lambda^{-1}\,\Gamma^{-1/2},
  \cT\big]$ where we use that $\Lambda$ and~$\Gamma$ commute and
  that the commutation with $\cT$ decreases the order of derivatives by~$1$, so that this
  operator is of order $0$ and therefore bounded by the
  Calder\'on-Vaillancourt Theorem. Note in addition that $H_\phi$
  (which appears, \emph{e.g.}, in the $B_i$'s) is of order $0$
  which greatly simplifies the proofs. For all other terms
  $\Lambda_i$, $A_i$ and~$B_i$, similar computations give the
  result.
\end{proof}

The second result deals with the symmetry and nonnegativity of
$\Lambda_1$.
\begin{lemma}
  \label{lem:lambda1-sym}
  The operator $\Lambda_1$ is symmetric and nonnegative.
\end{lemma}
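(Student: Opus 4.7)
The plan is to exhibit $\Lambda_1$ as a manifest positive conjugation, which immediately yields both properties. With the notation of the preceding cascade, introduce
\[
  T_1 := \Lambda^{-1}\,\Gamma^{-\frac12}\,\nabla_x \otimes \nabla_v \otimes \nabla_x\,,\qquad
  T_2 := \Lambda^{-1}\,\Gamma^{-\frac12}\,\nabla_v \otimes \nabla_x \otimes \nabla_x\,,
\]
each sending scalar functions in $\mathrm L^2(\cM)$ to 3-tensor-valued functions. A direct reading of the two summands defining $\Lambda_1$ shows that the ``left'' factor of the contraction in each line is the formal adjoint in $\mathrm L^2(\cM)$ of $T_1$ (using that $\Gamma$ and $\Lambda$ are self-adjoint). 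Hence
\[
  \Lambda_1 = T_1^*\,T_1 + T_1^*\,T_2\,.
\]

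First I would exploit the only algebraic structure present here: the Schwarz identity $\partial_{v_i}\,\partial_{x_j} = \partial_{x_j}\,\partial_{v_i}$. Let $P$ denote the involution on 3-tensors that swaps the first two indices, so that $P = P^* = P^{-1}$. The commutativity of partial derivatives translates into the operator identity $T_2 = P\,T_1$ on smooth functions, which permits the key rewriting
\[
  \Lambda_1 = T_1^*\,T_1 + T_1^*\,P\,T_1 = T_1^*\,(I+P)\,T_1\,.
\]
Since $I + P$ equals twice the orthogonal projection onto 3-tensors symmetric in the first two indices, this already exhibits $\Lambda_1$ as a nonnegative, self-adjoint conjugation.

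A slightly cleaner closed form follows from the computations $T_2^* T_2 = T_1^* P^* P\,T_1 = T_1^* T_1$ and $T_2^* T_1 = T_1^* P\,T_1 = T_1^* T_2$, which yield
\[
  (T_1 + T_2)^*\,(T_1 + T_2) = 2\,T_1^* T_1 + 2\,T_1^* T_2 = 2\,\Lambda_1\,,
\]
so that $\Lambda_1 = \tfrac12\,(T_1 + T_2)^*(T_1 + T_2)$. Symmetry and non-negativity are immediate. The only mild technical point, which I expect to be the sole obstacle, is that $T_1$ and $T_2$ are unbounded densely defined operators with tensor-valued range, so the identities above should be read on a dense subspace of smooth rapidly decaying test functions and then extended by continuity, using the boundedness of $\Lambda_1$ supplied by Lemma~\ref{lem:pseudobnd}.
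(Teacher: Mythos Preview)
Your argument is correct and is essentially the same as the paper's: the paper verifies symmetry of the cross term by an explicit index swap $j\leftrightarrow k$ and then shows $\langle \Lambda_1 f,f\rangle = \tfrac12\sum_{i,j,k}\|\Lambda^{-1}\Gamma^{-1/2}(\partial_{x_k}\partial_{v_j}+\partial_{v_k}\partial_{x_j})\partial_{x_i} f\|^2$, which is exactly your identity $\Lambda_1=\tfrac12(T_1+T_2)^*(T_1+T_2)$ written componentwise. Your introduction of the swap involution $P$ with $T_2=PT_1$ is a clean abstract repackaging of that same index manipulation.
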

\begin{proof}[Proof of Lemma~\ref{lem:lambda1-sym}]
  First we check that
  \[
    \nabla_x^*\,\nabla_v^*\,\nabla_x^*\,\Gamma^{-\frac12}\,\Lambda^{-1}
    :
    \Lambda^{-1}\,\Gamma^{-\frac12}\,\nabla_v\,\nabla_x\,\nabla_x
  \]
  is symmetric, since for the other part of $\Lambda_1$ this is
  obvious:
  \begin{align*}
    & \big\langle
      \nabla_x^*\,\nabla_v^*\,\nabla_x^*\,\Gamma^{-\frac12}\,\Lambda^{-1}
      : \Lambda^{-1}\,\Gamma^{-\frac12}\,\nabla_v\,\nabla_x
      \nabla_x f , g \big\rangle \\
    & = \sum_{i,j,k} \big\langle
      \partial_{x_i}^*\,\partial_{v_j}^*\, \partial_{x_k}^*\,
      \Gamma^{-\frac12}\,\Lambda^{-1} :
      \Lambda^{-1}\,\Gamma^{-\frac12}\,\partial_{v_k}\,
      \partial_{x_j}\, \partial_{x_i}\,f,\,g \big\rangle\\
    & = \sum_{i,j,k} \big\langle \Lambda^{-1}\,
      \Gamma^{-\frac12}\, \partial_{v_k}\, \partial_{x_j} \,
      \partial_{x_i} \, f,\, \Lambda^{-1} \,
      \Gamma^{-\frac12}\,\partial_{x_k}\, \partial_{v_j} \,
      \partial_{x_i}\,g \big\rangle \\
    & = \sum_{i,j,k} \big\langle \Lambda^{-1} \,
      \Gamma^{-\frac12}\, \partial_{x_j} \, \partial_{v_k} \,
      \partial_{x_i} \, f,\, \Lambda^{-1} \, \Gamma^{-\frac12} \,
      \partial_{v_j}\,\partial_{x_k}\,\partial_{x_i}\,g \big\rangle\\
    & = \sum_{i,j,k} \langle f,\,\partial_{x_i}^*\,
      \partial_{v_k}^*\, \partial_{x_j}^*\,\Gamma^{-\frac12} \,
      \Lambda^{-2} \,\Gamma^{-\frac12} \,
      \partial_{v_j}\,\partial_{x_k}\, \partial_{x_i}\,g \rangle\\
    & = \big\langle f,\,\nabla_x^*\,\nabla_v^*\,\nabla_x^*\,
      \Gamma^{-\frac12}\, \Lambda^{-1} : \Lambda^{-1}\,
      \Gamma^{-\frac12} \, \nabla_v\,\nabla_x\,\nabla_x\,g
      \big\rangle\,.
  \end{align*}
  Next check that $\Lambda_1$ is indeed a nonnegative operator:
  \begin{align*}
    \langle \Lambda_1\,f,f \big\rangle
    & = \big\langle \nabla_x^* \otimes \nabla_v^* \otimes
      \nabla_x^*\,\Gamma^{-\frac12}\,\Lambda^{-1} :
      \Lambda^{-1}\,\Gamma^{-\frac12}\,\nabla_x^* \otimes
      \nabla_v^* \otimes \nabla_x^*\,f,\,f \big\rangle \\
    &\,\quad + \big\langle \nabla_x^* \otimes \nabla_v^* \otimes
      \nabla_x^*\,\Gamma^{-\frac12}\,\Lambda^{-1} :
      \Lambda^{-1}\,\Gamma^{-\frac12}\,\nabla_v^* \otimes
      \nabla_x^* \otimes \nabla_x^*\,f,f \big\rangle \\
    & = \sum_{i,j,k} \big\langle \Lambda^{-1} \,
      \Gamma^{-\frac12}\, \partial_{x_k} \, \partial_{v_j} \,
      \partial_{x_i}\, f,\, \Lambda^{-1} \, \Gamma^{-\frac12}\,
      \partial_{x_k}\,\partial_{v_j}\,\partial_{x_i}\,f \big\rangle \\
    &\,\quad + \big\langle \Lambda^{-1} \, \Gamma^{-\frac12}\,
      \partial_{v_k}\, \partial_{x_j}\, \partial_{x_i}\, f,\,
      \Lambda^{-1} \, \Gamma^{-\frac12}\,\partial_{x_k}\,
      \partial_{v_j} \,\partial_{x_i}\,f \big\rangle \\
    &= \sum_{i,j,k} \frac12 \left\|
      \Lambda^{-1}\,\Gamma^{-\frac12}\,(\partial_{x_k}\,\partial_{v_j}
      + \partial_{v_k}\,\partial_{x_j})\,\partial_{x_i}\,f
      \right\|^2 \, .
  \end{align*}
  This completes the proof.
\end{proof}

\subsection{A cascade of Poincar\'e-Lions inequalities}
\label{ssec:poin}

Under Assumptions~\eqref{hyp:boundedCollisionOperator}
and~\eqref{hyp:PotentialRestrictions}, we prove several
inequalities used in Section~\ref{ssec:correctors}. Let $\varphi$
a smooth function in $\mathrm L^2(\rho)$ with compact support and
\begin{align*}
  \label{eq:deftayl}
  & P_0 (\varphi) := \langle \varphi \rangle\,,\\[2mm]
  & P_1(\varphi) := \langle \varphi \rangle + \langle \nabla_x
  \varphi \rangle\cdot x\,, \\
  & P_2(\varphi) := \langle \varphi \rangle + \langle \nabla_x
    \varphi \rangle\cdot x + \frac12 \left\langle (\nabla_x
    \otimes \nabla_x)\,\varphi \right\rangle : \big( x \otimes x
    - \langle x \otimes x \rangle \big)\,.
\end{align*}
\begin{lemma}
  \label{lem:poincaretensoriel}
  Let $n \in \{1,2,3\}$. Then there exists a constant
  $c_{\text{\tiny P},n} >0$ such that for all smooth $\varphi$
  with compact support we have
  \[
    c_{\text{\tiny P},n} \left\| \varphi - P_{n-1}(\varphi)
    \right\|^2 \leq \big\|\,\Omega^{-n/2}\,\nabla_x^{\otimes n}
    \varphi \,\big\|^2.
  \]
\end{lemma}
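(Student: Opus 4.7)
The plan is to prove the case $n=1$ directly from~\eqref{eq:poincareL2} (taking $P_0(\varphi) = \langle\varphi\rangle$), and then to handle $n \in \{2,3\}$ by a controlled chaining argument built from~\eqref{eq:poincareL2} and the $(-1)^{th}$-order Poincaré--Lions inequality~\eqref{eq:poincareH-1}. Set $\psi := \varphi - P_{n-1}(\varphi)$. Using $\langle x \rangle = 0$ from~\eqref{hyp:intnorm}, a direct computation gives $P_k(\psi) = 0$ for all $k \leq n-1$; in particular $\langle \psi \rangle = 0$, $\langle \nabla_x \psi \rangle = 0$, and (for $n=3$) $\langle \nabla_x^{\otimes 2}\psi \rangle = 0$. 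Moreover the top-order derivative is unchanged: $\nabla_x^{\otimes n}\psi = \nabla_x^{\otimes n}\varphi$. These moment-vanishing properties are essential because each iteration of~\eqref{eq:poincareH-1} requires a mean-zero scalar function.

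For $n=2$, I apply~\eqref{eq:poincareL2} to $\psi$ to get $\|\psi\|^2 \lesssim \|\,\Omega^{-1/2}\nabla_x \psi\,\|^2$. Each component $(\nabla_x \psi)_i = \partial_{x_i}\varphi - \langle \partial_{x_i}\varphi \rangle$ has zero mean, so~\eqref{eq:poincareH-1} yields $\|\,\Omega^{-1/2}(\nabla_x \psi)_i\,\|^2 \lesssim \|\,\Omega^{-1}\nabla_x (\nabla_x \psi)_i\,\|^2$. Summing over $i$ closes the chain: $\|\psi\|^2 \lesssim \|\,\Omega^{-1}\nabla_x^{\otimes 2}\varphi\,\|^2$. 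For $n=3$, running the same chain gives $\|\psi\|^2 \lesssim \|\,\Omega^{-1}\nabla_x^{\otimes 2}\psi\,\|^2$, where each $(\nabla_x^{\otimes 2}\psi)_{ij} = \partial^2_{x_ix_j}\varphi - \langle \partial^2_{x_ix_j}\varphi \rangle$ still has zero mean thanks to $\langle \nabla_x^{\otimes 2}\psi\rangle=0$. It then remains to establish a \emph{$(-3/2)^{th}$-order Poincaré inequality}: for any componentwise mean-zero tensor $T$,
\begin{equation*}
  \|\,\Omega^{-1} T\,\|^2 \lesssim \|\,\Omega^{-3/2}\nabla_x T\,\|^2\,.
\end{equation*}
Applied componentwise and summed, this upgrades the right-hand side from $\|\,\Omega^{-1}\nabla_x^{\otimes 3}\varphi\,\|^2$ to $\|\,\Omega^{-3/2}\nabla_x^{\otimes 3}\varphi\,\|^2$, finishing the $n=3$ case.

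The main obstacle is this final inequality, which is where the improved weight $\Omega^{-3/2}$ must be generated. I would establish it by the algebraic identity
\begin{equation*}
  \nabla_x^*\,\Omega^{-3}\,\nabla_x
  = \Omega^{-3}\,\nabla_x^*\,\nabla_x + [\nabla_x^*,\Omega^{-3}]\,\nabla_x
  = \Omega^{-2} - \Omega^{-3} + [\nabla_x^*,\Omega^{-3}]\,\nabla_x\,,
\end{equation*}
using $\nabla_x^*\nabla_x = \Omega - 1$. Testing against $T$ produces $\|\Omega^{-1}T\|^2 - \|\Omega^{-3/2}T\|^2$ plus a commutator contribution. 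Since $T$ is mean-zero, the spectral gap $\Omega \geq 1 + c_{\text{\tiny P}}$ on $\mathrm L^2(\rho) \ominus \R$ gives $\|\Omega^{-3/2}T\|^2 \leq (1+c_{\text{\tiny P}})^{-1}\|\Omega^{-1}T\|^2$, so the leading part is bounded below by a positive multiple of $\|\Omega^{-1}T\|^2$. The commutator $[\nabla_x^*,\Omega^{-3}]\nabla_x$ is a lower-order pseudo-differential operator in the Witten calculus of Lemma~\ref{lem:pseudobnd} whose symbol involves derivatives of $\phi$; under the bounded-Hessian assumption~\eqref{hyp:PotentialRestrictions}, it can be controlled by $\|\,\Omega^{-3/2}\nabla_x T\,\|^2$ up to a small constant and absorbed into the leading term, as in the commutator arguments of~\cite{CDHMM21}. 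This yields the claimed inequality with a constructive constant and closes the cascade.
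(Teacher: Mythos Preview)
Your proof is correct and follows essentially the same route as the paper: reduce to $\psi=\varphi-P_{n-1}(\varphi)$, use the moment vanishing $\langle\nabla_x^{\otimes k}\psi\rangle=0$ for $k\le n-1$, and iterate Poincar\'e--Lions inequalities at successively more negative orders. The paper organises the $n=3$ step slightly differently (it introduces an auxiliary $\psi=\varphi-\tfrac12\langle\nabla_x^2\varphi\rangle:x\otimes x$ and checks $\psi-P_1(\psi)=\varphi-P_2(\varphi)$ in order to invoke the $n=2$ case directly), but this is only a cosmetic repackaging of your chain.

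The one difference worth noting is the ``$(-3/2)^{\text{th}}$-order'' inequality $\|\Omega^{-1}(f-\langle f\rangle)\|\lesssim\|\Omega^{-3/2}\nabla_x f\|$. The paper simply asserts this as a ``$-2$ order'' Poincar\'e--Lions inequality proved \emph{by the same argument} as~\eqref{eq:poincareH-1} in~\cite{CDHMM21}, without spelling it out. You go further and sketch a commutator proof. Your identity $\nabla_x^*\Omega^{-3}\nabla_x=\Omega^{-2}-\Omega^{-3}+[\nabla_x^*,\Omega^{-3}]\nabla_x$ and the spectral-gap lower bound are fine; the only soft spot is the claim that the commutator contribution can be ``absorbed into the leading term up to a small constant''. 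That absorption is not automatic: the constant in front of the commutator depends on $\|\nabla_x^2\phi\|_{L^\infty}$ and is not intrinsically small, so one needs an explicit order count (the commutator is genuinely one order lower in the Witten calculus) together with an interpolation/Young step to close. This is exactly what the argument in~\cite{CDHMM21} does for~\eqref{eq:poincareH-1}, so your appeal to that reference is legitimate, but as written your sketch is at the same level of detail as the paper's citation rather than a self-contained proof.
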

\begin{proof}[Proof of Lemma~\ref{lem:poincaretensoriel}]
  For $n= 1$ this is exactly the Poincar\'e-Lions Theorem as
  stated in~\cite[Proposition 5]{CDHMM21} and recalled
  in~\eqref{eq:poincareL2}. Let us prove the result for
  $n=2$. Let $\varphi$ be smooth and with compact support. We
  have
  \[
    \langle \varphi-P_1(\varphi) \rangle = \langle \varphi
    \rangle - \langle \varphi \rangle - \langle \nabla_x \varphi
    \rangle \cdot \langle x \rangle = 0
  \]
  because $\langle x \rangle = 0$. We therefore apply the
  Poincar\'e-Lions inequality (\emph{i.e.}, the case $n=1$) to
  $\varphi-P_1(\varphi)$, which gives
  \[
    \left\| \varphi-P_1(\varphi) \right\|^2 \leq c_{\text{\tiny
        P},1}^{-1} \,\big\|\,\Omega^{-\frac12}\,\nabla_x
    \left(\varphi -P_1(\varphi) \right)\!\big\|^2 = c_{\text{\tiny
        P},1}^{-1} \,\big\|\,\Omega^{-\frac12}\,\big(\nabla_x \varphi
    - \langle \nabla_x \varphi \rangle \big)\big\|^2.
  \]
  We then apply the ``$-1$-order'' Poincar\'e-Lions inequality
  in~\cite[Lemma 10]{CDHMM21} recalled in~\eqref{eq:poincareH-1}
  to $\nabla_x \varphi$ to get, for some $C_{\text{\tiny LPL}}>0$
  depending only on $\phi$,
  \[
    \,\big\|\,\Omega^{-\frac12}\,\big(\nabla_x\varphi - \langle
    \nabla_x \varphi \rangle \big) \big\|^2 \leq C_{\text{\tiny
        LPL}} \,\big\|\,\Omega^{-1}\,\nabla_x^2\varphi\,\big\|^2.
  \]
  This proves the case $n=2$ with
  $c_{\text{\tiny P},2} = {c_{\text{\tiny P},1}}/{C_{\text{\tiny
        LPL}}}$.

  In the case $n=3$, we define
  $\psi := \varphi - \frac12\,\langle \nabla^2 \phi \rangle :
  x\otimes x $ and we compute
  \begin{align*}
    \psi - P_1(\psi)
    & = \psi - \langle \psi \rangle - \langle \nabla_x \psi
      \rangle \cdot x \\
    & = \varphi - \tfrac12\,\langle \nabla^2 \varphi \rangle:
      x\otimes x - \langle \varphi \rangle \\
    &\qquad + \tfrac12\,\langle \nabla^2 \varphi \rangle :
      \langle x\otimes x \rangle - \langle \nabla_x \varphi
      \rangle \cdot x + \langle \nabla_{\!x}^2 \varphi \rangle :
      \langle x \rangle \otimes x \\
    & = \varphi - P_2(\varphi)
  \end{align*}
  since $\langle x \rangle = 0$. We apply the inequality for
  $n=2$ and obtain
  \begin{multline}
    \label{eq:P23}
    \left\| \varphi - P_2(\varphi) \right\|^2 = \left\| \psi -
      P_1(\psi) \right\|^2 \\
    \leq c_{\text{\tiny P},2}^{-1}
    \,\big\|\,\Omega^{-1}\nabla_{\!x}^2 \psi \,\big\|^2 =
    c_{\text{\tiny P},2}^{-1} \,\big\|\,\Omega^{-1}(
    \nabla_{\!x}^2 \varphi - \langle \nabla_{\!x}^2 \varphi
    \rangle) \,\big\|^2\,.
  \end{multline}
  Arguing as for the proof of the ``$-1$ order'' Poincar\'e-Lions
  inequality in~\cite[Lemma 10]{CDHMM21}, we prove the ``$-2$
  order'' Poincar\'e-Lions inequality with constant
  $C_{\text{\tiny LPL}}>0$:
  \[
    \,\big\|\,\Omega^{-1}(f - \langle f \rangle) \,\big\|^2 \leq
    C_{\text{\tiny LPL}} \,\big\|\,\Omega^{-\frac32}\,\nabla_x f
    \,\big\|^2
  \]
  for any $f \in C^\infty_c$. Applying this estimate
  in~\eqref{eq:P23} to $\nabla_{\!x}^2 \varphi$ gives
  \[
    \left\| \varphi - P_2(\varphi) \right\|^2 \leq c_{\text{\tiny
        P},2}^{-1}\,C_{\text{\tiny LPL}}
    \,\big\|\,\Omega^{-\frac32}\,\nabla_x^3 \varphi \,\big\|^2
  \]
  and concludes for $n=3$ with
  $c_{\text{\tiny P},3} = {c_{\text{\tiny P},2}}/{C_{\text{\tiny
        LPL}}}$, and completes the proof.
\end{proof}

\section{Extension to weakly coercive collision operators}
\label{app:extension}
\setcounter{equation}{0}

Our method covers the
case of collision operators $\sC$ that do not possess a spectral
gap (assumption~\eqref{eq:hyp-sg-C} in Section~\ref{sec:intro}) but only
satisfy a weaker coercivity property
(see~\eqref{eq:hyp-weakcoercivity-C} below). In this appendix, we state some results and changes to be done in the proofs.

\subsection{Results on decay rates}

We assume that $\sC$ satisfies, for some $\alpha>0$, the \emph{weak
  coercivity property}
\begin{align}
  \label{eq:hyp-weakcoercivity-C}
  \tag{H1'}
  - \int_{\R^d} \big( \sC f(v) \big)\,f(v)\,\mu(v)^{-1} \dd v \ge
  \mathrm c_{\sC}\,\| f - \Pi f \|_{\mathrm L^2(\wgt^{-\alpha} \mu^{-1})}^2
\end{align}
for some constant $\mathrm c_{\sC}>0$, and for all $f$ in the
domain of $\sC$, where $\Pi$ is the
$\mathrm L^2(\mu^{-1})$-orthogonal projection onto
$\mathrm{Ker}\,\sC$. Here $\wgt$ denotes the weight $\sqrt{1+|v|^2}$. Moreover, we suppose that for any
polynomial function $p(v) : \R^d \to \R$ of degree at most $4$,
the function~$p\,\mu$ is in the domain of $\sC$ and
\begin{align}
  \label{eq:lbound-weakcoercivity}
  \tag{H2'}
  C(p):=\| \sC (p\,\mu)\|_{\mathrm L^2(\wgt^{\alpha}  \mu^{-1})}<\infty\,.
\end{align}
The analog of our main result in Theorem~\ref{theo:main} then becomes:
\begin{theorem}\label{theo:weakcoercivity}
  Assume that the potential $\phi$ and the collision operator
  $\sC$ satisfy 
  assumptions~\eqref{eq:kersC}--\eqref{eq:hyp-weakcoercivity-C}--\eqref{eq:lbound-weakcoercivity}--\eqref{hyp:intnorm}--\eqref{hyp:regularity}--\eqref{eq:poincarenormal}--\eqref{eq:momentspace}--\eqref{eq:phiid}--\eqref{hyp:semigroup}. Then
  \begin{enumerate}
    
  \item[(1)] All special macroscopic modes
    of~\eqref{eq:mainMicroMacro} are given
    by~\eqref{eq:solMacroF}, \emph{i.e.}, are linear combinations
    of the Maxwellian, the energy mode, rotation modes compatible
    with $\phi$, and harmonic directional or pulsating modes if
    allowed by $\phi$.

  \item[(2)] There exists a norm
    $\vertiii{\cdot}_{\mathrm L^2(\cM^{-1})}$ on
    $\mathrm L^2(\cM^{-1})$, which is equivalent to
    $\|\cdot\|_{\mathrm L^2(\cM^{-1})}$ (with quantitative
    comparison constants), and some explicit $\lambda>0$ such
    that, for any solution
    $f\in\cC\big(\R^+; \mathrm L^2(\cM^{-1})\big)$
    to~\eqref{eq:main} with initial datum
    $f_0\in \mathrm L^2(\cM^{-1})$, there exists a unique special
    macroscopic mode $F$ (determined by $f_0$) such that
    \begin{equation}
      \label{eq:degen-decay}
      \forall\,t\ge0\,,\quad
      \frac12\,\frac{\mathrm{d}}{\mathrm{d}t} \vertiii{f(t) - F(t)
      }_{\mathrm  L^2(\cM^{-1})}^2 \le -\,\lambda\,\left\|
        f(t)-F(t)\right\|_{\mathrm L^2( \wgt^{-\alpha} \cM^{-1})}^2\,.
    \end{equation}
  \end{enumerate}
\end{theorem}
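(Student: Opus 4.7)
The plan is to follow the structure of Sections~\ref{sec:minimizers} and~\ref{sec:micmac} with targeted modifications reflecting the degeneracy of~\eqref{eq:hyp-weakcoercivity-C}. Part~(1) goes through essentially unchanged: any special macroscopic mode $F$ satisfies $\sC F = 0$ and therefore lies pointwise in $\mathrm{Ker}\,\sC$ regardless of whether one imposes~\eqref{eq:hyp-sg-C} or~\eqref{eq:hyp-weakcoercivity-C}. The factorization and ODE analysis of Section~\ref{sec:minimizers} uses only the kernel structure~\eqref{eq:kersC}, the normalizations~\eqref{hyp:intnorm}--\eqref{eq:phiid}, and the moment bound~\eqref{eq:momentspace}, none of which is affected by the degeneracy.

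For Part~(2), I would use the Lyapunov functional $\cF_2[h]$ of~\eqref{eq:F2h} as the equivalent norm, defining $\vertiii{f-F}^2_{\mathrm L^2(\cM^{-1})} := \cF_2[(f-F)/\cM]$. The equivalence $\cF_2[h] \simeq \|h\|^2$ of Lemma~\ref{lem:equivD2h} never invokes the spectral gap and is preserved. Three changes then propagate through the cascade. First, Lemma~\ref{lem1:hperpLyap} becomes
\[
  \frac{\mathrm d}{\mathrm dt} \|h\|^2 \le -\,2\,\mathrm c_{\sC}\, \|h^\bot\|_{\mathrm L^2(\wgt^{-\alpha}\cM)}^2
\]
by~\eqref{eq:hyp-weakcoercivity-C}. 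Second, every cross-term involving moments of $\cC h^\bot$ or $\cL h^\bot$ (for instance $\Theta[\cL h^\bot]$, $E[\cL h^\bot]$ in Lemmata~\ref{lem2:esLyap}--\ref{lem3:msLyap}, or $\langle (E[h^\bot]\,\nabla_x^2 \phi)^{\text{\tiny skew}}\rangle$ in Lemma~\ref{lem:decayAhypo}) is rebounded through self-adjointness of $\sC$ in $\mathrm L^2(\mu^{-1})$, Cauchy--Schwarz with the split weight $\wgt^{-\alpha/2}\cdot\wgt^{\alpha/2}$, and~\eqref{eq:lbound-weakcoercivity}: for any polynomial $p$ of degree at most $4$,
\[
  \left| \int_{\R^d} p(v)\,\cC h^\bot(v)\,\mu(v) \dd v \right| \le C(p)\, \|h^\bot\|_{\mathrm L^2(\wgt^{-\alpha}\mu)}\,.
\]
Transport contributions $p\,\cT h^\bot$ are handled by integration by parts, the resulting polynomial weights being absorbed by~\eqref{eq:momentspace}. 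Third, on the macroscopic subspace spanned by $\{1, v_1,\ldots,v_d,|v|^2\}$, the norms $\mathrm L^2(\wgt^{-\alpha}\cM)$ and $\mathrm L^2(\cM)$ are equivalent (with explicit constants coming from finite Gaussian integrals of $v$-polynomials against $\wgt^{-\alpha}$), so the macroscopic dissipation $\cD_2[h]$ still controls $\|\pih\|^2_{\mathrm L^2(\wgt^{-\alpha}\cM)}$.

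Combining these, the small-parameter juggling of the proof of Proposition~\ref{prop:hypomm} now absorbs the error term $\eta\,\|h\|^2$ into the new dissipation, yielding
\[
  \frac{\mathrm d}{\mathrm dt} \cF_2[h] \le -\,\lambda\, \|h\|^2_{\mathrm L^2(\wgt^{-\alpha}\cM)}
\]
for an explicit $\lambda>0$, which is precisely~\eqref{eq:degen-decay} upon reverting $h = (f-F)/\cM$.

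The main obstacle is the careful tracking of the second modification through the macroscopic cascade of Section~\ref{ssec:hypomacro}, where nested Witten-Laplacian smoothings $\Omega^{-k/2}$ interact with the transport contributions to $\cL h^\bot$ and with the moment identities driving the evolution of $A$, $b$, $c$. A routine but delicate accounting shows that each such term gains a factor $\wgt^{\alpha/2}$ on the test-polynomial side, which remains integrable against the Gaussian; the bound~\eqref{eq:lbound-weakcoercivity} together with the moment bound~\eqref{eq:momentspace} on $\phi$ then closes the loop uniformly in the small parameters $\varepsilon_i$.
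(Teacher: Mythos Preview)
Your proposal is correct and follows essentially the same route as the paper: Part~(1) is unchanged since it only uses the kernel structure, and for Part~(2) one reuses the Lyapunov functional $\cF_2$ as the equivalent norm, replaces the microscopic coercivity by~\eqref{eq:hyp-weakcoercivity-C}, rebounds all $\cC h^\bot$--moments via the split-weight Cauchy--Schwarz and~\eqref{eq:lbound-weakcoercivity}, and uses the finite-dimensional equivalence of $\|\cdot\|$ and $\|\cdot\|_{\mathrm L^2(\wgt^{-\alpha}\cM)}$ on the macroscopic part to conclude that the modified dissipation $\overline{\cD}_2[h]$ is equivalent to $\|h\|^2_{\mathrm L^2(\wgt^{-\alpha}\cM)}$. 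One small imprecision: the residual error term after the cascade is not $\eta\,\|h\|^2$ but $\eta\,\|h\|^2_{\mathrm L^2(\wgt^{-\alpha}\cM)}$, since your second and third modifications replace \emph{both} factors in every cross-term $\|h^\bot\|\,\|h\|$ by their weighted counterparts (the $\|h\|$ factor always arises from macroscopic quantities or polynomial moments of $h^\bot$); only then can it be absorbed into the dissipation, as $\|h\|^2$ itself is not controlled by $\|h\|^2_{\mathrm L^2(\wgt^{-\alpha}\cM)}$.
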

The differential inequality~\eqref{eq:degen-decay} alone is not
sufficient to prove a decay estimate when~$f_0$ is merely in
$\mathrm L^2(\cM^{-1})$. In order to get such an estimate, one needs to assume more
decay at infinity for $f_0$ and the differential
inequality~\eqref{eq:degen-decay} has to be replaced by an inequality in spaces with
stronger weights. For instance, assume that, for some $\beta>0$,
\begin{equation}
  \label{hyp:semigroup-weakcoercivity}
  \tag{H9'}
  \mbox{\emph{$t\mapsto e^{t \sL}$ is a strongly continuous uniformly bounded
      semigroup on $\mathrm L^2(\cM^{-1-\beta})$}}\,,
\end{equation}
with $\sL$ as in~\eqref{eq:main}. In the spirit of~\cite{Guo-Strain-2006}, we obtain the following decay rate.
\begin{corollary}\label{cor:weakcoercivity}
  Assume that the potential $\phi$ and the collision operator
  $\sC$ satisfy 
  assumptions~\eqref{eq:kersC}--\eqref{eq:hyp-weakcoercivity-C}--\eqref{eq:lbound-weakcoercivity}--\eqref{hyp:intnorm}--\eqref{hyp:regularity}--\eqref{eq:poincarenormal}--\eqref{eq:momentspace}--\eqref{eq:phiid}--\eqref{hyp:semigroup}--\eqref{hyp:semigroup-weakcoercivity}. Then there are explicit constants $C_0>0$ and $\Lambda>0$ such that, for any solution
  $f\in\cC\big(\R^+; \mathrm L^2(\cM^{-1})\big)$
  to~\eqref{eq:main} with initial datum
  $f_0\in \mathrm L^2(\cM^{-1-\beta})$, there exists a unique
  special macroscopic mode $F$ (determined by $f_0$) such that
  \begin{equation*}
    \forall\,t\ge0\,,\quad\left\| f(t) - F(t) \right\|_{\mathrm
      L^2(\cM^{-1})} \le C_0\,\exp\left(-\,\Lambda \,
      t^{\frac{2}{2+\alpha}}\right) \left\|
      f_0-F(0)\right\|_{\mathrm L^2(  \cM^{-1-\beta})}\,.
  \end{equation*}
\end{corollary}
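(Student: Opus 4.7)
The plan is to combine the energy-dissipation inequality~\eqref{eq:degen-decay} with a high-low velocity splitting, following the well-known scheme of~\cite{Guo-Strain-2006}. The key idea is that while the coercivity provided by~\eqref{eq:degen-decay} is only in the weaker norm $\|\cdot\|_{\mathrm L^2(\wgt^{-\alpha}\cM^{-1})}$, one can recover a closed differential inequality for $\|\cdot\|_{\mathrm L^2(\cM^{-1})}$ at the cost of a slowly decaying source term, controlled by the stronger norm $\|\cdot\|_{\mathrm L^2(\cM^{-1-\beta})}$, which is in turn propagated in time by~\eqref{hyp:semigroup-weakcoercivity}.

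Setting $g(t) := f(t) - F(t)$, the first observation is that $g$ is itself a semigroup orbit: since $F$ satisfies $\sC F = 0$ and $\partial_t F = \sT F$, it also solves $\partial_t F = \sL F$, so $g(t) = e^{t\sL}(f_0 - F(0))$. Hypothesis~\eqref{hyp:semigroup-weakcoercivity} then furnishes the uniform bound $M := \sup_{t\ge 0}\|g(t)\|_{\mathrm L^2(\cM^{-1-\beta})} \lesssim \|f_0-F(0)\|_{\mathrm L^2(\cM^{-1-\beta})}$. Next, for each radius $R > 0$, I would split the integral defining $\|g\|_{\mathrm L^2(\cM^{-1})}^2$ into $\{|v|\le R\}$ and $\{|v|>R\}$: on the first region the trivial bound $\wgt^{\alpha}\le (1+R^2)^{\alpha/2}$ yields the control $(1+R^2)^{\alpha/2}\|g\|_{\mathrm L^2(\wgt^{-\alpha}\cM^{-1})}^2$; on the second region, using that $\inf\phi > -\infty$ (a consequence of $\int e^{-\phi}\dd x = 1$) and $\mu^\beta(v) \lesssim e^{-\beta|v|^2/2}$, the factor $\sup_{|v|>R}\cM^{\beta} \lesssim e^{-\beta R^2/2}$ emerges and produces $\lesssim e^{-\beta R^2/2}\,M^2$. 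Combining these with~\eqref{eq:degen-decay} and the equivalence $\vertiii{g}^2 \simeq \|g\|^2_{\mathrm L^2(\cM^{-1})}$ gives, for every $R > 0$ and all $t \ge 0$, an affine differential inequality
\begin{equation*}
y'(t) \le -\,\frac{A}{(1+R^2)^{\alpha/2}}\,y(t) + \frac{B\,M^2\,e^{-\beta R^2/2}}{(1+R^2)^{\alpha/2}}, \qquad y(t):=\vertiii{g(t)}^2,
\end{equation*}
with explicit constants $A,B > 0$.

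Fixing $R$ and applying Grönwall yields $y(T) \le y(0)\,e^{-A T/(1+R^2)^{\alpha/2}} + C_\sharp\,M^2\,e^{-\beta R^2/2}$ for every $T > 0$. Since $R$ is a free parameter, one is then free to choose, for each target $T$, the value $R^2 = \kappa\,T^{2/(2+\alpha)}$, with $\kappa$ tuned so that both exponents coincide; both terms then decay as $\exp\!\bigl(-\Lambda\,T^{2/(2+\alpha)}\bigr)$ for an explicit $\Lambda = \Lambda(\lambda,\alpha,\beta)$. Bounding $y(0) \lesssim \|f_0 - F(0)\|^2_{\mathrm L^2(\cM^{-1-\beta})}$ via the trivial inequality $\cM^{-1}\le C\,\cM^{-1-\beta}$ then delivers the claimed stretched-exponential rate.

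The main obstacle I anticipate is the bookkeeping of the high-velocity contribution: extracting the factor $e^{-\beta R^2/2}$ from the stronger norm $\mathrm L^2(\cM^{-1-\beta})$ requires a careful pointwise estimate of $\cM^\beta$ combining the lower bound on $\phi$ with the exponential decay of the Maxwellian in $v$. Once this is in hand, the post-hoc optimization $R = R(T)$, performed after (rather than before) the Grönwall integration of the autonomous-in-$t$ inequality, is routine.
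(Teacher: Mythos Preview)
Your proposal is correct and follows essentially the same route as the paper: an interpolation inequality obtained by splitting at $|v|=R$, combined with the semigroup bound~\eqref{hyp:semigroup-weakcoercivity} and the differential inequality~\eqref{eq:degen-decay}, then Gr\"onwall at fixed $R$ followed by the choice $1+R^2 \sim t^{2/(2+\alpha)}$. The paper's version writes the interpolation directly as $\|g\|^2 \le (1+R^2)^{\alpha/2}\|g\|_\star^2 + \bar\mu^\beta(R)\|g\|_{1-\beta}^2$ with $\bar\mu(R)=(2\pi)^{-d/2}\|e^{-\phi}\|_{\mathrm L^\infty}e^{-R^2/2}$, which is exactly your high--low splitting; your anticipated obstacle is handled by the same pointwise bound on $\cM^\beta$ (note that $\inf\phi>-\infty$ follows from $\phi\in C^2$ together with its growth at infinity, not from integrability of $e^{-\phi}$ alone).
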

As in the proof of Theorem~\ref{theo:main}, it is convenient to work with the function $h$ 
defined by~\eqref{eq:defh}. We use various norms: 
$ \| \cdot \|$ defined by~\eqref{nrm} and also
\begin{align*}
  \| h \|_{\star}^2
   := \iint_{\R^d\times\R^d}|h|^2\,\wgt^{-\alpha}
    \, \cM\dd x\dd v \quad \text{ and } \quad 
  \| h \|_{1-\beta}^2 := \iint_{\R^d\times\R^d}|h|^2\,  \cM^{1-\beta}
  \dd x\dd v \, . 
\end{align*}
Observe that $\| \cdot \| = \| \cdot \|_{1}\le \| \cdot \|_{1-\beta} $,
$\| \cdot \|_{\star}=\|\cdot\|_{\mathrm L^2( \wgt^{-\alpha} \cM^{-1})}^2 \le \| \cdot \|$ and
\begin{equation*}
  \| h \|_{\star}^2 \approx \| h^\perp \|_{\star}^2 + \| r \|^2 +
  \| m \|^2 + \| e \|^2 .
\end{equation*}

\subsection{Proof of Theorem~\texorpdfstring{\ref{theo:weakcoercivity}}{B.1}}

Proposition~\ref{prop:min} applies: the proof of Part~(1) is the same as Part~(1) of Theorem~\ref{theo:main}. To prove
Part~(2), we argue as in the proof of Proposition~\ref{prop:hypomm}, using the new assumptions. Thanks to~\eqref{eq:hyp-weakcoercivity-C} and
using that $\langle \sT h , h \rangle=0$, the function $h$ defined by~\eqref{eq:defh} satisfies
\begin{equation*}
  \frac{1}{2} \frac{\mathrm{d}}{\mathrm{d}t} \| h \|^2 = \langle
  \cC h , h \rangle + \langle \sT h , h \rangle \le -\,\mathrm
  c_{\sC}\,\| h^\perp \|_{\star}^2\,.
\end{equation*}
This replaces the estimate of Lemma~\ref{lem1:hperpLyap}.  We can
then use~\eqref{eq:lbound-weakcoercivity} and the above estimate
to prove counterparts of estimates between
Lemma~\ref{lem2:esLyap} and Lemma~\ref{lem:boundchypo} with
$\| h^\perp \|$ and $\| h \|$ respectively replaced by
$\| h^\perp \|_{\star}$ and $\| h \|_{\star}$. Then
$\mathcal{F}_{2}$ defined in~\eqref{eq:F2h} using~\eqref{eq:F1h}, with an appropriate choice of parameters
$0 \ll \varepsilon_6 \ll \varepsilon_5 \ll \varepsilon_4 \ll
\varepsilon_3 \ll \varepsilon_2 \ll \varepsilon_1 \ll 1$, is
equivalent to $\| \cdot \|^2$ and satisfies
\begin{equation*}
  \begin{aligned}
    \label{eq:estimate_weakcoercivity}
    \frac{\mathrm{d}}{\mathrm{d}t} \mathcal{F}_{2}[h] 
    & \le -\,\kappa\,\overline{\mathcal{D}}_{2}[h]
  \end{aligned}
\end{equation*}
for some constant $\kappa>0$, where
$\overline{\mathcal{D}}_{2}[h]$ is defined as
$\mathcal{D}_{2}[h]$ in~\eqref{eq:D2h} with
$\| h^\perp \|^2$ replaced by $\| h^\perp \|^2_{\star}$ in~\eqref{eq:D1h}. This
concludes the proof with
\[
\vertiii{f-F}_{\mathrm L^2(\cM^{-1})}^2:=\mathcal{F}_{2}[h]
\]
since $\overline{\mathcal{D}}_{2}$ is
equivalent to $\| \cdot \|_{\star}^2$.
\hfill\ \qed

\subsection{Proof of Corollary~\ref{cor:weakcoercivity}}

Let $h_0 \in \mathrm L^2(\cM^{1-\beta})$ and consider the solution
$t\mapsto h(t)$ to the equation $\partial_t h = \mathcal{L} h$
with initial datum $h(0)=h_0$. Thanks to
\eqref{hyp:semigroup-weakcoercivity}, there is some $C>0$ such that
\begin{equation*}
\forall \, t \ge 0\,, \quad \| h(t) \|_{1-\beta} \le C\,\| h_0 \|_{1-\beta}\,.
\end{equation*}
We now observe that, for any $R>0$, the following interpolation
inequality holds
\begin{equation*}
  \| g \|^2
  \le \big(1+R^2\big)^{\alpha/2}\,\| g \|_{\star}^2
  + \bar\mu^\beta(R)\,\| g \|_{1-\beta}^2
\end{equation*}
with $\bar\mu(R):=(2\pi)^{-d/2}\,\nrm{e^{-\phi}}{\mathrm L^\infty(\R^d)}\,e^{-R^2/2}$.
Therefore, thanks to~\eqref{eq:degen-decay} and using
fact that $h\mapsto\vertiii{f-F}_{\mathrm L^2(\cM^{-1})}^2=\mathcal{F}_{2}[h]$ is equivalent to $h\mapsto\| h \|^2$, one deduces that
\begin{equation*}
  \begin{aligned}
    \frac{\mathrm{d}}{\mathrm{d}t} \cF_2[h(t)]
    & \le -\,\Lambda\,\big(1+R^2\big)^{-\alpha/2}\,\cF_2[h(t)] +
    \lambda\,\big(1+R^2\big)^{-\alpha/2}\, \bar\mu^{\beta}(R)\, \| h(t)
    \|_{1-\beta}^2 \\
    & \le -\,\Lambda\,\big(1+R^2\big)^{-\alpha/2}\,\cF_2[h(t)] + \lambda\,C
   \,\big(1+R^2\big)^{-\alpha/2}\, \bar\mu^{\beta}(R)\,\| h_0 \|_{1-\beta}^2
  \end{aligned}
\end{equation*}
for any $R>0$ and all $t \ge 0$, and
for some positive constant $\Lambda$. This yields, for all
$t \ge 0$,
\[
\cF_2[h(t)] \le \exp\left( -\,\Lambda\,\big(1+R^2\big)^{-\alpha/2}\, t
    \right) \cF_2[h_0] + \frac\lambda\Lambda\,C\,\bar\mu^{\beta}(R)\,\| h_0 \|_{1-\beta}^2\,.
    \]
Taking $R>0$ such that $1+R^2 = t^{2/(2+\alpha)}$, we obtain  
\begin{equation*}
    \forall\,t\ge0\,,\quad\cF_2[h(t)] \lesssim \exp\left( -\,\Lambda \,
      t^{\frac{2}{2+\alpha}}\right) \| h_0 \|_{1-\beta}^2\,,
\end{equation*}
which completes the proof.
\hfill\ \qed

\subsection{Comments and open questions}

In order to apply Theorem~\ref{theo:weakcoercivity} to the linearized Boltzmann and Landau operators with very soft potentials, one has to establish~\eqref{hyp:semigroup-weakcoercivity}, which is so far an open question. Instead of proving \emph{stretched exponential} decay rates as in Corollary~\ref{cor:weakcoercivity}, \emph{polynomial} decay rates could also be achieved with~\eqref{hyp:semigroup-weakcoercivity} replaced, for some $k>0$ large enough, by
\begin{equation}
  \label{hyp:semigroup-weakcoercivity2}
  \tag{H9''}
  \mbox{\emph{$t\mapsto e^{t \sL}$ is a strongly continuous uniformly bounded
      semigroup on $\mathrm L^2(\mathscr H^k\cM^{-1})$}}
\end{equation}
where $\mathscr H(x,v) = \phi(x) + \tfrac12\,|v|^2-\min_{\R^d}\phi$. Such a condition is also open in the case of the linearized Boltzmann and Landau operators with very soft potentials, but might be easier to prove in the spirit of~\cite[Appendix~A]{MR3488535}.

\section{Examples and remarks}
\label{app:exples}
\setcounter{equation}{0}

\subsection{Examples of collision operators}
\label{ssec:ecol}

We list some examples of linear collision operators $\sC$
satisfying the hypotheses of Theorem~\ref{theo:main}, in
particular the spectral gap property~\eqref{eq:hyp-sg-C} and the
bounded moment property~\eqref{eq:lbound}.

\begin{example}[The full linear Boltzmann operator]
  Consider
  \[
    \sC f := -\left(f-r_f \,\cM- m_f\cdot v \,\cM - e_f\,\fE\,\cM
    \right)
  \]
  where $r_f$, $m_f$ and $e_f$ are defined by
  \begin{align*}
    & r_f(t,x) := \left( \int_{\R^d} f(t,x,v) \dd v \right)
      e^{\phi(x)}\,,
    & \text{{(local) density}}\,, \\
    & m_f(t,x) := \left( \int_{\R^d} v\,f(t,x,v) \dd v \right)
      e^{\phi(x)}\,,
    & \text{{(local) momentum}}\,, \\
    & e_f(t,x) := \left( \int_{\R^d} \fE(v)\,f(t,x,v) \dd v
      \right) e^{\phi(x)}\,,
    & \text{{(local) thermal energy}}\,.
  \end{align*}
  By construction, $\sC$ satisfies the spectral gap
  condition~\eqref{eq:hyp-sg-C} and since it is bounded, it
  satisfies also the bounded moment property~\eqref{eq:lbound}.
\end{example}

\begin{example}[The linearized Boltzmann collision
  operator]\label{Ex:Boltzmann}
  Consider
  \[
    \sC f := \int_{\R^d} \int_{\mathbb S^{d-1}} \big( f'\,\mu'_*
    + f'_*\,\mu' - f\,\mu_* - f_*\,\mu \big)\,|v-v_*|^\gamma\,
    b(\theta) \, \dd \sigma \dd v_*
  \]
  with the notation $f'=f(v')$, $f_* = f(v_*)$ and $f'_*=f(v'_*)$
  and
  \begin{equation}
    \label{eq:coll-rule}
    v' := \frac{v+v_*}{2} + \sigma\,\frac{|v-v_*|}{2}\,, \quad
    v'_* := \frac{v+v_*}{2} - \sigma\,\frac{|v-v_*|}{2}\,, 
  \end{equation}
  and $\theta$ is the \emph{deviation angle} defined by
  $\cos \theta := \frac{(v-v_*)}{|v-v_*|} \cdot \sigma$, and
  where $\gamma \in (-d,+\infty)$. We assume that $b$ is
  positive, smooth away from $\theta=0$ and bounded by
  $b(\theta) \lesssim \theta^{-(d-1)-s}$ with $s \in [0,2)$. This
  framework includes the short-range so-called \emph{hard
    spheres} interactions, as well as the long-range so-called
  \emph{hard potentials} and \emph{moderately soft potentials}
  interactions. This operator satisfies the spectral gap
  property~\eqref{eq:hyp-sg-C} when $\gamma+s \ge 0$ but only
  satisfies the weaker coercivity
  property~\eqref{eq:hyp-weakcoercivity-C} with $\alpha=\gamma+s$
  when $\gamma+s<0$ (see~\cite{BM05,MR2254617,MR2322149} for
  quantitative estimates). It is in general not bounded on
  $\mathrm L^2(\mu^{-1})$. Polynomials multiplied by $\mu$ are
  however in the domain of $\sC$ and it satisfies the boundedness
  property~\eqref{eq:lbound}.
  \end{example}

\begin{example}[The linearized Landau collision operator]
  \label{Ex:Landau}
  With same convention as in Example~\ref{Ex:Boltzmann}, consider
  $\sC f=\mu\,\cC h$ with $h=f/\mu$ and
  \[
    \cC h := \nabla_v \cdot \left( \int_{\R^d} \mathsf
      B_\gamma(v,v_*) \,\big( \nabla h- \nabla
      h_*\big)\,\mu\,\mu_*\,\dd v_* \right)
  \]
  where the cross-section is defined by
  \[
    \mathsf B_\gamma(v,v_*):=|v-v_*|^{\gamma+2}\left(\mbox{\em
        Id} - \frac{v-v_*}{|v-v_*|} \otimes \frac{v-v_*}{|v-v_*|}
    \right)
  \]
  with parameter $\gamma \in [-d,1]$. This operator is non-local,
  of order $2$ in velocity (of \emph{diffusive} type) and
  therefore not bounded. It satisfies the spectral gap
  condition~\eqref{eq:hyp-sg-C} when $\gamma \in [-2,1]$ but only
  the weaker coercivity property~\eqref{eq:hyp-weakcoercivity-C}
  with $\alpha=\gamma+2$ when $\gamma \in [-d,-2)$ (see,
  \emph{e.g.},~\cite{BM05,MR2322149} for constructive
  estimates). Again all polynomials in velocity multiplied by
  $\mu$ are in its domain and it satisfies the boundedness
  property~\eqref{eq:lbound}. Note that the main physical case,
  the linearisation of the so-called \emph{Landau-Coulomb
    collision operator} (describing statistically collisions for
  a gas of electrons with Coulomb interactions) corresponds to
  $\gamma=-3$ in dimension $d=3$ and is covered by our (extended)
  assumption~\eqref{eq:hyp-weakcoercivity-C}.
\end{example}
\begin{remark}
  Examples~\ref{Ex:Boltzmann} and~\ref{Ex:Landau} are obtained
  after a linearization of the bilinear form associated with the
  original nonlinear collision kernel around the Gaussian $\mu$
  and not around the Maxwellian $\cM$: when linearizing the full
  nonlinear inhomogeneous kinetic models around a
  Maxwellian~$\cM$, one gets an additional term $\rho(x)$ in
  front of the collision operator that goes to zero at
  infinity. We have not considered this degeneracy in the present
  paper: it is likely to create significant difficulties since
  there is then no uniform-in-$x$ spectral gap for $\rho\,\sC$.
\end{remark}

\subsection{Examples of potentials} \label{ssec:expot}

Let us discuss and illustrate the
hypotheses~\eqref{eq:poincarenormal} and~\eqref{eq:momentspace}
on the potential $\phi$. The bounded moment
hypothesis~\eqref{eq:momentspace} is not restrictive. Functions
like $\phi(x) = \frac{d+5}2 \ln(1+|x|^2)-Z_\phi$ which are very
slowly increasing at infinity satisfy this hypothesis, as well as
fast increasing ones like $\phi(x) = e^{|x|^4}-Z_\phi$ (here
$Z_\phi$ is the constant of normalization of $e^{-\phi}$ in
$\mathrm L^1$). Regarding the Poincar\'e
inequality~\eqref{eq:poincarenormal}, many works have been
devoted to the study of sufficient conditions in order to
guarantee the existence of a spectral gap. Here are some
examples.

\begin{example}
  The harmonic potential
  $\phi(x) = \frac12\,|x|^2 + \frac d2 \log(2\,\pi)$ satisfies
  the Poincar\'e inequality with constant
  $c_{\text{\tiny P}} = 1$. The inequality is equivalent to the
  spectral gap inequality for the operator $\Omega$ defined
  in~\eqref{omegadef}. In the flat $\mathrm L^2$ space, the
  change of unknown $u = v\,e^{-\phi/2}$ shows that the
  Poincar\'e inequality is also equivalent to the spectral gap
  inequality for the quantum harmonic oscillator operator
  $-\Delta_x+\frac14\,|x|^2-\frac d2$.
\end{example}

\begin{example}
  For a general $\phi$, the change of unknown
  $u = v\,e^{-\phi/2}$ yields the following Schrödinger-type
  operator
  \[
    P_\phi= -\Delta_x + \frac14\,|\nabla_x \phi|^2
    -\frac12\,\Delta_x \phi\,.
  \]
  According to the so-called \emph{Bakry-Emery theory} (see for
  instance to~\cite{MR3155209}), there is a spectral gap as soon
  as the Hessian $\nabla^2_x \phi$ is uniformly strictly positive
  at infinity. When it is uniformly strictly positive everywhere
  the following estimate is available on the spectral gap
  $c_{\text{\tiny P}}$:
  \[
    c_{\text{\tiny
        P}}\ge\frac12\,\inf_{x\in\R^d}\lambda_1(\nabla^2_x \phi)
  \]
  where $\lambda_1(\nabla^2_x \phi) >0$ is the lowest eigenvalue
  of $\nabla^2_x \phi$.
\end{example}

\begin{example}
  All potentials $\phi$ such that $P_\phi$ has compact resolvent
  satisfy the Poincar\'e
  inequality~\eqref{eq:poincarenormal}. This happens in
  particular when
  \[
    \lim_{|x| \rightarrow \infty} \left(\tfrac14\,|\nabla_x
      \phi|^2 - \tfrac12\,\Delta_x \phi\right) = +\infty\,,
  \]
  which is implied for instance by the stronger assumption
  \begin{equation}
    \label{basicexamplephi}
    \lim_{|x| \rightarrow \infty} |\nabla_x \phi| =
    +\infty\,,\quad \mbox{and} \quad \lim_{|x| \rightarrow
      \infty} \frac{\Delta_x \phi(x)}{|\nabla_x \phi(x)|^2} =0\,.
  \end{equation}
  This is a standard result on Schr\"odinger operators, see for
  instance~\cite[Theorem XIII.67 p.~249]{MR0493421}, and $0$ is
  then a simple discrete eigenvalue. The argument in the latter
  reference is not constructive, and for a simpler constructive
  argument we refer for instance to~\cite[Theorem~A.1]{Vil09} or
  the IMS truncation method in~\cite{MR708966}.
\end{example}

\begin{example}
  Here is an exotic example of potential that does not
  satisfy~\eqref{basicexamplephi} nor the Bakry-\'Emery criterion
  (uniform convexity of $\phi$) and for which the Poincar\'e
  inequality holds. Consider on $\R^2$
  \[
    \phi(x,y) = x^2 \left( 1+y^2 \right)^2- Z_\phi
  \]
  where $Z_\phi$ is the normalization constant so that
  $\rho = e^{-\phi}$ is a probability density. One can check that
  $P_\phi$ has a spectral gap, although $\phi$ is constant on the
  unbounded set $\{x=0\}$.
\end{example}

\subsection{Change of coordinates}
\label{ssec:rescale-change-phi-L}

Let us discuss the reduction to the
normalization~\eqref{eq:phiid}. Note that the formulas for
$\mathrm{Ker}\, \sC$ are invariant by orthonormal change of
coordinates in the velocity variable. By orthonormal change of
coordinates in both the velocity and space variables, we can then
reduce to the case when $\phi$ satisfies
\begin{equation}\label{HessianPhi}
  \langle \nabla_{\!x}^2 \phi \rangle = \left(
    \begin{matrix} p_1^2 & 0 & 0 & \cdots & 0\\0 & p_2^2 & 0 &
      \cdots & 0\\\vdots & &&& \vdots\\0 & 0 & 0 & \ldots & p_d^2
    \end{matrix} \right).
\end{equation}
where we suppose without loss of generality that all $p_j$'s are
positive. The analysis of the present paper can be adapted to
this case, including the main Theorem~\ref{theo:main}, with the
following changes. We define the set of adapted centred
rotational modes compatible with $\phi$ as in~\eqref{eq:Rphi}:
\begin{equation}
  \label{eq:rphip}
  \fR_{\phi} = \left\{ (x,v) \mapsto \left( A\,x\cdot v \right)
    \cM \,:\,A \in \cR_\phi\right\}\,.
\end{equation}
We then choose orthonormal coordinates
$x=(x_1,x_2,\dots,x_d)$ such that
$\partial_{x_j} \phi = p_j^2\,x_j$ for some $p_j>0$ if and only if
$j\in I_\phi:=\{d_\phi + 1 , \ldots , d\}$, and $x_j=0$ for any
$j\in I_\phi$ if $x\in E_\phi$ (the linear subspace defined
in~\eqref{eq:defephi}). We define the set of \emph{harmonic
  directional modes} by
\begin{equation}
  \label{eq:dphip}
  \fD_\phi = \mathop{\mathrm{Span}}
  \big\{f_j^{-}(t,x,v)\,,\,f_j^{+}(t,x,v)\big\}_{j\in I_\phi} \,,
\end{equation}
where
\begin{align*}
  & f_j^{-}(t,x,v):=\left(p_j\,x_j\,\cos(p_j\,t) -
    v_j\,\sin(p_j\,t)\right)\cM(x,v)\,, \\
  & f_j^{+}(t,x,v):=\left(p_j\,x_j\,\sin(p_j\,t) +
    v_j\,\cos(p_j\,t)\right)\cM(x,v)\,.
\end{align*}
If $d_\phi = 0$ and for some $p>0$, $p_j=p$ for all
$j\in\{1\,,\dots,d\}$, we define the set of \emph{harmonic
  pulsating modes} by
\[
  \fP_\phi = \mathop{\mathrm{Span}}
  \big\{f^-(t,x,v)\,,\,f^+(t,x,v)\big\}
\]
where
\begin{align*}
  & f^{-}(t,x,v) := \left( \tfrac12\left(|p\,x|^2-|v|^2\right)
    \cos(2\,p\,t) - p\,x\cdot v\, \sin (2\,p\,t) \right)
    \cM(x,v)\,, \\
  & f^{+}(t,x,v):= \left( \tfrac12 \left(|p\,x|^2-|v|^2\right)
    \sin(2\,p\,t) + p\,x\cdot v\,\cos (2\,p\,t)\right) \cM(x,v)\,.
\end{align*}
The functions in $\fR_{\phi}$, $\fD_\phi$ and $\fP_\phi$ are
\emph{special macroscopic modes} of~\eqref{eq:main}. With these
definitions, the proof of Theorem~\ref{theo:main} can be adapted
to prove a hypocoercivity result taking into account all special
macroscopic modes.

\subsection{Spectral interpretation}\label{ssec:B4}

We have focused so far on \emph{real} solutions
to~\eqref{eq:main}, which is natural since physical solutions
(densities of probability) are real valued. By considering
complex solutions, we can interpret the results in terms of the
\emph{complex} spectrum of the nonnegative operator
\[
  -\sL = v \cdot \nabla_x -\nabla_x \phi \cdot \nabla_v -\sC
\]
in $\mathrm L^2_\C(\cM^{-1})$, the complexification of
$\mathrm L^2(\cM^{-1})$. We consider $\phi$ as in~\eqref{HessianPhi}.
We can then describe precisely the spectrum of $-\sL$ and obtain
resolvent estimates in a half-plane that includes the imaginary
axis. Notice first that~$0$ is in the spectrum of $-\sL$ with
associated eigenspace
\[
  \mathop{\mathrm{Span}}_\C(\cM) \oplus \mathop{\mathrm{Span}}_\C
  (\cH\,\cM) \oplus \fR_{\phi,\C}
\]
where $\fR_{\phi,\C}$ is the set of rotation modes as defined
in~\eqref{eq:rphip} but extended to the corresponding $\C$-vector
space. This set is then of (complex) dimension
$2+ \dim(\fR_\phi)$. Depending on the harmonicity of $\phi$ we
have three cases which are summarized in Figure~\ref{fig:spect}.
\begin{figure}[h!t]
    \subfigure[No harmonic
    modes]{\includegraphics[height=6cm]{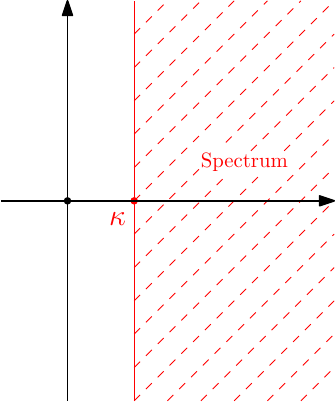}} \quad
    \subfigure[Harmonic directional
    modes]{\includegraphics[height=6cm]{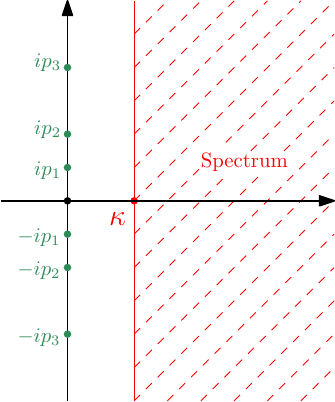}} \quad
    \subfigure[Harmonic directional and pulsating
    modes]{\includegraphics[height=6cm]{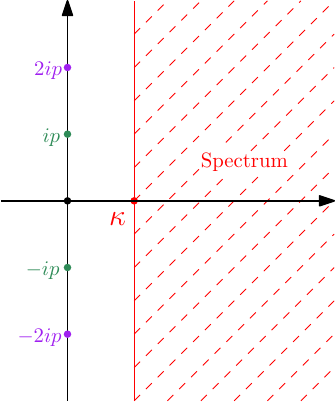}}
  \caption{Complex spectrum of $-\sL$.}
  \label{fig:spect}
\end{figure}

\noindent \textbf{(a) Case with no harmonic modes ($d_\phi=d$).}
In this case $\phi$ has no harmonic directions and there no
non-zero eigenvalue on the imaginary axis.

\noindent \textbf{(b) Case with harmonic directional modes but no
  pulsating modes ($1 \le d_\phi \le d-1$).} In this case, the
\emph{real} vector space of functions $\fD_\phi$
in~\eqref{eq:dphip} yields the complex set
\[
  \fD_{\phi,\C} = \mathop{\mathrm{Span}}_\C\,\big\{\left(
    p_j\,x_j - i\,v_j \right) e^{-i\,p_j\,t}\cM(x,v)\,,\left(
    p_j\,x_j + i\,v_j \right) e^{i\,p_j\,t}\cM(x,v)\big\}_{j\in
    I_\phi}
\]
where $I_\phi:=\left\{d_\phi+1, \dots,d\right\}$, to which we can
associate the eigenfunctions of~$(-\sL)$ corresponding to the
eigenvalues $\mp\,i\,p_j$ and given by
\[
  (x,v) \mapsto f_j^\pm(x,v) = (p_j\,x_j \pm i\,v_j) \cM(x,v)\,.
\]

\noindent \textbf{(c) Case with harmonic directional and
  pulsating modes ($d_\phi=0$).} In this last case necessarily
all $p_j$'s are equal to a common value $p>0$ and
$\phi(x)= \frac12 |p\,x|^2 + \frac d2 \log(2\,\pi) -
d\log(p)$. All possible harmonic directional modes exist, as well
as all possible infinitesimal rotational modes $\fR_{\phi,\C}$
with $\cR_\phi = \fM^{\text{\tiny skew}}_{d \times d}(\C)$. The
complexification of the set $\fP_\phi$ defined
in~\eqref{eq:dphip} is
\[
  \fP_{\phi,\C} = \mathop{\mathrm{Span}}
  \big\{ e^{2\,i\,p\,t}\, f^+(x,v)\,, e^{-\,2\,i\,p\,t}\,
  f^-(x,v) \big\}
\]
where
$f^\pm(x,v):= \left[ p\,x\cdot v \pm \frac
  i2\left(|p\,x|^2-|v|^2\right) \right] \cM(x,v)$
  are eigenfunctions of $(-\sL)$ with eigenvalues
$\pm\,2\,i\,p$.

The analysis of the paper can be extended to the complex
Hilbert space $\mathrm L^2_\C(\cM^{-1})$ with a set of
\emph{special macroscopic modes} defined by
\[
  \cS := \mathop{\mathrm{Span}}_\C(\cM) \oplus
  \mathop{\mathrm{Span}}_\C (\cH\,\cM) \oplus \fR_{\phi,\C}
  \oplus \mathop{\mathrm{Span}} \big\{f_j^\pm\big\}_{j\in I_\phi}
  \oplus \mathop{\mathrm{Span}} \left\{ f^\pm \right\}
\]
where the $f_j$'s and the $f^\pm$'s are defined above (when
$\phi$ has the relevant harmonicity). Let $\cS^\bot$ be the
orthogonal of $\cS$ in $\mathrm L^2_\C(\cM^{-1})$. We note that
since $\sL$ is a real operator, both $\cS$ and $\cS^\bot$ are
stable by conjugation and therefore stable by $\sL$ and
$\sL^*$. Using the Laplace transform, we obtain from
Theorem~\ref{theo:main} the following resolvent estimate for
$-\sL|_{\cS^\bot}$:
\[
  \forall\,z \in \C \quad\mbox{with} \quad \Re(z) <\kappa\,,\quad
  \left\|(z \mathrm{Id} +
    \sL|_{\cS^\bot})^{-1}\right\|_{\cB(\cS^\bot)} \leq
  \frac{\tilde{C}}{\kappa-\Re(z)}
\]
where $\tilde{C}$ is an explicit constant depending on $\kappa$
and $C$ in Theorem~\ref{theo:main} and
$\| \cdot \|_{\cB(\cS^\bot)}$ stands for the operator norm on
$\cS^\bot$. The provides the resolvent estimates in the left
half-planes in Figure~\ref{fig:spect}.

\subsection{Special macroscopic modes for the full nonlinear
  Boltzmann equation}
\label{ssec:fullnl}

The \emph{special macroscopic modes} which minimize the entropy
for the full nonlinear Boltz\-mann equation are the nonlinear
counterparts to the linearized special macroscopic modes studied
in the present paper. They appear for the first time in the
literature in Boltzmann's paper~\cite{Boltzmann} as mentioned in
the introduction. The full nonlinear inhomogeneous Boltzmann
equation is
\begin{align}
  \label{nonlinear}
  \partial_t F + v\cdot \nabla_x F - \nabla_x \phi \cdot \nabla_v
  F = \partial_t F + \sT F = \mathscr{Q}(F,F)
\end{align}
where, with the classical notations $F'=F(v')$, $F_* = F(v_*)$
and $F'_*=F(v'_*)$ associated to elastic collisions
$(v,v_*)\mapsto(v',v_*')$, such that the microscopic conservation
of momentum $v'+v'_*= v+v_*$ and energy
$|v'|^2+|v'_*|^2= |v|^2+|v_*|^2$ hold, the Boltzmann collision
operator writes
\[
  \mathscr{Q} (F,F) := \int_{\R^d} \int_{\mathbb S^{d-1}} \mathsf
  B(v-v_*,\sigma) \left( F' F'_* - FF_* \right) \dd \sigma \dd
  v_*\,.
\]
Here $\mathsf B \ge 0$ is the \emph{cross-section}. We refer
to~\cite{Cercignani} for more details. Let us assume the
normalization~\eqref{eq:phiid} on $\phi$. We consider the
functions in the space~$\cS$ of \emph{special macroscopic modes}
generated by\\
$\rhd$ the set $\mathfrak R_\phi$ of \emph{rotation modes
  compatible with $\phi$} if $\phi$ admits any,\\
$\rhd$ the set $\mathfrak D_\phi$ of \emph{harmonic directional
  modes} if $\phi$ has harmonic directions,\\
$\rhd$ the set $\mathfrak P_\phi$ of \emph{harmonic pulsating
  modes} if $\phi$ is fully harmonic.\\
For any $f\in\cS$, the function
$F(t,x,v) := e^{h(t,x,v)}\,\cM(x,v)$ with $h=f/\cM$ is a
time-periodic solution to~\eqref{nonlinear}. Indeed
$h(t,x,\cdot)$ is a linear combination of $1$, $v_i$,
$i \in \{1,\dots,d\}$ and $|v|^2$ for each $t,x$, and therefore
the microscopic conservation of momentum and energy imply
\[
  \forall \, t,x,v,\sigma, \quad
  h(t,x,v') + h(t,x,v'_*) = h(t,x,v) + h(t,x,v_*)
\]
where the four velocities $v$, $v_*$, $v',v'_*$
satisfy~\eqref{eq:coll-rule}. This proves the identity
$e^{h'}\,\cM'\,e^{h'_*}\,\cM'_* = e^h\,\cM\,e^{h_*}\,\cM_*$ and thus
$\mathscr{Q}(e^h\,\cM,e^h\,\cM)=0$. Finally we obtain
$\sT (e^h\,\cM)= \sT(e^h)\,\cM + e^h\,\sT(\cM) = e^h \left[ \sT(h)
 \,\cM + \sT(\cM) \right] =0$, where we have used that $\sT$ is a
first order operator and $\sT(\cM)=\sT(h)=0$ as calculated
before.

\begin{ack}
The authors
  warmly thank L.~Desvillettes, S.~V\~u Ng\d{o}c, C.~Cheverry and
  M.~Ro\-drigues for fruitful discussions and remarks which led to
  the observations of Appendices~\ref{ssec:B4}
  and~\ref{ssec:fullnl}.
\end{ack}

\begin{funding}
KC and JD have been partially supported by the Project EFI (ANR-17-CE40-0030). FH benefits from the support of the France 2030 framework programme, through the Centre Henri Lebesgue Mathematical Center. CM is partially supported by the European Research Council (ERC) under the European Union’s Horizon 2020 research and innovation programme MAFRAN (grant agreement No. 726386).
\end{funding}

\noindent{\scriptsize \copyright\,2023~by the authors. Reproduction of this article by any means permitted for non-commercial purposes. \hbox{\href{https://creativecommons.org/licenses/by/4.0/legalcode}{CC-BY 4.0}}}
\bibliographystyle{cdhmms}
\bibliography{CDHMMS}
\parindent=0pt
\end{document}